\documentclass[fleqn,reqno,11pt,a4paper,final]{amsart}

\usepackage[a4paper,left=20mm,right=20mm,top=20mm,bottom=20mm,marginpar=20mm]{geometry}
\usepackage[english]{babel}
\usepackage{amsmath}
\usepackage{amssymb}
\usepackage{amsthm}
\usepackage{amscd}
\usepackage[utf8]{inputenc}
\usepackage{color}
\usepackage[english=american]{csquotes}
\usepackage[final]{graphicx}
\usepackage{hyperref}
\usepackage{calc}
\usepackage{mathptmx}
\usepackage{mathtools}
\usepackage{textcmds} 
\usepackage{tikz}
\usepackage{graphicx}
\usepackage{float}
\usepackage{stix}
\usepackage{enumitem}
\usepackage{dsfont}
\usepackage{oldgerm}
\usepackage{makecell}

\usepackage{caption}
\captionsetup[table]{skip=6pt}

\usepackage[backend=biber,bibencoding=utf8,firstinits,maxnames=4,style=alphabetic,isbn=false, doi=false, eprint= true, url= false,date=year]{biblatex}
\usepackage[english=american]{csquotes}
\emergencystretch=1em

\bibliography{literature.bib}

\DeclareRobustCommand{\gobblefour}[5]{}
\newcommand*{\SkipTocEntry}{\addtocontents{toc}{\gobblefour}}

\definecolor{luh-dark-blue}{rgb}{0.0, 0.313, 0.608}

\linespread{1.1}

\graphicspath{{../Pictures/}}

\makeatletter
\renewcommand\paragraph{\@startsection{paragraph}{4}{\z@}%
  {1ex \@plus1ex \@minus.2ex}%
  {-1em}%
  {\normalfont\normalsize\bfseries}}
\makeatother

\numberwithin{equation}{section}

\newtheoremstyle{thmlemcorr}{10pt}{10pt}{\itshape}{}{\bfseries}{.}{10pt}{{\thmname{#1}\thmnumber{ #2}\thmnote{ (#3)}}}
\newtheoremstyle{thmlemcorr*}{10pt}{10pt}{\itshape}{}{\bfseries}{.}\newline{{\thmname{#1}\thmnumber{ #2}\thmnote{ (#3)}}}
\newtheoremstyle{remexample}{10pt}{10pt}{}{}{\bfseries}{.}{10pt}{{\thmname{#1}\thmnumber{ #2}\thmnote{ (#3)}}}
\newtheoremstyle{ass}{10pt}{10pt}{}{}{\bfseries}{.}{10pt}{{\thmname{#1}\thmnumber{ A#2}\thmnote{ (#3)}}}

\theoremstyle{thmlemcorr}
\newtheorem{theorem}{Theorem}
\numberwithin{theorem}{section}
\newtheorem{lemma}[theorem]{Lemma}
\newtheorem{corollary}[theorem]{Corollary}
\newtheorem{proposition}[theorem]{Proposition}

\theoremstyle{thmlemcorr*}
\newtheorem*{theorem*}{Theorem}
\newtheorem{lemma*}[theorem]{Lemma}
\newtheorem{corollary*}[theorem]{Corollary}
\newtheorem{proposition*}[theorem]{Proposition}
\newtheorem{problem*}[theorem]{Problem}
\newtheorem{conjecture*}[theorem]{Conjecture}
\newtheorem{definition*}[theorem]{Definition}
\newtheorem{assumption*}[theorem]{Assumption}

\theoremstyle{remexample}
\newtheorem{remark}[theorem]{Remark}

\theoremstyle{ass}


\newcommand{\Ccal}{\mathcal{C}}

\newcommand{\Ecal}{\mathcal{E}}
\newcommand{\Fcal}{\mathcal{F}}
\newcommand{\Gcal}{\mathcal{G}}

\newcommand{\Lcal}{\mathcal{L}}
\newcommand{\Mcal}{\mathcal{M}}
\newcommand{\Ncal}{\mathcal{N}}
\newcommand{\Ocal}{\mathcal{O}}
\newcommand{\Pcal}{\mathcal{P}}
\newcommand{\Qcal}{\mathcal{Q}}
\newcommand{\Rcal}{\mathcal{R}}

\newcommand{\Tcal}{\mathcal{T}}
\newcommand{\Ucal}{\mathcal{U}}
\newcommand{\Vcal}{\mathcal{V}}
\newcommand{\Wcal}{\mathcal{W}}
\newcommand{\Xcal}{\mathcal{X}}
\newcommand{\Ycal}{\mathcal{Y}}
\newcommand{\Zcal}{\mathcal{Z}}

\newcommand{\Lfrak}{\mathfrak{L}}

\newcommand{\Nfrak}{\mathfrak{N}}
\newcommand{\Ofrak}{\mathfrak{O}}
\newcommand{\Pfrak}{\mathfrak{P}}
\newcommand{\Qfrak}{\mathfrak{Q}}
\newcommand{\Rfrak}{\mathfrak{R}}

\newcommand{\Kbf}{\mathbf{K}}

\newcommand{\Ubf}{\mathbf{U}}

\newcommand{\Xbf}{\mathbf{X}}

\renewcommand{\k}{\mathbf{k}}
\newcommand{\ub}{\mathbf{u}}

\newcommand{\e}{\mathbf{e}}
\newcommand{\p}{\mathbf{p}}
\newcommand{\x}{\mathbf{x}}
\newcommand{\X}{\mathbf{X}}
\renewcommand{\a}{\mathbf{a}}
\newcommand{\gammab}{\boldsymbol{\gamma}}
\newcommand{\psib}{\boldsymbol{\psi}}
\newcommand{\Psib}{\boldsymbol{\Psi}}
\newcommand{\nub}{\boldsymbol{\nu}}
\newcommand{\mub}{\boldsymbol{\mu}}
\newcommand{\phib}{\boldsymbol{\varphi}}
\newcommand{\nb}{\mathbf{n}}

\newcommand{\Zgoth}{\mathord{\text{\textgoth{Z}}}}

\renewcommand{\Re}{\operatorname{Re}}
\renewcommand{\Im}{\operatorname{Im}}

\newcommand{\norm}[1]{\|#1\|}

\newcommand{\abs}[1]{|#1|}

\newcommand{\N}{\mathbb{N}}
\newcommand{\R}{\mathbb{R}}
\newcommand{\C}{\mathbb{C}}

\newcommand{\Z}{\mathbb{Z}}

\newcommand{\eps}{\varepsilon}
\newcommand{\lrangle}[1]{\langle #1 \rangle}

\def\div{\nabla\cdot}




\def\XXint#1#2#3{{\setbox0=\hbox{$#1{#2#3}{\int}$}
\vcenter{\hbox{$#2#3$}}\kern-.5\wd0}}


\renewcommand{\eps}{\varepsilon}
\renewcommand{\phi}{\varphi}




\begin{document}


\title[]{Fast-moving pattern interfaces close to a Turing instability in an asymptotic model for the three-dimensional Bénard--Marangoni problem}
\author{Bastian Hilder}
\address{\textit{Bastian Hilder:}  Department of Mathematics, Technische Universität München, Boltzmannstraße 3, 85748 Garching b.\ München, Germany}
\email{bastian.hilder@tum.de}

\author{Jonas Jansen}
\address{\textit{Jonas Jansen:}   Fraunhofer Institute for Algorithms and Scientific Computing SCAI, Schloss Birlinghoven, 53757 Sankt Augustin, Germany}
\email{jonas.jansen@scai.fraunhofer.de}

\begin{abstract}
    We study the bifurcation of planar patterns and fast-moving pattern interfaces in an asymptotic long-wave model for the three-dimensional Bénard–Marangoni problem, which is close to a Turing instability. We derive the model from the full free-boundary Bénard–Marangoni problem for a thin liquid film on a heated substrate of low thermal conductivity via a lubrication approximation. This yields a quasilinear, fully coupled, mixed-order degenerate-parabolic system for the film height and temperature. As the Marangoni number $M$ increases beyond a critical value $M^*$, the pure conduction state destabilises via a Turing(–Hopf) instability. Close to this critical value, we formally derive a system of amplitude equations which govern the slow modulation dynamics of square or hexagonal patterns. Using center manifold theory, we then study the bifurcation of square and hexagonal planar patterns. Finally, we construct planar fast-moving modulating travelling front solutions that model the transition between two planar patterns. The proof uses a spatial dynamics formulation and a center manifold reduction to a finite-dimensional invariant manifold, where modulating fronts appear as heteroclinic orbits. These modulating fronts facilitate a possible mechanism for pattern formation, as previously observed in experiments.
\end{abstract}
\vspace{4pt}

\maketitle
\noindent\textsc{MSC (2020): 35B36, 70K50, 35B32, 35Q35, 35K41, 35K59, 35K65, 35Q79, 76A20, 35B06, 34C37, 37L10}

\noindent\textsc{Keywords: thermocapillary instability, thin-film model, center manifold theory, spatial dynamics, quasilinear degenerate-parabolic system, planar patterns, modulating fronts, pattern formation}

\setcounter{tocdepth}{1}
\tableofcontents

\section{Introduction}

In two seminal papers in 1900 and 1901 \cite{bénard1900,benard1901}, Henri Bénard studied thin fluid layers resting on heated metal plates. He noticed the emergence of a regular polygonal pattern of typically hexagonal shape, raised in the center and a convection current within each cell. Block \cite{block1956} and Pearson \cite{pearson1958} showed that these patterns form due to temperature variations of the surface tension, even for ultrathin films. This thermocapillary instability is physically explained by the surface tension being, for most fluids, a decreasing function of temperature. Due to the similarity of the surface-tension deficit to the Marangoni effect, the problem of a thin viscous fluid resting on a heated plane is known as the \emph{Bénard--Marangoni problem}.

Since the initial experiments by Bénard, the problem has been revisited, and two main features have been found through experimental studies. First, the planar patterns already found by Bénard have been rediscovered by several other authors. In addition to the hexagons found in \cite{koschmieder1974,schatz1995}, the formation of planar square patterns has been observed in \cite{nitschke1995,eckert1998}. Second, it has been observed that the fluid film can exhibit spontaneous film rupture, see \cite{vanhook1995,vanhook1997}. This paper deals with the former case.

Besides the observation of stationary planar patterns, also their formation from the spatially homogeneous purely conductive state with a flat surface has received much attention. Already, Bénard found that planar patterns do not arise everywhere at once but typically form in the wake of an invading front, see \cite[Fig.~12]{bénard1900}. Later, also the transition between two planar patterns through a planar front has been observed experimentally, see \cite{pantaloni1979} for an invasion of roll waves by polygonal cells and \cite{nitschke1995,eckert1998} for an invasion of hexagonal cells by square cells. For an overview of the experimental literature, we refer the reader to \cite{colinet2001}, while high-resolution images of experimental results can be found in \cite[pg.~83]{vandyke2008}.

Mathematically, these planar patterns are strongly related to similar patterns in other applications, such as biology. A first explanation of how these structures arise from a spatially homogeneous steady state has been given by Turing \cite{turing1952}. He observed that a constant steady state in a reaction-diffusion system with different diffusion coefficients can become unstable at a fixed finite Fourier wave number when varying the diffusivity of one component. This instability is known as a diffusion-driven or \emph{Turing instability} and typically leads to the bifurcation of spatially periodic solutions. While this instability can also be found in the full mathematical model of the Bénard-Marangoni problem, see \cite{samoilova2014}, recently, a thermocapillary thin-film model exhibiting the same instability has been formally derived from the full free-boundary problem using a long-wave (or lubrication) approximation. In this model, the solid-fluid interface is assumed to have a poorly conducting thermal contact, and thus, only the heat flux through the interface is fixed, see \cite{shklyaev2012} and Section \ref{sec:modelling} for more details. In the parameter regime of small Biot number and large capillary number with constant product, their formal expansion leads to a coupled system of partial differential equations for the film height $h = h(t,x,y)$ and the mean temperature $\theta = \theta(t,x,y)$, which reads as
\begin{equation}\label{eq:thin-film-equation}
    \hspace{-0.5cm}\left\{
    \begin{array}{rcl}
        \partial_t h + \div\bigl(\frac{h^3}{3}(\nabla\Delta h-g\nabla h) + M\frac{h^2}{2} \nabla(h-\theta)\bigr) & = & 0  \\
        h\partial_t \theta - \div (h\nabla\theta) + \frac{1}{2}|\nabla h|^2 + \beta(\theta-h) - j\cdot \nabla(\theta-h) + \div\bigl(\frac{h^4}{8}(\nabla \Delta h-g\nabla h) + M\frac{h^3}{6} \nabla(h-\theta)\bigr) & = & 0 
    \end{array}
    \right.
\end{equation}
with nonlinear flux \(j=\frac{h^3}{3}(\nabla \Delta h-g\nabla h) + M\frac{h^2}{2} \nabla(h-\theta)\). Here, $g$ is a gravitational constant, $\beta$ is the rescaled Biot number, and $M$ is the Marangoni number, which is proportional to the prescribed heat flux across the bottom interface. Throughout this paper, $g$ and $\beta$ are treated as fixed parameters, while $M$ is the main bifurcation parameter. This is in line with experimental treatments since the temperature flux can be easily controlled.

Beyond the physical importance, the thin-film system \eqref{eq:thin-film-equation} is also interesting from a mathematical perspective since it is a quasilinear, fully-coupled mixed-order system and also degenerate-parabolic in the film height. Moreover, the analysis of pattern-forming systems has hitherto often been restricted to semilinear toy models. Thus, the rigorous analysis of the asymptotic model \eqref{eq:thin-film-equation} demonstrates the applicability of dynamical-systems methods to more complex models arising in free-boundary flows in order to study pattern formation and pattern interfaces. In particular, this is a first step towards the treatment of the original physical free-boundary problem.

The aim of this paper is a rigorous analysis of the existence of stationary patterns and fast-moving pattern interfaces in equation \eqref{eq:thin-film-equation}. For Marangoni numbers close to a critical value, we establish the bifurcation of periodic patterns with both $(D_4)$ and $(D_6)$ symmetry from the purely conductive state. This includes roll waves and square and hexagonal patterns. Then, we establish the existence of fast-moving planar fronts, which asymptotically connect different stationary patterns. In particular, this provides a mathematical description of the invasion phenomena already observed experimentally by Bénard and others.

\subsection{Main results of the paper}
We summarise the main results and techniques of this paper. We show that

\begin{itemize}
    \item there is an open domain $\Omega_m$ in the $(\beta,g)$-parameter space such that for each $(\beta,g) \in \Omega_m$ there exists a critical Marangoni number $M = M_m^*$ and a Fourier wave number $k = k_m^*$ such that the pure conduction state $(\bar{h},\bar{\theta}) = (1,1)$ undergoes a Turing instability at $M_m^*$ with wave numbers $|\k| = k_m^*$, $\k\in \R^2$;
    \item at $M_m^*$, spatially periodic solutions with either square ($D_4$) symmetry or hexagonal ($D_6$) symmetry bifurcate from the pure conduction state, see Figure \ref{fig:planar-patterns} and Theorems \ref{thm:square-patterns} and \ref{thm:hex-patterns};
    \item formally, slow amplitude modulations of square and hexagonal patterns of the thin-film model \eqref{eq:thin-film-equation} close to the pure conduction state and the critical Marangoni number $M_m^*$ are governed by a degenerate Ginzburg–Landau system with an additional conservation law, see equations \eqref{eq:amplitude-equations-square} and \eqref{eq:amplitude-equations-hex};
    \item in appropriate subsets of $\Omega_m$, there exist fast-moving pattern interfaces describing the invasion of certain patterns by other patterns, see Figures \ref{fig:Modfronts-square}, \ref{fig:Modfronts-hex} and \ref{fig:Modfronts-hex-two} and Theorems \ref{thm:modulating-fronts-square} and \ref{thm:modulating-fronts-hex}. To leading order, these solutions take the form of slow amplitude modulations of the critical Fourier modes.
\end{itemize}

\begin{figure}[H]
    \centering
    \includegraphics[width=0.3\linewidth]{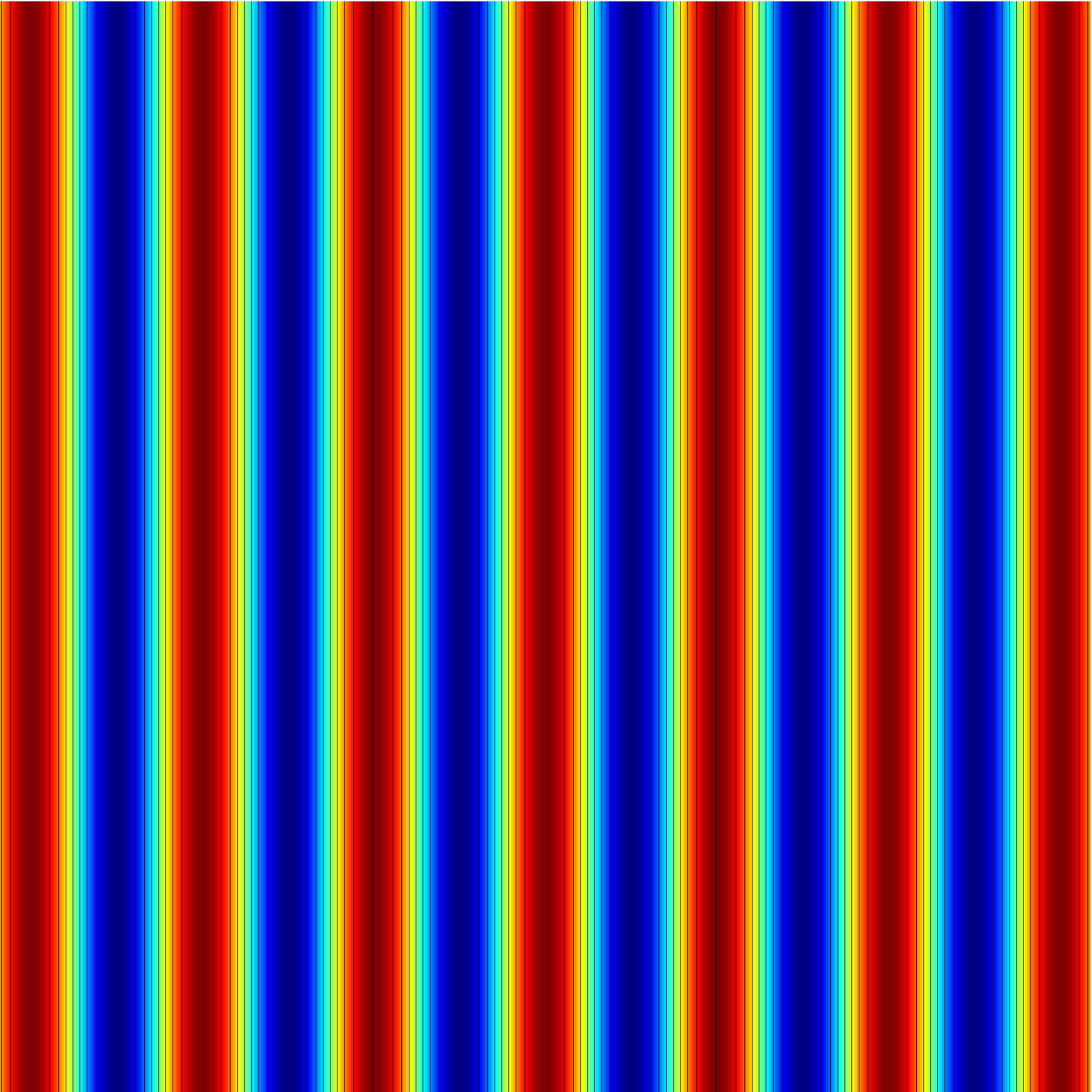}
    \hfill
    \includegraphics[width=0.3\linewidth]{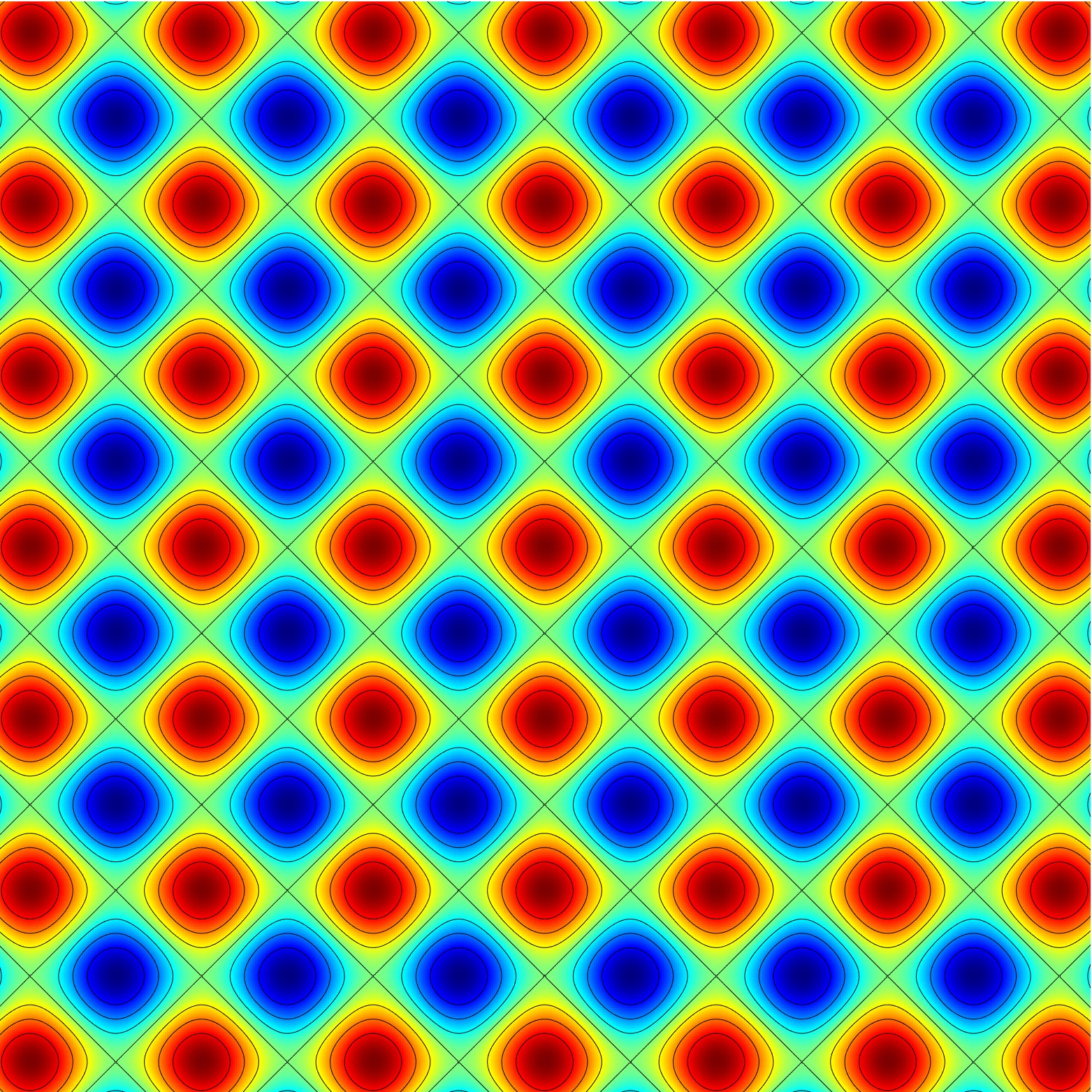}
    \hfill
    \includegraphics[width=0.3\linewidth]{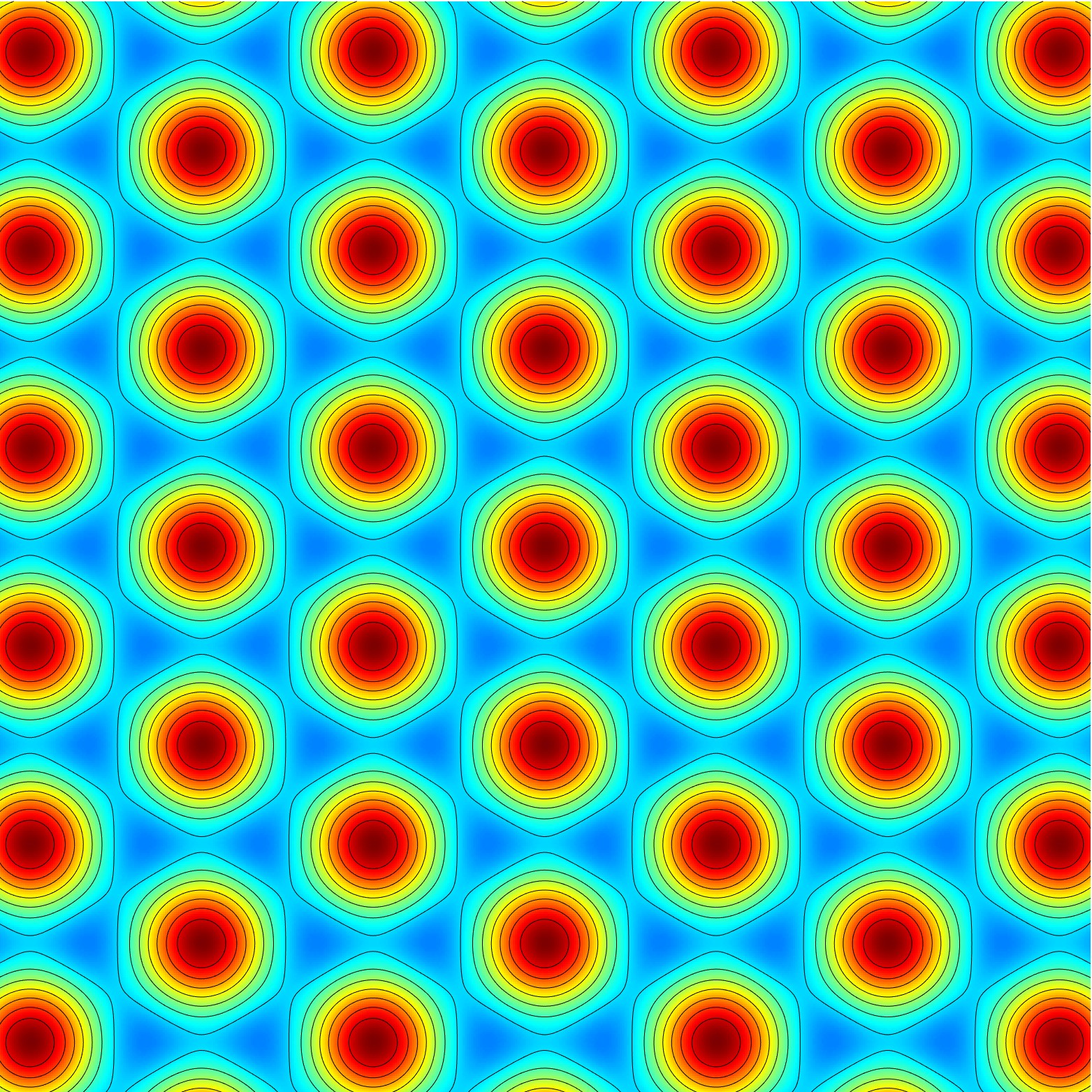}
    \caption{Planar roll waves, square and hexagonal patterns in \eqref{eq:thin-film-equation}. The different colors indicate the different temperatures $\theta$ in the domain, where red indicates a higher temperature and blue indicates a lower one. Additionally, the level sets of the surface height $h$ are plotted as black lines.}
    \label{fig:planar-patterns}
\end{figure}

\begin{remark}
    Throughout the paper and without loss of generality, we study only solutions, which are close to the constant stationary solution $(\bar{h},\bar{\theta}) = (1,1)$.
\end{remark}

\noindent\textbf{Bifurcation of planar patterns. }
To study the existence of planar patterns, we consider the linear instability of the pure conduction state $(\bar{h},\bar{\theta}) = (1,1)$. For this, we derive the dispersion relation of the pure conduction state by studying the Fourier symbol of the linearisation $\hat{\Lcal}_M(\k)$ of \eqref{eq:thin-film-equation} with $\k \in \C^2$. Using the rotation invariance of \eqref{eq:thin-film-equation}, there are two curves of eigenvalues $\lambda_\pm(k;M)$, $k = |\k|$. It turns out that there exists an open domain of the $(\beta,g)$ parameter space such that the pure conduction steady state undergoes a Turing instability. This means that there exists a critical Marangoni number $M_m^* > 0$ and a circle of critical Fourier wave numbers with $|\k| = k_m^*>0$ such that $\lambda_+(\cdot;M_m^*)$ has a quadratic root at $k = k_m^*$. Additionally, for $M > M_m^*$, there is a neighborhood of the circle $|\k| = k_m^*$ where $\lambda_+(\cdot;M)$ has positive real part. Finally, we point out the fact that the $h$-equation in \eqref{eq:thin-film-equation} is a conservation law yields that $\lambda_+(0;M) = 0$ for all $M > 0$ with a quadratic touching.

Close to this Turing instability, we then analyse the existence of planar patterns with either square ($D_4$) symmetry or hexagonal ($D_6$) symmetry using center manifold reduction. Therefore, we study \eqref{eq:thin-film-equation} on function spaces that are periodic with respect to a square and hexagonal lattice, that is functions of the form
\begin{equation*}
    \Ucal = \sum_{k\in \Gamma} \a_\k e^{i\k\cdot \x}, \quad \a_k = \overline{\a_{-\k}} \in \C^2,
\end{equation*}
where $\Gamma$ is the corresponding dual lattice, which is generated by $\k_1 = k_m^*(1,0)$ and $\k_2 = k_m^*(0,1)$ for the square lattice and by $\k_1 = k_m^* (1,0)$, $\k_2 = \tfrac{k_m^*}{2}(-1,\sqrt{3})$ and $\k_3 = -\tfrac{k_m^*}{2}(1,\sqrt{3})$ for the hexagonal lattice, see Figure \ref{fig:Fourier-lattice}. We show that on these lattices, we can apply standard center manifold theory \cite[Thm.~2.20]{haragus2011a} to obtain stationary solutions of the form
\begin{equation*}
    \begin{pmatrix}
        h \\ \theta
    \end{pmatrix} = \begin{pmatrix}
        1 \\ 1
    \end{pmatrix} + \sum_{j = 1}^N A_j(T) e^{i \k_j \cdot \x} \phib_+(\k_j) + c.c. + h.o.t.,
\end{equation*}
where $c.c.$ and $h.o.t.$ denote complex conjugated and higher-order terms, respectively, and $\phib_+(\k_j)$ is the eigenvector associated to the eigenvalue $\lambda_+(\k_j;M_m^*) = 0$ of $\hat{\Lcal}_{M^*}(\k_j)$ and $N\in \{2,3\}$ is the number of waves vectors generating the lattice. Here, the $A_j$ are amplitude modulations of the critical Fourier modes, which can evolve on the slow timescale $T = \mu t$ with $M - M_m^* = \mu M_0$ and $\mu$ sufficiently small. We then show that the amplitude modulations on the square lattice satisfy

\begin{equation*}
    \begin{split}
        \mu \partial_T A_1 &= \mu M_0 \kappa A_1 + (K_0 \abs{A_1}^2 + K_1 \abs{A_2}^2) A_1+ \Ocal\bigl(|(A_1,\bar{A}_1,A_2,\bar{A_2},\mu)|^4\bigr), \\
        \mu \partial_T A_2 &= \mu M_0 \kappa A_2 + (K_1 \abs{A_1}^2 + K_0 \abs{A_2}^2) A_2+ \Ocal\bigl(|(A_1,\bar{A}_1,A_2,\bar{A_2},\mu)|^4\bigr),
    \end{split}
\end{equation*}
whereas the evolution of the hexagonal lattice is given by
\begin{equation*}
    \begin{split}
        \mu \partial_T A_1 &= \mu M_0 \kappa A_1 + N \bar{A}_2 \bar{A}_3 + (K_0 \abs{A_1}^2 + K_2 (\abs{A_2}^2 + \abs{A_3}^2) A_1 + \Ocal\bigl(|(A_1,\bar{A}_1,A_2,\bar{A_2},A_3,\bar{A}_3,\mu)|^4\bigr), \\
        \mu \partial_T A_2 &= \mu M_0 \kappa A_2 + N \bar{A}_1 \bar{A}_3 + (K_0 \abs{A_2}^2 + K_2 (\abs{A_1}^2 + \abs{A_3}^2) A_2 + \Ocal\bigl(|(A_1,\bar{A}_1,A_2,\bar{A_2},A_3,\bar{A}_3,\mu)|^4\bigr), \\
        \mu \partial_T A_3 &= \mu M_0 \kappa A_3 + N \bar{A}_1 \bar{A}_2 + (K_0 \abs{A_3}^2 + K_2 (\abs{A_1}^1 + \abs{A_2}^2) A_3 + \Ocal\bigl(|(A_1,\bar{A}_1,A_2,\bar{A_2},A_3,\bar{A}_3,\mu)|^4\bigr).
    \end{split}
\end{equation*}
The coefficients $\kappa$, $N$ and $K_j$ for $j = 0,1,2$ depend on $(\beta,g) \in \Omega_m$ and can be computed explicity, see \eqref{eq:expansion-linear-coeff}, \eqref{eq:quadratic-coeff}, \eqref{eq:self-interaction-coeff}, \eqref{eq:cross-interaction-coeff-square} and \eqref{eq:cross-interaction-coeff-hex} and also the Supplementary Material. We then solve these systems for fixed points in the invariant subsets $\{A_1 \in \R, A_2 = 0\}$ and $\{A_1 \in \R, A_2 = A_3 = 0\}$ corresponding to roll waves in \eqref{eq:thin-film-equation}, and $\{A_1 = A_2 \in \R\}$ and $\{A_1 = A_2=A_3 \in \R\}$ corresponding to square and hexagonal patterns in \eqref{eq:thin-film-equation}, respectively. We summarise the result for the square lattice in the following theorem, see also Theorem \ref{thm:square-patterns} and Figure \ref{fig:planar-patterns}.

\begin{theorem}[Stationary patterns on square lattice]
    There exists a $\mu_0 > 0$ such that for all $0 < \mu < \mu_0$, the thin-film system \eqref{eq:thin-film-equation} has roll waves for $M_0K_0 < 0$ and square patterns for $M_0(K_0+K_1) < 0$. 
\end{theorem}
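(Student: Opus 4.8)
The plan is to obtain roll waves and square patterns as nontrivial equilibria of the reduced amplitude system on the square lattice. Since, via the center manifold reduction, stationary solutions of \eqref{eq:thin-film-equation} close to $(\bar h,\bar\theta)=(1,1)$ correspond exactly to zeros of the reduced vector field, I would set the right-hand sides of the square-lattice amplitude equations equal to zero and solve the resulting algebraic system for $(A_1,A_2)$. The structural fact I would exploit is that the reduced vector field inherits the equivariance of \eqref{eq:thin-film-equation} under the symmetries of the square lattice (the point group $D_4$ together with the translations). The translation action $A_j \mapsto e^{\ii\vartheta_j}A_j$ lets me restrict to real amplitudes, and the fixed-point subspaces of the isotropy subgroups are flow-invariant for the full reduced vector field, not merely for its cubic truncation. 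In particular the roll subspace $\{A_2=0,\ A_1\in\R\}$ and the square subspace $\{A_1=A_2\in\R\}$ are invariant.

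For the roll waves I restrict to $\{A_2=0,\ A_1\in\R\}$, where the equilibrium condition collapses to the scalar equation $0 = \mu M_0\kappa A_1 + K_0 A_1^3 + \Ocal(\abs{(A_1,\mu)}^4)$. The scaling that balances the linear term $\mu A_1$ against the cubic term is $A_1 = \sqrt{\mu}\,r$; inserting this and dividing by $\mu^{3/2}$ yields $0 = M_0\kappa\,r + K_0 r^3 + \Ocal(\sqrt{\mu})$. I read the sign of $\kappa$ off the Turing instability: because $\lambda_+(k_m^*;\cdot)$ crosses zero transversally as $M$ increases through $M_m^*$, one has $\kappa>0$. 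Hence the truncation $M_0\kappa\,r + K_0 r^3 = 0$ has the nonzero root $r_* = \sqrt{-M_0\kappa/K_0}$ exactly when $M_0 K_0 < 0$, and its $r$-derivative there equals $-2M_0\kappa\neq 0$. The implicit function theorem, applied with $\sqrt{\mu}$ as the continuation parameter, then extends $r_*$ to a branch $r(\mu)$ on $0<\mu<\mu_0$, producing a genuine roll wave $A_1 = \sqrt{\mu}\,r(\mu)\neq 0$.

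For the square patterns I restrict to $\{A_1 = A_2 =: A \in \R\}$, on which both equations reduce to the single scalar equation $0 = \mu M_0\kappa A + (K_0+K_1)A^3 + \Ocal(\abs{(A,\mu)}^4)$. This is formally identical to the roll case with $K_0$ replaced by $K_0+K_1$, so the same rescaling $A=\sqrt{\mu}\,r$ and implicit function theorem argument yield a nontrivial branch precisely when $M_0(K_0+K_1)<0$. Lifting both branches back through the center manifold parametrisation gives the asserted stationary solutions of \eqref{eq:thin-film-equation}.

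The elementary scalar bifurcation analysis is not where the difficulty lies; the points requiring genuine care are that the invariant subspaces persist for the full reduced vector field (so that restricting the equation is legitimate) and that the remainder stays uniformly higher order after the $\sqrt{\mu}$-rescaling, so that the implicit function theorem applies uniformly in $\mu$. Both follow from the equivariant center manifold theorem and the order of the normal-form expansion, so the only model-dependent ingredients are the transversality sign $\kappa>0$ and the explicitly computable coefficients $K_0$ and $K_1$, whose signs relative to $M_0$ determine which branch exists.
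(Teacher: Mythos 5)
Your proposal is correct and follows essentially the same route as the paper: center manifold reduction, restriction to the symmetry-induced invariant subspaces $\{A_2=0\}$ and $\{A_1=A_2\}$ of the reduced system, reduction to a scalar real-amplitude equation, and persistence of the nontrivial root of the cubic truncation via the implicit function theorem using $\kappa>0$. The only cosmetic difference is that you rescale $A=\sqrt{\mu}\,r$ and apply the implicit function theorem at $\mu=0$, whereas the paper works in polar coordinates and applies it directly at the $\mu$-dependent approximate root $\rho=\sqrt{-\mu M_0\kappa/K_0}$, where the linearisation equals $2\sqrt{-\mu M_0\kappa K_0}+\Ocal(\mu^{3/2})\neq 0$; the two presentations are equivalent.
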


For the hexagonal lattice, the bifurcation picture is more complicated due to the quadratic terms, which arise from the resonance of the hexagonal lattice, that is $\k_1 + \k_2 + \k_3 = 0$. Therefore, in order to obtain persistent solutions to the reduced system, we need to distinguish two different cases for the quadratic coefficient $N$. In the first case, $N$ is non-zero but of finite size, whereas in the second case, $N$ is small with respect to $\mu$ and satisfies $N = \sqrt{\mu} N_0$ with $N_0 = \Ocal(1)$ in $\mu$. We point out that there is an explicit curve $\beta(g)$ for $g \in (0,18)$ such that $(\beta(g),g) \subset \Omega_m$ and $N(\beta(g),g) = 0$, see \eqref{eq:beta-von-g} and Figure \ref{fig:coeffs-hex}. We summarize our findings in the following theorem, see also Theorem \ref{thm:hex-patterns} and Figures \ref{fig:planar-patterns} and \ref{fig:bifurcation-diag-hex}.

\begin{theorem}[Stationary patterns on hexagonal lattice]
    There exists a $\mu_0 > 0$ such that for all $0 < \mu < \mu_0$, the thin-film system \eqref{eq:thin-film-equation} has roll waves for $M_0K_0 < 0$. If $N \neq 0$, the system \eqref{eq:thin-film-equation} has hexagonal patterns with amplitude $A = \tfrac{\mu M_0 \kappa}{N}$. If additionally $N = \sqrt{\mu}N_0$, $K_0 + 2K_2 \neq 0$ and $N_0^2 - 4M_0\kappa (K_0+2K_2) > 0$, the system \eqref{eq:thin-film-equation} has hexagonal patterns with amplitudes $A_\pm = \sqrt{\mu}\tfrac{-N_0 \pm \sqrt{N_0^2 - 4M_0 \kappa (K_0 + 2K_2)}}{2(K_0 + 2K_2)}$.
\end{theorem}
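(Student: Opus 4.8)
The plan is to obtain the stationary patterns as equilibria of the reduced amplitude equations on the hexagonal lattice and to show that the explicitly solvable equilibria of the truncated polynomial normal form persist once the higher-order remainder is reinstated. By the reduction principle of center manifold theory, every equilibrium of the reduced flow on the finite-dimensional center manifold lifts to a genuine stationary solution of \eqref{eq:thin-film-equation}; hence it suffices to solve the reduced system for constant, time-independent amplitudes $(A_1,A_2,A_3)$.

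First I would use the equivariance of the reduced vector field under the $D_6$-action and the lattice translations to pass to flow-invariant fixed-point subspaces, in which the bifurcation problem becomes scalar. Setting $A_2 = A_3 = 0$, $A_1 = A \in \R$ isolates the roll-wave branch and yields $0 = \mu M_0 \kappa A + K_0 A^3 + \Ocal(|(A,\mu)|^4)$, while the diagonal $A_1 = A_2 = A_3 = A \in \R$ isolates the hexagonal branch and, using $\bar A_2 \bar A_3 = A^2$ and $|A_2|^2 + |A_3|^2 = 2A^2$, yields $0 = \mu M_0 \kappa A + N A^2 + (K_0 + 2K_2) A^3 + \Ocal(|(A,\mu)|^4)$. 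One must check here that the full reduced vector field (not merely its cubic truncation) leaves these subspaces invariant, so that the reduction to a single equation is exact.

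The key is then to select, in each regime, a scaling that brings two terms into balance at leading order and demotes the remainder to a perturbation, after which a simple root persists by the implicit function theorem. For rolls, set $A = \sqrt\mu\, a$ and divide by $\mu^{3/2}$ to get $0 = M_0 \kappa a + K_0 a^3 + \Ocal(\sqrt\mu)$; the nontrivial root $a_*^2 = -M_0\kappa/K_0$ exists iff $M_0 K_0 < 0$ and is simple because its derivative equals $-2M_0\kappa \neq 0$. For hexagons with $N$ of order one, the quadratic resonance dominates and the correct balance is $A = \mu\, a$; dividing by $\mu^2$ gives $0 = M_0\kappa a + N a^2 + \Ocal(\mu)$, whose nonzero root is simple since $N \neq 0$ and produces the leading-order amplitude recorded in the statement. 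For the degenerate case $N = \sqrt\mu\, N_0$, set $A = \sqrt\mu\, a$ so that all three of $\mu M_0\kappa A$, $N A^2$ and $(K_0 + 2K_2)A^3$ are of order $\mu^{3/2}$; the reduced equation becomes the quadratic $(K_0 + 2K_2)a^2 + N_0 a + M_0\kappa = 0$, whose roots $a_\pm$ are real, distinct and hence simple exactly when $K_0 + 2K_2 \neq 0$ and $N_0^2 - 4M_0\kappa(K_0 + 2K_2) > 0$, giving $A_\pm = \sqrt\mu\, a_\pm$.

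In each of the three cases the leading-order root is simple, so the implicit function theorem, applied with respect to the natural small parameter ($\mu$ in the finite-$N$ regime and $\sqrt\mu$ otherwise), produces a unique branch persisting for all $0 < \mu < \mu_0$, and undoing the scaling gives the stated amplitudes. I expect the main obstacle to be the bookkeeping forced by the hexagonal resonance: because the quadratic term $N\bar A_2\bar A_3$ shifts the leading balance from $\sqrt\mu$ to $\mu$, the regimes $N = \Ocal(1)$ and $N = \sqrt\mu\, N_0$ must be rescaled differently, and in each one has to verify that after rescaling the center-manifold remainder — which is only finitely smooth and carries the $\mu$-dependence inherited from $M - M_m^* = \mu M_0$ — is still a smooth function of the new small parameter, so that the implicit function theorem genuinely applies.
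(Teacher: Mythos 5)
Your proposal is correct and follows essentially the same route as the paper: center manifold reduction, restriction to the equivariance-protected invariant subspaces $\{A_2=A_3=0\}$ and $\{A_1=A_2=A_3\in\R\}$, solution of the resulting scalar bifurcation equations, and persistence of simple roots via the implicit function theorem. The only cosmetic difference is that you rescale the amplitude first ($A=\sqrt{\mu}\,a$ or $A=\mu\,a$) and apply the implicit function theorem in the rescaled variable, whereas the paper linearises directly about the unscaled roots; in particular, your $A=\mu\,a$ scaling in the $N\neq 0$ regime automatically discards the non-persistent $\Ocal(1)$ branch $A_-$, which the paper excludes by noting $A_-=\Ocal(N)$.
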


\noindent\textbf{Formal amplitude equations. }
The dynamics of pattern-forming systems close to the onset of instability is typically governed by a set of amplitude or modulation equations. These describe the slow modulation of characteristic features of the base pattern such as amplitude, wave number or mean value. The theory of amplitude equations extends the invariant manifold theory used to construct the planar patterns above in the following sense. On the center manifold, the dynamics is fully described by the amplitudes of the (finitely many) critical Fourier modes. This idea is also reflected in modulation theory, where one typically finds that the amplitude dynamics close to the onset of instability can be reduced to a closed system of finitely many PDEs for the amplitude modulations of the critical Fourier modes. We point out that after formally deriving the amplitude equations, one still needs to justify rigorously that the dynamics of the full system is actually determined by the amplitude equations on a sufficiently long time interval, which is a highly nontrivial task, and we refer to \cite{schneider2017} for details. 

Here, we restrict to a formal derivation of the amplitude equations for the thin-film system \eqref{eq:thin-film-equation}. For this, we make a multiscale ansatz to derive evolution equations for slow amplitude modulations of the planar base patterns on the square and hexagonal lattice, respectively. Specifically, defining $\eps$ via $M-M_m^* = \eps^2 M_0$, the ansatz reads as
\begin{equation*}
    \begin{pmatrix}
        h \\ \theta
    \end{pmatrix} = \begin{pmatrix}
        1 \\ 1
    \end{pmatrix} + \eps\sum_{j = 1}^N A_j(T,\Xbf) e^{i \k_j \cdot \x} \phib_+(\k_j)  + c.c. +  \eps^2 A_0(T,\Xbf) \phib_+(0) + h.o.t.,
\end{equation*}
where $T=\eps^2t$ and $\Xbf = \eps \x$. We point out that the ansatz is slightly different compared to the expansion for the planar patterns since the scaling of the amplitude modulations is already explicitly included. Inserting this ansatz into \eqref{eq:thin-film-equation} and equating different powers of $\eps$ to zero then yields the following equations. In the case of the square lattice, the amplitude system is given by
\begin{equation*}
    \begin{split}
        \partial_T A_1 &= -\dfrac{\lambda''_+(k_m^*)}{2 (k_m^*)^2} \partial_X^2 A_1 + M_0 \kappa A_1 + K_c A_0 A_1 + K_0 A_1 \abs{A_1}^2 + K_1 A_1 \abs{A_2}^2, \\
        \partial_T A_2 &= -\dfrac{\lambda''_+(k_m^*)}{2 (k_m^*)^2} \partial_Y^2 A_2 + M_0 \kappa A_2 + K_c A_0 A_2 + K_0 A_2 \abs{A_2}^2 +  K_1 A_2 \abs{A_1}^2, \\
        \partial_T A_0 &= -\dfrac{1}{2} \lambda_+^{\prime\prime}(0) \Delta A_0 + \div \bigl(p_c(\k_1\k_1^T) \nabla \abs{A_1}^2 + p_c(\k_2 \k_2^T) \nabla\abs{A_2}^2\bigr).
    \end{split}
\end{equation*}
For the hexagonal lattice, the amplitude equations are given by
\begin{equation*}
    \begin{split}
        \partial_T A_1 &= -\dfrac{\lambda''_+(k_m^*)}{2 (k_m^*)^2} \partial_X^2 A_1 + M_0 \kappa A_1 + K_c A_0 A_1 + \dfrac{N}{\eps}\bar{A}_2\bar{A}_3 + K_0 A_1 \abs{A_1}^2 + K_2 A_1 (\abs{A_2}^2 + \abs{A_3}^2), \\
        \partial_T A_2 &= -\dfrac{\lambda''_+(k_m^*)}{2 (k_m^*)^2} \left(-\dfrac{1}{2} \partial_X + \dfrac{\sqrt{3}}{2}\partial_Y\right)^2 A_2 + M_0 \kappa A_2 + K_c A_0 A_2 + \dfrac{N}{\eps}\bar{A}_1\bar{A}_3  + K_0 A_2 \abs{A_2}^2 \\
        & \quad +  K_2 A_2 (\abs{A_1}^2 + \abs{A_3}^2), \\
        \partial_T A_3 &= \dfrac{\lambda''_+(k_m^*)}{2 (k_m^*)^2} \left(\dfrac{1}{2} \partial_X + \dfrac{\sqrt{3}}{2}\partial_Y\right)^2 A_3 + M_0 \kappa A_3 + K_c A_0 A_3 + \dfrac{N}{\eps}\bar{A}_1\bar{A}_2  + K_0 A_3 \abs{A_3}^2 \\
        & \quad +  K_2 A_3 (\abs{A_1}^2 + \abs{A_2}^2), \\
        \partial_T A_0 &= -\dfrac{1}{2} \lambda_+^{\prime\prime}(0) \Delta A_0 + \div \bigl(p_c(\k_1\k_1^T) \nabla \abs{A_1}^2 + p_c(\k_2 \k_2^T) \nabla\abs{A_2}^2 + p_c(\k_3 \k_3^T) \nabla \abs{A_3}^2\bigr).
    \end{split}
\end{equation*}
In particular, we point out that the coefficients are the same as in the reduced equations on the center manifold for the construction of planar patterns. Moreover, we recover the same reduced equations appearing in the construction of planar patterns above by making the ansatz $A_j(T,\Xbf) = A_j(T)$ and $A_0 = 0$. Furthermore, we also formally recover the reduced equations in the construction of the modulating fronts below by searching for fast-travelling fronts with the ansatz $A_j(T,\Xbf) = A_j\bigl(\eps(X - \eps^{-1}cT)\bigr)$ with $\Xbf = (X,Y)$.

Finally, we highlight that since the conservation law is of fourth order, we find a new structure of the nonlinear terms appearing in the conservation law. That is, instead of a pure Laplace operator, we obtain a potentially non-elliptic second-order operator in the $A_0$-equations.

\vspace{6pt}

\noindent\textbf{Pattern interfaces. }
As highlighted above, experimental observations indicate that planar patterns arise in the wake of a (radial) invading front. To keep the problem mathematically tractable, we study the existence of planar fronts in this paper, which describe the transition from one planar pattern (including the trivial pure conduction state) to a different pattern. These fronts are also referred to as moving pattern interfaces and can be modelled via modulating fronts, which are solutions of the form
\begin{equation}\label{eq:pattern-interface-ansatz}
    \Ucal(t,\x) = \Vcal(x-ct,\x) = \Vcal(\xi,\p), \quad \x=(x,y).
\end{equation}
Here, $\xi = x-ct$ is a co-moving frame with fixed velocity $c > 0$, which reflects the moving interface. We point out that, for simplicity, we choose this co-moving frame to be in $x$-direction only; therefore, the front is a planar front purely in $x$-direction. It turns out that any other choice of direction leads to the same results. Additionally, we assume that $\Vcal$ is periodic with respect to its second variable $\p = \x$, where periodic here either refers to square ($D_4$) symmetry, that is,
\begin{equation*}
    \Vcal(\xi,x,y) = \Vcal(\xi,x+\tfrac{2\pi}{k_m^*},y) = \Vcal(\xi,x,y+\tfrac{2\pi}{k_m^*}),
\end{equation*}
or hexagonal ($D_6$) symmetry, that is,
\begin{equation*}
    \Vcal(\xi,x,y) = \Vcal(\xi,x+\tfrac{2\pi}{k_m^*},y) = \Vcal(\xi,x-\tfrac{\pi}{k_m^*},y+\tfrac{\sqrt{3}\pi}{k_m^*}) = \Vcal(\xi,x-\tfrac{\pi}{k_m^*},y-\tfrac{\sqrt{3}\pi}{k_m^*}).
\end{equation*}
We call $\Vcal$ a pattern interface or modulating front describing an invasion of the stationary pattern $\Ucal_2$ by the stationary pattern $\Ucal_1$ if $\Vcal$ has the asymptotic boundary conditions
\begin{equation*}
    \lim_{\xi \rightarrow -\infty} \Vcal(\xi,\p) = \Ucal_1(\p), \text{ and } \lim_{\xi \rightarrow +\infty} \Vcal(\xi,\p) = \Ucal_2(\p).
\end{equation*}

To construct these solutions, we follow the strategy of \cite{doelman2003}. However, we face new difficulties due to the quasilinearity of \eqref{eq:thin-film-equation} and its conservation law structure as outlined below. The main idea is to insert the ansatz \eqref{eq:pattern-interface-ansatz} into the thin-film system \eqref{eq:thin-film-equation} and to reformulate the resulting systems as a spatial dynamics system by treating the spatial variable $\xi$ as time. Compared to \cite{doelman2003} this step is more complicated for $\eqref{eq:thin-film-equation}$ since the problem is quasilinear. This requires an extra step to extract the highest-order derivatives and results in an equation of the form
\begin{equation*}
    \partial_\xi \Wcal = \Lfrak_{M_m^*} \Wcal + \Rfrak(\Wcal;M)
\end{equation*}
with $\Wcal = (\Vcal_1,\partial_{\xi}\Vcal_1,\partial_{\xi}^2\Vcal_1,\partial_{\xi}^3\Vcal_1,\Vcal_2,\partial_{\xi}\Vcal_2)$. This approach leads to a, typically ill-posed, infinite-dimensional dynamical system. However, it turns out that the problem can be reduced to a finite-dimensional problem via a center manifold reduction. This requires a spectral analysis of the linear operator $\Lfrak_{M_m^*}$ of the infinite-dimensional spatial dynamics system to show that the spectrum splits into a central part $\sigma_0 \subset i\R$, containing finitely many purely imaginary eigenvalues with finite multiplicity, and a hyperbolic part $\sigma_h$, which is separated from the imaginary axis by a spectral gap. The linear operator can be written as the direct sum of complex-valued $6\times 6$ matrices. However, in contrast to \cite{doelman2003}, the two equations in \eqref{eq:thin-film-equation} are fully coupled on the linear level, and therefore a precise spectral analysis of these $6\times6$ matrices is infeasible. Instead, we obtain the necessary spectral properties of the linear operator by relating the spatial dynamics system back to the physical system \eqref{eq:thin-film-equation} after inserting the modulating front ansatz \eqref{eq:pattern-interface-ansatz}. Using this method, which has been used for example by \cite{afendikov1995}, gives the desired spectral properties from the fact that the pure conduction state destabilises via a Turing instability at a critical Marangoni number $M_m^*$. In particular, we use here that the pattern interfaces are fast-moving, which means that the front velocity $c$ does not decay to zero as $M$ goes to $M_m^*$. It turns out that the construction of slow-moving modulating fronts with speed $c = \Ocal(\eps)$ is significantly more involved since there is no gap between the central part and the hyperbolic part of the spectrum at $M = M_m^*$. We refer to e.g.~\cite{hilder2020b} for the construction in the case of a phenomenological model, and we leave the construction of slow modulating fronts for the thin-film system \eqref{eq:thin-film-equation} for future research.

After establishing the spectral properties, we obtain the existence of a finite-dimensional invariant manifold $\Mcal_0 = \{\Wcal_0 + \Psib(\Wcal_0;M) : (\Wcal_0,M) \in \Ofrak\}$ through center manifold theory, see Theorem \ref{thm:center-manifold-fronts}. While \cite{doelman2003} can make this reduction directly for the spatial dynamics system, we perform the reduction for the physical system by converting the coordinates on the center manifold from the spatial dynamics system to the physical system, see \eqref{eq:Tcal-operator}. This has two advantages; first, by performing the calculations on the level of the physical system, we are able to identify the coefficients of the reduced equations in a direct manner. Second, by working directly with the physical system, we can use the conservation-law structure of the first equation in \eqref{eq:thin-film-equation}. This structure is hidden in the spatial dynamics system since the resolution of the quasilinearity requires giving up the divergence structure. However, while the conversion of the central part $\Wcal_0$ to a central part $\Vcal_0$ for the physical system is straightforward, it turns out that converting the reduction function $\Psib$ defining the center manifold to a reduction function for the physical system $\Tcal \Psib$, see \eqref{eq:Tcal-operator}, is subtle since we need to ensure that the contributions of $\Tcal\Psib$ in the relevant eigenspaces spanned by $\phib_+(\k_j)$ and $\phib_+(0)$ are of higher order, see Lemma \ref{lem:reduction-expansion}. While this is clear for the $\phib_+(\k_j)$-mode, establishing this for the $\phib_+(0)$-mode requires the use of the conservation-law structure. Collecting these results and defining the scaling parameter $\eps$ by $\eps^2 M_0 = M - M_m^*$, we find that the solutions on the center manifold have the form
\begin{equation*}
    \begin{pmatrix}
        h \\ \theta
    \end{pmatrix} = \begin{pmatrix}
        1 \\ 1
    \end{pmatrix} + \eps\sum_{j = 1}^N A_j(\Xi) e^{i \k_j \cdot \p} \phib_+(\k_j)  + c.c. +  \eps^2 A_0(\Xi) \phib_+(0) + h.o.t.,
\end{equation*}
where the amplitude modulations $A_j(\Xi) \in \C$, $j = 1,\dots,N$ and $A_0(\Xi) \in \R$ depend on the slow spatial variable $\Xi = \eps^2 (x-ct)$ and are, to leading order, solutions of the evolution system 
\begin{equation*}
    \begin{split}
        0 &= c\partial_\Xi A_1 + M_0 \kappa A_1 + K_c A_0 A_1 + K_0 A_1 \abs{A_1}^2 + K_1 A_1 \abs{A_2}^2, \\
        0 & = c \partial_\Xi A_2 + M_0 \kappa A_2 + K_c A_0 A_2 + K_0 A_2 \abs{A_2}^2 +  K_1 A_2 \abs{A_1}^2, \\
        0 & = c \partial_{\Xi} A_0
    \end{split}
\end{equation*}
for the square lattice and
\begin{equation*}
    \begin{split}
        0 &= c\partial_\Xi A_1 + M_0 \kappa A_1 + K_c A_0 A_1 + \dfrac{N}{\eps}\bar{A}_2\bar{A}_3 + K_0 A_1 \abs{A_1}^2 + K_2 A_1 (\abs{A_2}^2 + \abs{A_3}^2), \\
        0 &= c\partial_\Xi A_2 + M_0 \kappa A_2 + K_c A_0 A_2 + \dfrac{N}{\eps}\bar{A}_1\bar{A}_3  + K_0 A_2 \abs{A_2}^2 +  K_2 A_2 (\abs{A_1}^2 + \abs{A_3}^2), \\
        0 &= c\partial_\Xi A_3 + M_0 \kappa A_3 + K_c A_0 A_3 + \dfrac{N}{\eps}\bar{A}_1\bar{A}_2  + K_0 A_3 \abs{A_3}^2 + K_2 A_3 (\abs{A_1}^2 + \abs{A_2}^2), \\
        0 &= c\partial_{\Xi} A_0
    \end{split}
\end{equation*}
on the hexagonal lattice. We point out that the coefficients are the same as in the reduced equation in the construction of the stationary patterns. As mentioned above, this system also rigorously recovers the ODE system, which can be derived from the formal amplitude equations for fast-travelling fronts, see also Section \ref{sec:dynamics-amplitude-equations}.

After deriving the reduced systems and setting $A_0$ to zero, it remains to find persistent heteroclinic orbits in the reduced system, which correspond to modulating front solutions in the full system \eqref{eq:thin-film-equation}. Here, we restrict to appropriate invariant subspaces and analyse the dynamics of the leading-order system using phase-plane analysis. From this phase-plane analysis and after taking care of higher-order terms in the $A_0$-equation we also obtain persistence of the heteroclinic orbits. We summarise the result for the square lattice in the following theorem, see also Theorem \ref{thm:modulating-fronts-square} and Figures \ref{fig:phase-diags-square1} and \ref{fig:phase-diags-square2} for the phase-plane diagrams and Figure \ref{fig:Modfronts-square} for visualisations of some of the interfaces.

\begin{theorem}[Modulating fronts on square lattice]
    For $\eps > 0$ sufficiently small, the thin-film system \eqref{eq:thin-film-equation} has modulating front solutions on the square lattice, which describe, for $M_0 < 0$,
    \begin{itemize}
        \item an invasion of the roll waves by the pure conduction state if $K_0 > 0$;
        \item an invasion of the square patterns by the pure conduction state if $K_0 + K_1 >0$;
        \item an invasion of the roll waves by the square patterns if $K_0 >0$, $K_0 + K_1 >0$ and $K_1 - K_0 > 0$;
        \item an invasion of the square patterns by the roll waves if $K_0 > 0$, $K_0 + K_1 >0$ and $K_1 - K_0 < 0$;
    \end{itemize}
    and, for $M_0 > 0$,
    \begin{itemize}
        \item an invasion of the pure conduction state by roll waves if $K_0 < 0$;
        \item an invasion of the pure conduction state by square patterns if $K_0 + K_1  < 0$;
        \item an invasion of the roll waves by the square patterns if $K_0 < 0$, $K_0 + K_1 < 0$ and $K_1 - K_0 < 0$;
        \item an invasion of the square patterns by the roll waves if $K_0 < 0$, $K_0 + K_1 < 0$ and $K_1 - K_0 > 0$.
    \end{itemize}
\end{theorem}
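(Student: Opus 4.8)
The plan is to reduce the claim to a planar phase-plane analysis of the leading-order reduced system on the center manifold $\Mcal_0$ furnished by Theorem~\ref{thm:center-manifold-fronts}, and then to promote the resulting connecting orbits to genuine heteroclinic orbits of the full reduced flow. First I would exploit the third equation $0 = c\partial_\Xi A_0$: since $c>0$, the leading-order mean mode $A_0$ is constant along orbits, and because all relevant asymptotic states (pure conduction, rolls, squares) carry $A_0 = 0$, the connecting orbits live in the invariant slice $\{A_0 = 0\}$. The conservation-law structure encoded in Lemma~\ref{lem:reduction-expansion} ensures that the neglected higher-order contributions to the $A_0$-equation appear essentially as a total $\Xi$-derivative, so that $A_0$ can be integrated out and slaved to $(A_1,A_2)$ at higher order; this is exactly the point where the divergence structure of the first equation in \eqref{eq:thin-film-equation} is indispensable. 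Using the reflection symmetries inherited through the equivariant center manifold reduction, I would then restrict to the real invariant subspace $A_1,A_2\in\R$, on which the leading-order system becomes the gradient flow
\begin{equation*}
    c\,\partial_\Xi A_j = -\partial_{A_j} V(A_1,A_2), \qquad V = \tfrac{M_0\kappa}{2}(A_1^2+A_2^2) + \tfrac{K_0}{4}(A_1^4+A_2^4) + \tfrac{K_1}{2}A_1^2 A_2^2 .
\end{equation*}

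Next I would catalogue the equilibria: the pure conduction state $(0,0)$, the roll waves $(\pm A_r,0)$, $(0,\pm A_r)$ with $A_r^2 = -M_0\kappa/K_0$ (present precisely when $M_0K_0<0$), and the square patterns $(\pm A_s,\pm A_s)$ with $A_s^2 = -M_0\kappa/(K_0+K_1)$ (present precisely when $M_0(K_0+K_1)<0$), matching the existence thresholds in the statement since $\kappa>0$. Linearising amounts to computing the Hessian of $V$: at $(0,0)$ it equals $M_0\kappa\,\Id$, at a roll it has eigenvalues $-2M_0\kappa$ and $M_0\kappa(K_0-K_1)/K_0$, and at a square it has eigenvalues $-2M_0\kappa$ and $-2M_0\kappa(K_0-K_1)/(K_0+K_1)$. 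Reading the signs against $M_0$, $K_0$, $K_0+K_1$ and $K_1-K_0$ classifies each equilibrium in the $\Xi$-flow as a source (Hessian negative definite), a sink (positive definite) or a saddle; in particular $(0,0)$ is a source for $M_0<0$ and a sink for $M_0>0$, which already fixes the direction of every conduction-to-pattern front.

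The heteroclinics then follow from the gradient structure. The conduction–roll and conduction–square connections lie in the invariant lines $\{A_2=0\}$ and $\{A_1=A_2\}$, respectively, where the flow is a scalar monotone gradient system and the orbit joining the two equilibria exists whenever both are present. The roll–square connection is a saddle–sink connection in the plane: the one-dimensional unstable manifold of the saddle descends, along decreasing $V$, into the basin of the neighbouring sink, and the sign of $K_1-K_0$ decides which of rolls and squares is the saddle and which the sink, hence which invades which. Orienting each orbit by the sign of $c$ — the state at $\Xi=-\infty$ is the invader $\Ucal_1$ and the state at $\Xi=+\infty$ the invaded $\Ucal_2$ — reproduces each bullet of the theorem for both $M_0<0$ and $M_0>0$.

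Finally I would address persistence. Under the strict sign conditions (in particular $K_1-K_0\neq 0$) all participating equilibria are hyperbolic, so they persist for small $\eps$ by the implicit function theorem. The connecting orbits are robust: the source–sink and symmetry-confined connections survive because the invariant subspaces are preserved by the $D_4$-equivariance and the scalar connections within them are structurally stable, while the roll–square saddle–sink connection persists by transversality of the relevant one-dimensional invariant manifolds. The main obstacle is the marginal mean mode: at leading order the $A_0$-direction is neutral, and the higher-order terms in the $A_0$-equation — including the potentially non-elliptic quadratic flux flagged after the amplitude equations — must be shown not to break the connections. This is precisely where the conservation-law structure re-enters, allowing $A_0$ to be eliminated as a higher-order slave variable so that the full reduced flow is a genuine small perturbation of the planar gradient system, whose transverse heteroclinics then persist and yield the asserted modulating fronts of \eqref{eq:thin-film-equation}.
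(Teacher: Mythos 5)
Your route is, in outline, exactly the paper's: reduce to the leading-order system on the center manifold of Theorem \ref{thm:center-manifold-fronts}, eliminate the conserved mode $A_0$ through the divergence structure (the paper does precisely this in its persistence step, integrating $0 = c\partial_\Xi A_0 + \eps^2\partial_\Xi f_{\mathrm{sq}}$ once and slaving $A_0$ by the implicit function theorem), restrict to real amplitudes by reflection symmetry, use the gradient structure $\partial_\Xi(A_1,A_2) = -\nabla\Ecal(A_1,A_2)$ to exclude periodic orbits and classify $T$, $R$, $S$ via the Hessian (your eigenvalue formulas agree with the paper's Tables \ref{tab:trivial}--\ref{tab:square}), build the conduction--pattern fronts inside the invariant lines $\{A_2=0\}$ and $\{A_1=A_2\}$, and conclude persistence from hyperbolicity and symmetry-preserved invariant subspaces. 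All of these computations are correct.

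The genuine gap is your construction of the roll--square connection for $M_0>0$. You describe it as a saddle--sink connection, with the one-dimensional unstable manifold of the saddle descending into the basin of the neighbouring sink. That mechanism exists only for $M_0<0$, where $T$ is a source and the pair $\{R,S\}$ splits into a sink and a saddle. For $M_0>0$ the only sink is $T$: by the paper's tables, $R$ and $S$ are then a source and a saddle (which is which is decided by the sign of $K_1-K_0$), and the unstable manifold of the saddle descends to $T$, not to the other pattern. The pattern-to-pattern orbit in this regime is a \emph{source-to-saddle} connection, carried by the one-dimensional \emph{stable} manifold of the saddle, and to obtain it you must argue backward in $\Xi$ — this is why the paper invokes a time-reversal argument together with the overflowing invariant ball (coercivity of $\Ecal$ via Young's inequality), which your proposal never establishes; your "descends along decreasing $V$" step also tacitly needs this trapping region to rule out escape to infinity. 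Your transversality-based persistence claim inherits the same blind spot, since it is phrased only for saddle--sink connections. Finally, note that time reversal maps the flow with parameters $(M_0,K_0,K_1)$ to the flow with $(-M_0,-K_0,-K_1)$ and exchanges invader and invaded state, so the sign of $K_1-K_0$ attached to each $M_0>0$ invasion does \emph{not} carry over unchanged from the $M_0<0$ case; your assertion that the construction "reproduces each bullet of the theorem for both $M_0<0$ and $M_0>0$" is exactly the step that cannot be taken for granted and must be checked against the source/saddle roles of $R$ and $S$ in each sign regime.
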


As in the existence of stationary, hexagonal patterns, we also restrict to a neighborhood of the parameter curve $(\beta,g)$ such that $N(\beta,g) = 0$. We summarise the result for the hexagonal lattice in the following theorem, see also Theorem \ref{thm:modulating-fronts-hex} and Figures \ref{fig:phase-diags-hex1} and \ref{fig:phase-diags-hex2} for the phase-plane diagrams and Figures \ref{fig:Modfronts-hex} and \ref{fig:Modfronts-hex-two} for visualisations of some of the interfaces.

\begin{theorem}[Modulating fronts on hexagonal lattice]
    For $\eps > 0$ sufficiently small and in a neighborhood of the curve $(\beta(g),g)$, $g\in (10,18)$, the thin-film system \eqref{eq:thin-film-equation} has modulating front solutions on the hexagonal lattice, which describe, for $M_0 < 0$,
    \begin{itemize}
        \item an invasion of up-hexagons by the pure conduction state;
        \item an invasion of up-hexagons by up-hexagons with a larger amplitude;
    \end{itemize}
    and, for $M_0 > 0$,
    \begin{itemize}
        \item an invasion of the pure conduction state by the roll waves;
        \item an invasion of the pure conduction state by up-hexagons;
        \item an invasion of roll waves by up-hexagons, if $M_0\kappa < -\tfrac{K_0 N_0^2}{(K_0 - K_2)^2}$.
    \end{itemize}
\end{theorem}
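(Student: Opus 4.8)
The plan is to follow the spatial-dynamics and center-manifold programme already set up for the fronts and then to reduce the construction of the hexagonal modulating fronts to a finite-dimensional phase-plane problem. Assuming Theorem~\ref{thm:center-manifold-fronts} and the associated reduction, the solutions on the center manifold are parametrised by the amplitudes $(A_1,A_2,A_3,A_0)$, which to leading order obey the reduced system on the hexagonal lattice displayed above. Since we work in a neighborhood of the curve $(\beta(g),g)$ on which $N(\beta,g)=0$, we use the scaling $N=\eps N_0$ with $N_0=\Ocal(1)$, so that the quadratic coefficient $N/\eps$ stays of order one; the restriction to $g\in(10,18)$ is what makes the signs of $\kappa$, $K_0$, $K_2$ compatible with the listed connections, as read off from the explicit coefficient formulas. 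The leading-order $A_0$-equation is $0=c\,\partial_\Xi A_0$, which together with the boundary conditions (convergence to stationary patterns, where $A_0\equiv0$) forces $A_0\equiv0$ at leading order. We therefore begin by setting $A_0=0$ and analysing the resulting closed $(A_1,A_2,A_3)$-system.

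First I would locate the relevant equilibria. The trivial equilibrium $A_1=A_2=A_3=0$ is pure conduction; the rolls are $A_1\in\R$, $A_2=A_3=0$ with $A_1^2=-M_0\kappa/K_0$; and the up-hexagons are $A_1=A_2=A_3=A$ with $M_0\kappa+N_0 A+(K_0+2K_2)A^2=0$, whose two roots are exactly the amplitudes $A_\pm$ of Theorem~\ref{thm:hex-patterns}. To produce heteroclinics I would restrict to the two real subspaces left invariant by the lattice symmetry. On the fully symmetric line $\{A_1=A_2=A_3=A\in\R\}$ the system collapses to the scalar ODE
\begin{equation*}
    c\,\partial_\Xi A = -\bigl(M_0\kappa A + N_0 A^2 + (K_0+2K_2)A^3\bigr),
\end{equation*}
whose monotone orbits between consecutive roots of the cubic give, according to the sign of $M_0$, the conduction--hexagon and hexagon--hexagon interfaces; the assignment of asymptotic states to $\xi\to\pm\infty$ is fixed by the sign of $c$ and the ordering and stability of the roots $0,A_-,A_+$. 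On the two-dimensional subspace $\{A_1\in\R,\ A_2=A_3\in\R\}$ the pair $(A_1,A_2)$ evolves according to a planar vector field carrying the rolls on $\{A_2=0\}$ and the hexagons on $\{A_1=A_2\}$; a phase-plane analysis there yields the roll--hexagon connection, and the inequality $M_0\kappa<-K_0N_0^2/(K_0-K_2)^2$ is precisely the condition under which these two equilibria coexist in the configuration for which the connecting orbit exists. Matching the orbits obtained in the two subspaces with the invasion statements for $M_0<0$ and $M_0>0$ reproduces the list in the theorem.

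The final and genuinely delicate step is \emph{persistence}: the leading-order heteroclinics must survive the higher-order terms of the full reduced system, including the coupling to $A_0$. Within each invariant subspace the equilibria are hyperbolic and the leading-order connections are robust (on the scalar line they are monotone orbits between a source and a sink; in the planar case the connection is between a saddle and a sink). The obstruction is that the conservation-law structure of the first equation in \eqref{eq:thin-film-equation} produces a neutral $A_0$-direction: at leading order $A_0$ is a constant of motion, so the equilibria of the full system are not hyperbolic in the $A_0$-direction and the standard robustness of transverse heteroclinics does not apply directly. I would resolve this as flagged in the introduction, using Lemma~\ref{lem:reduction-expansion}: the conservation-law structure guarantees that the feedback of $A_0$ into the $A_j$-equations and the source terms in the $A_0$-equation are of higher order, so that $A_0$ stays slaved at order $\Ocal(\eps)$ and cannot obstruct the connection. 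Combining this control of the neutral direction with normal hyperbolicity transverse to the invariant subspace and a Lyapunov--Schmidt (or implicit-function) argument along the leading-order orbit then upgrades each leading-order heteroclinic to a genuine heteroclinic of the full reduced system, which corresponds to a modulating front of \eqref{eq:thin-film-equation}.

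I expect this last persistence step to be the main obstacle: reconciling the degeneracy introduced by the conservation law — the neutral $A_0$-mode that destroys the hyperbolicity one would otherwise exploit — with the clean transverse-heteroclinic argument is the crux, exactly as it was in the construction of the stationary hexagonal patterns, and it is here that the conservation-law structure rather than mere soft perturbation theory must be used.
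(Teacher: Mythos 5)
Your skeleton is the paper's own: the same spatial-dynamics/center-manifold reduction, the same restriction to the real invariant set $\{A_1\in\R,\ A_2=A_3\in\R\}$ with the scaling $N=\eps N_0$ near the curve $N=0$, the same equilibria $T$, $R$, $H_{1,\pm}$, and the same recognition that the conserved mode $A_0$ must be slaved rather than damped. But the two steps you treat as routine are exactly where the paper has to work, and as written both have gaps. First, the roll--hexagon connection: the conduction--hexagon and hexagon--hexagon fronts do live on the one-dimensional line $\{A_1=A_2=A_3\}$ and follow from your scalar cubic ODE, but the heteroclinic from $H_{1,+}$ to $R$ does not lie in any one-dimensional invariant set ($R$'s stable direction is transverse to the roll axis), so \enquote{a phase-plane analysis there yields the roll--hexagon connection} is an assertion, not an argument. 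The paper's proof is global: since $K_0,K_2<0$ a sufficiently large ball is overflowing invariant, time reversal makes it inflowing, the Lyapunov functional $\Ecal$ of \eqref{eq:leading-order-system-fronts-hex} (with $\nabla\Ecal\cdot f=-f_1^2-2f_2^2$) excludes periodic and homoclinic orbits, and then Poincar\'e--Bendixson forces the relevant unstable manifold to limit on a fixed point, which the eigenvalue signs identify (the mixed-mode cases need a separate appendix argument). Relatedly, your reading of the inequality $M_0\kappa<-\tfrac{K_0N_0^2}{(K_0-K_2)^2}$ is incorrect: rolls and up-hexagons coexist for \emph{all} $M_0>0$ in this regime; the inequality is precisely the condition that the mixed-mode saddle $M\!\!M$ has not yet bifurcated from $R$, and once it has, it intercepts the connection (the heteroclinics then run from $H_{1,+}$ and from $R$ into $M\!\!M$ instead of from $H_{1,+}$ to $R$).

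Second, persistence. You flag the neutral $A_0$-direction correctly, but the fix is not order counting via Lemma \ref{lem:reduction-expansion} plus a Lyapunov--Schmidt argument along the orbit; what closes the argument in the paper is the structural fact that the \emph{full} $A_0$-equation on the center manifold retains divergence form,
\begin{equation*}
    0 = c\,\partial_\Xi A_0 + \eps^2\,\partial_\Xi f_{\mathrm{hex}}(A_0,A_1,A_2,A_3),
\end{equation*}
so it can be integrated once in $\Xi$, converting the neutral direction into an algebraic constraint that the implicit function theorem solves for $A_0$ in terms of $(A_1,A_2,A_3)$. Substituting back gives a genuinely closed system all of whose fixed points are hyperbolic, and no argument along the orbit is needed. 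Finally, your claim that \enquote{within each invariant subspace the connections are robust} presupposes that those subspaces survive the higher-order terms; this is not automatic and the paper derives it from symmetries preserved by the reduction (translation invariance for $\{A_2=A_3=0\}$, rotation invariance for $\{A_1=A_2=A_3\}$, and the reflection $y\mapsto -y$ for $\{A_2=A_3\}$). Without these two ingredients — the integrated conservation law and the symmetry-based persistence of the invariant sets — the final upgrade from leading-order heteroclinics to modulating fronts of \eqref{eq:thin-film-equation} does not go through.
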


Note that this is not a comprehensive list of the existing pattern interfaces, see Section \ref{sec:modulating-fronts} for more details. We also point out that since the persistent argument gives the persistence of the whole phase plane, we also find two-stage invasion fronts using these arguments. These take the form of a leading front connecting a primary state to an intermediate state, which is then followed by a second front connecting this intermediate state to a third stationary state, see Remarks \ref{rem:modulating-pattern-selection-square} and \ref{rem:modulating-pattern-selection-hex}. In particular, we point out that both fronts move with the same speed, see also Figures \ref{fig:Modfronts-square}, \ref{fig:Modfronts-hex} and \ref{fig:Modfronts-hex-two}.

\subsection{Physical background and modelling}\label{sec:modelling}

We give a brief overview of the derivation of the thin-film system \eqref{eq:thin-film-equation} from the Boussinesq--Navier--Stokes system without buoyancy. For additional details, we refer the reader to \cite{shklyaev2012}.

The dynamics of a thin liquid film on a heated plane is described by the Boussinesq--Navier--Stokes system on the domain $\{(t,x,y,z) : t>0, 0<z<h(t,x,y)\}$. Buoyancy effects are ignored since they are irrelevant for sufficiently thin films \cite{pearson1958}. The bulk dynamics is given by a coupling of the Navier--Stokes equations for velocity field $\ub = (\ub',u_3)$ and pressure $p$ with a transport-diffusion equation for the temperature $T$ and, in non-dimensionalised form, reads as
\begin{equation}\label{eq:Navier-Stokes}
    \arraycolsep=2pt
    \def\arraystretch{1.2}
    \left\{
    \begin{array}{rcl}
        \tfrac{1}{\mathrm{Pr}} \bigl(\partial_t \ub + (\ub\cdot\nabla)\ub\bigr) & = & - \nabla p + \Delta \ub - \mathrm{Ga} \e_z,  \\
        \div \ub & = & 0,   \\
        \partial_t T + (\ub\cdot \nabla) T & = & \Delta T.
    \end{array}
    \right.
\end{equation}
The non-dimensionalised boundary conditions at the free boundary $z=h(t,x,y)$ are given by a kinematic boundary condition, a stress-balance condition and a heat transfer condition
\begin{equation}\label{eq:z=h}
    \arraycolsep=2pt
    \def\arraystretch{1.2}
    \left\{
    \begin{array}{rcl}
        \partial_t h + \ub'\cdot \nabla h & = & u_3, \\
        \Sigma(p,\ub) \nb & = & \mathrm{Ca}\kappa \nb - \mathrm{Ma} \nabla T, \\
        \nb \cdot \nabla T & = & - \mathrm{Bi} T,
    \end{array}
    \right.
\end{equation}
where $\Sigma(p,\ub) = \mathrm{Pr} \tfrac{\nabla \ub + \nabla \ub^T}{2}-pI$ is the Cauchy stress tensor, $\nb$ is the unit outer normal and $\kappa = -\tfrac{1}{2}\div \nb$ is the mean curvature of the free surface. Finally, the boundary condition at the solid-fluid interface $z=0$ is given as a no-slip condition for the fluid and a condition for the heat flux through the interface
\begin{equation}\label{eq:z=0}
    \arraycolsep=2pt
    \def\arraystretch{1.2}
    \left\{
    \begin{array}{rcl}
        \ub & = & 0, \\
        \partial_z T & = & -1. \\
    \end{array}
    \right.
\end{equation}
The dimensionless parameters are given by
\begin{equation*}
\begin{split} 
    \text{Prandtl number: } & \mathrm{Pr} = \frac{\mu}{\chi}, &
    \text{Capillary number: } & \mathrm{Ca} = \frac{\sigma_0 H}{\mu \chi}, & 
    \text{Marangoni number: } & \mathrm{Ma} = \frac{\alpha T_{\mathrm{flux}} H^2}{\mu\chi},\\
    \text{Biot number: } & \mathrm{Bi} = \frac{KH}{k}, &
    \text{Galileo number: }& \mathrm{Ga} = \frac{G H^3}{\mu \chi},
\end{split}
\end{equation*}
where $\mu$ is the kinematic viscosity, $\chi$ is the thermal diffusivity, $\sigma = \sigma_0 - \alpha T$ is the temperature-dependent surface tension, $H$ is the characteristic height, $-T_{\mathrm{flux}}$ is the heat flux through the solid-fluid interface, $K$ is the heat transfer coefficient at the fluid-gas interface, $k$ is the thermal conductivity of the fluid and $G$ is a gravitational constant. We note that we assume a linear dependence of the surface tension on the temperature. A version of the thin-film system \eqref{eq:thin-film-equation} assuming a nonlinear dependence of the surface tension on the temperature has been derived in \cite{mikishev2021}.

The pure conduction state
\begin{equation*}
    \bar{h} = 1, \quad \bar{T} = -z + \dfrac{1 + \mathrm{Bi}}{\mathrm{Bi}}, \quad \bar{p} = \mathrm{Ga}(1-z), \quad \bar{\ub} = 0
\end{equation*}
is a stationary solution to the system \eqref{eq:Navier-Stokes}-\eqref{eq:z=0}. 

We now make a long-wave ansatz by rescaling the equations with respect to the aspect ratio $\eps$ of the film
\begin{equation*}
    X = \eps x, \quad Y = \eps y, \quad Z = z, \quad \tau = \eps^2 t, \quad \Ubf' = \dfrac{\ub'}{\eps},\quad  U_3 = \dfrac{u_3}{\eps^2},
\end{equation*}
where $0 < \eps \ll 1$ is a small parameter given by the ratio of $H$ to a typical horizontal lengthscale. Then, we expand the unknowns in $\eps^2$ and write
\begin{equation*}
    \Ubf' = \Ubf_0' + \Ocal(\eps^2), \quad T = \bar{T} + \tilde{\theta} + \eps^2 T_1 + \Ocal(\eps^4), \quad U_3 = U_{3,0} + \Ocal(\eps^2).
\end{equation*}
Similarly for $p$. Additionally, we assume that we are in the regime of large Capillary numbers and small Biot numbers. Specifically, we write
\begin{equation*}
    \mathrm{Ca} = \eps^{-2} C, \quad \mathrm{Bi} = \eps^2 \beta,
\end{equation*}
where $C$ and $\beta$ are $\Ocal(1)$.

Inserting the expansions, equating different orders of $\eps$ to zero and following the standard arguments of a lubrication approximation, see \cite{bruell2024}, yields that $\tilde{\theta}$ is independent of $Z$ and an evolution equation for the fluid height $h$
\begin{equation*}
    \partial_\tau h + \div\bigl(\frac{h^3}{3}(C \nabla\Delta h-\mathrm{Ga}\nabla h) + \mathrm{Ma}\frac{h^2}{2} \nabla(h-\tilde{\theta})\bigr) = 0.
\end{equation*}

In order to obtain a closed system, we still need an equation for the unknown $\tilde{\theta} = \tilde{\theta}(\tau,X,Y)$. For this, we need to consider the expansion for $T$ up to order $\eps^2$. Inserting the expansions into the transport-diffusion equation and considering only the terms of order $\eps^2$, we obtain
\begin{equation*}
    \partial_{\tau}\tilde{\theta} + \Ubf'_0\cdot \nabla \tilde{\theta} - U_{3,0} = \partial_Z^2 T_1 + \Delta \tilde{\theta}
\end{equation*}
in the fluid domain. On the fluid-solid interface, we obtain the boundary condition
\begin{equation*}
    \partial_z T_{1} = 0.
\end{equation*}
For the boundary condition on the fluid-gas interface, we recall the rescaling of the Biot number $B = \eps^2 \beta$ and that $\nb = (1+\eps^2|\nabla h|^2)^{-1/2}(-\eps^2\nabla h,1)$ in the rescaled variables and obtain
\begin{equation*}
    -\eps^2\nabla h \cdot \nabla \tilde{\theta} + \eps^2 \partial_Z T_1 - 1 = \bigl(-\eps^2\beta(\tilde{\theta}-h) - 1 - \eps^2 \beta\bigr) \sqrt{1+\eps^2|\nabla h|^2} + \Ocal(\eps^4).
\end{equation*}
Taylor expanding $\sqrt{1+\eps^2|\nabla h|^2}$ and considering only terms of order $\eps^2$, we obtain the boundary condition at $Z=h(\tau,X,Y)$ given by
\begin{equation*}
    \partial_Z T_1 = \nabla h \cdot \nabla \tilde{\theta} - \frac{1}{2}|\nabla h|^2 - \beta(\tilde{\theta}-h) - \beta.
\end{equation*}
To remove the $-\beta$, we introduce $\theta = \tilde{\theta} - 1$. We then arrive at the following system
\begin{equation}\label{eq:Temperature-epseps}
    \arraycolsep=2pt
    \def\arraystretch{1.2}
    \left\{
    \begin{array}{rcll}
        \partial_Z^2 T_1 & = & \partial_{\tau}\theta + \Ubf'_0\cdot \nabla \theta - U_{3,0} - \Delta \theta , & \text{for } 0 < Z < h,   \\
        \partial_Z T_1 & = & 0, & \text{at } Z=0, \\
        \partial_Z T_1 & = & \nabla h\cdot \nabla\theta - \tfrac{1}{2}|\nabla h|^2 - \beta(\theta-h), &\text{at } Z=h. \\
    \end{array}
    \right.
\end{equation}
Integrating the system \eqref{eq:Temperature-epseps} with respect to $Z$ and using the boundary conditions, we obtain the equation
\begin{equation*}
    h\partial_{\tau} \theta - \div (h\nabla\theta) + \frac{1}{2}|\nabla h|^2 + \beta(\theta-h) - j\cdot \nabla(\theta-h) + \div\bigl(\frac{h^4}{8}(C\nabla \Delta h-\mathrm{Ga}\nabla h) + \mathrm{Ma}\frac{h^3}{6} \nabla(h-\theta)\bigr) = 0
\end{equation*}
with nonlinear flux \(j=\frac{h^3}{3}(C\nabla \Delta h-\mathrm{Ga}\nabla h) + \mathrm{Ma}\frac{h^2}{2} \nabla(h-\theta)\). We note that by rescaling, we can without loss of generality set $C = 1$. Thus, using the notation $X = x$, $Y = y$, $\mathrm{Ma} = M$ and $\mathrm{Ga} = g$, we obtain \eqref{eq:thin-film-equation}.

\subsection{Related results}

While there are only a few rigorous mathematical results on the dynamics of thermocapillary thin-film models, see e.g.~\cite{bruell2024} for a recent analysis of stationary periodic solutions and film-rupture in a purely deformational model, there is a vast literature on physical and numerical results for the thin-film model \eqref{eq:thin-film-equation}, as well as rigorous mathematical results for toy models and fluid models without a free surface.

The model \eqref{eq:thin-film-equation} was initially derived in \cite{shklyaev2012}. For a recent overview of other related long-wave models, we refer to the book \cite{shklyaev2017}. The model was then subsequently analysed numerically  \cite{samoilova2021,samoilova2021a}. Additionally, the dynamics in the gas phase was incorporated in \cite{samoilova2015}, which leads to a different critical Marangoni number. Finally, the model was also adapted to model thermocapillary thin films with a nonlinear feedback control \cite{samoilova2019,samoilova2020,samoilova2023}  as well as a nonlinear dependence of the surface tension on the temperature \cite{mikishev2021}. For additional references on the modelling of long-wave phenomena, we refer to \cite{davis1987,oron1997}.

In addition to the experimental results cited above, the dynamics of thermocapillary thin films close to a short-wave instability was also studied numerically. We specifically point out the results in \cite{bestehorn1996} on square patterns in the full Bénard–Marangoni problem, and the fronts between hexagons and squares in a generalised Swift–Hohenberg equation found in \cite{kubstrup1996}.

The mathematical literature for the dynamics of pattern-forming systems close to a short-wave instability, which is related to the present manuscript, can be split into three parts: amplitude equations, planar patterns, and modulating fronts and pattern interfaces. 

There is extensive literature on amplitude equations for pattern-forming systems, although almost exclusively for spatially one-dimensional settings, and we refer to the book \cite{schneider2017} for a recent overview. For the specific case of a (conserved) Turing instability appearing in the thin-film model \eqref{eq:thin-film-equation}, the amplitude equations were formally derived in \cite{matthews2000}. A rigorous justification of these amplitude equations was then carried out for toy models in \cite{häcker2011,schneider2013a}, and in \cite{zimmermann2014} for the full two-dimensional Bénard–Marangoni problem. Related results pertaining to corresponding attractivity and global existence properties were obtained in \cite{dull2016,schneider2017a}.

The analysis of planar patterns arising from a short-wave instability has also received much attention. The bifurcation of rolls and hexagons close to a Turing instability has been treated in \cite{buzano1983} and the pattern selection has been discussed in \cite{golubitsky1984a}. Furthermore, the bifurcations appearing close to a Turing–Hopf instability, where the critical spectral curves have a nontrivial imaginary part at the critical bifurcation parameter and critical Fourier wave numbers, were treated in \cite{silber1991} for the case of a square lattice. Since it is impossible to provide an extensive list of all obtained results, we refer to the overview articles and books \cite{golubitsky1988,cross1993,hoyle2007} and the references therein.

Finally, we discuss results on the existence of modulating fronts, pattern interfaces and related solutions. Slow-moving modulating fronts in problems with one unbounded spatial dimension were first obtained for the cubic Swift–Hohenberg equation in \cite{collet1986,eckmann1991}. This result was then extended to the Taylor–Couette problem in \cite{hărăguş–courcelle1999} and to nonlocal reaction-diffusion equations in \cite{faye2015}. Furthermore, the case of an additional conservation law was studied in \cite{hilder2020b} close to a Turing instability and in \cite{hilder2022} close to a Turing–Hopf instability. We point out that modulating fronts are related to defect solutions, which describe solutions which are time-periodic in a co-moving frame, see e.g.~\cite{sandstede2004,siemer2020}. 

For models with two unbounded spatial directions, which is treated in this manuscript, modulating fronts have only been constructed so far for a toy model, see \cite{doelman2003}. However, there are also results on stationary pattern interfaces close to a Turing instability, which are used to describe domain walls in convective fluids such as the Rayleigh–Bénard problem, see \cite{haragus2007,haragus2012,haragus2021,haragus2022,iooss2023,iooss2024,iooss2024a}.

While existing results focus on planar fronts, experimental results suggest that the formation of polygonal patterns is initiated at point defects, where radial fronts form. Therefore, their study is related to multi-dimensional localised patterns, which have attracted much attention recently, see the overview \cite{bramburger2024}. In particular, we point to \cite{hill2024} for a formal treatment of radial amplitude equations and \cite{groves2024} for a functional analytic setup, which could be used as a starting point to lift techniques such as center manifold reduction to the radial case.

\vspace{-12pt}
\subsection{Outline of the paper}

The structure of the present paper is as follows: in Section \ref{sec:linear-analysis}, we perform a linear stability analysis of the pure conduction state $(\bar{h},\bar{\theta}) = (1,1)$ and establish regimes where the Turing or Turing–Hopf instability occurs.

In Section \ref{sec:amplitude-equation}, we formally derive the amplitude equations for the square and hexagonal lattices using formal multiscale expansions. We also briefly discuss the dynamics of the obtained amplitude equations and formally obtain the reduced equations that will later reappear in the rigorous analysis of the planar patterns and the modulating fronts.

In Section \ref{sec:stationary}, we study the planar patterns that bifurcate from the purely conduction state close to a Turing instability. Therefore, using center manifold reduction, we first derive reduced equations on the square and hexagonal lattice for the central modes. We then solve these equations and prove the persistence to establish the existence of bifurcation curves for square and hexagonal patterns.

Finally, in Section \ref{sec:modulating-fronts}, we construct modulating travelling front solutions to \eqref{eq:thin-film-equation}. For this, we insert the modulating front ansatz into the thin-film system \eqref{eq:thin-film-equation} and reformulate the resulting system as a spatial dynamics problem in the spatial variable $\xi = x-ct$ with $\x = (x,y)$. We then study the spectral properties of the linearisation and perform a center manifold reduction. Next, we derive the reduced equations on the center manifold and analyse them using phase-plane analysis to obtain heteroclinic orbits. Finally, we show their persistence which establishes the existence of modulating travelling planar front solutions in the thin-film system \eqref{eq:thin-film-equation}. 

Appendix \ref{app:heteroclinic} contains a proof of the existence of heteroclinic orbits using Poincaré–Bendixson. Additionally, we provide the explicit expressions for the coefficients in the reduced equations in the Supplementary Material.

\section{Linear stability analysis}\label{sec:linear-analysis}

To understand the dynamics of \eqref{eq:thin-film-equation} close to the pure conduction state, the first step is to study its linear stability. Therefore, we linearise \eqref{eq:thin-film-equation} about \((\bar{h},\bar{\theta}) = (1,1)\) and obtain the linear operator
\begin{equation*}
    \Lcal_M  = \begin{pmatrix} -\frac{1}{3}\Delta^2 + \bigl(\frac{g}{3} - \frac{M}{2}\bigr)\Delta & \frac{M}{2} \Delta \\
    -\frac{1}{8}\Delta^2  + \bigl(\frac{g}{8} - \frac{M}{6}\bigr)\Delta + \beta & \bigl(1+\frac{M}{6}\bigr)\Delta - \beta
    \end{pmatrix}.
\end{equation*}
We call the pure conduction state \emph{spectrally stable} if the spectrum of $\Lcal_M$ lies in the left complex half-plane and \emph{spectrally unstable} otherwise. To identify the $L^2$-spectrum of $\Lcal_M$, we apply the Fourier transform and obtain
\begin{equation*}
    \hat{\Lcal}_M(k) = \begin{pmatrix} -\frac{1}{3}k^4 - \bigl(\frac{g}{3} - \frac{M}{2}\bigr)k^2 & -\frac{M}{2} k^2 \\
    -\frac{1}{8}k^4 - \bigl(\frac{g}{8} - \frac{M}{6}\bigr)k^2 + \beta & - \bigl(1+\frac{M}{6}\bigr)k^2 - \beta
    \end{pmatrix}.
\end{equation*}
Here, we used that due to the rotational symmetry of \eqref{eq:thin-film-equation}, $\Lcal_M$ only depends on $|\k|$, where $\k\in \R^2$ is the wave vector, and hence, we write it as a function of the wave number $k = |\k| \geq 0$.
Then, $\lambda \in \C$ is in the $L^2$-spectrum of $\Lcal_M$ if it satisfies
\begin{equation*}
    \det(\lambda I - \hat{\Lcal}_M(k)) = 0
\end{equation*}
for some $k \geq 0$, or equivalently if $U=\exp(i(\k\cdot\x - \lambda t))\phib(k)$ is a solution to the linear equation $\partial U = \Lcal_M U$, where $\phib(k)$ is an eigenvector of $\hat{\Lcal}_M(k)$. Note that we drop the explicit dependence of $\phib$ on $M$. Therefore, $\lambda$ is a root of the second-order polynomial
\begin{equation*}
    P_{k,M}(\lambda) = \lambda^2 + a_1(k,M) \lambda + a_0(k,M)
\end{equation*}
with coefficients given by
\begin{equation*}
\begin{split}
    a_1(k,M) & = \beta +\frac{1}{3} k^2
   \left(g+k^2-M+3\right),\\
    a_0(k,M) & = \frac{1}{144} \Bigl(48 \beta  k^2 (g+k^2)+k^4 \bigl(48
   (g+k^2)-M (g+k^2+72)\bigr)\Bigr).
\end{split}
\end{equation*}

The polynomial $\lambda \mapsto P_{k,M}(\lambda)$ has two complex roots (counted with multiplicity). We call the root with larger real part $\lambda_+(k;M)$ and the other $\lambda_-(k;M)$. If the real parts are the same, the roots are complex conjugated since $P_{k,M}$ has real coefficients, which is due to the reflection symmetry of \eqref{eq:thin-film-equation}. In this case, we call the root with positive imaginary part $\lambda_+(k;M)$ and the other one $\lambda_-(k;M)$.

Since we are interested in the critical values for the Marangoni number $M$ (depending on $g$ and $\beta$), where the stability of the trivial state changes, we look for roots $\lambda(k;M)$ with vanishing real part. Here, we need to distinguish between $\Im(\lambda(k;M)) = 0$ and $\Im(\lambda(k;M)) \neq 0$. If $\Im(\lambda(k;M)) = 0$, the corresponding solution to the linearised equation $\exp(i(\k\cdot\x - \lambda(k;M) t)\phib(k)$ is stationary, spatially periodic, while for $\Im(\lambda(k;M)) \neq 0$ it is a spatially periodic travelling wave. We start with the first case \(\Im(\lambda(k;M))=0\), which will result in a Turing (or monotonic) instability. We find that $\lambda(k;M) = 0$ is a root of $P_{k,M}$ if and only if $a_0(k,M) = 0$. Solving this for the Marangoni number $M$ gives
\begin{equation*}
    M_m(k) = \frac{48 (\beta+k^2)(g+k^2)}{k^2 \left(g+k^2+72\right)}.
\end{equation*}
In the second case, which corresponds to a Turing--Hopf (or oscillatory, or wave) instability, we find a root $\lambda(k;M)$ of $P_{k,M}$ with \(\Re(\lambda(k;M))=0\) and \(\Im(\lambda(k;M))\neq 0\) by setting $a_1(k,M) = 0$. This gives
\begin{equation*}
    M_o(k) = g+\frac{3\beta}{k^2}+k^2+3.
\end{equation*}
The imaginary part of the root $\lambda(k;M)$ is then given by
\begin{equation*}
    \Im(\lambda(k;M)) = \pm \sqrt{a_0(M_o(k),k)} = \pm\dfrac{k^2}{12}\sqrt{(72+g+k^2)(M_m(k)-M_o(k))}.
\end{equation*}
Note that this requires that $M_m(k) > M_o(k)$.

Next, we want to understand when the instabilities occur. For this, we introduce the notation
\begin{equation*}
    M_m^* := \inf_{k \in \R} M_m(k), \quad M_o^* := \inf_{k \in \R} M_o(k).
\end{equation*}
We now show that if $M \leq \min(M_m^*,M_o^*)$, the pure conduction state is spectrally stable. Since the $L^2$-spectrum of $\Lcal_M$ is the union of all roots $\lambda(k;M)$ of $P_{k,M}$ for all $k \geq 0$, it is sufficient to show that for each $k \geq 0$ the real part of $\lambda_\pm(k;M)$ is non-positive. First, we show that for fixed $k \neq 0$ if $M < \min(M_m(k),M_o(k))$, then both roots $\lambda_\pm(k;M)$ have strictly negative real part. To do this, we use the Routh--Hurwitz criterion, which states that the roots of $P_{k,M}$ have negative real part if and only if $a_0(k,M) > 0$ and $a_1(k,M) > 0$. We note that for fixed $k\neq 0$ both $a_0$ and $a_1$ are strictly decreasing in $M$. Since $a_0(M_m(k),k) = 0$ and $a_1(M_o(k),k) = 0$ by construction of $M_m(k)$ and $M_o(k)$, respectively, this shows that $a_0(k,M) > 0$ and $a_1(k,M) > 0$ as $M < \min(M_m(k),M_o(k))$. Therefore, both roots have negative real part for $k \neq 0$. For $k = 0$ the roots are explicitly given by $\lambda_1(0) = 0$ and $\lambda_2(0) = -\beta < 0$. Second, if $M = \min(M_m^*,M_o^*)$ it follows $\Re \lambda_{\pm}(k)\leq 0$ for all $k\geq 0$ as $\lambda_{\pm}(k)$ continuously depends on $M$.

Next, we will see that for $M>\min(M_m^*,M_o^*)$ the pure conduction state becomes spectrally unstable. This also raises the question of which instability mechanism, i.e. monotonic or oscillatory instability, occurs first and if this instability occurs at a finite wave number. To answer the latter question, note that $k\mapsto M_m(k)$ has a minimiser if and only if $0<\beta < 72$ and $g>0$.  Indeed, it holds
\begin{equation*}
    \partial_k M_m(k) = -\frac{96 \left(\beta  g (g+72)+2 \beta  g k^2+(\beta -72) k^4\right)}{k^3
\left(g+k^2+72\right)^2}
\end{equation*}
and hence, $\partial_k M_m(k) < 0$ for all $k\geq 0$ if $\beta \geq 72$. Therefore, $M_m(k)$ is monotonically decreasing for $\beta \geq 72$ and has no minimiser. In particular, in this case $M_m^* = \lim_{k \rightarrow \infty} M_m(k) = 48$, which is well-known in the physics literature, see e.g.~\cite{pearson1958,shklyaev2012,samoilova2014}. We note, however, that for $\beta > 72$ the long-wave approximation does not reflect the behavior of the full model anymore and is thus not applicable \cite{samoilova2014}.
In contrast, if $0<\beta < 72$, $M_m(k)$ attains its minimal value at a finite wave number $k_m^*>0$ given by
\begin{equation}\label{eq:km}
    (k_m^*)^2 = \frac{\beta  g+6 \sqrt{2} \sqrt{\beta  g (-\beta +g+72)}}{72 - \beta}.
\end{equation} 
Meanwhile, for every \(\beta>0\) and \(g>0\), \(M_o(k)\) has a minimiser \(k_o^*>0\) given by
\begin{equation}\label{eq:ko}
    (k_o^*)^2 = \sqrt{3\beta}.
\end{equation}

We now discuss the spectral situation at the critical Marangoni number $M^* := \min(M_m^*,M_o^*)$, where the pure conduction state destabilises. We distinguish two cases:
\begin{enumerate}[label  = (\arabic*)]
    \item the monotonic case:  $M_m^* < M_o^*$;
    \item the oscillatory case:  $M_o^* < M_m^*$.
\end{enumerate}

\begin{remark}
    The case $M_m^* = M_o^*$, in which both instabilities occur at the same critical Marangoni number, is realised for a one-dimensional sub-manifold of the parameters $\beta$ and $g$. If additionally the instabilities occur at resonant wave numbers $k_m^*$ and $k_o^*$, this scenario can lead to the bifurcation of complex waves. In the case without an additional conservation law, this occurs, for example, in the Taylor-Couette problem, and we refer to \cite{chossat1994} for more details.
\end{remark}

To simplify notation, we then define parameter regimes
\begin{equation*}
    \begin{split}
        \Omega_m &:= \{(\beta,g) \in \R^2_+ \,:\, M_m^* < M_o^*, \ \beta < 72\}, \\
        \Omega_o &:= \{(\beta,g) \in \R^2_+ \,:\, M_o^* < M_m^*\},
    \end{split}
\end{equation*}
see Figure \ref{fig:spectral-regions}.

We now show that if $(\beta,g) \in \Omega_m$, then the pure conduction state destabilises via a monotonic instability, while for $(\beta, g) \in \Omega_o$ an oscillatory instability occurs. We start by discussing the monotonic case.

\begin{proposition}\label{prop:linear-monotonic}
    Let $(\beta,g) \in \Omega_m$. Then, at
    \begin{equation*}
        M = M_m^* = \frac{48 \left(g (72-\beta +g)+12 \left(6 \beta +\sqrt{2} \sqrt{\beta  g (72-\beta+g)}\right)\right)}{(g+72)^2},
    \end{equation*} there exists a wave number $k_m^* > 0$ given by \eqref{eq:km} such that the roots $\lambda_\pm(k;M_m^*)$ of $P_{k,M_m^*}$ satisfy the following conditions, see Figure \ref{fig:dispersion-monotonic}.
    \begin{enumerate}[label=(\arabic*)]
        \item\label{it:linear-monotonic-1} \(\Re(\lambda_+(k;M_m^*)) \leq 0\) and \(\Re(\lambda_-(k;M_m^*)) < 0\) for all \(k \geq 0\);
        \item\label{it:linear-monotonic-2} $\lambda_+(k;M_m^*) = 0$ if and only if $k \in \{0, k_m^*\}$;
        \item\label{it:linear-monotonic-3} $\partial_k \lambda_+(k;M_m^*) = 0$ at $k \in \{0,k_m^*\}$;
        \item\label{it:linear-monotonic-4} $\partial_k^2 \lambda_+(k;M_m^*) < 0$ at $k \in \{0, k_m^*\}$;
        \item\label{it:linear-monotonic-5} $\partial_M\lambda_+(k_m^*;M_m^*) =: \kappa >0$.
    \end{enumerate}
    That is, the pure conduction state destabilises via a \emph{Turing instability} at wave vectors \(\k\) with \(|\k| = k_m^*\). Additionally, the curves $\lambda_\pm(k;M_m^*)$ are eigenvalues of $\hat{\Lcal}_{M_m^*}(k)$ with eigenvectors $\phib_\pm(k)$.
\end{proposition}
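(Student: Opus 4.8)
The plan is to work entirely with the characteristic polynomial $P_{k,M}(\lambda) = \lambda^2 + a_1(k,M)\lambda + a_0(k,M)$, whose roots are the eigenvalues of $\hat{\Lcal}_M(k)$, and to exploit that $a_0$ is affine in $M$, so that $a_0(k,M) = \tfrac{1}{144}k^4(g+k^2+72)\,(M_m(k)-M)$, while $a_1(k,M) = \beta + \tfrac13 k^2(g+k^2+3-M)$ is likewise affine and strictly decreasing in $M$. Since $(\beta,g)\in\Omega_m$ gives $M_m^* < M_o^* \le M_o(k)$ for every $k$, and $a_1(k,M_o(k)) = 0$, monotonicity yields $a_1(k,M_m^*) > 0$ for all $k \ge 0$ (with $a_1(0,M_m^*) = \beta$ directly). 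Likewise $M_m(k) \ge M_m^*$ with equality only at the unique minimiser $k_m^*$, so $a_0(k,M_m^*) \ge 0$ with equality exactly at $k \in \{0,k_m^*\}$. Parts \ref{it:linear-monotonic-1} and \ref{it:linear-monotonic-2} then follow from the Routh–Hurwitz criterion already invoked in the excerpt: where $a_0 > 0$ both roots lie strictly in the left half-plane, and at the two exceptional wave numbers $P_{k,M_m^*}(\lambda) = \lambda(\lambda + a_1)$ has roots $0$ and $-a_1 < 0$, forcing $\lambda_+ = 0$ and $\lambda_- = -a_1 < 0$. In particular $\lambda_+(k;M_m^*) = 0 \iff a_0(k,M_m^*) = 0 \iff k \in \{0,k_m^*\}$, which is \ref{it:linear-monotonic-2}.

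For the local behaviour near $k_m^*$, note that $\lambda = 0$ is a \emph{simple} root of $P_{k_m^*,M_m^*}$ (since $\lambda_- = -a_1 \ne 0$ makes $\partial_\lambda P = a_1 \ne 0$ there), and the roots are real near $k_m^*$ because $a_0$ is small while $a_1$ stays bounded away from $0$. Hence $\lambda_+$ is a smooth branch, and implicit differentiation of $P_{k,M_m^*}(\lambda_+(k)) = 0$ gives at $\lambda_+ = 0$ the identities $\lambda_+'(k_m^*) = -\partial_k a_0/a_1$ and $\lambda_+''(k_m^*) = -\partial_k^2 a_0/a_1$. Because $M_m(k) - M_m^*$ attains a nondegenerate zero minimum at $k_m^*$, the factorisation above shows $a_0(\cdot,M_m^*)$ vanishes to second order there: $\partial_k a_0(k_m^*,M_m^*) = 0$ and $\partial_k^2 a_0(k_m^*,M_m^*) = \tfrac{1}{144}(k_m^*)^4(g+(k_m^*)^2+72)\,M_m''(k_m^*) > 0$. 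Combined with $a_1(k_m^*,M_m^*) > 0$, this yields \ref{it:linear-monotonic-3} and \ref{it:linear-monotonic-4} at $k_m^*$.

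At $k = 0$ I would instead use that the symbol depends on $k$ only through $k^2$, so $\lambda_+$ is an even function of $k$ and $\partial_k\lambda_+(0) = 0$ automatically. Expanding, $a_0(k,M_m^*) = \tfrac{\beta g}{3}k^2 + \Ocal(k^4)$ and $\lambda_-(0) = -\beta$, whence $\lambda_+ = a_0/\lambda_-$ expands as $-\tfrac{g}{3}k^2 + \Ocal(k^4)$, giving $\partial_k^2\lambda_+(0) = -\tfrac{2g}{3} < 0$ and completing \ref{it:linear-monotonic-3}–\ref{it:linear-monotonic-4}. For \ref{it:linear-monotonic-5} I differentiate $P_{k_m^*,M}(\lambda_+) = 0$ in $M$ at $(k_m^*,M_m^*)$; at $\lambda_+ = 0$ this gives $\kappa = \partial_M\lambda_+ = -\partial_M a_0/a_1 = \tfrac{1}{144}(k_m^*)^4(g+(k_m^*)^2+72)/a_1(k_m^*,M_m^*) > 0$. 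The final eigenvector assertion is immediate: $\lambda_\pm$ are by construction the eigenvalues of $\hat{\Lcal}_{M_m^*}(k)$ and carry eigenvectors $\phib_\pm(k)$, smooth wherever the roots are simple.

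The main obstacle is the nondegeneracy $M_m''(k_m^*) > 0$ (and the uniqueness of $k_m^*$), on which \ref{it:linear-monotonic-4} rests. I would extract it from the explicit derivative recorded before the proposition: the numerator of $-\partial_k M_m$ is proportional to the quadratic $\beta g(g+72) + 2\beta g k^2 + (\beta-72)k^4$ in $k^2$, whose constant term is positive and whose leading coefficient is negative for $\beta < 72$; hence it has a single positive root, namely $(k_m^*)^2$, across which $\partial_k M_m$ changes sign from negative to positive. This simple sign change forces $M_m''(k_m^*) > 0$ and, together with $M_m(k)\to\infty$ as $k\to 0^+$ and $M_m(k)\to 48$ as $k\to\infty$, identifies $k_m^*$ as the unique global minimiser, closing the one genuinely quantitative gap in the argument.
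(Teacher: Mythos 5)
Your proposal is correct, and its skeleton (Routh--Hurwitz plus monotonicity of $a_0,a_1$ in $M$ for \ref{it:linear-monotonic-1}--\ref{it:linear-monotonic-2}, implicit differentiation of $P_{k,M}$ for \ref{it:linear-monotonic-3}--\ref{it:linear-monotonic-5}) matches the paper's proof; the genuine divergence is in how the second-order nondegeneracy behind \ref{it:linear-monotonic-4} is established. The paper proves $\partial_k^2 a_0(M_m^*,k)>0$ at $k\in\{0,k_m^*\}$ by a purely algebraic observation: $k\mapsto a_0(M_m^*,k)$ is a nonnegative sextic vanishing at $k=0,\pm k_m^*$, so each zero has even multiplicity and the polynomial must factor as $c\,k^2(k^2-(k_m^*)^2)^2$ with $c>0$, which forces strict quadratic touching at all three zeros simultaneously. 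You instead factor $a_0(k,M)=\tfrac{1}{144}k^4(g+k^2+72)\bigl(M_m(k)-M\bigr)$, reduce \ref{it:linear-monotonic-4} at $k_m^*$ to $M_m''(k_m^*)>0$, and extract that from the explicit formula for $\partial_k M_m$ (a downward parabola in $k^2$ with one simple positive root, hence a transversal sign change), handling $k=0$ by a separate Taylor expansion $\lambda_+=-\tfrac{g}{3}k^2+\Ocal(k^4)$. Your route is more computational but buys something the paper leaves partly implicit: it simultaneously re-establishes uniqueness of the minimiser $k_m^*$ and its nondegeneracy, which the paper assumes from the preceding discussion ("let $k_m^*$ be the unique, positive minimiser"). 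Conversely, the paper's factorization trick is shorter and treats $k=0$ and $k=k_m^*$ in one stroke, and its argument for \ref{it:linear-monotonic-3} (the zeros are maxima of a nonpositive smooth function, hence critical points) avoids even your mild computation $\partial_k a_0(k_m^*)=0$; your evenness argument at $k=0$ is a nice touch either way.
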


\begin{figure}[H]
    \centering
    \includegraphics[width=0.8\textwidth]{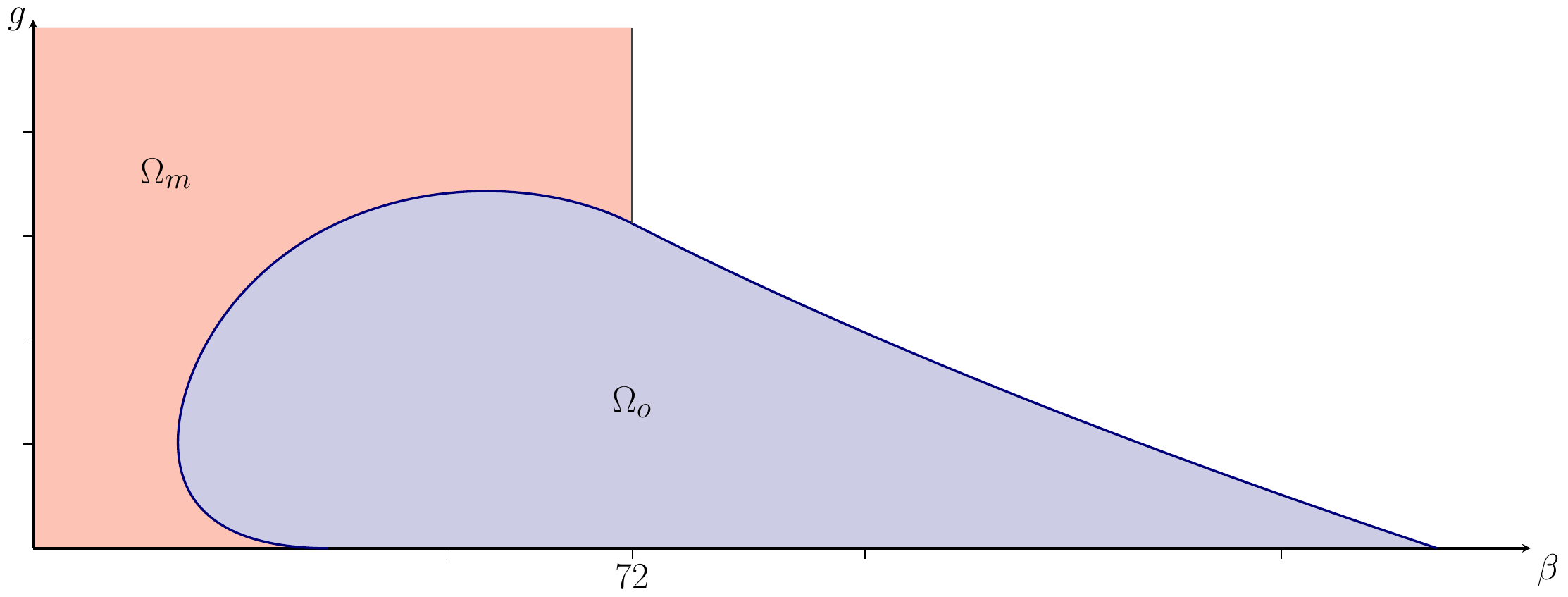}
    \caption{Depiction of the two regions $\Omega_m$ and $\Omega_o$ in parameter space.}
    \label{fig:spectral-regions}
\end{figure}

\begin{remark}\label{rem:Msmaller48}
    It holds $M_m^* < 48$ for all $(\beta,g) \in \Omega_m$, which can be obtained by explicitly checking that $\partial_{\beta}M_m^*(g,\beta)>0$ for $0<\beta<72$ and $g\geq 0$ and recalling that $M_m^*(g,72) = 48$.
\end{remark}

\begin{remark}\label{rem:imaginary-roots}
    Observe that there is \(C>0\) such that $|\Im(\lambda_{\pm}(k;M))| \leq C$ for all $k>0$. Indeed, this follows from $\lambda_\pm(k;M) = \frac{-a_1(k,M) \pm \sqrt{a_1(k,M)^2 - 4a_0(k,M)}}{2}$ and the asymptotics $a_1(k,M)\sim k^4$ and $a_0(k,M)\sim k^6$ as $k\to \infty$. We will use this observation later in the construction of a center manifold for stationary planar patterns.
\end{remark}

\begin{figure}[H]
    \centering
    \includegraphics[width=0.8\linewidth]{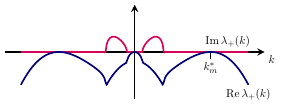}
    \caption{Plot of $\lambda_+(k;M^*_m)$ for $g=12$, $\beta= 0.1865184573$ and $M = M^*_m = 8.5144749311$. The pure conduction state destabilises via a Turing instability at the critical wave number $k_m^* = 1.2843299054$.}
    \label{fig:dispersion-monotonic}
\end{figure}

\begin{proof}
    Let $k_m^*$ be the unique, positive minimiser of $k \mapsto M_m(k)$. Then, by the choice of $M = M_m^* \leq M_m(k)$ and due to the strict monotonicity of $a_0(k,M)$ in $M$, we know that $a_0(M_m^*,k) > a_0(M_m(k),k) = 0$ for all $k \notin \{0, k_m^*\}$. Since $M < M_o^* \leq M_o(k)$ for all $k > 0$ by assumption, it holds that $a_1(M_m^*,k) > a_1(M_o(k),k) =  0$ for all $k > 0$ due to the strict monotonicity of $a_1(k,M)$ in $M$. Therefore, by the Routh--Hurwitz criterion, both roots \(\lambda_\pm(k;M_m^*)\) of $P_{k,M_m^*}$ have strictly negative real part for all $k \notin \{0, k_m^*\}$.
    
    For $k = 0$, we have that $a_0(M_m^*,0) = 0$ and $a_1(M_m^*,0) = \beta > 0$ and hence $\lambda_+(0;M_m^*) = 0$ is a root of $P_{k,M_m^*}$, while the other root \(\lambda_-(0;M_m^*) = -\beta\) has negative real part.

    For \(k= k_m^*>0\), we have that $a_0(M_m^*,k_m^*) = 0$ and thus, $\lambda_+(k_m^*;M_m^*) = 0$ is a root of $P_{M_m^*,k_m^*}$. Since $a_1(M_m^*, k_m^*) > a_1(M_o(k_m^*),k_m^*) = 0$, the second root $\lambda_-(k_m^*;M_m^*)$ is negative. This proves \ref{it:linear-monotonic-1} and \ref{it:linear-monotonic-2}.

    Since $a_0(M_m^*, k) = 0$ and $a_1(M_m^*,k)>0$ for \(k\in \{0, k_m^*\}\) and by the continuous dependence of the roots of \(P_{k,M_m^*}\) on its coefficients, we know that there are neighborhoods of \(0\) and \(k_m^*\) such that $\Re\lambda_-(k;M_m^*) < \Re\lambda_+(k;M_m^*)$ and thus \(\Im\lambda_+(k;M_m^*) = 0\). By the implicit function theorem, we hence obtain that \(k\mapsto \lambda_+(k;M_m^*)\) is a smooth function in the respective neighborhoods of \(0\) and \(k_m^*\). Further, since \(\lambda_+(k;M_m^*) \leq 0\), \(0\) and \(k_m^*\) are maximum points of \(\lambda_+(k;M_m^*)\) proving \ref{it:linear-monotonic-3}.

    To prove \ref{it:linear-monotonic-4}, we notice that by the implicit function theorem, it holds
    \begin{equation*}
        \partial^2_k \lambda_+(k;M_m^*) = - \frac{\partial_k^2 a_0(M_m^*,k)}{a_1(M_m^*,k)} < 0
    \end{equation*}
    for $k \in \{0,k_m^*\}$. Indeed, it holds $\partial_k^2 a_0(M_m^*,k) > 0$ for $k \in \{0,k_m^*\}$ since $k\mapsto a_0(M_m^*,k) \geq 0$ is a polynomial of degree 6 and has three minima in $\R$ at $k = 0, \pm k_m^*$.

    Finally, to prove \ref{it:linear-monotonic-5}, we may use the implicit function theorem again to find that
    \begin{equation}\label{eq:partial-M-implicit-function-thm}
        \partial_M \lambda_+(k_m^*;M_m^*) = - \frac{\partial_M a_1(M_m^*, k_m^*) \lambda(k_m^*) + \partial_M a_0(M^*_m,k_m^*)}{2\lambda(k_m^*) + a_1(M_m^*,k_m^*)} = - \frac{\partial_M a_0(M^*_m,k_m^*)}{a_1(M_m^*,k_m^*)} > 0,
    \end{equation}
    since $\partial_M a_0(M_m^*,k_m^*) = -\tfrac{1}{144}(k_m^*)^4\bigl(g+(k_m^*)^2 + 72\bigr) < 0$ and $a_1(M_m^*,k_m^*) > 0$ as shown above.
\end{proof}

Next, we discuss the spectral picture in the case of an oscillatory instability.

\begin{proposition}\label{prop:linear-oscillatory}
    Let $(\beta, g) \in \Omega_o$. Then, at
    \begin{equation*}
        M = M_o^* = 3 + g + \sqrt{3\beta},
    \end{equation*} 
    there exists a wave number $k_o^* > 0$ given by \eqref{eq:ko} such that the roots $\lambda_\pm(k;M_o^*)$ of $P_{k,M_o^*}$ satisfy the following conditions, see Figure \ref{fig:dispersion-oscillatory}.
    \begin{enumerate}[label=(\arabic*)]
        \item\label{it:linear-oscillatory-1} \(\Re(\lambda_\pm(k;M_o^*)) \leq 0\) for all \(k \geq 0\);
        \item\label{it:linear-oscillatory-2} $\Re(\lambda_+(k;M_o^*)) = 0$ if and only if $k \in \{0, k_o^*\}$ and $\Re(\lambda_-(k;M_o^*)) = 0$ if and only if $k = k_o^*$;
        \item\label{it:linear-oscillatory-2-Im} $\Im \lambda_{\pm}(k_o^*;M_o^*) \neq 0$;
        \item\label{it:linear-oscillatory-3} $\partial_k \Re(\lambda_\pm(k_o^*;M_o^*)) = 0$ and $\partial_k \lambda_+(0;M_o^*) = 0$;
        \item\label{it:linear-oscillatory-4} $\partial_k^2 \Re(\lambda_\pm(k_o^*;M_o^*)) < 0$ and $\partial_k^2 \lambda_+(0;M_o^*) < 0$;
        \item\label{it:linear-oscillatory-5} $\partial_M \Re(\lambda_{\pm}(k_o^*;M_o^*)) >0$.
    \end{enumerate}
    That is, the pure conduction state destabilises via an oscillatory instability at wave vectors \(\k\) with \(|\k| = k_o^*\).
\end{proposition}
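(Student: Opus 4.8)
The plan is to follow the structure of the proof of Proposition~\ref{prop:linear-monotonic}, interchanging the roles played there by the coefficient $a_0$ and the monotonic curve $M_m$ with those of $a_1$ and the oscillatory curve $M_o$. The algebraic fact I would record first is the factorization
\[ a_1(k,M) = \frac{k^2}{3}\bigl(M_o(k)-M\bigr), \]
which is immediate from the definitions of $a_1$ and $M_o$ and which makes both the $k$- and $M$-dependence of $a_1$ transparent. In particular $\partial_M a_1 = -\tfrac{k^2}{3}<0$, the curve $M_o(k)=g+3+k^2+\tfrac{3\beta}{k^2}$ has the everywhere positive second derivative $M_o''(k)=2+\tfrac{18\beta}{k^4}>0$, so $k_o^*$ from \eqref{eq:ko} is its unique and nondegenerate minimiser, and $a_1(M_o^*,\cdot)\geq 0$ with equality exactly at $k_o^*$.

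For items~\ref{it:linear-oscillatory-1}--\ref{it:linear-oscillatory-2-Im} I would argue via the Routh--Hurwitz criterion, exactly as in the monotonic case. Since $(\beta,g)\in\Omega_o$ gives $M_o^*<M_m^*\leq M_m(k)$ for all $k$, strict monotonicity of $a_0$ in $M$ yields $a_0(M_o^*,k)>a_0(M_m(k),k)=0$ for every $k\neq 0$, while $a_1(M_o^*,k)\geq 0$ vanishes only at $k_o^*$. Hence both roots have strictly negative real part for $k\notin\{0,k_o^*\}$. At $k=k_o^*$ one has $a_1=0$ and $a_0>0$, so $P_{k_o^*,M_o^*}(\lambda)=\lambda^2+a_0$ has the purely imaginary conjugate pair $\lambda_\pm=\pm\ii\sqrt{a_0(M_o^*,k_o^*)}$, which gives~\ref{it:linear-oscillatory-2-Im} and the $\lambda_-$ part of~\ref{it:linear-oscillatory-2}. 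At $k=0$ the conservation-law structure $a_0(M,0)\equiv 0$ forces $\lambda_+(0)=0$ and $\lambda_-(0)=-\beta<0$, completing~\ref{it:linear-oscillatory-1} and~\ref{it:linear-oscillatory-2}.

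The derivative conditions~\ref{it:linear-oscillatory-3}--\ref{it:linear-oscillatory-5} at $k_o^*$ rest on what I expect to be the one point requiring genuine care, and the essential difference from the monotonic case: near $(k_o^*,M_o^*)$ the two roots must remain a genuine complex-conjugate pair, so that there is a well-defined common real part to differentiate. This holds because the discriminant $a_1^2-4a_0$ equals $-4a_0(M_o^*,k_o^*)<0$ at the critical point and therefore stays negative on a full $(k,M)$-neighbourhood by continuity. On this neighbourhood $\Re\lambda_\pm=-\tfrac12 a_1$, so the three conditions reduce to elementary computations with $a_1=\tfrac{k^2}{3}(M_o(k)-M)$: condition~\ref{it:linear-oscillatory-3} is $-\tfrac12\partial_k a_1(M_o^*,k_o^*)=0$ since $k_o^*$ minimises $a_1(M_o^*,\cdot)$; condition~\ref{it:linear-oscillatory-4} is $-\tfrac12\partial_k^2 a_1(M_o^*,k_o^*)=-\tfrac{(k_o^*)^2}{6}M_o''(k_o^*)<0$; and condition~\ref{it:linear-oscillatory-5} is $-\tfrac12\partial_M a_1(k_o^*,M_o^*)=\tfrac{(k_o^*)^2}{6}>0$.

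Finally, the $k=0$ statements in~\ref{it:linear-oscillatory-3} and~\ref{it:linear-oscillatory-4} are handled exactly as in Proposition~\ref{prop:linear-monotonic} through the implicit function theorem: near $k=0$ the roots are real since the discriminant tends to $\beta^2>0$, the root $\lambda_+(0)=0$ is simple, and differentiating the defining relation $P_{k,M_o^*}(\lambda_+(k))=0$ twice at $k=0$ gives $\partial_k\lambda_+(0;M_o^*)=-\partial_k a_0(M_o^*,0)/a_1(M_o^*,0)=0$ and $\partial_k^2\lambda_+(0;M_o^*)=-\partial_k^2 a_0(M_o^*,0)/a_1(M_o^*,0)=-\tfrac{2g}{3}<0$, using the expansion $a_0(M_o^*,k)=\tfrac{\beta g}{3}k^2+\Ocal(k^4)$ near the origin. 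Collecting these observations establishes all six items.
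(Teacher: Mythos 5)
Your proposal is correct and follows essentially the same route as the paper: Routh--Hurwitz plus the explicit roots at $k=0$ and $k_o^*$ for \ref{it:linear-oscillatory-1}--\ref{it:linear-oscillatory-2-Im}, the representation $\Re(\lambda_\pm) = -\tfrac{1}{2}a_1$ (valid because the roots stay a complex-conjugate pair near $k_o^*$) for the derivative conditions there, and the implicit function theorem as in Proposition \ref{prop:linear-monotonic} at $k=0$; your computed values $-\tfrac{(k_o^*)^2}{6}M_o''(k_o^*) = -\tfrac{4}{3}\sqrt{3\beta}$ and $\tfrac{(k_o^*)^2}{6}$ agree with the paper's. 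The only cosmetic differences are your factorization $a_1 = \tfrac{k^2}{3}(M_o(k)-M)$ as an organizing device and your use of the conjugate-pair formula in a full $(k,M)$-neighbourhood to get \ref{it:linear-oscillatory-5}, where the paper instead invokes \eqref{eq:partial-M-implicit-function-thm} and takes real parts -- both yield the same quantity.
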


\begin{figure}[H]
    \centering
    \includegraphics[width=0.48\linewidth]{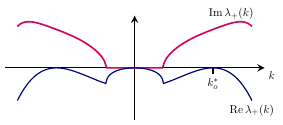}\hfill
    \includegraphics[width=0.48\linewidth]{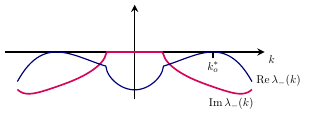}
    \caption{Plot of $\lambda_+(k;M_o^*)$ (left) and $\lambda_-(k;M_o^*)$ (right) for $g=12$, $\beta= 40$ and $M = M^*_o = 36.9089023002$. The pure conduction state destabilises via an oscillatory instability at the critical wave number $k_o^* = 3.3097509196$.}
    \label{fig:dispersion-oscillatory}
\end{figure}

The proof of \ref{it:linear-oscillatory-1}--\ref{it:linear-oscillatory-3} in Proposition \ref{prop:linear-oscillatory} works similar to the proof of Proposition \ref{prop:linear-monotonic}.
However, since \(a_1(M_o^*,k_o^*) = 0\), we cannot rely on the implicit function theorem to obtain the quadratic behaviour of the spectral curves. Hence, the proof takes a slightly different path.

\begin{proof}
    Let $k_o^*$ be the unique, positive minimiser of $k \mapsto M_o(k)$. Since $M_o^* < \min(M_m(k),M_o(k))$ for $k \notin \{0, k_o^*\}$, we have that $a_0(M_o^*,k), a_1(M_o^*,k) > 0$ for $k \notin \{0, k_o^*\}$. Therefore, the real part of $\lambda_\pm(k;M_o^*)$ is strictly negative for $k \notin \{0, k_o^*\}$.

    For $k = 0$, the roots are again given by $\lambda_+(0;M_o^*) = 0$ and $\lambda_-(0;M_o^*) = -\beta < 0$. For $k = k_o^*$, we have that $a_1(M_o^*, k_o^*) = 0$ and $a_0(M_o^*, k_o^*) > a_0(M_m( k_o^*), k_o^*) = 0$ since $M_o^* < M_m(k_o^*)$. Therefore, $\lambda_\pm(k_o^*;M_o^*) = \pm i\sqrt{a_0(M_o^*,k_o^*)}$ and the real parts vanish. This proves \ref{it:linear-oscillatory-1} and \ref{it:linear-oscillatory-2}.

    Since, in a neighborhood of \(0\) and \(k_o^*\), the roots depend smoothly on the coefficients of the polynomial \(P_{k,M_o^*}\) and the real parts of \(\lambda_\pm\) have maxima in \(k_o^*\) and \(\lambda_+\) has another maximum in \(0\), we find that \(\partial_k \Re(\lambda_\pm(k_o^*;M_o^*)) = 0\) and \(\partial_k\lambda_+(0;M_o^*) = 0\). Hence, we obtain \ref{it:linear-oscillatory-3}.

    To prove \ref{it:linear-oscillatory-4}, at \(k=0\) we may argue as in the proof of Proposition \ref{prop:linear-monotonic} via the implicit function theorem. For \(k=k_o^*\), we cannot apply the implicit function theorem since $a_1(M_o^*,k_o^*) = 0$. Instead, we notice that in a neighborhood of \(k_o^*\) the imaginary parts of \(\lambda_\pm\) are strictly bounded away from zero. Hence, we find that in a neighborhood of \(k_o^*\), it holds \(\Re(\lambda_\pm(k;M_o^*)) = - \frac{a_1(k,M_o^*)}{2}\) since \(P_{k,M_o^*}\) has real coefficients. Hence, we may conclude
    \begin{equation*}
        \partial_k^2 \Re(\lambda_\pm(k_o^*;M_o^*)) = - \frac{1}{2} \partial_k^2 a_1(M_o^*,k_o^*) =  - \frac{4}{3} \sqrt{3\beta} < 0.
    \end{equation*}

    Finally, to prove \ref{it:linear-oscillatory-5}, we may use \eqref{eq:partial-M-implicit-function-thm} again to obtain
    \begin{equation*}
        \partial_M \Re \lambda_{\pm} (k_o^*;M_o^*) = - \frac{\partial_M a_1(M_o^*,k_o^*)}{2} > 0,
    \end{equation*}
    where we use that \(\partial_M a_1(M_o^*,k_o^*) = - \frac{1}{3}(k_o^*)^2\). This concludes the proof.
\end{proof}

\section{Derivation of the amplitude equations}\label{sec:amplitude-equation}

After completing the linear stability analysis in Section \ref{sec:linear-analysis}, we now analyse the dynamics of solutions to \eqref{eq:thin-film-equation}, which are close to the pure conduction state $(\bar{h},\bar{\theta}) = (1,1)$. From now on, we also restrict to the case $(\beta, g) \in \Omega_m$, i.e.~that the system is close to a Turing instability. Therefore, we also write $M^* := M_m^*$. In this section, we give an intuition for the dynamics using formal multiscale expansions leading to the formal amplitude equations. Since the calculations only rely on some general properties of the system, we rewrite \eqref{eq:thin-film-equation} as
\begin{equation}\label{eq:evolution-equation-U}
    \begin{pmatrix}
        1 & 0 \\ 0 & h
    \end{pmatrix}\partial_t \Ucal = \Fcal_M(\Ucal) = \Lcal_M \Ucal + \Ncal(\Ucal;M),  \quad \Ucal = \begin{pmatrix}
        h - 1 \\ \theta - 1
    \end{pmatrix},
\end{equation}
where $\Lcal_M$ denotes the linear terms and $\Ncal$ denotes the nonlinearities. Additionally, we write $\Ncal$ as
\begin{equation*}
    \Ncal(\Ucal;M) = \Ncal_2(\Ucal;M) + \Ncal_3(\Ucal;M) + R(\Ucal;M),
\end{equation*}
where $\Ncal_2$ and $\Ncal_3$ are the quadratic and cubic nonlinearities, which are given by
\begin{equation}\label{eq:def-nonlinearities}
\begin{split}
    \Ncal_2(\Ucal,\Vcal;M) & := \dfrac{1}{2} D^2\Fcal_M(0,0)[\Ucal,\Vcal], &&\quad \Ncal_2(\Ucal;M) := \Ncal_2(\Ucal,\Ucal;M), \\
    \Ncal_3(\Ucal,\Vcal,\Wcal;M) & := \dfrac{1}{6} D^3\Fcal_M(0,0)[\Ucal,\Vcal,\Wcal], && \quad \Ncal_3(\Ucal;M) := \Ncal_2(\Ucal,\Ucal,\Ucal;M).
\end{split}
\end{equation}
We note that $\Ncal_2(\cdot,\cdot;M)$ is a symmetric bilinear form and $\Ncal_3(\cdot,\cdot,\cdot;M)$ is a symmetric trilinear form. Additionally, $R = \Ocal(\|\Ucal\|^4)$ denotes the higher-order nonlinearities.

For the purpose of this paper, we restrict to solutions to \eqref{eq:thin-film-equation} with certain spatial periodicities. In particular, we are interested in roll waves, square patterns and hexagonal patterns. To represent these periodic patterns, we introduce the following notation: for a fixed \(N\in \N\), let
\begin{equation*}
    \k_1,\ldots,\k_N \in S_{k_m^*} := \{\k \in \R^2 : |\k| = k_m^*\}
\end{equation*}
be a set of wave vectors and define the corresponding lattice in Fourier space
\begin{equation}\label{eq:fourier-lattice}
    \Gamma := \Bigl\{\sum_{j=1}^{N} n_j \k_j : n_j\in \Z\Bigr\}.
\end{equation}
For future use, we also define
\begin{equation*}
    \Gamma_0 := \Bigl\{0, \pm\k_j, \,:\, j = 1,\dots,N\Bigl\}, \text{ and }  \Gamma_h := \Gamma \setminus \Gamma_0.
\end{equation*}
We endow \(\Gamma\) with the natural distance function \(d(\,\cdot\,\,\,\cdot\,)\) given by the \(\ell^1\)-norm and we normalise the distance such that $d(0,\k_j) = 1$. For simplicity of notation, we also define \(\k_{-j} = - \k_j\) and $\k_0=0$. Square patterns are given by the choice
\begin{equation*}
    \k_1 = k_m^*\begin{pmatrix}
        1 \\ 0
    \end{pmatrix}, \quad \k_2 = k_m^*\begin{pmatrix}
        0 \\ 1
    \end{pmatrix}
\end{equation*}
and hexagonal patterns by 
\begin{equation*}
    \k_1 = k_m^*\begin{pmatrix}
        1 \\ 0
    \end{pmatrix}, \quad \k_2 = \frac{k_m^*}{2}\begin{pmatrix}
        -1 \\ \sqrt{3}
    \end{pmatrix}, \quad \k_3 = -\frac{k_m^*}{2} \begin{pmatrix}
        1 \\ \sqrt{3}
    \end{pmatrix}
\end{equation*}
and roll waves are subsumed under both of these cases. We point out that it is not possible to handle the case of squares and hexagons simultaneously with the methods presented here since a lattice containing both patterns is not discrete and has non-isolated points on the circle $S_{k_m^*}$. Typically, these lattices are referred to as quasilattices. These support quasipatterns, which are not periodic but have a long-range order, see \cite{braaksma2017,iooss2019,iooss2022} for details.

\begin{figure}[H]
    \centering
    \includegraphics[width=0.45\linewidth]{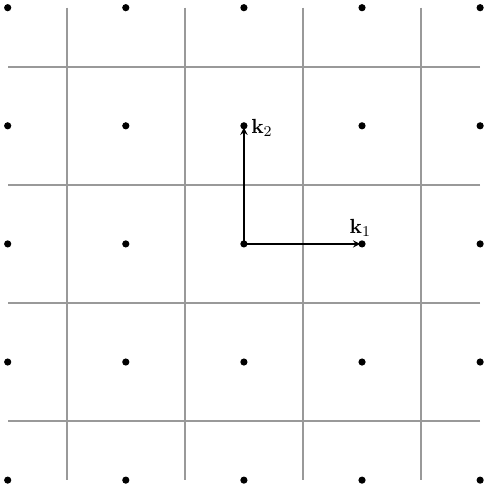}\hfill
    \includegraphics[width=0.45\linewidth]{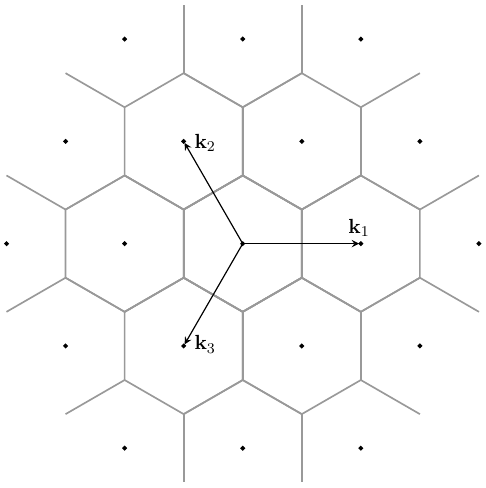}
    \caption{Fourier lattices for square patterns (left) and hexagonal patterns (right). The grey lines form the dual lattice of $\Gamma$, which consists of the fundamental domains of periodicity in physical space.}
    \label{fig:Fourier-lattice}
\end{figure}
Functions that are periodic with respect to the dual lattice of \(\Gamma\) can then be written as
\begin{equation*}
    \Ucal(\x) = \sum_{\gammab \in \Gamma} \a_{\gammab} e^{i\gammab\cdot \x}, 
\end{equation*}
where $\x\in \R^2$, $\a_0 \in \R^2$ and $\a_{\gammab}\in \C^2$ for $\gammab \neq 0$.

The goal of this section now is to formally derive the amplitude equations, which govern the dynamics of \eqref{eq:thin-film-equation} on the periodic lattice $\Gamma$ close to the onset of the monotonic instability. We follow the standard approach of deriving these equations via a formal multiscale ansatz, see e.g.\ \cite{schneider2017}. The idea is that the solutions close to the onset of instability are given to leading order by an amplitude modulation of the unstable Fourier modes since the other modes are exponentially damped. To extract these modes, we define the projections onto the eigenspaces corresponding to $\lambda_\pm(k;M^*)$ by
\begin{equation*}
\begin{split}
    P_\pm(k) & := \frac{\lrangle{\phib_\pm^*(k),\, \cdot\,}}{\lrangle{\phib_\pm^*(k),\phib_\pm(k)}}, \quad k = 0,k_m^*,  
\end{split}
\end{equation*}
where $\phib_\pm^*(\k)$ is the eigenvector of the adjoint eigenvalue problem $(\bar{\lambda}_\pm(\k;M^*) I - \hat{\Lcal}_{M^*}(\k)^*)\phib_\pm^*(\k) = 0$. These projections are well-defined since $\lambda_+(k;M^*) \neq \lambda_-(k;M^*)$ for $k = 0,k_m^*$. Using this projection, we additionally define
\begin{equation*}
        P_0 := P_+(0),
\end{equation*}
as well as, on the function spaces, the projections
\begin{equation*}
\begin{split}
    \Pcal(\gammab)& :  \Ucal = \sum_{\gammab \in \Gamma} \a_{\gammab} e^{i\gammab\cdot \x} \longmapsto \a_{\gammab}, \\
    \Pcal_{\pm}(\k)& : \Ucal = \sum_{\gammab \in \Gamma} \a_{\gammab} e^{i\gammab\cdot \x} \longmapsto P_{\pm}(|\k|)\a_{\k}, \quad \k \in \{0,\pm \k_j : j = 1,\ldots,N\}.
\end{split}
\end{equation*}
Note that the projection $P_\pm(k)$ is only defined on a single coefficient of the Fourier series, whereas $\Pcal_\pm(\k)$ and $\Pcal(\gamma)$ apply to the full Fourier series.

To derive the system of amplitude equations close to the onset of monotonic instability $M^*:=M_m^*$, we write $M = M^* + \eps^2 M_0$ and make the following ansatz
\begin{equation}\label{eq:ansatz-physical}
    \begin{split}
        \Ucal(t,\x) = &\varepsilon\left(\sum_{j=1}^{N} A_j(T,\X) \phib_+(\k_j) E_j + c.c.\right) + \varepsilon^2 A_0(T,\X) \phib_+(0) E_0 + \varepsilon^2 \Psi_0(T,\X) \phib_-(0) E_0 \\
        &+ \varepsilon^2\left(\sum_{j=1}^{N} \Psi_j(T,\X) \phib_-(\k_j) E_j + c.c.\right) + \varepsilon^2 \sum_{\gammab\in \Gamma : d(\gammab,0)=2} \Psib_{\gammab}(T,\X) E_{\gammab},
    \end{split}
\end{equation}
where $A_j(T,\X), \Psi_j(T,\X) \in \C$ with $A_{-j}(T,\X) = \bar{A}_j(T,\X)$ and $\Psi_{-j}(T,\X) = \bar{\Psi}_j(T,\X)$, $j=1,\dots,N$. Additionally, $A_0(T,\X), \Psi_0(T,\X) \in \R$,  $\Psib_{\gammab}(T,\X) \in \C^2$, $T = \varepsilon^2 t$, and $\X = \varepsilon \x$. Recall that $\phib_{\pm}(k_m^*)$ are the eigenvectors of $\Lcal_{M^*}(k_m^*)$, c.f.~Proposition \ref{prop:linear-monotonic}, and we denote by $\phib_{\pm}(\k_j) = \phib_{\pm}(|\k_j|)$ in order to clarify which mode is affected. Furthermore, we use the notation $E_j = e^{i\k_j\cdot\mathbf{x}}$ and $E_{\gammab} = e^{i\gammab\cdot\mathbf{x}}$. By transforming $\Ucal$ into Fourier space, we obtain the following ansatz
\begin{equation}\label{eq:ansatz-fourier}
    \begin{split}
        \hat{\Ucal}(t,\k) = &\left(\sum_{j=1}^{N} \hat{A}_j\Big(T,\frac{\k-\k_j}{\varepsilon}\Big) \phib_+(\k) + c.c.\right) + \varepsilon \hat{A}_0\Big(T,\frac{\k}{\varepsilon}\Big) \phib_+(\k) + \varepsilon \hat{\Psi}_0\Big(T,\frac{\k}{\varepsilon}\Big) \phib_-(\k) \\
        &+ \varepsilon^2\left(\sum_{j=1}^{N} \hat{\Psi}_j\Big(T,\frac{\k-\k_j}{\varepsilon}\Big) \phib_-(\k) + c.c.\right) + \varepsilon^2 \sum_{\gammab\in \Gamma : d(\gammab,0)=2} \hat{\Psib}_{\gammab}\Big(T,\frac{\k-\gammab}{\varepsilon}\Big).
    \end{split}
\end{equation}

\begin{remark}
    We point out that the ansatz in Fourier space \eqref{eq:ansatz-fourier} is not the Fourier transform of the ansatz in physical space \eqref{eq:ansatz-physical}, since the eigenvectors are not fixed at the particular wave number. This is necessary to obtain the correct expansion on the linear level. However, since it makes the nonlinear analysis much more involved, we use the ansatz in physical space for the nonlinear expansions. In fact, the formal derivation can be carried out fully in Fourier space, and we refer to e.g.~\cite{schneider2017} for the derivation in case of a reaction-diffusion system.
\end{remark}

\begin{remark}
    The $\varepsilon^2$-terms at $\phib_-(\k_j) E_j$, $j = 0,\dots,N$ and $E_{\gammab}$, where $\gammab\in \Gamma$ satisfies $d(\gammab,0)=2$, are correction terms, which are necessary to obtain the correct coefficients in the formal amplitude equation since they are used to remove lower-order nonlinear terms appearing in the expansion. Since we need to balance terms on $\varepsilon^4$, it would also be necessary to introduce terms of order $\varepsilon^3$ in the ansatz. However, it turns out that they do not play a role in the amplitude equation and therefore, we neglect them.
\end{remark}

\subsection{The linear expansion}\label{sec:amplitude-equation-lin}
On the linear level, we make the calculations in Fourier space to obtain the necessary expansion. Since these are similar for the different linear terms after inserting the ansatz \eqref{eq:ansatz-fourier}, we perform the expansion in detail for one term,
\begin{equation*}
    \begin{split}
        \hat{\Lcal}_M(k) \hat{A}_j\Big(T,\frac{\k-\k_j}{\varepsilon}\Big) \phib_+(\k;M) &= \lambda_+(\k;M) \hat{A}_j\Big(T,\frac{\k-\k_j}{\varepsilon}\Big) \phib_+(\k;M) \\
        &= \lambda_+(\k_j + \varepsilon \Kbf;M^*+\varepsilon^2 M_0) \hat{A}_j(T,\Kbf) \phib_+(\k_j + \varepsilon \Kbf;M^*+\varepsilon^2 M_0) \\
        &= \dfrac{1}{2} \varepsilon^2 \lrangle{\Kbf, D^2_\k\lambda_+(\k_j;M^*)\Kbf} \hat{A}(T,\Kbf) \phib_+(\k_j;M^\ast) \\
        &+ \varepsilon^2 M_0 \partial_M\lambda_+(\k_j;M^*) \hat{A}(T,\Kbf) \phib_+(\k_j;M^\ast) + \Ocal(\varepsilon^3),
    \end{split}
\end{equation*}
where $D^2_\k\lambda_+$ denotes the Hessian of $\lambda_+$ with respect to the wave vector $\k$. In the first equality, we use that $\phib_+(\k;M)$ is an eigenvector with corresponding eigenvalue $\lambda_+(\k;M)$. For the second equality, we set $\k = \k_j + \eps \Kbf$; that is, we localise in Fourier space around $\k_j$ and then perform a Taylor expansion. The lower-order terms in $\eps$ all vanish using that $\lambda_+(\k_j;M^*) = 0$ and $\nabla \lambda_+(\k_j;M^*) = 0$ since $\lambda_+$ has a maximum at $\k = \k_j$, see Proposition \ref{prop:linear-monotonic}.

Performing a similar expansion for the other terms in the ansatz and then Fourier transforming back to physical space, we obtain that
\begin{equation*}
    \begin{split}
        \Lcal_M \Ucal &= \varepsilon^3 \sum_{j = 1}^N \Big(-\dfrac{1}{2}\dfrac{\lambda_+''(k_m^*;M^*)}{(k_m^*)^2} \lrangle{\k_j,\nabla}^2 A_j(T,\X) + \partial_M \lambda_+(\k_j;M^*)A_j(T,\X)\Big) E_j \phib_+(\k_j;M^*)  + c.c. \\
        &\quad - \dfrac{1}{2}\varepsilon^4 \lambda_+^{\prime\prime}(0) \Delta A_0(T,\X) \phib_+(0) + \varepsilon^2 \lambda_-(0;M^*) \Psi_0(T,\X) \phib_-(0) \\
        &\quad + \varepsilon^2\left(\sum_{j=1}^{N} \lambda_-(\k_j; M^*) \Psi_j(T,\X) \phib_-(\k_j) E_j + c.c.\right) + \varepsilon^2 \sum_{\gammab \in \Gamma : d(\gammab,0) = 2} \hat{\Lcal}_{M^*}(\gammab) \Psib_{\gammab}(T,\X) E_{\gammab}+ \Rcal,
    \end{split}
\end{equation*}
where $\Rcal$ denotes higher-order or irrelevant terms. Examples of irrelevant terms are $\varepsilon^3$-contributions arising from the $\Psib_{\gammab}$, which do not influence the coefficients of the amplitude equations. Here, we also used that $\lrangle{\nabla, D_\k^2\lambda_+(\k_j ;M^*)\nabla} = \tfrac{\lambda_+''(k_m^*;M^*)}{(k_m^*)^2} \lrangle{\k_j,\nabla}^2$ and $\lrangle{\nabla, D^2_\k\lambda_+(0;M^*)\nabla} = \lambda_+''(0;M^*)\Delta$ due to rotation symmetry of $\lambda_+(\k;M^*)$ and the fact that $\lambda_+(\k;M^*)$ has a quadratic root at $\k = 0,\k_j$.

\subsection{The nonlinear expansion}\label{sec:amplitude-equation-nonlin}
For simplicity, we refrain from calculating the full nonlinear expansion and focus on the relevant terms instead. Note that we also drop the dependence of all objects on $M$ since all relevant terms are evaluated at $M^*$. These are the following.
\begin{enumerate}
    \item For $\varepsilon^2$, we are interested in quadratic combinations of the $A_j$, which determine the $\varepsilon^2$-balance terms in the ansatz. Additionally, on the hexagonal lattice, this also generates a quadratic term in the amplitude system due to the resonance $\k_1 + \k_2 + \k_3 = 0$.
    \item For $\varepsilon^3$, we are only interested in the balance at $E_j \phib_+(\k_j)$ for $j = 1,\dots,N$. These terms are created by quadratic interactions of $E_0$ and $E_j$ terms, as well as $E_{\k_j + \k_\ell}$ and $E_{-\ell}$ terms. Additionally, we have to consider cubic interactions of $E_j$, $E_{\ell}$ and $E_{-\ell}$. Through this balance, we obtain the amplitude equation for the $A_j$.
    \item For $\varepsilon^4$, we calculate the balance at $E_0\phib_+(0)$. As we will see, the relevant nonlinear terms can only be created by a quadratic interaction of $E_j$ and $E_{-j}$. This is due to the fact that $\phib_+(0)$ corresponds to the conservation law in the system and the system is reflection symmetric. Therefore, we show that two derivatives must fall on the $A_j$, which adds two powers of $\varepsilon$, see Lemma \ref{lem:conservation-law}. All other nonlinearities at $\k = 0$ are at least of order $\eps^5$ due to the divergence structure.
\end{enumerate}

We split the discussion into three parts. First, we discuss (1) and (2) in the case of a square lattice. Second, we outline the changes to (1) and (2) for the hexagonal lattice. Third, we discuss (3), which is the same for both lattices.

\subsection{The nonlinear expansion: square lattice}\label{sec:amplitude-equation-nonlin-square}

To obtain the nonlinear expansion, we first calculate the correction terms. Using the linear expansion and balancing terms on $\varepsilon^2 E_0 \phib_-(0)$ we find that
\begin{equation}\label{eq:amplitude-Psi0}
    \begin{split}
        &0 = \lambda_-(0) \Psi_0 + 2\sum_{j = 1}^N \Pcal_-(0) \Ncal_2(E_j \phib_+(\k_j), E_{-j} \phib_+(\k_{-j})) \abs{A_j}^2 \\
        \Longleftrightarrow &\Psi_0 = - 2\lambda_-(0)^{-1} \sum_{j = 1}^N \Pcal_-(0) \Ncal_2(E_j \phib_+(\k_j), E_{-j} \phib_+(\k_{-j})) \abs{A_j}^2 =: \nu_0 \sum_{j = 1}^N \abs{A_j}^2.
    \end{split}
\end{equation}
Here, we use that $\lambda_-(0) < 0$ and we note that the coefficients $\nu_0 \in \R$ are independent of $j$ due to rotation symmetry.
Next, we consider the terms at $\varepsilon^2 E_{\gammab}$ for $\gammab=\k_j+\k_{\ell} \in \Gamma$ with $d(\gammab,0) = 2$. Note that, for fixed $j=1,\ldots,N$, the decomposition \(\gammab=\k_j+\k_{\ell}\) is unique. With this, we find that
\begin{equation}\label{eq:amplitude-Psi2}
    \begin{split}
        &0 = \hat{\Lcal}_{M^*}(\k_j + \k_\ell)\Psib_{\k_j + \k_\ell}+ 2 \Ncal_2(E_j\phib_+(\k_j), E_\ell \phib_+(\k_\ell)) A_j A_\ell E_{\k_j + \k_\ell} \\
        \Longleftrightarrow &\Psib_{\k_j + \k_\ell} = -2 \hat{\Lcal}_{M^*}(\k_j + \k_\ell)^{-1}\Ncal_2(E_j\phib_+(\k_j), E_\ell \phib_+(\k_\ell)) A_j A_\ell E_{\k_j + \k_\ell} =: \nub_{\k_j + \k_\ell} A_j A_\ell
    \end{split}
\end{equation}
with $\nub_{\k_j + \k_\ell}\in \C^2$. Here, we use that $\Re(\lambda_\pm(\k_j+\k_\ell)) < 0$ by assumption and thus the matrix $\hat{\Lcal}_M(\k_j + \k_\ell)$ is indeed invertible.

Since the $\varepsilon^2$-balance determines the compensation terms $\Psi_0$ and $\Psib_{\gammab}$, we can now consider the $\varepsilon^3$-balance at $E_j \phib_+(\k_j)$ for $j = 1,\dots, N$. Here, on the nonlinear level we find the terms
\begin{equation*}
    \begin{split}
        &\eps^3\Pcal_+(\k_j)\Bigg[2 \Ncal_2(E_0\phib_+(0),E_j \phib_+(\k_j)) A_0 A_j + \nu_0 \sum_{\ell = 1}^N \Ncal_2(E_0\phib_-(0), E_j \phib_+(\k_j)) A_j \abs{A_\ell}^2 \\
        &\qquad\qquad+\sum_{\substack{\k_j + \k_\ell = \gammab\\ d(\gammab,0)=2}} 2 \Ncal_2(E_{\k_j+\k_\ell} \nub_{\k_j+\k_\ell}, E_{-\ell}\phib_+(\k_{-\ell})) A_j \abs{A_\ell}^2 \\
        &\qquad\qquad+ 3 \Ncal_3(E_j \phib_+(\k_j), E_j \phib_+(\k_j), E_{-j} \phib_+(\k_{-j})) A_j \abs{A_j}^2 \\
        &\qquad\qquad+ \sum_{\ell = 1,\ell\neq j}^N 6 \Ncal_3(E_j \phib_+(\k_j), E_\ell \phib_+(\k_\ell), E_{-\ell} \phib_+(\k_{-\ell})) A_j \abs{A_\ell}^2\Bigg].
    \end{split}
\end{equation*}
Hence, the nonlinear expansion on the square lattice for $j=1$ is given by
\begin{equation}\label{eq:nonlin-expansion-square}
    \eps^3\Big[K_c A_0 A_1 + K_0 A_1 \abs{A_1}^2 + K_1 A_1 \abs{A_2}^2\Big].
\end{equation}

\subsection{The nonlinear expansion: hexagonal lattice}\label{sec:amplitude-equation-nonlin-hex}

We obtain $\Psi_0$ and $\Psib_{\gammab}$ with $\gammab \in \Gamma$ such that $d(\gammab,0) = 2$ as for the square lattice, see \eqref{eq:amplitude-Psi0} and \eqref{eq:amplitude-Psi2}, respectively. Additionally, due to the resonance $\k_1 + \k_2 + \k_3 = 0$ in the hexagonal lattice, the $\varepsilon^2$-balance at $E_j \phib_+(\k_j)$ now has a nontrivial contribution, which for $j = 1$ reads as
\begin{equation*}
    \eps^2\Pcal_+(\k_1) \left[2 \Ncal_2(E_{-2} \phib_+(\k_{-2}), E_{-3} \phib_+(\k_{-3})) \bar{A}_2 \bar{A}_3\right] = \eps^2 N\bar{A}_2\bar{A}_3.
\end{equation*}
The corresponding terms for $j = 2,3$ can then be obtained by cyclic permutation. Hence, the nonlinear expansion on the hexagonal lattice is of the form
\begin{equation}\label{eq:nonlin-expansion-hex}
    \eps^2 N \bar{A}_2 \bar{A}_3 + \eps^3\Big[K_c A_0 A_1 + K_0 A_1 \abs{A_1}^2 + K_2  A_1 \bigl(\abs{A_2}^2 + \abs{A_3}^2\bigr)\Big].
\end{equation}
Note that the coefficients $K_c$ and $K_0$ in \eqref{eq:nonlin-expansion-square} and \eqref{eq:nonlin-expansion-hex} are indeed the same.

\subsection{The nonlinear expansion: conservation law}\label{sec:amplitude-equation-con}

Finally, we consider the nonlinear terms, which emerge at $\varepsilon^4 E_0 \phib_+(0)$. Since the calculations are the same for the square and hexagonal lattice, we do not distinguish the different settings here. It turns out that the lowest order nonlinearity is a potentially non-elliptic, second-order operator in $|A_j|^2$. This holds under fairly generic assumptions, as the following result shows.

\begin{lemma}\label{lem:conservation-law}
    Assume that 
    \begin{equation*}
        \Pcal_+(0) \Fcal_M(\Ucal) = \nabla \cdot (f(\Ucal) \nabla g(\Ucal, \Delta \Ucal))
    \end{equation*}
    with smooth functions $f$ and $g$. Then, it holds that
    \begin{equation*}
        \Pcal_+(0) \Ncal_2(A_j \phib_+(\k_j) E_j, A_{-j}\phib_+(\k_{-j}) E_{-j}) = \eps^2 \nabla \cdot (p_c(\k_j \k_j^T) \nabla(|A_j|^2)) + \Ocal(\eps^3),
    \end{equation*}
    where $p_c$ is a polynomial of degree one.
\end{lemma}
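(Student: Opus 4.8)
The plan is to combine the conservation-law (divergence) structure encoded in the hypothesis with the reflection symmetry of \eqref{eq:thin-film-equation}. First recall that $\Pcal_+(0)$ extracts the component along $\phib_+(0)$, which is exactly the conservation law; moreover, $\hat{\Lcal}_{M^*}(\k_j)$ is a real matrix with the real eigenvalue $\lambda_+(\k_j;M^*)=0$, so the eigenvector $\phib_+(\k_j)$ may be chosen real. I would write $\Ucal_1 = A_j\phib_+(\k_j)E_j$ and $\Ucal_2 = A_{-j}\phib_+(\k_{-j})E_{-j}$ with $A_{-j}=\bar A_j$, $A_j=A_j(T,\X)$ and $\X=\eps\x$, and record how each derivative splits into a fast and a slow part: $\nabla_\x(A_jE_j) = (i\k_j A_j + \eps\nabla_\X A_j)\phib_+(\k_j)E_j$ and $\Delta_\x(A_jE_j) = (-\abs{\k_j}^2 A_j + 2i\eps\,\k_j\cdot\nabla_\X A_j + \eps^2\Delta_\X A_j)\phib_+(\k_j)E_j$, so that each fast derivative contributes a factor $i\k_j$ of order one and each slow derivative a factor $\eps$.

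Since $\Ncal_2$ is the quadratic part of $\Fcal_M$, the hypothesis yields a symmetric bilinear flux $\Gbf$, obtained as the quadratic part of $f(\Ucal)\nabla g(\Ucal,\Delta\Ucal)$, with $\Pcal_+(0)\Ncal_2(\Ucal_1,\Ucal_2) = \nabla_\x\cdot\Gbf(\Ucal_1,\Ucal_2)$. Because $\Ucal_1\propto E_j$ and $\Ucal_2\propto E_{-j}$, every derivative only produces polynomial prefactors and never a new exponential, so $\Gbf(\Ucal_1,\Ucal_2)$ is automatically \emph{fast-constant} (its only fast content is $E_jE_{-j}=1$) and is a function of $\X$ alone. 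Expanding $f$ and $g$ about the origin, $\Gbf(\Ucal_1,\Ucal_2)$ is the sum of a term $f(0)\,\nabla_\x(g''(\Ucal_1,\Ucal_2))$, coming from the bilinear part $g''$ of $g(\Ucal,\Delta\Ucal)$, and a term built from $(Df(0)\Ucal_1)\,\nabla_\x(g'(\Ucal_2))$ symmetrised, where $g'$ is the linear part of $g$.

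The heart of the argument is that this fast-constant flux carries one slow derivative. Dropping all $\nabla_\X$, the first term $f(0)\nabla_\x(g'')$ is a fast gradient of a fast-constant quantity and hence vanishes, while the order-one part of the second term evaluates, using $\k_{-j}=-\k_j$ and $\phib_+(\k_{-j})=\phib_+(\k_j)$, to a multiple of $i\k_j\,(Df(0)\phib_+(\k_j))\bigl((\partial_u g - \abs{\k_j}^2\partial_v g)\phib_+(\k_j)\bigr)\abs{A_j}^2$ minus its symmetric partner, which cancels exactly because all scalar factors are real (reflection symmetry, i.e. reality of $\phib_+(\k_j)$). Thus the $\Ocal(1)$ contribution vanishes and $\Gbf(\Ucal_1,\Ucal_2)=\Ocal(\eps)$, generated purely by the slow-derivative corrections. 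A second use of reflection symmetry ($\x\mapsto-\x$, under which a genuine flux transforms as a vector but the current $\Im(\bar A_j\nabla_\X A_j)$ does not) excludes the current combination and forces this $\Ocal(\eps)$ flux to be $\eps\,p_c(\k_j\k_j^T)\nabla_\X\abs{A_j}^2$: the isotropic part $\propto I$ comes from $f(0)\nabla_\x(g'')$ and the anisotropic part $\propto\k_j\k_j^T$ from pairing the fast factor $\k_j$ with the slow gradient $\k_j\cdot\nabla_\X$. Since $\k_j\k_j^T$ is rank one, so that $(\k_j\k_j^T)^2=\abs{\k_j}^2\k_j\k_j^T$ with $\abs{\k_j}=k_m^*$ fixed, any such matrix polynomial collapses to degree one.

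Finally I would apply the outer divergence: as $\Gbf(\Ucal_1,\Ucal_2)$ is fast-constant, the fast part of $\nabla_\x$ annihilates it, so $\nabla_\x\cdot\Gbf(\Ucal_1,\Ucal_2) = \eps\nabla_\X\cdot\Gbf(\Ucal_1,\Ucal_2) = \eps^2\nabla_\X\cdot\bigl(p_c(\k_j\k_j^T)\nabla_\X\abs{A_j}^2\bigr) + \Ocal(\eps^3)$, which is the claimed identity. I expect the main obstacle to be the cancellation step: both the removal of the potentially $\Ocal(1)$ flux and the exclusion of the imaginary-current term hinge on tracking reflection symmetry through the quadratic flux, and the bookkeeping of the product and chain rules for $f(\Ucal)\nabla g(\Ucal,\Delta\Ucal)$ must be organised so that the single Laplacian in $g$ yields exactly the degree-one dependence on $\k_j\k_j^T$.
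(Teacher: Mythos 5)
Your proposal is correct and follows essentially the same route as the paper's proof: both exploit the divergence structure to reduce to a bilinear flux, observe that the $E_jE_{-j}=1$ interaction makes this flux a function of the slow variable alone, cancel the $\Ocal(1)$ fast-gradient contributions by symmetrization (using reality of $\phib_+(\k_j)$ and $\phib_+(\k_{-j})=\phib_+(\k_j)$), and extract the $\k_j\k_j^T$-anisotropy from the term where the fast factor $\k_j$ of $\nabla\Delta(A_jE_j)$ pairs with a slow gradient, so that the outer divergence supplies the final factor of $\eps$. The only cosmetic differences are that you invoke reflection equivariance to rule out the current term $\Im(\bar A_j\nabla_\X A_j)$, where the paper's explicit computation in \eqref{eq:conservation-law-3}--\eqref{eq:conservation-law-4} produces the symmetric combination $A_j\nabla\bar A_j+\bar A_j\nabla A_j=\nabla|A_j|^2$ directly, and that you leave implicit the $D^2f$-term killed by $\nabla g(\bar\Ucal)=0$, which the paper records as \eqref{eq:conservation-law-1}.
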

\begin{proof}
    Let $\bar{\Ucal} = (1,1)^T$, $\Vcal = A_j \phib_+(\k_j) E_j$ and $\Wcal = A_{-j}\phib_+(\k_{-j}) E_{-j}$. Then, using the form of $\Fcal_M$, we calculate that
    \begin{equation*}
        \begin{split}
            \Pcal_+(0) \Ncal_2(\Vcal,\Wcal) &= \dfrac{1}{2} \Pcal_+(0) D^2 \Fcal_M(\bar{\Ucal})[\Vcal,\Wcal] \\
            &= \dfrac{1}{2}\Bigl(\nabla \cdot (\lrangle{\Vcal, D^2f(\bar{\Ucal}) \Wcal}_\mathrm{sym} \nabla g(\bar{\Ucal})) + \nabla \cdot (f(\bar{\Ucal}) \nabla \lrangle{\Vcal, D_1^2 g(\bar{\Ucal}) \Wcal}_\mathrm{sym}) \\
            &\qquad + \nabla \lrangle{\Vcal, D_1D_2 g(\bar{\Ucal}) \Delta \Wcal}_\mathrm{sym}) + \nabla \lrangle{\Delta\Vcal, D_2^2 g(\bar{\Ucal}) \Delta\Wcal}_\mathrm{sym}) \\
            &\qquad + \nabla \cdot (Df(\bar{U}) \Vcal (\nabla (D_1g(\bar{\Ucal}) \Wcal + D_2g(\bar{\Ucal}) \Delta \Wcal)) \\
            &\qquad + \nabla \cdot (Df(\bar{U}) \Wcal (\nabla (D_1g(\bar{\Ucal}) \Vcal + D_2g(\bar{\Ucal}) \Delta \Vcal))\Bigr),
        \end{split}
    \end{equation*}
    where $\lrangle{u,\Gcal v}_\mathrm{sym} := \lrangle{v,\Gcal u} + \lrangle{u,\Gcal v}$ with $\Gcal = D_1g(\bar{\Ucal})$ or $G = D_2 g(\bar{\Ucal})\Delta$. Here, we also use the notation $g(\bar{\Ucal}) = g(\bar{\Ucal},\Delta \bar{\Ucal})$ for convenience.

    We now discuss these terms separately. First, we note that 
    \begin{equation}\label{eq:conservation-law-1}
        \nabla \cdot (\lrangle{\Vcal, D^2 f(\bar{\Ucal}) \Wcal}_\mathrm{sym} \nabla g(\bar{\Ucal})) = 0
    \end{equation}
    since $\bar{U}$ is constant.
    Second, since $A_j = A_j(\eps \x)$ and $E_j E_{-j} = 1$, every quadratic interaction of $\Vcal$ and $\Wcal$, or any of their derivatives, only depends on $\eps \x$. Therefore, we find that
    \begin{equation}\label{eq:conservation-law-2}
        \begin{split}
            &\nabla \cdot (f(\bar{\Ucal}) \nabla \lrangle{\Vcal, D_1^2 g(\bar{\Ucal}) \Wcal}_\mathrm{sym}) + \nabla \lrangle{\Vcal, D_1D_2 g(\bar{\Ucal}) \Delta \Wcal}_\mathrm{sym}) + \nabla \lrangle{\Delta\Vcal, D_2^2 g(\bar{\Ucal}) \Delta\Wcal}_\mathrm{sym}) \\
            &= \eps^2 \tilde{\kappa}_0 \Delta (|A_j|^2) + \Ocal(\eps^3)
        \end{split}
    \end{equation}
    with $\tilde{\kappa}_0 \in \R$. Hence, after rearranging, the two remaining terms are
    \begin{equation*}
        \begin{split}
            \nabla \cdot I_1 &:= \nabla \cdot ( Df(\bar{\Ucal}) \Vcal \nabla(D_1g(\bar{\Ucal}) \Wcal) + Df(\bar{\Ucal}) \Wcal \nabla (D_1 g(\bar{\Ucal}) \Vcal)), \\
            \nabla \cdot I_2 &:= \nabla \cdot ( Df(\bar{\Ucal}) \Vcal \nabla(D_2 g(\bar{\Ucal}) \Delta \Wcal) + Df(\bar{\Ucal}) \Wcal \nabla (D_2 g(\bar{\Ucal}) \Delta\Vcal)).
        \end{split}
    \end{equation*}
    To treat $I_1$, we recall that $\Wcal = \phib_+(\k_j) \bar{A}_j \bar{E}_j$. Using this, we obtain
    \begin{equation*}
        \begin{split}
            I_1 &= (Df(\bar{\Ucal})\phib_+(\k_j))(D_1g(\bar{\Ucal})\phib_+(\k_j)) \left[(i\k_j - i\k_j) |A_j|^2 + \eps (A_j \nabla \bar{A}_j + \bar{A}_j \nabla A_j)\right] + \Ocal(\eps^2) \\ &= \eps (Df(\bar{\Ucal})\phib_+(\k_j))(D_1g(\bar{\Ucal})\phib_+(\k_j)) \nabla(|A_j|^2) + \Ocal(\eps^2).
        \end{split}
    \end{equation*}
    This yields 
    \begin{equation}\label{eq:conservation-law-3}
        \nabla \cdot I_1 = \eps^2 (Df(\bar{\Ucal})\phib_+(\k_j))(D_1g(\bar{\Ucal})\phib_+(\k_j)) \Delta (|A_j|^2) + \Ocal(\eps^3).
    \end{equation}

    For $I_2$, we note that
    \begin{equation*}
        \begin{split}
            \nabla \Delta (E_j A_j) &= (\nabla \Delta E_j) A_j + \eps (2 D^2 E_j + \Delta E_j) \nabla A_j + \Ocal(\eps^2) \\
            &= -i\k_j |\k_j|^2 E_j A_j + \eps (-2 \k_j \k_j^T - |\k_j|^2) E_j \nabla A_j + \Ocal(\eps^2).
        \end{split}
    \end{equation*}
    Then, proceeding similarly to the treatment of $I_1$, we obtain that
    \begin{equation}\label{eq:conservation-law-4}
        \nabla \cdot I_2 = - \eps^2 (Df(\bar{\Ucal})\phib_+(\k_j))(D_2g(\bar{\Ucal})\phib_+(\k_j)) \nabla \cdot ((|\k_j|^2 + 2 \k_j\k_j^T)\nabla (|A_j|)^2) + \Ocal(\eps^3).
    \end{equation}
    Therefore, combining \eqref{eq:conservation-law-1}--\eqref{eq:conservation-law-4} shows that the quadratic nonlinearities in the conservation law are indeed of the claimed form and that the polynomial $p_c$ is given by
    \begin{equation}\label{eq:conservation-law-polynom}
        \begin{split}
            p_c(\k_j\k_j^T) &= \Bigl(\tilde{\kappa}_0 + (Df(\bar{\Ucal})\phib_+(\k_j))\bigl(D_1g(\bar{\Ucal})\phib_+(\k_j) - D_2g(\bar{\Ucal})\phib_+(\k_j)|k_j|^2\bigr)\Bigr) I \\
            &\qquad - 2 (Df(\bar{\Ucal})\phib_+(\k_j))(D_2g(\bar{\Ucal})\phib_+(\k_j)) \k_j\k_j^T.
        \end{split}
    \end{equation}
    This completes the proof.
\end{proof}

In the case of the thin-film system \eqref{eq:thin-film-equation}, we can obtain the polynomial $p_c$ explicitly. First, we note that since $\phib_+^*(0) = (1,0)^T$, the projection onto the conserved mode is a linear combination of terms, which are of the form $\nabla \cdot (f(\Ucal)\nabla g(\Ucal, \Delta \Ucal))$. In fact, we find that $g(\Ucal, \Delta \Ucal)$ is always linear and $f(\Ucal)$ only depends on $h$. The former yields that $\tilde{\kappa}_0 = 0$ in \eqref{eq:conservation-law-polynom}. Recalling the nonlinear flux $\frac{h^3}{3}(\nabla \Delta h-g\nabla h) + M\frac{h^2}{2} \nabla(h-\theta)$, we thus obtain that the leading order nonlinearity of the conservation law is given by
\begin{equation*}
    \sum_{j = 1}^N \nabla \cdot \bigl(p_c(\k_j\k_j^T) \nabla (|A_j|^2)\bigr), \quad p_c(\k_j\k_j^T) = \kappa_0 + \kappa_1 \k_j\k_j^T,
\end{equation*}
where 
\begin{equation*}
    \kappa_0 = (M-g)(\phib_+(\k_j))_1^2 + g (\phib_+(\k_j))_1(\phib_+(\k_j))_2 - |\k_j|^2 (\phib_+(\k_j))_1^2, \quad \kappa_1 = - (\phib_+(\k_j))_1^2.
\end{equation*}

\begin{remark}
    We point out that other nonlinear terms do not contribute to leading-order terms. Since there is always one leading derivative due to the divergence structure of the film-height equation, quadratic combinations of $\Psib_{\gammab}$-terms are at least of order $\eps^5$. Additionally, cubic combinations of the $A_j$ cannot generate a term at Fourier mode $E_0$ and all other terms are of higher order due to the divergence structure.
\end{remark}

\subsection{The amplitude equations}
We now combine the results from previous Sections \ref{sec:amplitude-equation-lin}--\ref{sec:amplitude-equation-con} to calculate the full amplitude equations to leading order. For the square lattice, we find that the formal amplitude equations, up to higher-order terms, are given by
\begin{equation}\label{eq:amplitude-equations-square}
    \begin{split}
        \partial_T A_1 &= -\dfrac{\lambda''_+(k_m^*)}{2 (k_m^*)^2} \partial_X^2 A_1 + M_0 \kappa A_1 + K_c A_0 A_1 + K_0 A_1 \abs{A_1}^2 + K_1 A_1 \abs{A_2}^2, \\
        \partial_T A_2 &= -\dfrac{\lambda''_+(k_m^*)}{2 (k_m^*)^2} \partial_Y^2 A_2 + M_0 \kappa A_2 + K_c A_0 A_2 + K_0 A_2 \abs{A_2}^2 +  K_1 A_2 \abs{A_1}^2, \\
        \partial_T A_0 &= -\dfrac{1}{2} \lambda_+^{\prime\prime}(0) \Delta A_0 + \div \bigl(p_c(\k_1\k_1^T) \nabla \abs{A_1}^2 + p_c(\k_2 \k_2^T) \nabla\abs{A_2}^2\bigr).
    \end{split}
\end{equation}
Here, the coefficients are given in Section \ref{sec:amplitude-equation-nonlin-square} and are explicitly computed in the Supplementary Material; see also Figure \ref{fig:density-plot-square-coeffs}.

For the hexagonal lattice, the amplitude equations are to leading order given by
\begin{equation}\label{eq:amplitude-equations-hex}
    \begin{split}
        \partial_T A_1 &= -\dfrac{\lambda''_+(k_m^*)}{2 (k_m^*)^2} \partial_X^2 A_1 + M_0 \kappa A_1 + K_c A_0 A_1 + \dfrac{N}{\eps}\bar{A}_2\bar{A}_3 + K_0 A_1 \abs{A_1}^2 + K_2 A_1 (\abs{A_2}^2 + \abs{A_3}^2), \\
        \partial_T A_2 &= -\dfrac{\lambda''_+(k_m^*)}{2 (k_m^*)^2} \left(-\dfrac{1}{2} \partial_X + \dfrac{\sqrt{3}}{2}\partial_Y\right)^2 A_2 + M_0 \kappa A_2 + K_c A_0 A_2 + \dfrac{N}{\eps}\bar{A}_1\bar{A}_3  + K_0 A_2 \abs{A_2}^2 \\
        & \quad +  K_2 A_2 (\abs{A_1}^2 + \abs{A_3}^2), \\
        \partial_T A_3 &= \dfrac{\lambda''_+(k_m^*)}{2 (k_m^*)^2} \left(\dfrac{1}{2} \partial_X + \dfrac{\sqrt{3}}{2}\partial_Y\right)^2 A_3 + M_0 \kappa A_3 + K_c A_0 A_3 + \dfrac{N}{\eps}\bar{A}_1\bar{A}_2  + K_0 A_3 \abs{A_3}^2 \\
        & \quad +  K_2 A_3 (\abs{A_1}^2 + \abs{A_2}^2), \\
        \partial_T A_0 &= -\dfrac{1}{2} \lambda_+^{\prime\prime}(0) \Delta A_0 + \div \bigl(p_c(\k_1\k_1^T) \nabla \abs{A_1}^2 + p_c(\k_2 \k_2^T) \nabla\abs{A_2}^2 + p_c(\k_3 \k_3^T) \nabla \abs{A_3}^2\bigr).
    \end{split}
\end{equation}
Again, the coefficients are given in Sections \ref{sec:amplitude-equation-nonlin-square} and \ref{sec:amplitude-equation-nonlin-hex} and are explicitly computed in the Supplementary Material. We point out that the resonance $\k_1 + \k_2 + \k_3=0$ of the hexagonal lattice leads to a quadratic term, which is singular in $\eps$ in the scaling used here. If $N \neq 0$ it turns out that hexagons arise through a transcritical bifurcation, but all small amplitude hexagons are unstable, see \cite{hoyle2007}. Hence, one typically restricts to parameter regimes where $N = \Ocal(\varepsilon)$ in the analysis of the amplitude equations, see e.g.~\cite{doelman2003}. It turns out that such a parameter regime also exists for the thin-film system \eqref{eq:thin-film-equation}, see Section \ref{sec:stationary-hex} and \cite{shklyaev2012}.

\begin{remark}
    Note that the degeneracy of the principal symbol in \eqref{eq:amplitude-equations-square} and \eqref{eq:amplitude-equations-hex} reflects that only a discrete set of the unstable wave numbers is covered by the ansatz.
\end{remark}

\subsection{Dynamics of amplitude equations: steady states and fronts}\label{sec:dynamics-amplitude-equations}

To conclude this section, we briefly discuss the dynamics, which can be observed in the leading-order amplitude equations \eqref{eq:amplitude-equations-square} and \eqref{eq:amplitude-equations-hex} and how it relates to solutions to the full thin-film system \eqref{eq:thin-film-equation}. Our main focus lies on spatially homogeneous steady states, which correspond to planar patterns in the original system, and on fast travelling fronts, whose profile is homogeneous in one spatial direction. These correspond to moving pattern interfaces in the original equation. In the remainder of the paper, we then establish the existence of these structures rigorously, see Section \ref{sec:stationary} for planar patterns and Section \ref{sec:modulating-fronts} for pattern interfaces.

The starting point of our analysis is the observation that the amplitude equations have certain invariant subspaces. For the square lattice, where the amplitude equation is given by \eqref{eq:amplitude-equations-square}, these are $\{A_1 = 0, A_2\in \R\}$ (or equivalently $\{A_2=0, A_1\in \R\}$) and $\{A_1 = A_2 \in \R\}$, which in light of
\begin{equation*}
    \Ucal(t,\x) = 2\varepsilon \bigl[A_1(T,\X) \cos(\k_1 \cdot \x) + A_2(T,\X) \cos(\k_2 \cdot \x)\bigr] \phib_+(k_m^*) + \Ocal(\varepsilon^2)
\end{equation*}
correspond to modulated roll waves and modulated square patterns, respectively. Similarly, for the hexagonal lattice, where the amplitude equations are given by \eqref{eq:amplitude-equations-hex}, the subspaces $\{A_1 \in \R, A_2=A_3=0\}$ (and all cyclic permutations) and $\{A_1 = A_2 = A_3 \in \R\}$ are invariant. Again, in light of
\begin{equation*}
    \Ucal(t,\x) = 2\varepsilon \bigl[A_1(T,\X) \cos(\k_1 \cdot \x) + A_2(T,\X) \cos(\k_2 \cdot \x) + A_3(T,\X) \cos(\k_3 \cdot \x)\bigr] \phib_+(k_m^*) + \Ocal(\varepsilon^2),
\end{equation*}
these correspond to modulated roll waves and modulated hexagonal patterns, respectively. Observe that on the invariant subspace of modulated roll waves, the amplitude equations on the square and hexagonal lattice are identical. We note that on the square lattice, we can always assume that $A_1, A_2 \in \R$ by passing to polar coordinates and using that the equations \eqref{eq:amplitude-equations-square} are invariant under $A_1 \mapsto A_1 e^{i\alpha}$ and $A_2 \mapsto A_2 e^{i\alpha}$, respectively. On the hexagonal lattice, this invariance is broken by the quadratic term, and thus, we cannot assume that the solutions are real-valued outside of the invariant subspaces mentioned above.

Now, steady planar patterns are obtained as spatially homogeneous solutions in these invariant subspaces. In particular, the conservation law is then solved by any constant $A_0 \in \R$, and we always set $A_0 = 0$. Roll waves $A_1 = A \in \R$ and $A_2 = 0$ (and $A_3=0$ on the hexagonal lattice) are then solutions to
\begin{equation*}
    0 = M_0 \kappa + K_0 A^2.
\end{equation*}
Square patterns $A_1 = A_2 = A \in \R$ are given as solutions to
\begin{equation*}
    0 = M_0 \kappa + (K_0 + K_1) A^2.
\end{equation*}
Finally, hexagonal patterns $A_1 = A_2 = A_3 = A \in \R$ are obtained as solutions to
\begin{equation*}
    0 = M_0\kappa + \frac{N}{\eps} A + (K_0 + 2 K_2) A^2.
\end{equation*}
We discuss their existence in detail in Section \ref{sec:stationary}, see in particular Theorems \ref{thm:square-patterns} and \ref{thm:hex-patterns}. 

Finally, we consider travelling wave solutions, specifically solutions of the form
\begin{equation*}
    A_j(T,\X) = A_j(X-\eps^{-1}cT).
\end{equation*}
Note that $X-\eps^{-1} cT = \eps(x-ct)$ and thus this corresponds to modulated patterns, where the amplitude modulation is homogeneous in $y$-direction and moves in $x$-direction with speed $c = \Ocal(1)$. It is well-known that fast-moving front solutions $v(T,X) = V(X-\eps^{-1} c T)$ for the reaction-diffusion system
\begin{equation*}
    \partial_T v = \partial_X^2 v + f(v)
\end{equation*}
satisfy the singularly perturbed profile ODE
\begin{equation*}
    0 = \eps^2 \partial_{\Xi}^2 V + c \partial_{\Xi} V + f(V),
\end{equation*}
where $\Xi = \eps(X-\eps^{-1} c T) = \eps^2(x-ct)$ using $X = \eps x$ and $T = \eps^2 t$. Formally, a profile can then be obtained by setting $\eps = 0$ and solving the first-order ODE. This idea can indeed be made rigorous using center manifold theory, see e.g.~\cite{scheel1996}, or using geometric singular perturbation theory, see e.g.~\cite{kuehn2015}. Applying these ideas to our systems, we obtain the profile ODE system
\begin{equation*}
    \begin{split}
        0 &= c\partial_\Xi A_1 + M_0 \kappa A_1 + K_c A_0 A_1 + K_0 A_1 \abs{A_1}^2 + K_1 A_1 \abs{A_2}^2, \\
        0 & = c \partial_\Xi A_2 + M_0 \kappa A_2 + K_c A_0 A_2 + K_0 A_2 \abs{A_2}^2 +  K_1 A_2 \abs{A_1}^2, \\
        0 & = c \partial_{\Xi} A_0
    \end{split}
\end{equation*}
for the square lattice and
\begin{equation*}
    \begin{split}
        0 &= c\partial_\Xi A_1 + M_0 \kappa A_1 + K_c A_0 A_1 + \dfrac{N}{\eps}\bar{A}_2\bar{A}_3 + K_0 A_1 \abs{A_1}^2 + K_2 A_1 (\abs{A_2}^2 + \abs{A_3}^2), \\
        0 &= c\partial_\Xi A_2 + M_0 \kappa A_2 + K_c A_0 A_2 + \dfrac{N}{\eps}\bar{A}_1\bar{A}_3  + K_0 A_2 \abs{A_2}^2 +  K_2 A_2 (\abs{A_1}^2 + \abs{A_3}^2), \\
        0 &= c\partial_\Xi A_3 + M_0 \kappa A_3 + K_c A_0 A_3 + \dfrac{N}{\eps}\bar{A}_1\bar{A}_2  + K_0 A_3 \abs{A_3}^2 + K_2 A_3 (\abs{A_1}^2 + \abs{A_2}^2), \\
        0 &= c\partial_{\Xi} A_0
    \end{split}
\end{equation*}
for the hexagonal lattice. In particular, we formally obtain that $A_0$ is spatially constant, and thus we again assume that $A_0 = 0$. Recalling the form of the multiscale ansatz \eqref{eq:ansatz-physical} leading to the amplitude equations, we point out that heteroclinic connections between the fixed points in these first-order systems thus correspond to fast-moving pattern interfaces in the full system \ref{eq:thin-film-equation}. We make the formal ideas presented here rigorous in Section \ref{sec:modulating-fronts} and analyse the dynamics of the first-order systems in detail in Sections \ref{sec:modulation-center-square} and \ref{sec:modulation-center-hex}. The corresponding phase planes can be found in Figures \ref{fig:phase-diags-square1} and \ref{fig:phase-diags-square2} for square patterns and Figures \ref{fig:phase-diags-hex1} and \ref{fig:phase-diags-hex2} for hexagonal patterns. Images of the resulting pattern interfaces can be found in Figures \ref{fig:Modfronts-square}, \ref{fig:Modfronts-hex} and \ref{fig:Modfronts-hex-two}. We also refer to \cite{Hilder_Data_availability_for_2024}, where videos of modulating fronts can be found.

\section{Bifurcation of periodic planar stationary solutions}\label{sec:stationary}

In Section \ref{sec:dynamics-amplitude-equations} we predicted the existence of stationary patterns using formal multiscale analysis. The goal here is to construct these solutions rigorously using center manifold theory. We begin by defining appropriate function spaces. For this, recall the Fourier lattice $\Gamma$, see \eqref{eq:fourier-lattice}, and let
\begin{equation*}
    \Ucal(\x) = \sum_{\k \in \Gamma} \a_{\k} e^{i\k\cdot \x},\quad \x\in \R^2.
\end{equation*}
be a periodic function with respect to the dual lattice of $\Gamma$. For \(\ell_1,\ell_2 \in [0,\infty)\), we then introduce the real function spaces
\begin{equation*}
\begin{split}
    L^2_\Gamma & = \Bigl\{ \Ucal = \sum_{\k \in \Gamma} \a_{\k} e^{i\k\cdot \x} : \a_{\k} = \overline{\a_{-\k}}\in \C^2,\ \|\Ucal\|_{L^2_{\Gamma}} < +\infty\Bigr\},\\
    H^{\ell_1,\ell_2}_\Gamma & = \Bigl\{ \Ucal \in L^2_{\Gamma} : \|\Ucal\|_{H^{\ell_1,\ell_2}_{\Gamma}} < \infty \Bigr\}, \\
    H^{\ell}_{\Gamma} & := H^{\ell,\ell}_{\Gamma}, \quad \ell>0,
\end{split}
\end{equation*}
with corresponding norms defined by
\begin{equation*}
\begin{split}
    \|\Ucal\|_{L^2_{\Gamma}}^2 & :=  \sum_{\k\in \Gamma} |\a_{\k}|^2, \\
    \|\Ucal\|_{H^{\ell_1,\ell_2}_{\Gamma}}^2 & := \sum_{\k\in \Gamma} (1+|\gammab|^2)^{\ell_1}|\a_{\k,1}|^2 + (1+|\k|^2)^{\ell_2}|\a_{\k,2}|^2.
\end{split}
\end{equation*}
Additionally, we define the subspace
\begin{equation*}
    L_{\Gamma,0}^2 := \{\Ucal \in L^2_\Gamma : P_0 \a_0 = 0\}
\end{equation*}
and similarly $H^{\ell_1,\ell_2}_{\Gamma,0} = H^{\ell_1,\ell_2}_\Gamma\cap L^2_{\Gamma,0}$. This removes the zero eigenvalue at $\k = 0$ corresponding to the conservation law.

In order to apply center manifold reduction, we rewrite equation \eqref{eq:thin-film-equation} as
\begin{equation}\label{eq:cm-system}
\left\{
    \begin{array}{rcl}
    \partial_t \Ucal & = & \Lcal_{M^*}\Ucal + \Rcal(\Ucal;M), \\
    \partial_t M & = & 0,
    \end{array}
\right. \quad \text{where } \Ucal = \begin{pmatrix}
    h-1 \\ \theta -1
\end{pmatrix}
\end{equation}
and $\Rcal(\Ucal;M) := (\Lcal_M-\Lcal_{M^*}) \Ucal + \Ncal(\Ucal;M)$, which is nonlinear in $(\Ucal,M)$.

To construct bifurcation branches consisting of stationary periodic solutions with a roll wave, square or hexagonal symmetry, respectively, we use center manifold reduction and prove the hypotheses of \cite[Thm. 2.20, Hypotheses 2.1, 2.4 and 2.15]{haragus2011a} in the following three corollaries.

The first corollary establishes Hypothesis 2.1, by establishing the mapping properties of $\Lcal_{M^*}$ and $\Rcal$ in appropriate function spaces.

\begin{corollary}\label{cor:mapping-properties}
    Let \(\ell > 0\). Then
    \begin{equation*}
        \Lcal_{M^*} \in L(\Zcal,\Xcal)\quad \text{and}\quad \Rcal \in C^1(\Zcal\times \R,\Ycal)
    \end{equation*}
    with $\Zcal = H^{\ell+4,\ell+2}_{\Gamma,0}$ and $\Ycal = \Xcal = H^{\ell}_{\Gamma,0}$. Additionally, it holds
    \begin{equation*}
        \Rcal(0;M^*) = D_1\Rcal(0;M^*) = 0 \quad \text{for all } M \geq 0.
    \end{equation*}
\end{corollary}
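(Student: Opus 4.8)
The plan is to verify the three assertions in turn: the linear operator $\Lcal_{M^*}$ by a direct Fourier-multiplier estimate, and the remainder $\Rcal$ by multiplication and composition (Nemytskii) estimates adapted to the mixed-order scale $H^{\ell_1,\ell_2}_\Gamma$.

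\textbf{Boundedness of $\Lcal_{M^*}$.} Since $\Lcal_{M^*}$ acts on $L^2_\Gamma$ as the multiplier $\hat\Lcal_{M^*}(\k)$, I would read off the polynomial growth of its entries: the first column (acting on the $h$-component) grows like $|\k|^4$, while the second column (acting on $\theta$) grows like $|\k|^2$. Hence, for $\Ucal\in\Zcal=H^{\ell+4,\ell+2}_{\Gamma,0}$, the weights $(1+|\k|^2)^{\ell+4}$ and $(1+|\k|^2)^{\ell+2}$ built into the $\Zcal$-norm exactly absorb these rates, so that each entry of $\hat\Lcal_{M^*}(\k)\a_\k$ is controlled by $(1+|\k|^2)^{\ell/2}|\a_\k|$ and summation over $\k\in\Gamma$ gives $\|\Lcal_{M^*}\Ucal\|_{H^\ell_\Gamma}\lesssim\|\Ucal\|_{\Zcal}$. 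The asymmetric orders $\ell+4$ and $\ell+2$ are dictated precisely by these two growth rates, which is the only bookkeeping point. That $\Lcal_{M^*}$ maps into $\Xcal=H^\ell_{\Gamma,0}$ follows because the first row of $\hat\Lcal_{M^*}(0)$ vanishes (the $h$-equation is a conservation law), so $\Pcal(0)\Lcal_{M^*}\Ucal$ has vanishing first component and hence zero $P_0$-projection.

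\textbf{The genuine nonlinearity (the main obstacle).} I first split $\Rcal(\Ucal;M)=(M-M^*)\Lcal_1\Ucal+\Ncal(\Ucal;M)$, where $\Lcal_1=\partial_M\Lcal_M$ is a fixed matrix of second-order operators; this term is bilinear and bounded $\Zcal\to\Xcal$ by the same multiplier estimate (its top row again vanishing at $\k=0$), hence $C^\infty$. The real work is $\Ncal$. This is where the quasilinearity must be resolved: after multiplying the second equation of \eqref{eq:evolution-equation-U} by $1/h$, every nonlinear term becomes a finite sum of products of (i) a smooth function of $h=1+U_1$ (a polynomial in $h$ together with the factor $1/h$), (ii) derivatives $\partial^\alpha U_1$ with $|\alpha|\le4$, and (iii) derivatives $\partial^\beta U_2$ with $|\beta|\le2$, arranged so that in each monomial the derivatives falling on a single factor never exceed its available regularity. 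The key analytic input is the multiplication estimate $\|fg\|_{H^\ell}\lesssim\|f\|_{H^\ell}\|g\|_{H^s}$ for $s>d/2=1$ and $s\ge\ell$: since the smoothest factor always lies in a space $H^{\ell+4}$ with $\ell+4>\max(1,\ell)$, it acts as a bounded multiplier on $H^\ell$, and I would check term by term that the worst contributions, namely $(h^3-1)\Delta^2 U_1$ in the height equation and $\tfrac18 h^3\Delta^2 U_1$ in the temperature equation, indeed land in $H^\ell$. The factor $1/h$ is handled by the composition lemma for Nemytskii operators: $U_1\mapsto 1/(1+U_1)$ is smooth from a neighborhood of $0$ in $H^{\ell+4}$, on which $\|U_1\|_{L^\infty}<1$ by the embedding $H^{\ell+4}\hookrightarrow C^0$ so that $h>0$, into $H^{\ell+4}$. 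Combining the multiplication and composition estimates yields $\Ncal\in C^\infty(\Zcal\times\R,H^\ell_\Gamma)$, in particular $C^1$, which is all that Hypothesis 2.1 requires.

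\textbf{Constraint and vanishing.} To conclude that $\Rcal$ maps into $\Ycal=H^\ell_{\Gamma,0}$, I would note that the first component of the nonlinearity is a pure divergence $-\div(\cdots)$, so its zeroth Fourier mode vanishes and thus $P_0\Pcal(0)\Ncal=0$; together with the corresponding property of $\Lcal_1$ this gives the range constraint. Finally, $\Ncal(0;M)=0$ and $D_1\Ncal(0;M)=0$ since $\Ncal=\Ncal_2+\Ncal_3+R$ is at least quadratic with $R=\Ocal(\|\Ucal\|^4)$, while $\Lcal_M-\Lcal_{M^*}=0$ at $M=M^*$; hence $\Rcal(0;M)=0$ for all $M$ and $D_1\Rcal(0;M^*)=0$. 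The anticipated difficulty is entirely concentrated in the nonlinear step, specifically in tracking the derivative budget across the quasilinear inversion so that the top-order products close in $H^\ell$.
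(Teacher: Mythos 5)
Your proposal is correct, and for the linear part it coincides with the paper's argument (pure order counting: fourth order in $h$, second order in $\theta$, exactly matching the weights $\ell+4$ and $\ell+2$ in $\Zcal$). For the nonlinearity, however, you take a genuinely different and more laborious route. The paper never inverts the quasilinear mass matrix $\operatorname{diag}(1,h)$: the system \eqref{eq:cm-system} is taken with the polynomial right-hand side $F_M$ as is, which is legitimate because only stationary solutions are extracted from this center manifold and these are unaffected by the mass matrix; consequently $\Rcal$ is a genuinely polynomial nonlinearity, and the paper's entire proof reduces to the observation that $\Zcal$ is a Banach algebra acting on $H^{\ell}_{\Gamma}$ by the same multiplication estimate you quote. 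Your reading — dividing the temperature equation by $h$ so that \eqref{eq:cm-system} is the physical evolution system — forces you to control the Nemytskii operator $U_1\mapsto 1/(1+U_1)$, which you do correctly via the embedding $H^{\ell+4}\hookrightarrow C^0$ and the composition lemma; this buys an analytic (rather than polynomial) nonlinearity but changes nothing in the conclusion, since the $h$-equation, and hence the divergence structure you use for the range constraint, is untouched by the division. Two places where your write-up is in fact more careful than the paper's three-line proof: you verify that the image actually lies in $H^{\ell}_{\Gamma,0}$ (divergence structure of the first component plus $\phib_+^*(0)=(1,0)^T$, so the $P_0$-projection of the zero mode vanishes), a point the paper silently omits; and you disentangle the vanishing conditions correctly, namely $\Rcal(0;M)=0$ for all $M$ while $D_1\Rcal(0;M)=(M-M^*)\partial_M\Lcal_M$ vanishes only at $M=M^*$, which repairs the garbled quantifier ``for all $M\geq 0$'' in the statement itself.
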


\begin{proof}
    Since $\eqref{eq:thin-film-equation}$ contains at most fourth-order derivatives of $h$ and second-order derivatives of $\theta$, the linear operator $\Lcal$ maps $\Zcal$ into $\Xcal$. Additionally, since $\ell > 0$, the space $\Zcal$ is a Banach algebra, which, together with the fact that $\Rcal$ is a polynomial nonlinearity in $\Ucal$ and $M$, establishes the mapping properties of $\Rcal$.
\end{proof}

The second corollary establishes Hypothesis 2.4 which shows that the spectrum of $\Lcal_{M^*}$ decomposes into a central and a hyperbolic part, which are separated by a spectral gap. This follows directly from the linear analysis in Section \ref{sec:linear-analysis} and the fact that the Fourier lattice $\Gamma$ is discrete.

\begin{corollary}\label{cor:spectral-decomposition}
    It holds
    \begin{equation*}
        \sigma(\Lcal_{M^*}) = \sigma_0 \cup \sigma_-,
    \end{equation*}
    where \(\sigma_0\) is the central part of the spectrum, given by
    \begin{equation*}
        \sigma_0 = \{\lambda_+(|\pm \k_j|;M^*) : j=1,\ldots, N \} = \{0\}\subset i\R,
    \end{equation*}
    and \(\sigma_- = \sigma(\Lcal_{M^*})\setminus\sigma_0\) is the stable part of the spectrum, and there exists a \(\delta > 0\) such that
    \begin{equation*}
        \Re(\lambda) < -\delta \quad \text{for all } \lambda \in \sigma_-.
    \end{equation*}
\end{corollary}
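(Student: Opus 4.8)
The plan is to use that on the discrete lattice $\Gamma$ the operator $\Lcal_{M^*}$ is a Fourier multiplier, so that it decomposes as the direct sum $\bigoplus_{\k\in\Gamma}\hat{\Lcal}_{M^*}(\k)$ of the $2\times2$ matrices acting on the Fourier coefficients $\a_\k$. Since the eigenvalues of $\hat{\Lcal}_{M^*}(\k)$ are precisely $\lambda_\pm(|\k|;M^*)$ from Section \ref{sec:linear-analysis}, the $L^2_\Gamma$-spectrum of $\Lcal_{M^*}$ equals the closure of $\bigcup_{\k\in\Gamma}\{\lambda_+(|\k|;M^*),\lambda_-(|\k|;M^*)\}$. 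Restricting to $L^2_{\Gamma,0}$ removes exactly the zero eigenvalue $\lambda_+(0;M^*)=0$ carried by the conservation-law mode $P_0\a_0$, while retaining $\lambda_-(0;M^*)=-\beta$. Hence the whole statement reduces to locating these countably many eigenvalues.

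I would first pin down the central part. By Proposition~\ref{prop:linear-monotonic}, items \ref{it:linear-monotonic-1} and \ref{it:linear-monotonic-2}, one has $\Re(\lambda_-(k;M^*))<0$ for all $k\ge0$ and $\Re(\lambda_+(k;M^*))\le0$ with equality exactly at $k\in\{0,k_m^*\}$. By construction $\Gamma$ contains $\k=0$ together with the finitely many lattice points $\pm\k_j\in S_{k_m^*}$, $j=1,\dots,N$, and no other lattice point meets $\{0,k_m^*\}$. Discarding the $k=0$ contribution in $L^2_{\Gamma,0}$, the only spectrum on $i\R$ is $\sigma_0=\{\lambda_+(|\k_j|;M^*):j=1,\dots,N\}=\{0\}$, an eigenvalue of finite multiplicity equal to the number of lattice points on $S_{k_m^*}$, with eigenvectors $\phib_+(\k_j)$.

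It remains to produce a uniform gap, for which I would split the lattice radii into bounded and unbounded regimes. In any bounded range of $|\k|$ only finitely many lattice radii occur, and by discreteness of $\Gamma$ those different from $0,k_m^*$ keep a definite distance from the critical circle, so the associated $\lambda_\pm$ form a finite set of numbers with strictly negative real part. For $|\k|\to\infty$ I would invoke the asymptotics behind Remark~\ref{rem:imaginary-roots}, $a_1(k,M^*)\sim\tfrac13 k^4$ and $a_0(k,M^*)\sim\tfrac{48-M^*}{144}k^6$, where $48-M^*>0$ by Remark~\ref{rem:Msmaller48}; since $\lambda_\pm=\tfrac12\bigl(-a_1\pm\sqrt{a_1^2-4a_0}\bigr)$, this yields $\Re(\lambda_+(k;M^*))\sim-\tfrac{48-M^*}{48}k^2\to-\infty$ and $\Re(\lambda_-(k;M^*))\sim-\tfrac13 k^4\to-\infty$. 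Together with the bounded imaginary parts from Remark~\ref{rem:imaginary-roots}, this shows the eigenvalues leave every compact subset of $\C$, so the union is already closed, has no finite accumulation point, and consists of isolated eigenvalues of finite multiplicity; in particular $\sigma_0$ is a finite set of finite-multiplicity eigenvalues with a finite-rank spectral projection, as required by Hypothesis~2.4. Moreover only finitely many eigenvalues can have real part above any negative threshold, and each of those is strictly negative, so $\sup\{\Re\lambda:\lambda\in\sigma_-\}$ is attained and negative; calling it $-\delta$ gives the claimed $\delta>0$.

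The proof is largely bookkeeping built on Section \ref{sec:linear-analysis}, and I expect the only point needing genuine care to be the uniformity of the gap across the infinitely many lattice radii: strict negativity of $\Re(\lambda_\pm)$ pointwise is not by itself enough to exclude an accumulation of the stable spectrum onto the imaginary axis, and it is exactly the large-$k$ asymptotics of $a_0,a_1$ together with the sign condition $M^*<48$ that rule this out. A minor secondary point is the absence of residual or continuous spectrum, but this is immediate from the block-diagonal structure, since $\lambda$ lies in the resolvent set as soon as $\inf_{\k\in\Gamma}\dist\bigl(\lambda,\{\lambda_\pm(|\k|;M^*)\}\bigr)>0$.
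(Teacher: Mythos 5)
Your proposal is correct and follows the same route the paper intends: the paper gives no detailed proof of this corollary, asserting only that it ``follows directly from the linear analysis in Section \ref{sec:linear-analysis} and the fact that the Fourier lattice $\Gamma$ is discrete,'' which is exactly the Fourier-multiplier/block-diagonal argument you spell out, including the removal of the conserved zero mode via $L^2_{\Gamma,0}$. Your additional detail — the large-$k$ asymptotics $a_1\sim\tfrac13 k^4$, $a_0\sim\tfrac{48-M^*}{144}k^6$ with $M^*<48$ from Remark \ref{rem:Msmaller48}, used to rule out accumulation of $\sigma_-$ onto the imaginary axis — is precisely the uniformity point the paper leaves implicit, and you handle it correctly.
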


The spectral decomposition gives rise to decompositions of \(\Xcal = \Xcal_0 \oplus \Xcal_h\) and \(\Zcal = \Zcal_0 \oplus \Zcal_h\), where
\begin{equation*}
\begin{split}
    \Xcal_0 & = \bigl\{\Ucal \in \Xcal : \Pcal(\gammab) \Ucal = 0,\ \Pcal_-(\pm \k_j) \Ucal = 0,\ \Pcal_-(0)\Ucal = 0,\ j=1,\ldots,N,\ \gammab\in \Gamma_h\bigr\}, \\
    \Zcal_0 & = \Zcal\cap \Xcal_0,
\end{split}
\end{equation*}
i.e. \(\Xcal_0\) is the finite-dimensional central eigenspace of \(\Lcal_{M^*}\).

Finally, the third corollary establishes the resolvent estimate of Hypothesis 2.15.

\begin{corollary}\label{cor:resolvent-estimate}
    There exist $\omega_0 >0$ and a constant \(C>0\) such that for all $\omega > \omega_0$ it holds \(i\omega\) is in the resolvent set of \(\Lcal_{M^*}\) and
    \begin{equation*}
        \norm{(i\omega - \Lcal_{M^*})^{-1}}_{L(\Xcal)} \leq \frac{C}{|\omega|}.
    \end{equation*}
\end{corollary}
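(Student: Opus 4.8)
The plan is to exploit that, on the periodic space $\Xcal = H^\ell_{\Gamma,0}$, the operator $\Lcal_{M^*}$ is Fourier-diagonal: it acts on the coefficient $\a_\k$ of each mode $\k \in \Gamma$ by the constant $2\times2$ matrix $\hat{\Lcal}_{M^*}(\k)$, which by rotational symmetry depends only on $k = |\k|$. Consequently, the candidate resolvent $(i\omega - \Lcal_{M^*})^{-1}$ is also Fourier-diagonal, with block $R_\k := (i\omega I - \hat{\Lcal}_{M^*}(\k))^{-1}$ on the $\k$-th coefficient. Since the $\Xcal$-norm weights each mode by the same scalar $(1+|\k|^2)^\ell$ on both components, this weight factors through each block, so that
\begin{equation*}
    \norm{(i\omega - \Lcal_{M^*})^{-1}}_{L(\Xcal)} = \sup_{\k \in \Gamma} \norm{R_\k}_{L(\C^2)}.
\end{equation*}
Thus it suffices to bound $\norm{R_\k}_{L(\C^2)}$ by $C/|\omega|$ uniformly in $\k$. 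At $\k = 0$ the restriction to $L^2_{\Gamma,0}$ removes the kernel direction $\phib_+(0)$, leaving only the eigenvalue $\lambda_-(0) = -\beta$, for which $(i\omega + \beta)^{-1}$ is trivially $\Ocal(|\omega|^{-1})$.

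For the $2\times2$ blocks I would use the explicit inverse
\begin{equation*}
    R_\k = \frac{1}{P_{k,M^*}(i\omega)} \begin{pmatrix} i\omega - d & b \\ c & i\omega - a \end{pmatrix},
\end{equation*}
where $a,b,c,d$ are the entries of $\hat{\Lcal}_{M^*}(\k)$ and I use the identity $\det(i\omega I - \hat{\Lcal}_{M^*}(\k)) = P_{k,M^*}(i\omega)$. Writing $P_{k,M^*}(i\omega) = (a_0 - \omega^2) + i\omega a_1$ with the coefficients $a_0(k,M^*), a_1(k,M^*)$ from Section \ref{sec:linear-analysis} gives $\abs{P_{k,M^*}(i\omega)}^2 = (a_0 - \omega^2)^2 + \omega^2 a_1^2$. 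First, $i\omega$ lies in the resolvent set for all $\omega > \omega_0$: by Remark \ref{rem:imaginary-roots} the eigenvalues satisfy $\abs{\Im \lambda_\pm} \le C$, so $P_{k,M^*}(i\omega) = (i\omega - \lambda_+)(i\omega - \lambda_-) \neq 0$ once $\omega_0 > C$.

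The core of the argument is to match the growth of the numerator (adjugate) against that of the denominator (determinant) in $k$, which is where the mixed-order structure enters. Using the explicit entries I would bound every adjugate entry by $C(\abs{\omega} + k^4)$, the worst being $\abs{i\omega - a}$ and $\abs{c}$, both of order $k^4$. For the denominator I would prove the lower bound $\abs{P_{k,M^*}(i\omega)} \ge c(\omega^2 + \abs{\omega} k^4)$ for all $k$ and $\omega > \omega_0$. The estimate $\abs{P} \ge \abs{\omega} a_1 \ge c\abs{\omega} k^4$, valid since $a_1(k,M^*) > 0$ by the proof of Proposition \ref{prop:linear-monotonic} and $a_1 \sim \tfrac13 k^4$, yields the second term; the estimate $\abs{P} \ge c\omega^2$ follows by splitting on whether $a_0 \le \tfrac12\omega^2$, where $\abs{a_0 - \omega^2} \ge \tfrac12\omega^2$, or $a_0 > \tfrac12\omega^2$, where $k$ is forced large so that $a_1 \gtrsim k^4 \gtrsim \omega^{4/3}$ and hence $\abs{\omega}a_1 \ge \omega^2$. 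Here I use $M^* < 48$ from Remark \ref{rem:Msmaller48} to guarantee the positive leading coefficient $a_0 \sim \tfrac{48 - M^*}{144} k^6$. Combining the two estimates gives
\begin{equation*}
    \norm{R_\k}_{L(\C^2)} \le \frac{C(\abs{\omega} + k^4)}{c(\omega^2 + \abs{\omega} k^4)} = \frac{C}{c\abs{\omega}},
\end{equation*}
which is the claimed bound, uniformly in $\k$.

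The main obstacle is precisely this matching step: because the blocks $\hat{\Lcal}_{M^*}(\k)$ are non-normal and of mixed order, with fourth-order growth in the height row and second-order growth in the temperature row, the naive bound $\norm{R_\k} \le 1/\dist(i\omega, \sigma(\Lcal_{M^*}))$ is far too weak, since the adjugate entries themselves blow up like $k^4$. One must therefore track numerator and denominator simultaneously as functions of $k$ and exploit the exact algebraic growth $a_1 \sim k^4$, $a_0 \sim k^6$. The delicate point is the crossover regime $k \sim \omega^{1/3}$, where $a_0 - \omega^2$ nearly vanishes and the determinant is carried entirely by its imaginary part $\omega a_1$; there the estimate is tight, which is also why the bound takes the sharp form $C/\abs{\omega}$.
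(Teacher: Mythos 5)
Your proof is correct, and it shares the paper's starting point: $\Lcal_{M^*}$ acts diagonally on the Fourier modes, so the resolvent estimate reduces to a bound on the $2\times2$ blocks $(i\omega I-\hat{\Lcal}_{M^*}(\k))^{-1}$, uniform in $\k\in\Gamma$, with the $\k=0$ block reduced to the invariant line spanned by $\phib_-(0)$ exactly as you say. The key estimate, however, is genuinely different from the paper's. The paper's proof is a single line: it bounds $\norm{(i\omega-\Lcal_{M^*})^{-1}}_{L(\Xcal)}$ by $\sup_{k\geq 0}\abs{i\omega-\lambda_\pm(k;M^*)}^{-1}$ and then invokes Remark \ref{rem:imaginary-roots} (uniformly bounded imaginary parts of the spectral curves) to conclude the $1/\abs{\omega}$ decay. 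That first inequality is the distance-to-spectrum bound, which is automatic only for normal operators; the blocks here are non-normal, with off-diagonal entries of different orders in $k$ (namely $\Ocal(k^2)$ and $\Ocal(k^4)$), and for non-normal matrices the distance to the eigenvalues is in general a \emph{lower} bound on the resolvent norm, not an upper one. In this respect your phrase \enquote{far too weak} actually understates the issue: the naive bound is not merely insufficient, it is unavailable as stated, so the paper's one-line argument has a gap that your computation closes. Your adjugate-versus-determinant matching is precisely the missing justification: the numerator grows like $\abs{\omega}+k^4$, the denominator satisfies $\abs{P_{k,M^*}(i\omega)}\geq c\bigl(\omega^2+\abs{\omega}k^4\bigr)$ — using $a_1(k,M^*)>0$ for all $k\geq 0$ from the proof of Proposition \ref{prop:linear-monotonic} together with $a_1\sim\tfrac13 k^4$, plus the case distinction on whether $a_0$ exceeds $\tfrac12\omega^2$ — and the quotient is $\Ocal(1/\abs{\omega})$ uniformly in $k$, including the crossover regime $k\sim\omega^{1/3}$ where the determinant is carried by its imaginary part. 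What the paper's route buys is brevity and a transparent link to Remark \ref{rem:imaginary-roots}; what yours buys is an airtight argument that confirms the paper's conclusion, makes the role of the mixed-order structure explicit, and shows why the $C/\abs{\omega}$ rate is sharp. The only point to spell out in a final write-up is that the uniform block bound is what guarantees $i\omega$ lies in the resolvent set of the full operator, since the blockwise inverse then defines a bounded operator on $\Xcal$.
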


\begin{proof}
    Observe that
    \begin{equation*}
        \norm{(i\omega - \Lcal_{M^*})^{-1}}_{L(\Xcal)} \leq \sup_{k\geq 0} |i\omega -\lambda_{\pm}(k;M^*)|^{-1} \sim \frac{1}{|\omega|} \quad \text{as } |\omega| \to \infty,
    \end{equation*}
    since \(\Im(\lambda_{\pm}(k;M^*)) \leq C\) for \(k\geq 0\), see Remark \ref{rem:imaginary-roots}.
\end{proof}

Combining Corollaries \ref{cor:mapping-properties}--\ref{cor:resolvent-estimate}, we can now apply \cite[Thm. 2.20]{haragus2011a} and obtain the following theorem.

\begin{theorem}\label{thm:center-manifold}
    Let $\ell > 0$. Then there exists a neighborhood $\Ofrak$ of $(0,M^*)$ in $\Zcal_0 \times \R$ and a smooth map $\Psib \colon \Ofrak \to \Zcal_h$ such that
    \begin{equation*}
        \Psib(0;M^*) = D\Psib(0;M^*) = 0
    \end{equation*}
    and the center manifold given by
    \begin{equation*}
        \Mcal_0 = \{\Ucal_0 + \Psib(\Ucal_0;M) : (\Ucal_0,M) \in \Ofrak\}
    \end{equation*}
    is invariant and contains all small bounded solutions of \eqref{eq:cm-system}. Furthermore, the solutions to the system of equations
    \begin{equation}\label{eq:central-equations}
        \partial_t(\Pcal_+(\k_j)\Ucal_0) = \Pcal_+(\k_j)\left[\Rcal(\Ucal_0 + \Psib(\Ucal_0;M);M)\right], \quad j=1,\ldots,N,
    \end{equation}
    give rise to solutions to the full system \eqref{eq:cm-system} via $\Ucal = \Ucal_0 + \Psib(\Ucal_0;M)$. Additionally, the symmetries of the system \eqref{eq:cm-system} are preserved by the reduction function $\Psib$.
\end{theorem}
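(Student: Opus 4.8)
The plan is to obtain Theorem \ref{thm:center-manifold} as a direct application of the parameter-dependent center manifold theorem \cite[Thm.~2.20]{haragus2011a} to the extended system \eqref{eq:cm-system}, in which $M$ has been appended as a dynamical variable through $\partial_t M = 0$ so that the construction is automatically uniform in the bifurcation parameter. The three structural hypotheses required by that theorem have already been established: Corollary \ref{cor:mapping-properties} verifies Hypothesis 2.1, namely $\Lcal_{M^*} \in L(\Zcal,\Xcal)$, $\Rcal \in C^1(\Zcal\times\R,\Ycal)$ and $\Rcal(0;M^*) = D_1\Rcal(0;M^*) = 0$; Corollary \ref{cor:spectral-decomposition} verifies Hypothesis 2.4, giving the splitting $\sigma(\Lcal_{M^*}) = \sigma_0 \cup \sigma_-$ with $\sigma_0 = \{0\}$ of finite multiplicity and a spectral gap $\Re(\lambda) < -\delta$ on $\sigma_-$; and Corollary \ref{cor:resolvent-estimate} verifies the resolvent bound of Hypothesis 2.15 on the imaginary axis. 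Since the hypotheses hold verbatim, I would invoke the abstract theorem to produce the smooth reduction function $\Psib\colon\Ofrak\to\Zcal_h$ with $\Psib(0;M^*) = D\Psib(0;M^*) = 0$ and the associated invariant manifold $\Mcal_0$, which contains all sufficiently small bounded solutions of \eqref{eq:cm-system}.

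It then remains to identify the reduced equations \eqref{eq:central-equations}. First I would use that, because the spaces $H^{\ell}_{\Gamma,0}$ exclude the conservation-law zero mode at $\k = 0$ (the condition $P_0\a_0 = 0$), the central subspace $\Xcal_0$ is finite-dimensional and is spanned precisely by the critical modes $\phib_+(\k_j) E_j$ together with their complex conjugates, so the coordinates on $\Mcal_0$ are the amplitudes $\Pcal_+(\k_j)\Ucal_0$, $j = 1,\ldots,N$. The reduced flow is obtained by applying the spectral projection onto $\Xcal_0$ to the $\Ucal$-equation in \eqref{eq:cm-system}. Since $\lambda_+(k_m^*;M^*) = 0$ by Proposition \ref{prop:linear-monotonic}, item \ref{it:linear-monotonic-2}, the operator $\Lcal_{M^*}$ annihilates $\Xcal_0$; moreover $\Pcal_+(\k_j)\Lcal_{M^*}\Psib = 0$ because $\Psib$ takes values in the complementary subspace $\Zcal_h$, which carries no $\phib_+(\k_j)$-component. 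Hence the entire linear contribution drops out upon projection, leaving exactly \eqref{eq:central-equations}. For the preservation of symmetries I would appeal to the equivariance statement accompanying \cite[Thm.~2.20]{haragus2011a}: the reflection and translation symmetries of \eqref{eq:thin-film-equation} act as isometries on $L^2_\Gamma$ commuting with both $\Lcal_{M^*}$ and $\Rcal$, and since the reduction function is uniquely determined on $\Ofrak$, it inherits the equivariance, so that $\Psib$ commutes with these symmetries.

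I expect the only genuinely delicate point to be the bookkeeping in the second step, rather than any new analysis: one must check that excising the conserved mode in passing to $H^{\ell}_{\Gamma,0}$ is legitimate (this mode lies in the kernel of $\Lcal_{M^*}$ and would otherwise inflate $\sigma_0$, so its removal is exactly what keeps the central spectrum $\{0\}$ of finite multiplicity and the reduction finite-dimensional), and that the central projection truly eliminates the $\Lcal_{M^*}$-term so that \eqref{eq:central-equations} is nonlinear in character. Everything else reduces to quoting the abstract theorem, the heavy lifting having been carried out in Corollaries \ref{cor:mapping-properties}--\ref{cor:resolvent-estimate}.
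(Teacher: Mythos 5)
Your proposal is correct and takes essentially the same route as the paper: the paper's proof consists precisely of combining Corollaries \ref{cor:mapping-properties}--\ref{cor:resolvent-estimate} (Hypotheses 2.1, 2.4 and 2.15) and invoking \cite[Thm.~2.20]{haragus2011a} applied to the extended system \eqref{eq:cm-system}. Your additional discussion of the reduced equations \eqref{eq:central-equations} and of equivariance merely spells out conclusions already packaged in the abstract theorem, so no new analysis is required beyond what the corollaries provide.
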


We now analyse the emergence of stationary periodic patterns for system \eqref{eq:cm-system} by considering the bifurcation of these on the center manifold provided by Theorem \ref{thm:center-manifold}. Therefore, let $\mu M_0 := M - M^*$ with $\mu > 0$ and we introduce the slow timescale \(T=\mu t\) and coordinates on the center manifold given by
\begin{equation}\label{eq:ansatz-U0}
    \Ucal_0(t,\x) = \sum_{j=1}^{N} A_j(\mu t) e^{i\k_j\cdot \x} \phib_+(k_m^*) + c.c.
\end{equation}
We note that $\mu$ plays the role of $\eps^2$ in Section \ref{sec:amplitude-equation} and we do not prescribe any a-priori scaling of $A_j$ with respect to $\mu$.
The center manifold equation can then, to leading order, be derived by employing the following strategy.
\begin{enumerate}[label=(\arabic*)]
    \item Determine \(\Psib = \sum_{\gammab \in \Gamma_h} \Psib_{\gammab} e^{i\gammab\cdot \x} + \Psi_0 \phib_-(0) + \left(\sum_{j=1}^{N} \Psi_j \phib_-(k_m^*) e^{i\k_j\cdot \x} + c.c.\right)\) to leading order via projection onto the non-central modes, that is
    \begin{equation}\label{eq:Psi-equations}
    \begin{split}
        \mu\partial_T \Psib_{\gammab} & = \Lcal_{M^*}\Psib_{\gammab} + \Pcal(\gammab)\left[\Rcal(\Ucal_0 + \Psib(\Ucal_0;M);M)\right], \quad \gammab \in \Gamma_h, \\
        \mu\partial_T \Psi_{0} & = \lambda_-(0)\Psi_0 + P_-(0)\left[\Rcal(\Ucal_0 + \Psib(\Ucal_0;M);M)\right], \\
        \mu\partial_T \Psi_{j} & = \lambda_-(k_m^*;M^*)\Psi_{j} + \Pcal_-(\k_j)\left[\Rcal(\Ucal_0 + \Psib(\Ucal_0;M);M)\right], \quad j=1,\ldots,N.
    \end{split}
    \end{equation}
    \item Insert the expansion into the central equations \eqref{eq:central-equations} obtained by projecting onto the central modes
    \begin{equation}\label{eq:A-central-equations}
        \mu\partial_T A_j = \Pcal_+(\k_j)\left[\Rcal(\Ucal_0 + \Psib(\Ucal_0;M);M)\right], \quad j=1,\ldots,N.
    \end{equation}
\end{enumerate}

\begin{remark}
    As pointed out above, although we use the same notation as in the formal derivation of the amplitude system, the amplitudes $A_j$ are not exactly the same as in Section \ref{sec:amplitude-equation}, where we explicitly include the scaling by the distance from the bifurcation point in the ansatz \eqref{eq:ansatz-physical}.
\end{remark}
Since the combinatorics of this strategy differ between the square and hexagonal lattice, we now present the calculations for both cases separately.

\subsection{Reduced equations on square lattice}\label{sec:stationary-reduced-square}

We begin with the first step of the strategy on the square lattice. For this, observe that \(\Psib\) is at least quadratic in \(A_1\), \(\bar{A}_1\), \(A_2\), \(\bar{A}_2\) and \(\mu\). As the goal is to describe the dynamics on the center manifold to leading order, it suffices to discern the leading-order terms of \(\Psib\) which are exactly quadratic in $(A_1,\bar{A}_1,A_2,\bar{A_2},\mu)$. First, we observe that, for \(\gammab\in \Gamma_h\) with \(d(0,\gammab) \geq 3\) it holds
\begin{equation}\label{eq:expansion-Psib_3}
    \Psib_{\gammab} = \Ocal\bigl(|(A_1,\bar{A}_1,A_2,\bar{A_2},\mu)|^3\bigr).
\end{equation}
Next, we turn to \(\gammab\in \Gamma_h\) with \(d(0,\gammab) = 2\). There exists a unique pair $j,\ell\in \{\pm 1, \pm 2\}$ such that $\gammab = \k_j+\k_\ell$. Then, we may compute for
\begin{equation*}
\begin{split}
    \Pcal(\gammab)\left[\Rcal(\Ucal_0 + \Psib(\Ucal_0;M);M)\right]
    & = (2-\delta_{j\ell})\Pcal(\gammab)\left[\Ncal_2(\phib_+(k_m^*) e^{i\k_j\cdot \x},\phib_+(k_m^*) e^{i\k_\ell\cdot \x};M^*)\right] A_jA_\ell \\
    & \quad + \Ocal\bigl(|(A_1,\bar{A}_1,A_2,\bar{A_2},\mu)|^3\bigr),
\end{split}
\end{equation*}
where $\delta_{j\ell} = 1$ if $j = \ell$ and $\delta_{j\ell} = 0$ otherwise, and obtain that
\begin{equation}\label{eq:expansion-Psib_2}
    \begin{split}
        0 & = \hat{\Lcal}_{M^*}(\k_j+\k_\ell) \Psib_{\gammab} + (2-\delta_{j\ell})\Pcal(\gammab)\left[\Ncal_2(\phib_+(k_m^*) e^{i\k_j\cdot \x},\phib_+(k_m^*) e^{i\k_\ell\cdot \x};M^*)\right]A_jA_\ell \\
        &\quad + \Ocal\bigl(|(A_1,\bar{A}_1,A_2,\bar{A_2},\mu)|^3\bigr).
    \end{split}
\end{equation}
We can solve this uniquely for \(\Psib_{\gammab}\) using that $\hat{\Lcal}_M^*(\k_j+\k_\ell)$ is invertible for \(\k_j + \k_\ell = \gammab \in \Gamma_h\). This completes the first equation in \eqref{eq:Psi-equations}. We now turn to the second equation and observe that
\begin{equation*}
\begin{split}
    \Pcal_-(0)\left[\Rcal(\Ucal_0 + \Psib(\Ucal_0;M);M)\right] & = 2 \sum_{j=1}^{2} P_-(0)\left[\Ncal_2(\phib_+(k_m^*)e^{i\k_j\cdot\x},\phib_+(k_m^*)e^{-i\k_j\cdot \x};M^*)\right]|A_j|^2 \\
    & \quad + \Ocal\bigl(|(A_1,\bar{A}_1,A_2,\bar{A_2},\mu)|^3\bigr).
\end{split}
\end{equation*}
Hence, we can determine \(\Psi_0\) uniquely by solving the equation
\begin{equation}\label{eq:expansion-Psib_0}
    0 = \lambda_-(0)\Psi_0 + 2 \sum_{j=1}^{2} P_-(0)\left[\Ncal_2(\phib_+(k_m^*)e^{i\k_j\cdot\x},\phib_+(k_m^*)e^{-i\k_j\cdot \x};M^*)\right]|A_j|^2 + \Ocal\bigl(|(A_1,\bar{A}_1,A_2,\bar{A_2},\mu)|^3\bigr).
\end{equation}
Here, note that $\lambda_-(0) = -\beta$ by explicit computation.
Finally, for the third equation in \eqref{eq:Psi-equations}, observe that
\begin{equation*}
    \Pcal_-(\k_j)\left[\Rcal(\Ucal_0 + \Psib(\Ucal_0;M);M)\right] = \Ocal\bigl(|(A_1,\bar{A}_1,A_2,\bar{A_2},\mu)|^3\bigr)
\end{equation*}
since \(\k_1\) and \(\k_2\) are non-resonant. Hence,
\begin{equation}\label{eq:expansion-Psib_1}
    \Psi_{j} = \Ocal\bigl(|(A_1,\bar{A}_1,A_2,\bar{A_2},\mu)|^3\bigr).
\end{equation}
Now, we can insert this expansion into the central equations \eqref{eq:A-central-equations}. Therefore, we compute the right-hand sides
\begin{equation*}
\begin{split}
    & \Pcal_+(\k_1)\left[\Rcal(\Ucal_0 + \Psib(\Ucal_0;M);M)\right]\\
    & = P_+(k_m^*)\left[\bigl(\hat{\Lcal}_M(k_m^*) - \hat{\Lcal}_{M^*}(k_m^*) \bigr)A_1\phib_+(k_m^*)\right] + 2\Pcal_+(\k_1)\left[\Ncal_2(\phib_-(0),\phib_+(k_m^*)e^{i\k_1\cdot\x};M^*)\Psi_0 A_1 \right. \\
    &\quad \left. + \Ncal_2(\Psib_{2\k_1}e^{2i\k_1\cdot \x},\phib_+(k_m^*) e^{-i\k_1\cdot \x};M^*)\bar{A}_1 + \Ncal_2(\Psib_{\k_1+\k_2}e^{i(\k_1+\k_2)\cdot \x},\phib_+(k_m^*) e^{-i\k_2\cdot \x};M^*)\bar{A}_2\right. \\
    & \quad \left. + \Ncal_2(\Psib_{\k_1-\k_2}e^{i(\k_1-\k_2)\cdot \x},\phib_+(k_m^*) e^{i\k_2\cdot \x};M^*)A_2\right]\\
    & \quad + 3 \Pcal_+(\k_1)\left[\Ncal_3(\phib_+(k_m^*)e^{i\k_1\cdot\x},\phib_+(k_m^*)e^{i\k_1\cdot\x},\phib_+(k_m^*)e^{-\k_1\cdot\x};M^*)\right]|A_1|^2A_1 \\
    & \quad + 6 \Pcal_+(\k_1)\left[\Ncal_3(\phib_+(k_m^*)e^{i\k_1\cdot\x},\phib_+(k_m^*)e^{i\k_2\cdot\x},\phib_+(k_m^*)e^{-\k_2\cdot\x};M^*)\right]|A_2|^2A_1 + \Ocal\bigl(|(A_1,\bar{A}_1,A_2,\bar{A_2},\mu)|^4\bigr).
\end{split}
\end{equation*}
Note that one obtains $\Pcal_+(\k_2)\left[\Rcal(\Ucal_0 + \Psib(\Ucal_0;M);M)\right]$ by permuting the roles of $\k_1$ and $\k_2$. Observe furthermore that
\begin{equation}\label{eq:expansion-linear-coeff}
    P_+(k_m^*)\left[\bigl(\hat{\Lcal}_M(k_m^*) - \hat{\Lcal}_{M^*}(k_m^*)\bigr)\phib_+(k_m^*)\right] = \mu M_0 \partial_M \lambda_+(k_m^*;M^*) + \Ocal(\mu^2) =: \mu M_0 \kappa + \Ocal(\mu^2) > 0
\end{equation}
for $\mu$ sufficiently small. Summarising, we obtain a system of the form
\begin{equation}\label{eq:CM-equation-squares}
    \begin{split}
        \mu \partial_T A_1 &= \mu M_0 \kappa A_1 + (K_0 \abs{A_1}^2 + K_1 \abs{A_2}^2) A_1+ \Ocal\bigl(|(A_1,\bar{A}_1,A_2,\bar{A_2},\mu)|^4\bigr), \\
        \mu \partial_T A_2 &= \mu M_0 \kappa A_2 + (K_1 \abs{A_1}^2 + K_0 \abs{A_2}^2) A_2+ \Ocal\bigl(|(A_1,\bar{A}_1,A_2,\bar{A_2},\mu)|^4\bigr),
    \end{split}
\end{equation}
where we note that the \emph{self-interaction coefficients}
\begin{equation}\label{eq:self-interaction-coeff}
\begin{split}
    K_0 & := 2\Pcal_+(\k_1)\left[\Ncal_2(\phib_-(0),\phib_+(k_m^*)e^{i\k_1\cdot\x};M^*) \nu_{0}
    + \Ncal_2(\nub_{2\k_1}e^{2i\k_1\cdot \x},\phib_+(k_m^*) e^{-i\k_1\cdot \x};M^*) \right] \\
    & \quad +3\Pcal_+(\k_1)\left[\Ncal_3(\phib_+(k_m^*)e^{i\k_1\cdot\x},\phib_+(k_m^*)e^{i\k_1\cdot\x},\phib_+(k_m^*)e^{-\k_1\cdot\x};M^*)\right], \\
    \nu_0 &:= \dfrac{2}{\beta} P_-(0) \Ncal_2(\phib_+(k_m^*)e^{i\k_j\cdot\x},\phib_+(k_m^*)e^{-i\k_j\cdot \x};M^*), \\
    \nub_{2\k_1} &:= -\hat{\Lcal}_M^*(2\k_1)^{-1} \Pcal(2\k_1)\left[\Ncal_2(\phib_+(k_m^*) e^{i\k_1\cdot \x},\phib_+(k_m^*) e^{i\k_1\cdot \x};M^*)\right]
\end{split}
\end{equation}
and \emph{cross-interaction coefficients}
\begin{equation}\label{eq:cross-interaction-coeff-square}
\begin{split}
    K_1 & := 2\Pcal_+(\k_1)\left[\Ncal_2(\phib_-(0),\phib_+(k_m^*)e^{i\k_1\cdot\x};M^*)\nu_0
    + \Ncal_2(\nub_{\k_1+\k_2}e^{i(\k_1+\k_2)\cdot \x},\phib_+(k_m^*) e^{-i\k_2\cdot \x};M^*) \right. \\
    & \qquad \qquad \left. + \Ncal_2(\nub_{\k_1-\k_2}e^{i(\k_1-\k_2)\cdot \x},\phib_+(k_m^*) e^{i\k_2\cdot \x};M^*) \right] \\
    & \quad
    + 6 \Pcal_+(\k_1)\left[\Ncal_3(\phib_+(k_m^*)e^{i\k_1\cdot\x},\phib_+(k_m^*)e^{i\k_2\cdot\x},\phib_+(k_m^*)e^{-\k_2\cdot\x};M^*)\right], \\
    \nub_{\k_1\pm \k_2} & := -2\hat{\Lcal}_{M^*}(\k_1\pm\k_2)^{-1} \Pcal(\k_1\pm\k_2)\left[\Ncal_2(\phib_+(k_m^*) e^{i\k_1\cdot \x},\phib_+(k_m^*) e^{\pm i\k_2\cdot \x};M^*)\right]
\end{split}
\end{equation}
are the same in both equations due to rotation and reflection symmetry, which is reflected on the center manifold by the symmetry $(A_1, A_2) \mapsto (A_2, A_1)$. Explicit algebraic expressions of the self-interaction and cross-interaction coefficient can be found in the Supplement Material, and a plot of the coefficients can be found in Figure \ref{fig:density-plot-square-coeffs}.

\begin{remark}\label{rem:square-invariant-sets}
    We observe that the higher-order terms in the $A_1$-equation $\eqref{eq:CM-equation-squares}_1$ are of the form
    \begin{equation*}
        A_1p(|A_1|^2,|A_2|^2)
    \end{equation*}
    for a polynomial $p(|A_1|^2,|A_2|^2) = \Ocal((|A_1|^2+|A_2|^2)^2)$. Indeed, this follows from translation invariance in $x$-direction and $y$-direction, respectively. In particular, this yields that $\{A_1 = 0\}$ and $\{A_2 = 0\}$ are invariant under the dynamics of \eqref{eq:CM-equation-squares}. Furthermore, due to rotation invariance the set $\{A_1 = A_2\}$ is also invariant.
\end{remark}

\begin{figure}[H]
    \centering
    \includegraphics[width=0.45\linewidth]{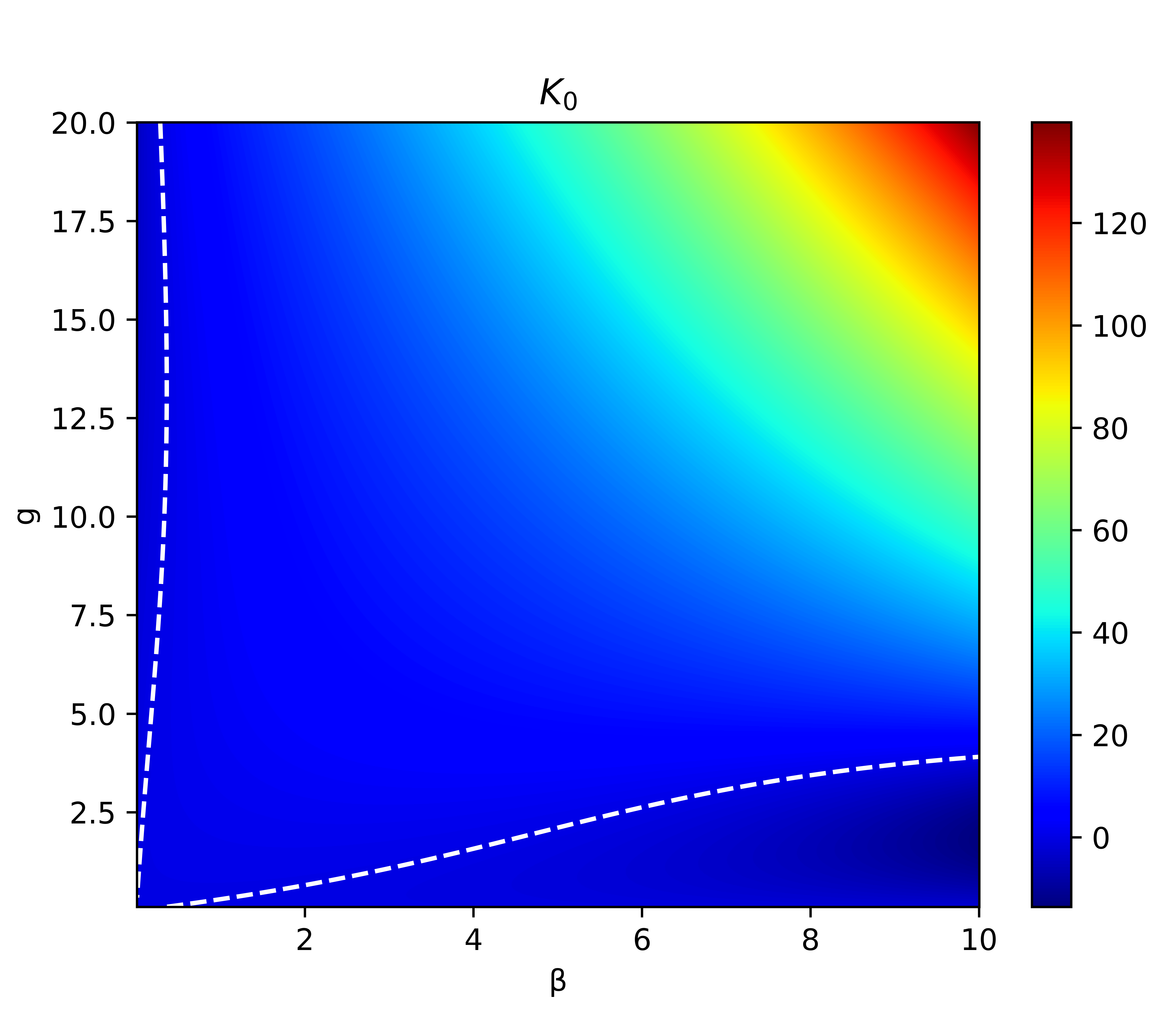}
    \includegraphics[width=0.45\linewidth]{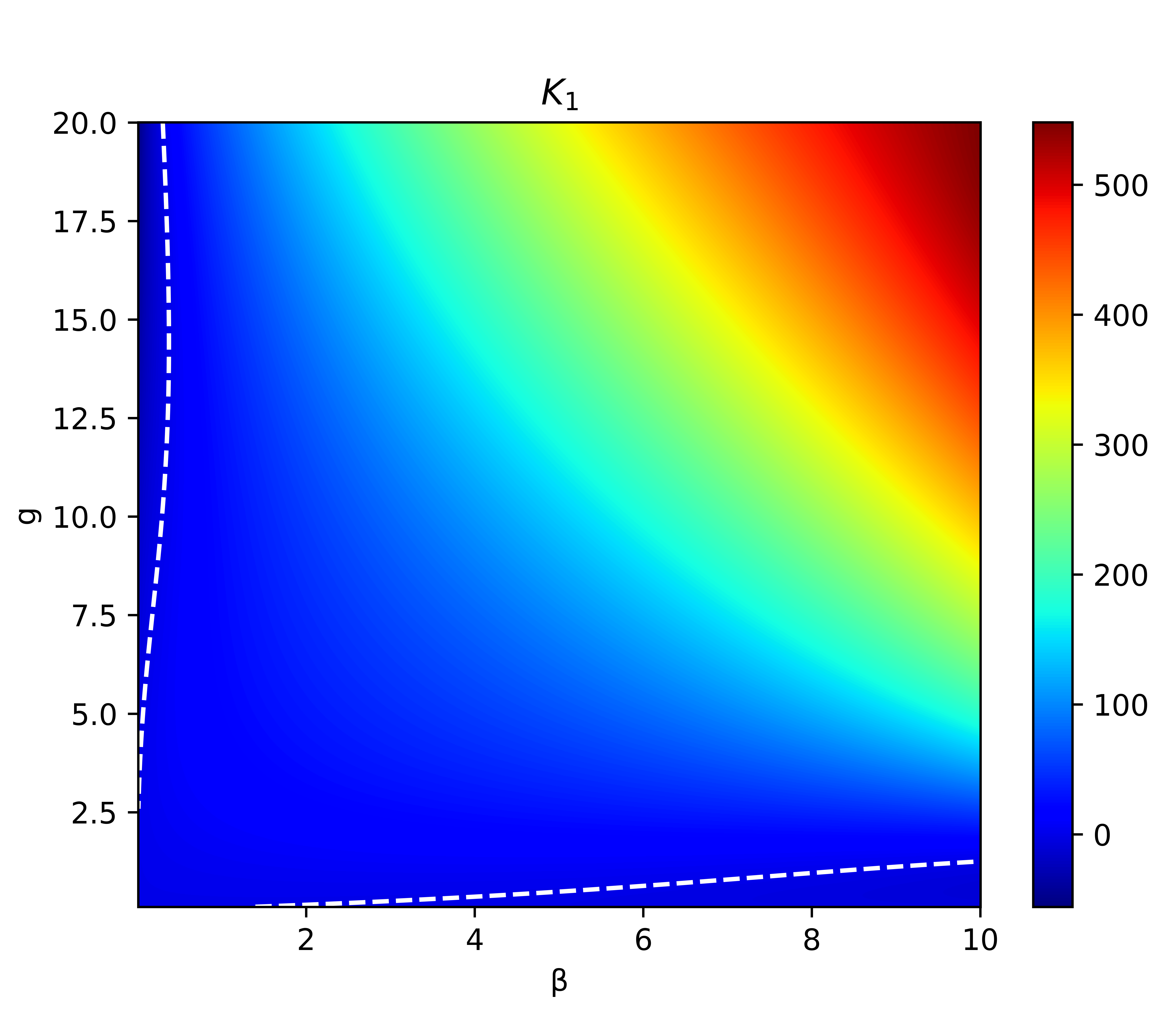}
    \caption{Density plots of the coefficients $K_0$ and $K_1$ for $0 < g < 20$ and $0 < \beta < 10$. The white, dashed lines indicate parameters, where the coefficients vanish.}
    \label{fig:density-plot-square-coeffs}
\end{figure}

\subsection{Reduced equations on hexagonal lattice}

Next, we derive the reduced equations for the hexagonal lattice. Since some of the calculations are similar to the square case we only point out the differences. Again, we start by calculating the leading order behaviour of $\Psib$, noting that $\Psib$ is now at least quadratic in $(A_1,\bar{A}_1,A_2,\bar{A}_2,A_3,\bar{A}_3,\mu)$. The equations \eqref{eq:expansion-Psib_3}--\eqref{eq:expansion-Psib_0} hold with obvious modifications. However, since the $(\k_1,\k_2,\k_3)$, which generate the hexagonal lattice are resonant, that is, $\k_1 + \k_2 + \k_3 = 0$, the $\Psi_j$ now have a quadratic contribution. This is given by
\begin{equation}\label{eq:expansion-Psib_1-hex}
\begin{split}
    0 & = \lambda_-(k_m^*;M^*) \Psi_j + 2\Pcal_-(\k_j)\left[\Ncal_2(\phib_+(k_m^*)e^{-i\k_\ell\cdot \x},\phib_+(k_m^*)e^{-i\k_n\cdot \x};M^*)\right]\bar{A}_\ell \bar{A}_n \\
    &\quad + \Ocal\bigl(|(A_1,\bar{A}_1,A_2,\bar{A_2},A_3,\bar{A_3},\mu)|^3\bigr),
\end{split}
\end{equation}
where \(j,\ell,n\in \{1,2,3\}\) are pairwise distinct.

With this, we can now derive the equation on the center manifold by computing that
\begin{equation*}
\begin{split}
    & \Pcal_+(\k_1)\left[\Rcal(\Ucal_0 + \Psib(\Ucal_0;M);M)\right]\\
    & = P_+(k_m^*)\left[\bigl(\hat{\Lcal}_M(k_m^*) - \hat{\Lcal}_{M^*}(k_m^*) \bigr)A_1\phib_+(k_m^*)\right] + 2\Pcal_+(\k_1)\left[\Ncal_2(\phib_+(k_m^*)e^{-i\k_2\cdot \x},\phib_+(k_m^*)e^{-i\k_3\cdot \x};M^*)\bar{A}_2\bar{A}_3\right]\\
    & \quad + 2\Pcal_+(\k_1)\left[\Ncal_2(\phib_-(0),\phib_+(k_m^*)e^{i\k_1\cdot\x};M^*)\Psi_0 A_1 + \Ncal_2(\bar{\Psi}_{3}\phib_-(k_m^*)e^{-i\k_3\cdot \x},\phib_+(k_m^*) e^{-i\k_2\cdot \x};M^*)\bar{A}_2\right.\\
    & \quad + \left. \Ncal_2(\bar{\Psi}_{2}\phib_-(k_m^*)e^{-i\k_2\cdot \x},\phib_+(k_m^*) e^{-i\k_3\cdot \x};M^*)\bar{A}_3+ \Ncal_2(\Psib_{2\k_1} e^{2i\k_1\cdot \x},\phib_+(k_m^*) e^{-i\k_1\cdot \x};M^*)\bar{A}_1\right. \\ 
    & \quad \left.
     + \Ncal_2(\Psib_{\k_1-\k_2}e^{i(\k_1-\k_2)\cdot \x},\phib_+(k_m^*) e^{i\k_2\cdot \x};M^*)A_2 +
    \Ncal_2(\Psib_{\k_1-\k_3}e^{i(\k_1-\k_3)\cdot \x},\phib_+(k_m^*) e^{i\k_3\cdot \x};M^*)A_3\right]\\
    & \quad + 3 \Pcal_+(\k_1)\left[\Ncal_3(\phib_+(k_m^*)e^{i\k_1\cdot\x},\phib_+(k_m^*)e^{i\k_1\cdot\x},\phib_+(k_m^*)e^{-\k_1\cdot\x};M^*)\right]|A_1|^2A_1 \\
    & \quad + 6 \Pcal_+(\k_1)\left[\Ncal_3(\phib_+(k_m^*)e^{i\k_1\cdot\x},\phib_+(k_m^*)e^{i\k_2\cdot\x},\phib_+(k_m^*)e^{-\k_2\cdot\x};M^*)\right]|A_2|^2A_1 \\
    & \quad + 6 \Pcal_+(\k_1)\left[\Ncal_3(\phib_+(k_m^*)e^{i\k_1\cdot\x},\phib_+(k_m^*)e^{i\k_3\cdot\x},\phib_+(k_m^*)e^{-\k_3\cdot\x};M^*)\right]|A_3|^2A_1 \\
    & \quad + \Ocal\bigl(|(A_1,\bar{A}_1,A_2,\bar{A_2},A_3,\bar{A}_3,\mu)|^4\bigr).
\end{split}
\end{equation*}
Again we note that the expansion for the $\k_2$ and $\k_3$ equations are obtained by cyclic permutation of the $(\k_1,\k_2,\k_3)$. The linear coefficient is again given by \eqref{eq:expansion-linear-coeff}. Summarising, we obtain a system of the form
\begin{equation}\label{eq:CM-equation-hexagon}
    \begin{split}
        \mu \partial_T A_1 &= \mu M_0 \kappa A_1 + N \bar{A}_2 \bar{A}_3 + (K_0 \abs{A_1}^2 + K_2 (\abs{A_2}^2 + \abs{A_3}^2) A_1 + \Ocal\bigl(|(A_1,\bar{A}_1,A_2,\bar{A_2},A_3,\bar{A}_3,\mu)|^4\bigr), \\
        \mu \partial_T A_2 &= \mu M_0 \kappa A_2 + N \bar{A}_1 \bar{A}_3 + (K_0 \abs{A_2}^2 + K_2 (\abs{A_1}^2 + \abs{A_3}^2) A_2 + \Ocal\bigl(|(A_1,\bar{A}_1,A_2,\bar{A_2},A_3,\bar{A}_3,\mu)|^4\bigr), \\
        \mu \partial_T A_3 &= \mu M_0 \kappa A_3 + N \bar{A}_1 \bar{A}_2 + (K_0 \abs{A_3}^2 + K_2 (\abs{A_1}^1 + \abs{A_2}^2) A_3 + \Ocal\bigl(|(A_1,\bar{A}_1,A_2,\bar{A_2},A_3,\bar{A}_3,\mu)|^4\bigr).
    \end{split}
\end{equation}
The self-interaction coefficient is the same as in the square case, cf. \eqref{eq:self-interaction-coeff}, the \emph{quadratic coefficient}
\begin{equation}\label{eq:quadratic-coeff}
    N = 2\Pcal_+(\k_1)\left[\Ncal_2(\phib_+(k_m^*)e^{-i\k_2\cdot \x},\phib_+(k_m^*)e^{-i\k_3\cdot \x};M^*)\right],
\end{equation}
and the \emph{cross-interaction coefficients}
\begin{equation}\label{eq:cross-interaction-coeff-hex}
\begin{split}
    K_2 &= 2\Pcal_+(\k_1)\left[\Ncal_2(\phib_-(0),\phib_+(k_m^*)e^{i\k_1\cdot\x};M^*)\nu_0 \right. \\
    & \qquad \qquad \left. + \Ncal_2(\nub_{\k_1-\k_2}e^{i(\k_1-\k_2)\cdot \x},\phib_+(k_m^*) e^{i\k_2\cdot \x};M^*) + \Ncal_2(\phib_-(k_m^*)e^{-i\k_3\cdot \x},\phib_+(k_m^*) e^{-i\k_2\cdot \x};M^*) \bar{\nu}_{3}\right] \\
    & \quad
    + 6 \Pcal_+(\k_1)\left[\Ncal_3(\phib_+(k_m^*)e^{i\k_1\cdot\x},\phib_+(k_m^*)e^{i\k_2\cdot\x},\phib_+(k_m^*)e^{-\k_2\cdot\x};M^*)\right],\\
    \nu_3 &= -2\lambda_-(k_m^*;M^*)^{-1}\Pcal_-(\k_j)\left[\Ncal_2(\phib_+(k_m^*)e^{-i\k_1\cdot \x},\phib_+(k_m^*)e^{-i\k_2\cdot \x};M^*)\right]
 \end{split}
\end{equation}
are the same in all equations due to rotation and reflection symmetry, which is preserved by the center manifold reduction. Algebraic expressions of the quadratic, self- and cross-interaction coefficients can be found in the Supplementary Material.

\begin{figure}[H]
    \centering
    \includegraphics[width=0.45\linewidth]{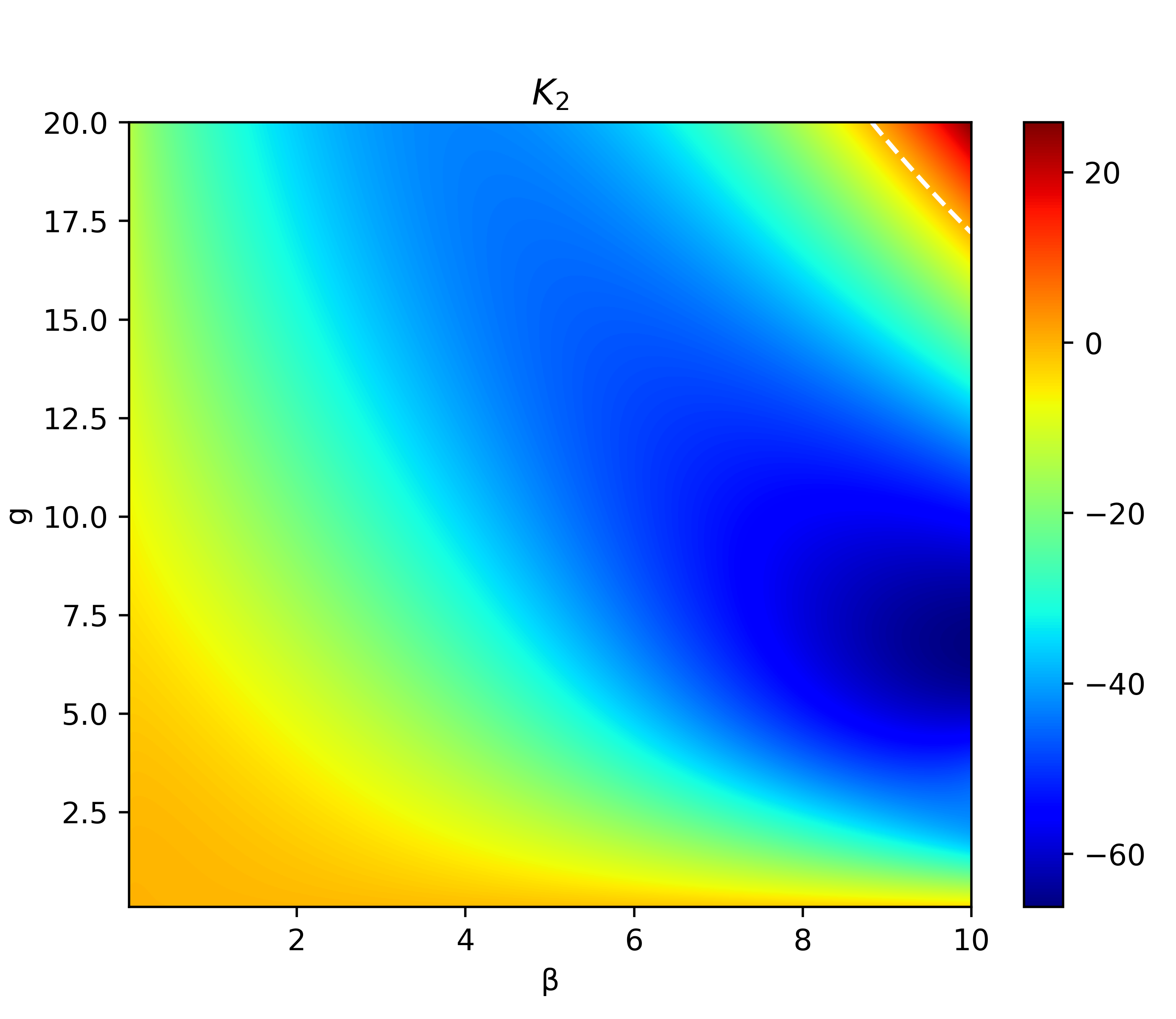}
    \caption{Density plots of the coefficients $K_2$ for $0 < g < 20$ and $0 < \beta < 10$. The white, dashed line in the top right indicates parameters, where the coefficients vanish.}
    \label{fig:density-plot-K2}
\end{figure}

\begin{remark}\label{rem:hexagon-invariant-sets}
    We observe that the higher-order terms in the $A_1$-equation $\eqref{eq:CM-equation-hexagon}_1$ are of the form 
    \begin{equation*}
        A_1p_1(|A_1|^2,|A_2|^2,|A_3|^2,q) + \bar{A_2}\bar{A_3} p_2(|A_1|^2,|A_2|^2,|A_3|^2,q)
    \end{equation*}
    with $q = A_1 A_2 A_3 + \bar{A}_1 \bar{A}_2 \bar{A}_3$ for polynomials $p_{j}(|A_1|^2,|A_2|^2,|A_3|^2,q) = \Ocal((|A_1|^2+|A_2|^2+|A_3|^2+q)^2)$, $j=1,2$. This follows again from translation invariance and the fact that $\k_1 + \k_2 + \k_3 = 0$, c.f.~\cite[Prop. 3.1]{buzano1983}.
    In particular, this yields that $\{A_1,A_2 = 0\}$, $\{A_2,A_3 = 0\}$ and $\{A_3,A_1=0\}$ are invariant under the dynamics of \eqref{eq:CM-equation-squares}. Furthermore, due to rotation invariance the set $\{A_1 = A_2 = A_3\}$ is also invariant.
\end{remark}

\subsection{Stationary solutions on the square lattice}\label{sec:stationary-square}

On the square lattice, we are interested in two patterns: roll waves, which lie in the invariant set $\{A_2 = 0\}$, and square patterns, which lie in the invariant set $\{A_1 = A_2\}$. We first discuss their existence in the reduced equations \eqref{eq:CM-equation-squares} without higher-order terms and then show their persistence for $\mu > 0$ sufficiently small.

Nontrivial roll waves with $A_1 \neq 0$ are solutions of
\begin{equation*}
    0 = \mu M_0 \kappa + K_0 \abs{A_1}^2,
\end{equation*}
which exist for $M_0 > 0$ if $K_0 < 0$ and for $M_0 < 0$ if $K_0 > 0$ since $\kappa > 0$ and $\mu > 0$ by assumption. Similarly, nontrivial square patterns with $A = A_1 = A_2 \neq 0$ are solutions of
\begin{equation*}
    0 = \mu M_0 \kappa + (K_0 + K_1) \abs{A}^2
\end{equation*}
which exist for $M_0 > 0$ if $K_0+K_1 < 0$ and for $M_0 < 0$ if $K_0 + K_1 > 0$ since $\kappa > 0$ and $\mu > 0$.

To show the persistence of roll waves, we use that according to Remark \ref{rem:square-invariant-sets} the higher-order terms preserve the $S_1$-invariance and thus, writing $A_1 = \rho_1 e^{i \alpha_1}$ we obtain
\begin{equation*}
    0 = \mu M_0 \kappa + K_0 \rho_1^2 + p(\rho_1^2), \quad \partial_T \alpha_1 = 0.
\end{equation*}
Linearising about $\rho_1 = \sqrt{-\tfrac{\mu M_0 \kappa}{K_0}}$ we find
\begin{equation*}
    2 \sqrt{-\mu M_0 \kappa K_0} + \Ocal(\mu^{3/2}) \neq 0
\end{equation*}
for $0 < \mu \ll 1$. Hence, persistence follows from an application of the implicit function theorem. Similarly, introducing polar coordinates $A = \rho e^{i\alpha}$ for the square patterns, we obtain
\begin{equation*}
    0 = \mu M_0 \kappa + (K_0 + K_1) \rho^2 + p(\rho^2).
\end{equation*}
Then, linearising about $\rho = \sqrt{{-\tfrac{\mu M_0 \kappa}{K_0 + K_1}}}$ gives
\begin{equation*}
    2 \sqrt{-\mu M_0 \kappa (K_0+K_1)} + \Ocal(\mu^{3/2}) \neq 0
\end{equation*}
and persistence follows from the implicit function theorem. To conclude, we obtain the following result.

\begin{theorem}[Patterns on square lattice]\label{thm:square-patterns}
    Let $\mu M_0 = M - M^*$. Then there exists a $\mu_0 > 0$ such that for all $0 < \mu < \mu_0$ the following holds.
    \begin{itemize}
        \item If $M_0 K_0 < 0$, the system \eqref{eq:cm-system} exhibits roll waves of the form
        \begin{equation*}
            \Ucal = 2 \sqrt{-\tfrac{\mu M_0 \kappa}{K_0}} \cos(k_m^* x) \phib_+(k_m^*) + \Ocal(\mu).
        \end{equation*}
        \item If $M_0 (K_0 + K_1) < 0$, the system \eqref{eq:cm-system} exhibits square patterns of the form
        \begin{equation*}
            \Ucal = 2\sqrt{-\tfrac{\mu M_0 \kappa}{K_0 + K_1}} (\cos(k_m^* x) + \cos(k_m^* y))\phib_+(k_m^*) + \Ocal(\mu).
        \end{equation*}
    \end{itemize}
\end{theorem}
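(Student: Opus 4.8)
The plan is to leverage the center manifold reduction of Theorem \ref{thm:center-manifold}, which reduces the construction of small bounded stationary solutions of \eqref{eq:cm-system} to finding equilibria of the reduced system \eqref{eq:CM-equation-squares} on the finite-dimensional space $\Xcal_0$, and then to lift these equilibria back to the full system via $\Ucal = \Ucal_0 + \Psib(\Ucal_0;M)$. First I would restrict attention to the two invariant subspaces identified in Remark \ref{rem:square-invariant-sets}: the line $\{A_2 = 0\}$, on which \eqref{eq:CM-equation-squares} governs roll waves, and the diagonal $\{A_1 = A_2\}$, on which it governs square patterns. Invariance of these sets, guaranteed by the translation and rotation symmetries, ensures that an equilibrium of the reduced equation restricted to the subspace is automatically an equilibrium of the full reduced system, so the problem decouples into two scalar bifurcation problems.

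Second, I would solve the truncated (leading-order) reduced equations. On $\{A_2 = 0\}$ the equilibrium condition is $0 = \mu M_0\kappa + K_0\abs{A_1}^2$, which admits the nontrivial root $\abs{A_1} = \sqrt{-\mu M_0\kappa/K_0}$ precisely when $M_0 K_0 < 0$, using $\kappa > 0$ from Proposition \ref{prop:linear-monotonic} and $\mu > 0$. On $\{A_1 = A_2 = A\}$ the condition $0 = \mu M_0\kappa + (K_0+K_1)\abs{A}^2$ yields $\abs{A} = \sqrt{-\mu M_0\kappa/(K_0+K_1)}$ when $M_0(K_0+K_1) < 0$.

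The crux, and the main obstacle, is persistence of these equilibria once the $\Ocal(\abs{(A_1,\bar{A}_1,A_2,\bar{A}_2,\mu)}^4)$ remainder is reinstated: the equilibria are never isolated, since the phase symmetry $A_j \mapsto A_j e^{i\alpha}$ inherited from spatial translation produces an entire circle of equilibria, so the Jacobian of the full complex system always carries a zero eigenvalue in the phase direction and a naive implicit function theorem fails. I would resolve this exactly as foreshadowed by passing to polar coordinates $A_1 = \rho_1 e^{i\alpha_1}$ (respectively $A = \rho e^{i\alpha}$): by Remark \ref{rem:square-invariant-sets} the higher-order terms take the form $A_1 p(\abs{A_1}^2,\abs{A_2}^2)$, so on the invariant subspaces the modulus decouples and satisfies $0 = \mu M_0\kappa + K_0\rho_1^2 + p(\rho_1^2)$ while $\partial_T\alpha_1 = 0$. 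Linearising the scalar modulus equation about the leading-order root gives the nonvanishing derivative $2\sqrt{-\mu M_0\kappa K_0} + \Ocal(\mu^{3/2}) \neq 0$ for $0 < \mu \ll 1$ (after the rescaling $\rho_1 = \sqrt{\mu}\,r$ this becomes a $\mu$-uniform nondegeneracy), so the implicit function theorem produces a persisting root; the square-pattern case is identical with $K_0$ replaced by $K_0+K_1$.

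Finally, I would convert back to the physical solution. Since $\Psib(\Ucal_0;M) = \Ocal(\norm{\Ucal_0}^2) = \Ocal(\mu)$, the reduction function being at least quadratic and $\abs{A_j} = \Ocal(\mu^{1/2})$, inserting the real equilibrium $\Ucal_0 = \rho_1 e^{i\k_1\cdot\x}\phib_+(k_m^*) + c.c.$ yields $\Ucal = 2\rho_1\cos(k_m^* x)\phib_+(k_m^*) + \Ocal(\mu)$, and analogously for squares, which is precisely the stated form. Choosing $\mu_0$ small enough to validate both implicit-function-theorem applications simultaneously completes the argument.
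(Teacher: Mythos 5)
Your proposal is correct and follows essentially the same route as the paper: restriction to the invariant subspaces $\{A_2=0\}$ and $\{A_1=A_2\}$, solving the truncated cubic equations, passing to polar coordinates to quotient out the phase symmetry, and applying the implicit function theorem to the modulus equation with the derivative $2\sqrt{-\mu M_0\kappa K_0}+\Ocal(\mu^{3/2})\neq 0$ (resp.\ with $K_0+K_1$). Your explicit rescaling $\rho_1=\sqrt{\mu}\,r$ to make the nondegeneracy uniform in $\mu$, and the final accounting $\Psib(\Ucal_0;M)=\Ocal(\mu)$ when lifting back to $\Ucal$, are minor clarifications of steps the paper leaves implicit.
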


\begin{remark}\label{rem:rotation-square}
    We point out that due to rotation and translation invariance of \eqref{eq:thin-film-equation}, rotated and translated versions of the patterns found in Theorem \ref{thm:square-patterns} are also stationary solutions of \eqref{eq:thin-film-equation}.
\end{remark}

The different parameter regimes, in which the sign conditions for the existence of the patterns are met, are shown in \ref{fig:existence-square-lattice}.

\begin{figure}[H]
    \centering
    \includegraphics[width=0.48\linewidth]{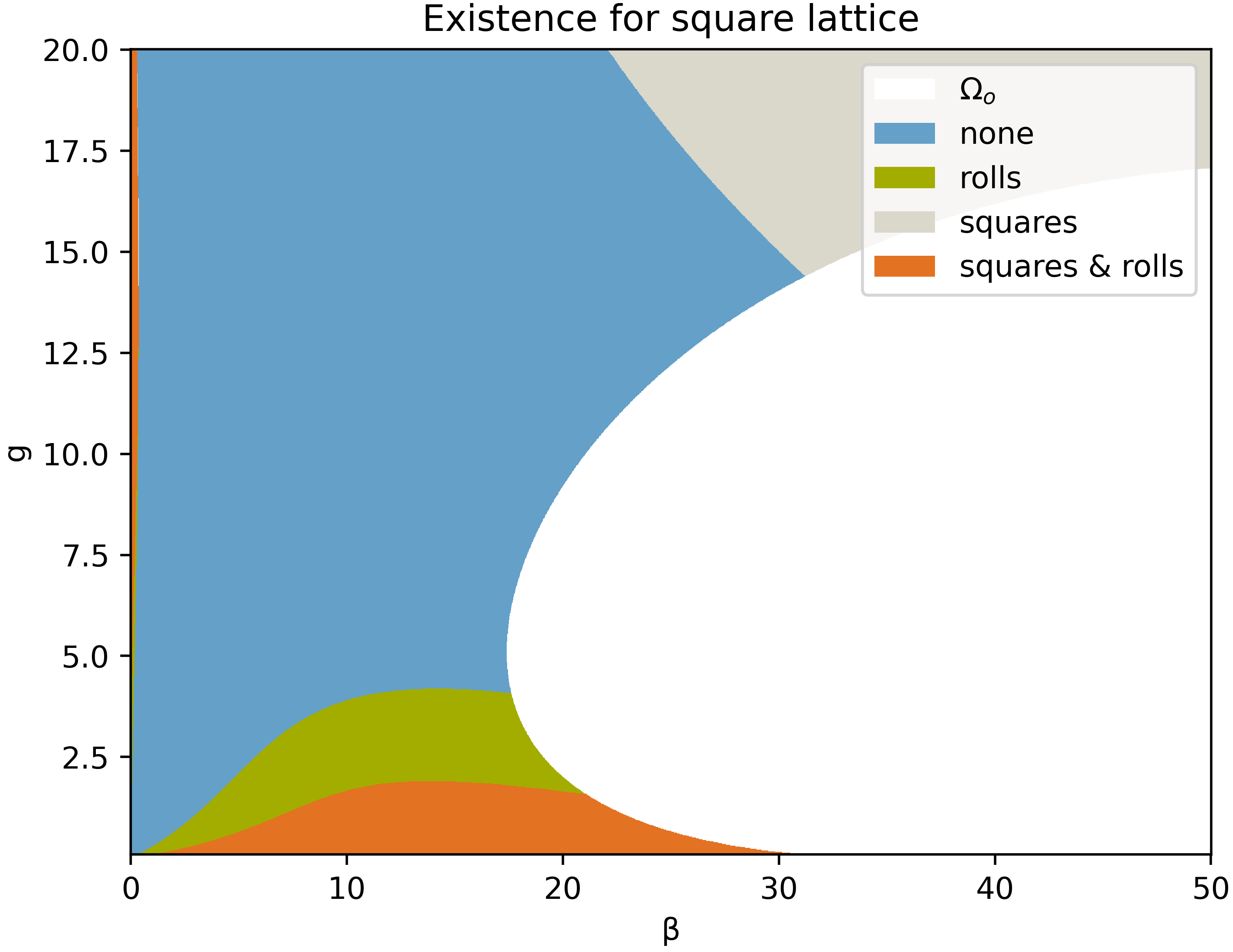}
    \hfill\includegraphics[width=0.48\linewidth]{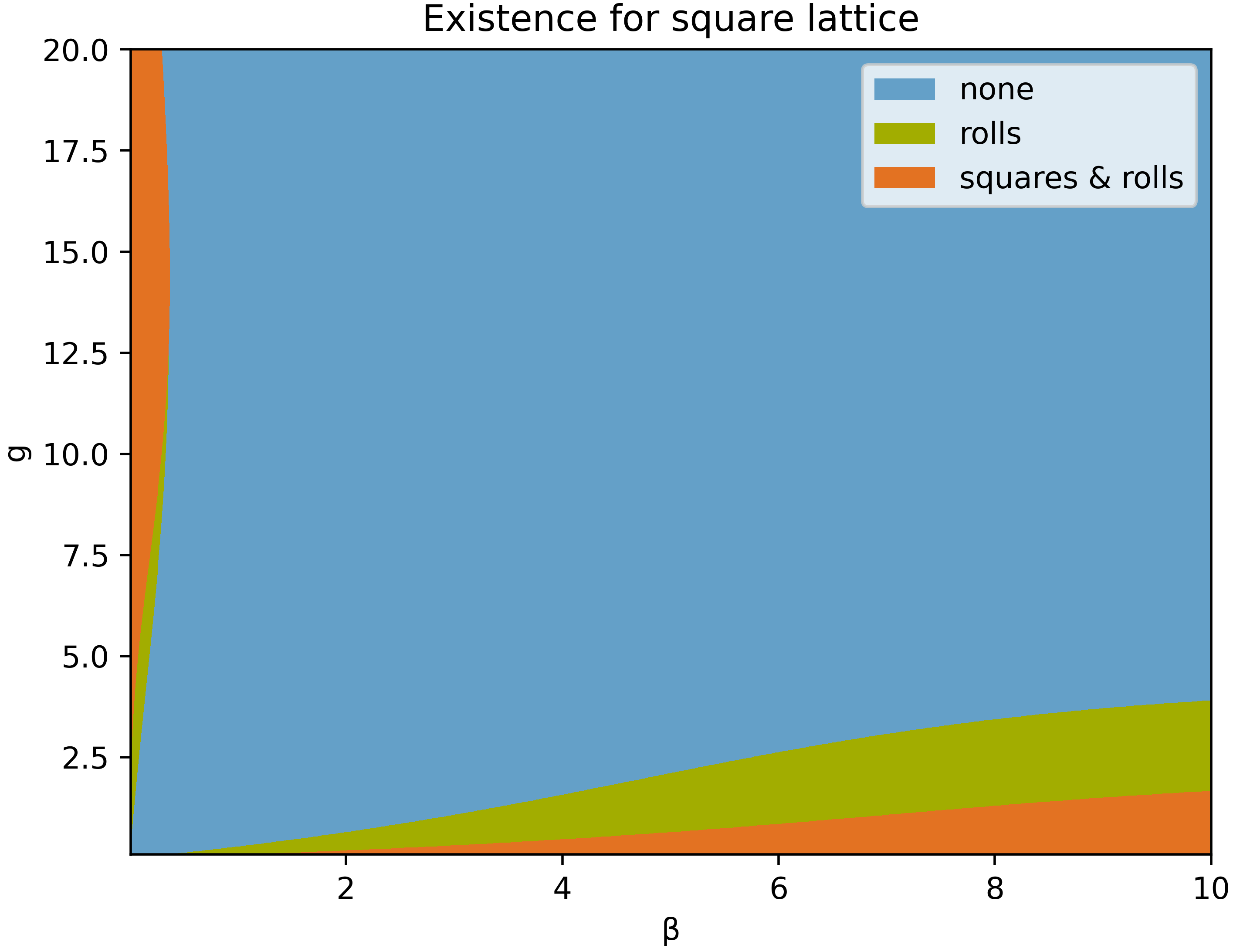}
    \caption{Depiction of the regions in parameter space, where the existence criteria from Theorem \ref{thm:square-patterns} are satisfied. The plots show the case of $M_0 > 0$ in two different parameter regimes (left: $0 < \beta < 50$; right: $0 < \beta < 10$). Note, that the case $M_0 < 0$ can be directly recovered from this.}
    \label{fig:existence-square-lattice}
\end{figure}

\begin{remark}\label{rem:pattern-selection-square}
    Since the dynamics on the center manifold given by \eqref{eq:CM-equation-squares} also includes the temporal dynamics of the patterns, we can predict pattern selection through a linear stability analysis of the fixed points corresponding to planar patterns. Similar to \cite{shklyaev2012}, we find that if $M_0 > 0$ square patterns are selected if $K_1 - K_0 < 0$, whereas roll waves are selected if $K_1 - K_0 > 0$. Here, we point out that in \cite{shklyaev2012}, the signs of $K_0$ and $K_1$ are reversed.
\end{remark}

Finally, Figure \ref{fig:selection-square-lattice} shows numerical plots of the regimes in which the sign conditions for the selection of the established patterns are satisfied.

\begin{figure}[H]
    \centering
    \includegraphics[width=0.48\linewidth]{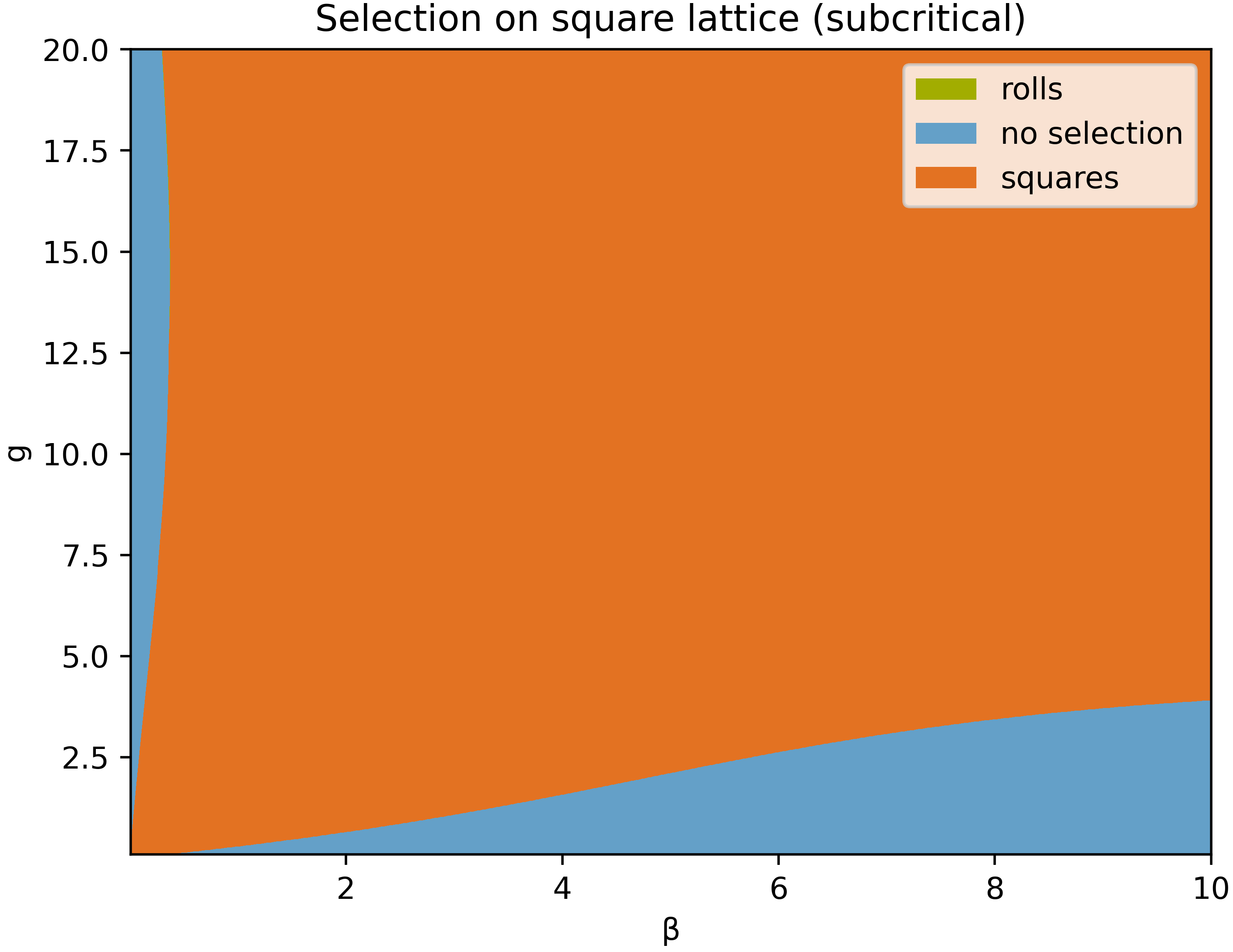}
    \hfill\includegraphics[width=0.48\textwidth]{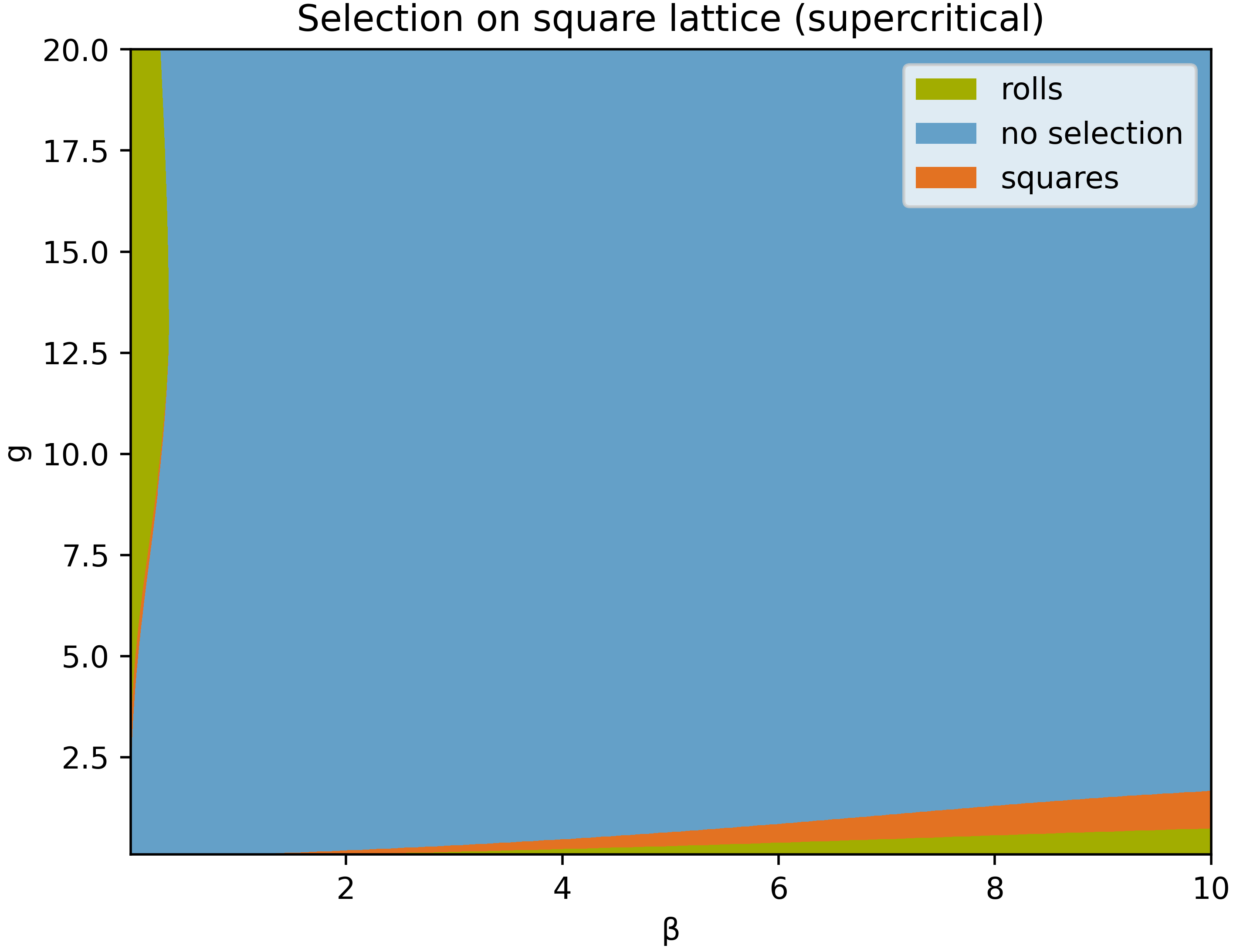}
    \caption{Depiction of the regions in parameter space, where the existence and selection criteria from Remark \ref{rem:pattern-selection-square} are satisfied. The selection plots show both the subcritical ($M_0 < 0$, bottom left) and the supercritical ($M_0 > 0$, bottom right) regime. In both cases, ``no selection'' denotes the regime, where roll waves or square patterns exist, but not both.}
    \label{fig:selection-square-lattice}
\end{figure}

\subsection{Stationary solutions on the hexagonal lattice}\label{sec:stationary-hex}

As pointed out in Remark \ref{rem:hexagon-invariant-sets} there are invariant sets on the hexagonal lattice. These model roll waves, where without loss of generality $A_1 \neq 0$ and $A_2 = A_3 = 0$ and hexagons $A_1 = A_2 = A_3$. Similar to the square lattice, we first discuss the existence of roll waves and hexagonal patterns in the reduced equations and then show their persistence.

The reduced equations \eqref{eq:CM-equation-hexagon} on the invariant set $\{A_2 = A_3 = 0\}$ reduce to
\begin{equation*}
    \mu \partial_T A_1 = \mu M_0 \kappa A_1 + K_0  \abs{A_1}^2A_1.
\end{equation*}
Since this is the same equation as for roll waves on the square lattice, we find that nontrivial roll waves exist if $M_0 K_0 < 0$. On the invariant set $\{A_1 = A_2 = A_3\}$ the reduced equations read to leading order as
\begin{equation*}
\begin{split}
    \mu \partial_T A_1 &= \mu M_0 \kappa A_1 + N A_1^2 + (K_0 + 2K_2 ) A_1^3,
\end{split}
\end{equation*}
where we can assume that $A_1 \in \R$ since the coefficients are real due to rotation symmetry. Hence, nontrivial hexagonal patterns with $A = A_1 = A_2 = A_3 \neq 0$ are given by solutions of
\begin{equation*}
    0 = \mu M_0 \kappa + N A + (K_0 + 2 K_2) A^2,
\end{equation*}
which leads to
\begin{equation*}
    A_\pm = \dfrac{-N \pm \sqrt{N^2 - 4\mu M_0 \kappa(K_0 + 2K_2)}}{2(K_0 + 2K_2)}
\end{equation*}
if $K_0 + 2K_2 \neq 0$ and $A = -\tfrac{\mu M_0 \kappa}{N}$ otherwise.
The former solutions are real provided that $N^2 - 4\mu M_0 \kappa (K_0 + 2K_2) > 0$. Moreover, Taylor expanding $A_+$ in $\mu$ yields $A_+ = -\tfrac{\mu M_0 \kappa}{N} + \Ocal(\mu^{3/2})$. If $N \neq 0$, hexagons emerge from a transcritical bifurcation, whereas if $N = 0$ and $K_0 + 2K_2 \neq 0$, they emerge from a pitchfork bifurcation. In the literature, two types of hexagons are distinguished, namely up-hexagons satisfying $A > 0$ and down-hexagons satisfying $A < 0$. Depending on the signs of $N$, $K_0 + 2K_2$ and $M_0$, both types may appear in this setting. For a full discussion of the bifurcation branches and their stability in the case $N > 0$ and $K_0 + 2K_2 < 0$, we refer to Section \ref{sec:modulation-center-hex}, Figure \ref{fig:bifurcation-diag-hex} and \cite{doelman2003}.

Since the persistence of roll waves follows with the same calculations as in the square case, we only discuss the persistence of hexagonal patterns here. Again, we can restrict to the invariant subspace $A_1 = A_2 = A_3 = A \in \R$, where the full equation is given by
\begin{equation*}
    0 = \mu M_0 \kappa + N A + (K_0+2K_2) A^2 + p(A^2),
\end{equation*}
where $p$ is a polynomial with $p(0) = p'(0)=0$, cf. Remark \ref{rem:hexagon-invariant-sets}. Then, linearising about the hexagonal patterns $A_\pm$ yields
\begin{equation*}
    N + 2(K_0 + 2K_2) A_\pm + \Ocal(A_\pm^3) = \pm \sqrt{N^2 - 4\mu M_0 \kappa(K_0 + 2K_2)} + \Ocal(A_\pm^3).
\end{equation*}
In particular, if $N \neq 0$, we find that $A_+ = \Ocal(\mu)$ and therefore, this fixed point persists under higher-order perturbations using the implicit function theorem. However, $A_- = \Ocal(N)$ and thus, we can guarantee its persistence only if $N = \Ocal(\sqrt{\mu})$. Therefore, we obtain the following result.
\begin{theorem}[Patterns on hexagonal lattice]\label{thm:hex-patterns}
    Let $\mu M_0 = M - M^*$. Then there exists a $\mu_0 > 0$ such that for all $0 < \mu < \mu_0$ the following holds.
    \begin{itemize}
        \item If $M_0 K_0 < 0$, the system \eqref{eq:cm-system} exhibits roll waves of the form
        \begin{equation*}
            \Ucal = 2 \sqrt{-\tfrac{\mu M_0 \kappa}{K_0}} \cos(k_m^* x) \phib_+(k_m^*) + \Ocal(\mu).
        \end{equation*}
        \item If $N \neq 0$, the system \eqref{eq:cm-system} exhibits hexagonal patterns of the form
        \begin{equation*}
            \Ucal = -2\dfrac{\mu M_0 \kappa}{N} \sum_{j=1}^{3}\cos(\k_j\cdot\x)\phib_+(k_m^*) + \Ocal(\mu^2).
        \end{equation*}
        \item If $N = \sqrt{\mu} N_0$ with $N_0 = \Ocal(1)$ for $\mu \rightarrow 0$, $K_0+2K_2\neq 0$ and $ (N_0^2-4 M_0 \kappa(K_0+2K_2)) > 0$, the system \eqref{eq:cm-system} exhibits hexagonal patterns of the form
        \begin{equation*}
            \Ucal_\pm = 2\sqrt{\mu}\dfrac{-N_0 \pm \sqrt{(N_0^2 - 4\kappa M_0 (K_0 + 2K_2)})}{2(K_0 + 2K_2)}\sum_{j=1}^{3}\cos(\k_j\cdot\x)\phib_+(k_m^*) + \Ocal(\mu).
        \end{equation*}
    \end{itemize}
\end{theorem}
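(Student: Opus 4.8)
The plan is to read the theorem as a bookkeeping summary of fixed points of the reduced flow \eqref{eq:CM-equation-hexagon} on the center manifold from Theorem \ref{thm:center-manifold}. Stationary solutions of \eqref{eq:cm-system} lying on the center manifold correspond exactly to zeros of the right-hand side of \eqref{eq:CM-equation-hexagon}, so the entire argument reduces to solving a $\mu$-dependent family of algebraic equations inside the invariant subspaces of Remark \ref{rem:hexagon-invariant-sets} and then showing that the leading-order roots survive the reinstatement of the higher-order remainder. I would organise it as three steps: roll waves, the leading-order hexagon balance, and the persistence analysis.

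First, restricting \eqref{eq:CM-equation-hexagon} to $\{A_2 = A_3 = 0\}$ produces the scalar equation $\mu\partial_T A_1 = \mu M_0\kappa A_1 + K_0\abs{A_1}^2 A_1$, which is identical to the roll-wave equation on the square lattice. Passing to $A_1 = \rho_1 e^{i\alpha_1}$ and using the $S^1$-invariance from Remark \ref{rem:hexagon-invariant-sets}, the phase decouples and the modulus solves $0 = \mu M_0\kappa + K_0\rho_1^2 + p(\rho_1^2)$; the root $\rho_1 = \sqrt{-\mu M_0\kappa/K_0}$ exists precisely when $M_0 K_0 < 0$, and since the linearisation there is $2\sqrt{-\mu M_0\kappa K_0} + \Ocal(\mu^{3/2}) \neq 0$, the implicit function theorem gives persistence. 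I would simply invoke Section \ref{sec:stationary-square} here rather than repeat the computation.

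Second, on the diagonal $\{A_1 = A_2 = A_3 = A\}$ the reflection and rotation symmetries restrict the relevant stationary solutions to $A \in \R$, and the leading-order equation becomes the quadratic $0 = \mu M_0\kappa + NA + (K_0 + 2K_2)A^2$. For $K_0 + 2K_2 \neq 0$ its roots are the stated $A_\pm$, real exactly when the discriminant $N^2 - 4\mu M_0\kappa(K_0+2K_2)$ is positive, while for $K_0 + 2K_2 = 0$ one solves the linear equation to get $A = -\mu M_0\kappa/N$. Reinstating the higher-order polynomial $p$ of Remark \ref{rem:hexagon-invariant-sets} gives $0 = \mu M_0\kappa + NA + (K_0+2K_2)A^2 + p(A^2)$, whose linearisation about $A_\pm$ simplifies, after cancellation, to $\pm\sqrt{N^2 - 4\mu M_0\kappa(K_0+2K_2)} + \Ocal(A_\pm^3)$.

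The main obstacle, and the reason the three bullets look the way they do, is a scale-matching issue in the persistence step: the implicit function theorem needs the linearisation bounded away from zero \emph{and} the fixed point to sit inside the small-amplitude neighbourhood where both the center manifold and the expansion of $p$ are valid. When $N \neq 0$ is of order one, Taylor expansion gives $A_+ = -\mu M_0\kappa/N + \Ocal(\mu^2) = \Ocal(\mu)$, which is genuinely small with linearisation $\approx N \neq 0$, so the implicit function theorem applies and yields the second bullet. The companion root, however, satisfies $A_- = \Ocal(N) = \Ocal(1)$ and escapes the local regime, so nothing can be concluded about it. To capture a controllable second branch I would impose $N = \sqrt{\mu}N_0$; then both roots are $\Ocal(\sqrt{\mu})$, the hypothesis $N_0^2 - 4M_0\kappa(K_0+2K_2) > 0$ makes them real and distinct, and the linearisation equals $\pm\sqrt{\mu}\,\sqrt{N_0^2 - 4M_0\kappa(K_0+2K_2)} + \Ocal(\mu^{3/2}) \neq 0$, so the implicit function theorem applies to each and delivers the third bullet. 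The delicate point throughout is thus the transcritical structure forcing one branch to $\Ocal(1)$ amplitude, invisible to the local reduction, which is only resolved by rescaling $N$ so that both fixed points live at the common small order $\sqrt{\mu}$.
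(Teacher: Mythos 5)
Your proposal is correct and follows essentially the same route as the paper: restrict the reduced flow \eqref{eq:CM-equation-hexagon} to the invariant subspaces $\{A_2=A_3=0\}$ and $\{A_1=A_2=A_3\in\R\}$, solve the leading-order algebraic equations (quoting the square-lattice computation for roll waves), and obtain persistence via the implicit function theorem, with the key observation that the linearisation at $A_\pm$ equals $\pm\sqrt{N^2-4\mu M_0\kappa(K_0+2K_2)}$ up to higher order, so that $A_+$ persists for $N\neq 0$ while $A_-=\Ocal(N)$ forces the rescaling $N=\sqrt{\mu}N_0$ to control the second branch. Your explanation of the scale-matching obstruction (the $\Ocal(1)$ branch leaving the validity region of the center manifold) is exactly the reason behind the paper's case split.
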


\begin{remark}
    As in Remark \ref{rem:rotation-square}, rotated and translated versions of the patterns found in Theorem \ref{thm:hex-patterns} are also stationary solutions to \eqref{eq:thin-film-equation} due to rotation and translation invariance. 
\end{remark}

Figure \ref{fig:coeffs-hex} shows numerical plots of the regimes in which the sign conditions necessary for the existence of the found patterns are satisfied.

\begin{figure}[H]
    \centering
    \includegraphics[width=0.49\linewidth]{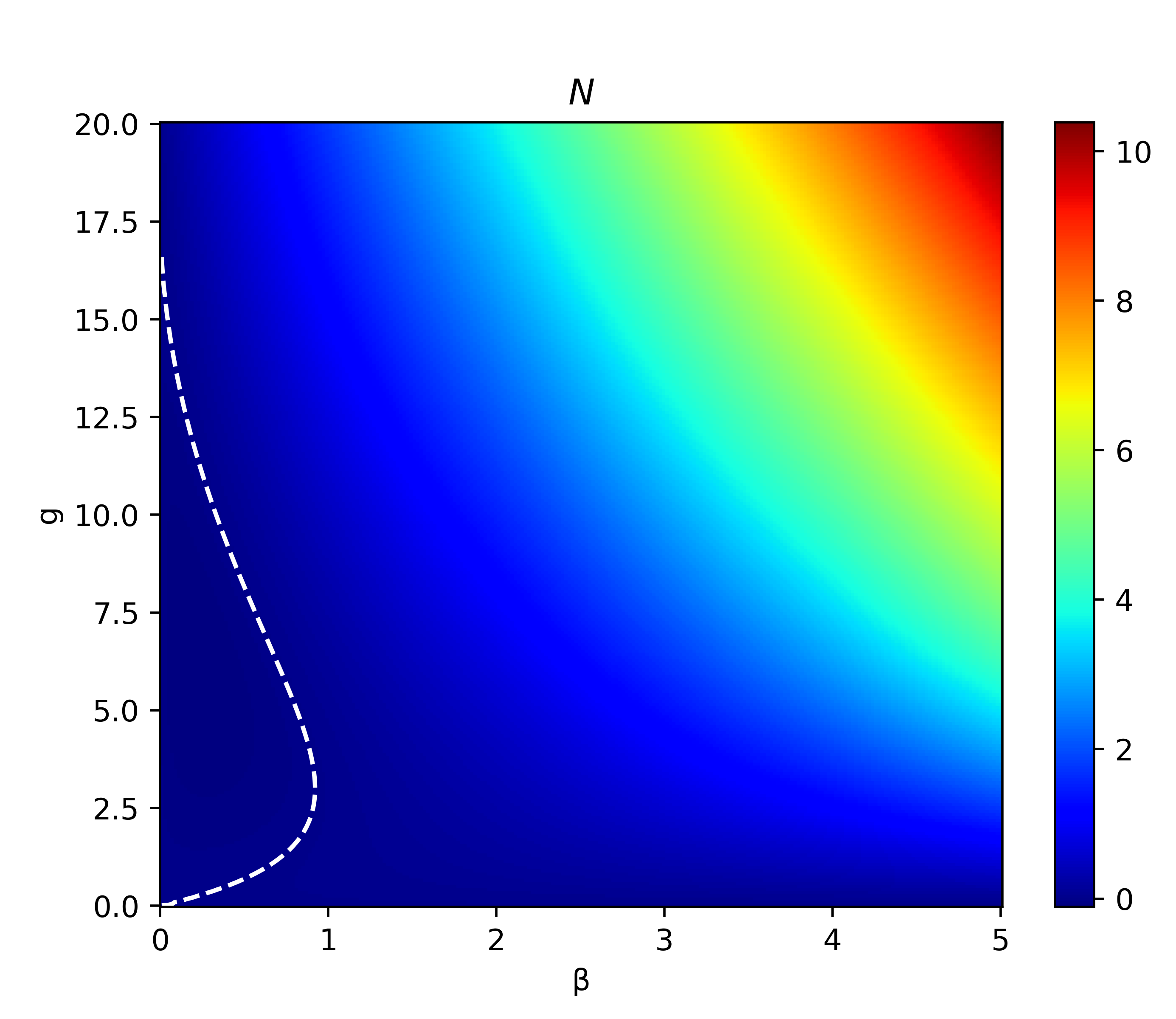} \hfill \includegraphics[width=0.49\linewidth]{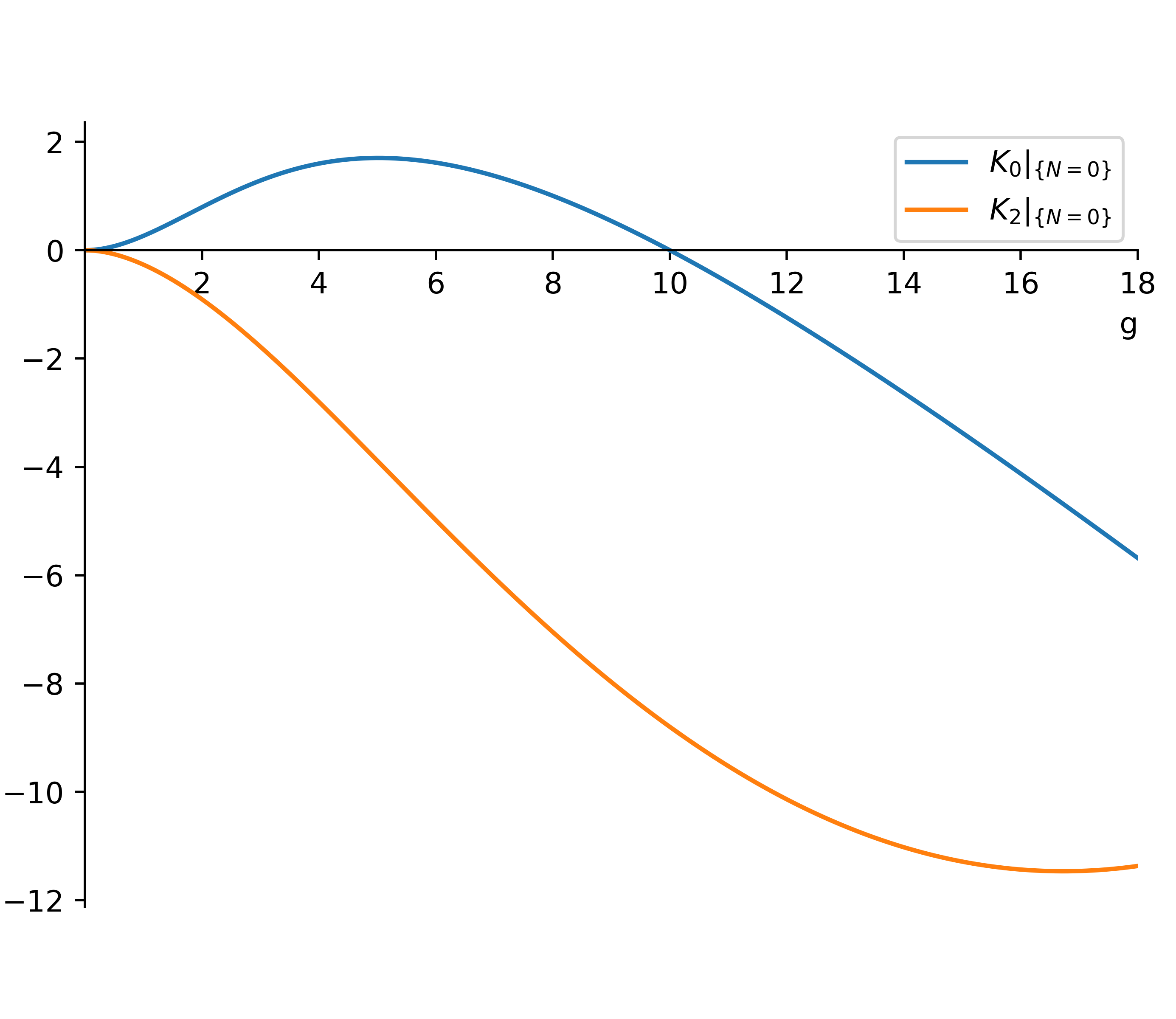}
    \caption{On the left: density plot of the coefficient $N$ for $0<g<20$ and $0<\beta <5$. The white dashed line indicates the parameter curve $\beta(g)$, where $N$ vanishes, see \eqref{eq:beta-von-g}. On the right: plot of the coefficients $K_0$ and $K_2$ on the curve $\{N=0\}$. Note that $K_2<0$ for every $g$, while $K_0$ changes sign at $g=10$.}
    \label{fig:coeffs-hex}
\end{figure}

\begin{remark}\label{rem:pattern-selection-hex}
    Using the temporal dynamics on the center manifold given by \eqref{eq:CM-equation-hexagon}, it is again possible to obtain criteria for pattern selection. However, since this is more involved we refrain from a detailed discussion and refer to \cite{golubitsky1984a,shklyaev2012}.
\end{remark}

\begin{remark}
    In addition to the hexagonal patterns and roll waves obtained in Theorem \ref{thm:hex-patterns}, there are also additional planar patterns, which lie neither in the invariant set $\{A_1 = A_2 = A_3\}$, nor in $\{A_2 = A_3 = 0\}$. These stationary solutions are referred to as \emph{mixed modes} and facilitate through a secondary bifurcation from the branch of roll waves, see Figure \ref{fig:bifurcation-diag-hex} as well as the analysis in Section \ref{sec:modulation-center-hex}. Continuing the branch of mixed modes further, it eventually crosses the branch of hexagons. After that, they are also referred to as \emph{false hexagons}. For a detailed discussion, we refer to \cite{golubitsky1988,hoyle2007}.
\end{remark}

\section{Fast-moving modulating fronts}\label{sec:modulating-fronts}

In this section, we study the existence of modulating travelling fronts for equation \eqref{eq:thin-film-equation}. Modulating travelling fronts are solutions $\Ucal(t,\x)$ to \eqref{eq:evolution-equation-U}, which are of the form 
\begin{equation}\label{eq:ansatz-modulating-front}
    \Ucal(t,\x) = \Vcal(x-ct,\x) = \Vcal(\xi,\p), \quad \xi = x-ct, \ \p = \x.
\end{equation}
Additionally, we assume that $\Vcal(\xi,\p)$ is periodic in $\p=(p_x,p_y)$ with respect to the dual lattice $\Gamma$ in Fourier space, see \eqref{eq:fourier-lattice}.
In particular, this means that we can write $\Vcal$ as a Fourier series
\begin{equation}\label{eq:Fourier-series}
    \Vcal(\xi,\p) = \sum_{\k\in \Gamma} \hat{\Vcal}_{\k}(\xi) e^{i\k\cdot \p}, \quad \hat{\Vcal}_{\k}(\xi) = \overline{\hat{\Vcal}_{-\k}(\xi)} \in \C^2
\end{equation}
since $\Vcal$ is real-valued. Further, we denote by \(\hat{\Vcal}(\xi) = (\hat{\Vcal}_{\k}(\xi))_{\k\in \Gamma}\). As in Sections \ref{sec:amplitude-equation} and \ref{sec:stationary} we restrict to the square lattice spanned by $\k_1 = k_m^*(1,0)$ and $\k_2 = k_m^*(0,1)$ and the hexagonal lattice spanned by $\k_1 = k_m^* (1,0)$, $\k_2 = \tfrac{k_m^*}{2}(-1,\sqrt{3})$ and $\k_3 = -\tfrac{k_m^*}{2}(1,\sqrt{3})$.

Finally, we assume that $\Vcal$ satisfies asymptotic boundary conditions in $\xi$. Assume that \eqref{eq:thin-film-equation} has two stationary solutions $\Ucal_1(\p)$ and $\Ucal_2(\p)$ which are periodic with respect $\Gamma$. Note that the existence of these solutions has been discussed in Section \ref{sec:stationary}. Then $\Vcal$ is a modulating travelling front connecting $\Ucal_1$ to $\Ucal_2$ if
\begin{equation*}
    \lim_{\xi \rightarrow -\infty} \Vcal(\xi,\p) = \Ucal_1(\p), \text{ and } \lim_{\xi \rightarrow +\infty} \Vcal(\xi,\p) = \Ucal_2(\p)
\end{equation*}
holds. The modulating travelling front $\Vcal$ then describes the spatial transition between the stationary states $\Ucal_1$ and $\Ucal_2$ in the form of the invasion of the state $\Ucal_2$ by the state $\Ucal_1$.

\begin{remark}
    \begin{enumerate}
        \item We restrict to modulating travelling fronts, which travel purely in one horizontal direction and which are periodic in the transverse horizontal direction. While it is possible to construct these fronts for any arbitrary horizontal direction, we choose, without loss of generality, solutions, which travel in $x$-direction.
        \item Note that since there is no discrete Fourier lattice, which contains both hexagonal and square patterns, it is not possible to model a transition between hexagons and squares using the theory presented here.
    \end{enumerate}
\end{remark}

The strategy for the construction of modulating fronts is as follows. First, we derive a spatial dynamics formulation by plugging the ansatz \eqref{eq:ansatz-modulating-front} into \eqref{eq:evolution-equation-U} and write this as an evolution system with respect to the spatial variable $\xi$. Then, we use periodicity in $\p$ to obtain an infinite-dimensional dynamical system for the Fourier modes of $\Vcal$. We then analyse the spectrum of the linearisation of this dynamical system in order to apply center manifold theory. It turns out that, since we are only considering fast-moving fronts, there are only finitely many central modes. Finally, we derive and analyse the dynamics of the finite-dimensional reduced equations on the center manifold both for the square lattice and the hexagonal lattice. Here, we construct both fixed points, which correspond to planar patterns in \eqref{eq:thin-film-equation}, and heteroclinic orbits, which correspond to modulating fronts in \eqref{eq:thin-film-equation}.

\subsection{Spatial dynamics formulation}

We plug the ansatz \eqref{eq:ansatz-modulating-front} into equation \eqref{eq:evolution-equation-U} to obtain
\begin{equation}\label{eq:modulating-thin-film-equation}
    \begin{pmatrix}
        1 & 0 \\
        0 & h
    \end{pmatrix} (-c\partial_\xi \Vcal) = \Fcal_M(\e_1 \partial_\xi + \nabla_\p)(\Vcal) 
\end{equation}
with $\e_1 = (1,0)^T$ and $\nabla_\p = (\partial_{p_x},\partial_{p_y})^T$. Here, we use that we can write $\Fcal_M$ as $\Fcal_M(\nabla)$ to denote the dependence on the spatial derivatives. Then, from the ansatz \eqref{eq:ansatz-modulating-front} we obtain that $\nabla = \nabla_\x \rightarrow \e_1 \partial_\xi + \nabla_\p$.
Using the notation $\Vcal = (h-1,\theta-1) =: (\tilde{h},\tilde{\theta})$, the highest order terms in $\Fcal$ with respect to $\partial_\xi$ can be written as
\begin{equation*}
    \Qcal(\tilde{h};M)\begin{pmatrix}
        \partial_{\xi}^4\tilde{h} \\ \partial_{\xi}^2\tilde{\theta}
    \end{pmatrix}, \quad \text{where } \Qcal(\tilde{h};M) = \begin{pmatrix}
        -\frac{(1+\tilde{h})^3}{3} & M \frac{(1+\tilde{h})^2}{2} \\
        -\frac{(1+\tilde{h})^4}{8} & (1+\tilde{h}) + M \frac{(1+\tilde{h})^3}{6}
    \end{pmatrix}.
\end{equation*}
Note that
\begin{equation}\label{eq:determinant-Q}
    \det \Qcal(\tilde{h};M) = -\frac{M(1+\tilde{h})^6}{144} + \frac{(1+\tilde{h})^4}{3} = -\frac{(1+\tilde{h})^4}{144} (M(1+\tilde{h})^2-48).
\end{equation}
In particular, for $\tilde{h} = 0$ it holds that $\det \Qcal(0;M) = -\tfrac{M-48}{144}$. Remark \ref{rem:Msmaller48} then implies that $\det \Qcal(0;M^*) > 0$ since $M^* < 48$. Due to continuity, we therefore find that $\Qcal(\tilde{h};M)$ is invertible in a neighborhood $\Ofrak_{g,\beta}\subset H^{\ell}_{\Gamma} \times \R$, $\ell > 0$, of $(0,M^*)$, which depends on the parameters $(g,\beta)$. Therefore, in $\Ofrak_{g,\beta}$ we can rewrite equation \eqref{eq:modulating-thin-film-equation} by multiplying with $\Qcal(\tilde{h};M)^{-1}$ to obtain
\begin{equation}\label{eq:spat-dyn-system}
    \begin{pmatrix}
        \partial_\xi^4 \tilde{h} \\
        \partial_\xi^2 \tilde{\theta}
    \end{pmatrix} = \Qcal(\tilde{h};M)^{-1}\Biggl( \begin{pmatrix}
        1 & 0 \\
        0 & 1+\tilde{h}
    \end{pmatrix} (-c \partial_\xi \Vcal) - \Fcal_M(\e_1\partial_\xi+\nabla_\p)(\Vcal) + \Qcal(\tilde{h};M) \begin{pmatrix}
        \partial_\xi^4 \tilde{h} \\
        \partial_\xi^2 \tilde{\theta}
    \end{pmatrix}\Biggr) =: \Gcal(\Vcal;M).
\end{equation}
Using the Fourier series for $\Vcal$ \eqref{eq:Fourier-series}, we obtain $\nabla_\p \rightarrow i\k$ with $\k \in \Gamma$ and find
\begin{equation*}
     \begin{pmatrix}
        \partial_\xi^4 \hat{h}_\k(\xi) \\
        \partial_\xi^2 \hat{\theta}_\k(\xi)
    \end{pmatrix} = \hat{\Gcal}_\k(\hat{\Vcal};M), \quad \k\in \Gamma.
\end{equation*}
Note that, by construction, $\hat{\Gcal}_\k$ contains at most third derivatives with respect to $\xi$. We also point out that $\hat{\Gcal}_\k$ depends on all Fourier modes $\hat{\Vcal} = (\hat{\Vcal}_\k)_{\k \in \Gamma}$ as the nonlinear terms yield convolutions in Fourier space.

We can write this as a first-order spatial dynamics system with respect to $\xi$ which reads as
\begin{equation}\label{eq:spat-dyn}
    \partial_\xi \hat{\Wcal}_\k(\xi) = \hat{\Lfrak}_M(\k) \hat{\Wcal}_\k(\xi) + \hat{\Nfrak}(\hat{\Wcal}(\xi);M,\k)
\end{equation}
for all $\k \in \Gamma$. Here $\hat{\Wcal}_\k = (\hat{h}_\k,\partial_\xi \hat{h}_\k,\partial_\xi^2 \hat{h}_\k \partial_\xi^3\hat{h}_{\k},\hat{\theta}_\k,\partial_\xi \hat{\theta}_\k)^T$, $\hat{\Wcal} = (\hat{\Wcal}_{\k})_{\k\in \Gamma}$, $\hat{\Lfrak}_M(\k) \in \C^{6\times 6}$ denotes the linear terms, and $\hat{\Nfrak}(\cdot;M,\k)$ denotes the nonlinearities. We note in particular that
\begin{equation}\label{eq:Nfrak}
    \hat{\Nfrak}(\cdot;M,\k) = (0,0,0,\hat{\Gcal}_\k^{\text{nl}}(\cdot;M)_1,0,\hat{\Gcal}_\k^{\text{nl}}(\cdot;M)_2)^T,
\end{equation}
where $\hat{\Gcal}_\k^{\text{nl}}$ denotes the nonlinear terms of $\hat{\Gcal}_\k$. Since $\k \in \Gamma$, the spatial dynamics system \eqref{eq:spat-dyn} is an infinite-dimensional dynamical system on $\hat{H}^{\ell}_{\Gamma}$, $\ell >0$, where we define the spaces
\begin{equation*}
\begin{split}
    \hat{H}^{\ell_1,\ell_2}_{\Gamma} & := \Bigl\{(\hat{\Wcal}_\k)_{\k\in \Gamma} \subset \C^6 : \hat{\Wcal}_{\k} = \overline{\hat{\Wcal}_{-\k}} , \ \sum_{\k\in \Gamma} (1+|\k|^2)^{\ell_1} |\hat{\Wcal}_{\k,1-4}|^2 + (1+|\k|^2)^{\ell_2} |\hat{\Wcal}_{\k,5-6}|^2<\infty\Bigr\} \\
    \hat{H}^{\ell}_{\Gamma} & := \hat{H}^{\ell,\ell}_{\Gamma},
\end{split}
\end{equation*}
where $\hat{\Wcal}_{\k,1-4} = (\hat{\Wcal}_{\k,1},\hat{\Wcal}_{\k,2},\hat{\Wcal}_{\k,3},\hat{\Wcal}_{\k,4})^T$ and $\hat{\Wcal}_{\k,5-6} = (\hat{\Wcal}_{\k,5},\hat{\Wcal}_{\k,6})^T$. By a slight abuse of notation, we also use the notation $\hat{H}_\Gamma^{\ell_1,\ell_2}$ for the corresponding space of functions defined by the Fourier series of coefficients in $\hat{H}_\Gamma^{\ell_1,\ell_2}$. The remainder of this section deals with constructing heteroclinic orbits of the system \eqref{eq:spat-dyn} using center manifold reduction. Therefore, as in Section \ref{sec:stationary}, we write \eqref{eq:spat-dyn} as
\begin{equation}\label{eq:spat-dyn-CM}
    \begin{split}
        \partial_\xi \hat{\Wcal}_\k(\xi) &=  \hat{\Lfrak}_{M^*}(\k) \hat{\Wcal}_\k(\xi) + \hat{\Rfrak}(\hat{\Wcal}(\xi);M,\k) , \\
        \partial_\xi M &= 0,
    \end{split}
\end{equation}
for all $\k \in \Gamma$, where $ \hat{\Rfrak}(\hat{\Wcal}(\xi);M,\k) := (\hat{\Lfrak}_M(\k) - \hat{\Lfrak}_{M^*}(\k)) \hat{\Wcal}_\k(\xi) + \hat{\Nfrak}(\hat{\Wcal}(\xi);M,\k)$.

\begin{remark}\label{rem:equivalence-spat-dyn-thin-film}
    We point out that the first-order spatial dynamics formulation \eqref{eq:spat-dyn-CM} and the system \eqref{eq:modulating-thin-film-equation} obtained from inserting the modulating front ansatz into the thin-film system \eqref{eq:thin-film-equation} are equivalent. We use this to determine the spectral properties of the linear operator $\hat{\Lfrak}_{M^*}$ from the spectral properties obtained in Section \ref{sec:linear-analysis} and relate the coefficients on the center manifold to the ones obtained in Sections \ref{sec:amplitude-equation} and \ref{sec:stationary}.
\end{remark}

\subsection{Spectral properties of $\Lfrak_{M^*}$}

To apply center manifold reduction to the system \eqref{eq:spat-dyn-CM}, we study the spectrum of the linear operator $\Lfrak_{M^*}$ given by 
\begin{equation}\label{eq:def-Lfrak}
    \Lfrak_{M^*} \Wcal = \sum_{\k \in \Gamma} \hat{\Lfrak}_{M^*}(\k) \hat{\Wcal}_\k e^{i\k \cdot \p}, \quad \Wcal = \sum_{\k\in\Gamma} \hat{\Wcal}_{\k} e^{i\k\cdot \p}.
\end{equation}
Since $\Lfrak_{M^*}$ acts diagonally on the Fourier modes $e^{i\k\cdot\p}$ the spectrum of $\Lfrak_{M^*}$ in $H_{\Gamma}^{\ell}$ is given by the union of the eigenvalues of $\hat{\Lfrak}_{M^*}(\k)$ over $\k \in \Gamma$. However, instead of calculating the eigenvalues of $\Lfrak_{M^*}(\k)$ directly, we aim to exploit the structures of the temporal problem \eqref{eq:thin-film-equation}, similar to \cite{hărăguş–courcelle1999}.

We start by linearising \eqref{eq:modulating-thin-film-equation} about $\Vcal = (0,0)^T$. This gives
\begin{equation*}
    -c\partial_\xi \Vcal = \Lcal_{M^*}(\e_1 \partial_\xi + \nabla_\p) \Vcal,
\end{equation*}
where
\begin{equation*}
    \Lcal_{M^*}(\mub) = 
    \begin{pmatrix}
        -\frac{1}{3}(\mub\cdot\mub)^2 + \bigl(\tfrac{g}{3} - \tfrac{M^*}{2}\bigr) \mub\cdot\mub & \frac{M^*}{2} \mub\cdot\mub \\
        - \frac{1}{8} (\mub\cdot\mub)^2 + \bigl(\tfrac{g}{8} - \tfrac{M^*}{6}\bigr) \mub\cdot\mub + \beta & \bigl(1+ \tfrac{M^*}{6}\bigr)\mub\cdot\mub - \beta
    \end{pmatrix}, \quad \mub \in \C^2.
\end{equation*}
Now we make the ansatz $\Vcal(\xi,\p) = \Vcal_{\k,\tilde{\mu}} e^{i\k\cdot \p} e^{\tilde{\mu}\xi}$ with \(\k\in \Gamma\) and \(\tilde{\mu}\in \C\), and obtain
\begin{equation*}
    -c\tilde{\mu} \Vcal_{\k,\tilde{\mu}} = \Lcal_{M^*}(\e_1 \tilde{\mu} + i\k) \Vcal_{\k,\tilde{\mu}}.
\end{equation*}
This equation has a nontrivial solution if and only if the dispersion relation
\begin{equation*}
    d(c,\tilde{\mu},\k) := \det\bigl(\Lcal_{M^*}(\e_1 \tilde{\mu} + i\k) + c\tilde{\mu}I\bigr), \quad \tilde{\mu}\in \C,\ \k\in \Gamma
\end{equation*}
vanishes. It turns out that the zeros of the dispersion relation characterise the spectrum of $\Lfrak_{M^*}$.

\begin{lemma}\label{lem:spectrum-dispersion-rel}
    Fix $\k \in \Gamma$ and $\lambda \in \C$. Then the following statements are equivalent:
    \begin{enumerate}
        \item\label{spec-alt1} $\lambda$ is an eigenvalue of $\hat{\Lfrak}_{M^*}(\k)$ with eigenvector $\hat{\phib} \in \C^6$.
        \item\label{spec-alt2} There exists a $\psib \in \C^2$ such that $\Vcal(\xi) = e^{\lambda \xi} \psib$ solves
        \begin{equation*}
            \begin{pmatrix}
                \partial_\xi^4 & 0 \\
                0 & \partial_\xi^2
            \end{pmatrix} \Vcal = D\Gcal(0;M^*)\Vcal.
        \end{equation*}
        \item\label{spec-alt3} It holds that $d(c,\lambda,\k) = 0$.
    \end{enumerate}
    Additionally, the relation
    \begin{equation}\label{eq:relation-eigenvectors}
        \hat{\phib} = \begin{pmatrix}
            1 & 0 \\
            \lambda & 0 \\
            \lambda^2 & 0 \\
            \lambda^3 & 0 \\
            0 & 1 \\
            0 & \lambda
        \end{pmatrix}\psib =: T(\lambda) \psib
    \end{equation}
    holds.
\end{lemma}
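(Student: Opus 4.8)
The plan is to prove the two equivalences $\ref{spec-alt2}\Leftrightarrow\ref{spec-alt3}$ and $\ref{spec-alt1}\Leftrightarrow\ref{spec-alt2}$ separately. The first is an algebraic rearrangement together with a determinant condition, and the second rests on the observation that $\hat{\Lfrak}_{M^*}(\k)$ is, by construction, the companion matrix of the second-order system appearing in \ref{spec-alt2}; this companion structure is also exactly what forces the intertwining relation \eqref{eq:relation-eigenvectors}. Throughout, the equation in \ref{spec-alt2} is understood on the fixed Fourier mode $\k$, so that $D\Gcal(0;M^*)$ acts with $\nabla_\p\mapsto i\k$ and reduces to a constant-coefficient ODE in $\xi$ for a $\C^2$-valued profile.

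For $\ref{spec-alt2}\Leftrightarrow\ref{spec-alt3}$, I would first note that, by the very way $\Gcal$ is built in \eqref{eq:spat-dyn-system}, the term $\Qcal(\tilde h;M)\,(\partial_\xi^4\tilde h,\partial_\xi^2\tilde\theta)^T$ is added and subtracted, so that the equation $(\partial_\xi^4\tilde h,\partial_\xi^2\tilde\theta)^T=\Gcal(\Vcal;M)$ is, after left-multiplication by $\Qcal(\tilde h;M)$, identically the modulating-front equation \eqref{eq:modulating-thin-film-equation}. Linearising at $\Vcal=0$ and using that $\Qcal(0;M^*)$ is invertible (Remark \ref{rem:Msmaller48}, since $M^*<48$), the equation in \ref{spec-alt2} is equivalent to $-c\partial_\xi\Vcal=\Lcal_{M^*}(\e_1\partial_\xi+\nabla_\p)\Vcal$ on the mode $\k$. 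Inserting the ansatz $\Vcal(\xi)=e^{\lambda\xi}\psib$ replaces $\partial_\xi\mapsto\lambda$ and $\nabla_\p\mapsto i\k$, turning \ref{spec-alt2} into the statement that $(\Lcal_{M^*}(\e_1\lambda+i\k)+c\lambda I)\psib=0$ admits a nontrivial solution $\psib$. Since this coefficient matrix is obtained from $D\Gcal(0;M^*)$ by the invertible, $\lambda$-independent left factor $\Qcal(0;M^*)$, the solvability condition is unchanged, and it is exactly the vanishing of $d(c,\lambda,\k)=\det(\Lcal_{M^*}(\e_1\lambda+i\k)+c\lambda I)$, i.e. \ref{spec-alt3}.

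For $\ref{spec-alt1}\Leftrightarrow\ref{spec-alt2}$, I would read off the block structure of $\hat{\Lfrak}_{M^*}(\k)$ acting on $\hat{\Wcal}_\k=(\hat h,\partial_\xi\hat h,\partial_\xi^2\hat h,\partial_\xi^3\hat h,\hat\theta,\partial_\xi\hat\theta)^T$: rows $1,2,3,5$ are the shift rows of the companion reduction, while rows $4$ and $6$ carry the dynamics encoded by $D\hat\Gcal_\k(0;M^*)$. Given an eigenpair $\hat{\Lfrak}_{M^*}(\k)\hat\phib=\lambda\hat\phib$, the shift rows force $\hat\phib_2=\lambda\hat\phib_1$, $\hat\phib_3=\lambda^2\hat\phib_1$, $\hat\phib_4=\lambda^3\hat\phib_1$ and $\hat\phib_6=\lambda\hat\phib_5$; setting $\psib=(\hat\phib_1,\hat\phib_5)^T$ then gives precisely $\hat\phib=T(\lambda)\psib$, which is \eqref{eq:relation-eigenvectors}. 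The two remaining scalar equations (rows $4$ and $6$) assert that $\lambda^4\hat\phib_1$ and $\lambda^2\hat\phib_5$ equal the two components of $D\hat\Gcal_\k(0;M^*)\,T(\lambda)\psib$, which is exactly the requirement that $e^{\lambda\xi}\psib$ solve the system in \ref{spec-alt2}. Conversely, given such a $\psib$, I would set $\hat\phib:=T(\lambda)\psib$: the shift rows then hold by construction and the dynamics rows by hypothesis, so $\hat\phib$ is an eigenvector of $\hat{\Lfrak}_{M^*}(\k)$; since the two columns of $T(\lambda)$ are linearly independent, $T(\lambda)$ is injective and $\hat\phib\neq0$ whenever $\psib\neq0$, so the eigenvector is genuine.

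Once the companion structure is identified the argument is essentially bookkeeping, so there is no serious analytic obstacle; the points requiring care are organisational. The most delicate passage is the one in $\ref{spec-alt2}\Leftrightarrow\ref{spec-alt3}$ linking the $\Gcal$-formulation, whose principal part has already been inverted through $\Qcal^{-1}$, to the dispersion relation $d$, which is written in terms of $\Lcal_{M^*}$ \emph{without} that inversion: this is exactly where the invertibility of $\Qcal(0;M^*)$, and hence $M^*<48$, enters, and one must note that left-multiplying the $\psib$-system by the constant matrix $\Qcal(0;M^*)$ leaves its solution set—and thus the singularity condition in $\lambda$—unchanged. A secondary point is to keep straight which of the six rows of $\hat{\Lfrak}_{M^*}(\k)$ are shift rows and which encode the dynamics, so that the exponents in $T(\lambda)$ come out as $1,\lambda,\lambda^2,\lambda^3$ on the film-height block and $1,\lambda$ on the temperature block.
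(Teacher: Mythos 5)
Your proposal is correct and follows essentially the same route as the paper's proof: the equivalence \ref{spec-alt1}$\Leftrightarrow$\ref{spec-alt2} via the companion structure of $\hat{\Lfrak}_{M^*}(\k)$ (shift rows forcing $\hat{\phib} = T(\lambda)\psib$, dynamics rows matching the higher-order system), and \ref{spec-alt2}$\Leftrightarrow$\ref{spec-alt3} by undoing the $\Qcal(0;M^*)^{-1}$ factor---invertible since $M^*<48$---to reduce to $(c\lambda I + \Lcal_{M^*}(\e_1\lambda + i\k))\psib = 0$ and hence to the vanishing of $d(c,\lambda,\k)$. The only cosmetic difference is that you verify the dynamics rows of the eigenvalue equation directly, where the paper invokes the equivalence of the first-order and higher-order formulations; the content is identical.
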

\begin{proof}
    \textbf{Step 1: \ref{spec-alt1} $\Longleftrightarrow$ \ref{spec-alt2}.} Assume that statement \ref{spec-alt1} is true. We recall that $\hat{\Lfrak}_{M^*}(\k)$ is a matrix of the form
    \begin{equation*}
        \hat{\Lfrak}_{M^*}(\k) = \begin{pmatrix}
            0 & 1 & 0 & 0 & 0 & 0 \\
            0 & 0 & 1 & 0 & 0 & 0 \\
            0 & 0 & 0 & 1 & 0 & 0 \\
            a_0 & a_1 & a_2 & a_3 & a_4 & a_5 \\
            0 & 0 & 0 & 0 & 0 & 1 \\
            b_0 & b_1 & b_2 & b_3 & b_4 & b_5
        \end{pmatrix}.
    \end{equation*}
    Hence, $(\lambda I - \hat{\Lfrak}_{M^*}(\k))\hat{\phib} = 0$ implies that $\hat{\phib}_j = \lambda \hat{\phib}_{j-1}$ for $j = 2,3,4$ and $\hat{\phib}_6 = \lambda \hat{\phib}_5$. Therefore, by defining $\psib_1 = \hat{\phib}_1$ and $\psib_2 = \hat{\phib}_5$ we find that $\hat{\phib}$ and $\psib$ satisfy \eqref{eq:relation-eigenvectors}. Next, we note that the first-order formulation
    \begin{equation}\label{eq:pf-spec-lemma-lin-first-order-spatdyn}
        \partial_\xi \hat{\Wcal}_\k = \hat{\Lfrak}_{M^*}(\k) \hat{\Wcal}_\k
    \end{equation}
    is equivalent to
    \begin{equation}\label{eq:pf-spec-lemma-lin-spatdyn}
        \begin{pmatrix}
                \partial_\xi^4 & 0 \\
                0 & \partial_\xi^2
            \end{pmatrix} \hat{\Vcal}_\k = D\hat{\Gcal}_\k(0;M^*)\hat{\Vcal}_\k
    \end{equation}
    through the relation $\hat{\Wcal}_\k = (\hat{h}_\k, \partial_\xi \hat{h}_\k, \partial_\xi^2 \hat{h}_\k, \partial_\xi^3 \hat{h}_\k, \hat{\theta}_\k, \partial_\xi \hat{\theta}_\k)^T$, where $\hat{\Vcal}_\k = (\hat{h}_\k,\hat{\theta}_\k)^T$. Since, by assumption, statement \ref{spec-alt1} holds true, the first-order formulation has a solution of the form $\hat{\Wcal}_\k(\xi) = e^{\lambda \xi} \hat{\phib}$ and therefore, $\hat{\Vcal}_\k(\xi) = e^{\lambda \xi} \psib$ solves \eqref{eq:pf-spec-lemma-lin-spatdyn} and the statement \ref{spec-alt2} holds.

    Conversely, we assume that \ref{spec-alt2} is true and let $\hat{\phib}$ be defined by \eqref{eq:relation-eigenvectors}. Then, by equivalence of the linear spatial dynamics system \eqref{eq:pf-spec-lemma-lin-spatdyn} and its first-order formulation \eqref{eq:pf-spec-lemma-lin-first-order-spatdyn} we find that $\hat{\Wcal}_\k(\xi) = e^{\lambda \xi} \hat{\phib}$, which shows that $\lambda$ is an eigenvalue of $\hat{\Lfrak}_{M^*}(\k)$ with eigenvector $\hat{\phib}$.

    \textbf{Step 2: \ref{spec-alt2} $\Longleftrightarrow$ \ref{spec-alt3}.} First, we calculate 
    \begin{equation*}
        D\hat{\Gcal}_\k(0;M^*)\hat{\Vcal}_\k = Q(0,M^*)^{-1}\left(-c \partial_\xi \hat{\Vcal}_\k - \Lcal_{M^*}(\e_1 \partial_\xi + i\k)\hat{\Vcal}_\k + Q(0;M^*) \begin{pmatrix}
            \partial_\xi^4 & 0 \\
            0 & \partial_\xi^2
        \end{pmatrix}\hat{\Vcal}_\k\right).
    \end{equation*}
    Since $Q(0;M^*)$ is invertible as $M^* < 48$, cf. \eqref{eq:determinant-Q} and Remark \ref{rem:Msmaller48}, we find that the linear spatial dynamics system \eqref{eq:pf-spec-lemma-lin-spatdyn} is equivalent to
    \begin{equation*}
        0 = \bigl(c I \partial_\xi + \Lcal_{M^*}(\e_1 \partial_\xi + i\k)\bigr)\hat{\Vcal}_\k.
    \end{equation*}
    Hence, \eqref{eq:pf-spec-lemma-lin-spatdyn} has a solution $\hat{\Vcal}_\k(\xi) = e^{\lambda \xi} \psib$ with $\psib \in \C^2 \setminus \{0\}$ if and only if
    \begin{equation}\label{eq:psib-is-eigenvector}
        0 = \bigl((c \lambda I + \Lcal_{M^*}(\e_1 \lambda + i\k)\bigr)\psib.
    \end{equation}
    Such a $\psib$ exists if and only if $d(c,\lambda,\k) = 0$ and hence statements \ref{spec-alt2} and \ref{spec-alt3} are equivalent.
\end{proof}

Lemma \ref{lem:spectrum-dispersion-rel} characterises the eigenvalues and eigenvectors of $\hat{\Lfrak}_{M^*}$ using the eigenvalues and eigenvectors of the temporal linearisation $\Lcal_{M^*}$, which have been studied in Section \ref{sec:linear-analysis}. In fact, we can also relate the geometric and algebraic multiplicities of eigenvalues on the imaginary axis, which are the relevant ones for the center manifold reduction.

\begin{lemma}\label{lem:multiplicity}
    Fix $\k \in \Gamma$. Then $i\gamma_0 \in i\R$ is a purely imaginary eigenvalue of $\hat{\Lfrak}_{M^*}(\k)$ if and only if $-ic\gamma$ is an eigenvalue of $\hat{\Lcal}_{M^*}(|\e_1 \gamma + \k|)$. Additionally, their geometric multiplicities coincide. Furthermore, if $i\gamma_0$ is a geometrically simple eigenvalue of $\hat{\Lfrak}_{M^*}(\k)$ and $\lambda(\gamma)$ is an eigenvalue of $\hat{\Lcal}_{M^*}(\e_1 \gamma + \k) + ic\gamma I$ with $\lambda(\gamma) = e_{0,m_0} (\gamma - \gamma_0)^{m_0} + \Ocal(|\gamma-\gamma_0|^{m_0+1})$ with $e_{0,m_0} \neq 0$, then $i\gamma_0$ has algebraic multiplicity $m_0$.
\end{lemma}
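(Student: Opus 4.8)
The plan is to push everything through the $2\times 2$ temporal symbol using Lemma~\ref{lem:spectrum-dispersion-rel} and to read off both the equivalence and the multiplicities from the dispersion determinant $d(c,\lambda,\k)=\det\bigl(\Lcal_{M^*}(\e_1\lambda+i\k)+c\lambda I\bigr)$. The algebraic backbone is the identity $\Lcal_{M^*}(i\mathbf v)=\hat{\Lcal}_{M^*}(|\mathbf v|)$ for real $\mathbf v\in\R^2$, which holds because every entry of $\Lcal_{M^*}(\mub)$ depends on $\mub$ only through $\mub\cdot\mub$, and $(i\mathbf v)\cdot(i\mathbf v)=-|\mathbf v|^2$ reproduces exactly the $k^2,k^4$ dependence of $\hat{\Lcal}_{M^*}(k)$. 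Applying this with $\mathbf v=\e_1\gamma_0+\k$ and $\lambda=i\gamma_0$ gives $d(c,i\gamma_0,\k)=\det\bigl(\hat{\Lcal}_{M^*}(|\e_1\gamma_0+\k|)-(-ic\gamma_0)I\bigr)$; by the equivalence of \ref{spec-alt1} and \ref{spec-alt3} in Lemma~\ref{lem:spectrum-dispersion-rel}, $i\gamma_0$ is an eigenvalue of $\hat{\Lfrak}_{M^*}(\k)$ if and only if this determinant vanishes, that is, if and only if $-ic\gamma_0\in\sigma\bigl(\hat{\Lcal}_{M^*}(|\e_1\gamma_0+\k|)\bigr)$. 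For the geometric multiplicities I use that by \eqref{eq:relation-eigenvectors} every eigenvector of $\hat{\Lfrak}_{M^*}(\k)$ for $i\gamma_0$ is $T(i\gamma_0)\psib$ with $\psib$ solving \eqref{eq:psib-is-eigenvector}, i.e. $\psib\in\ker\bigl(\hat{\Lcal}_{M^*}(|\e_1\gamma_0+\k|)+ic\gamma_0 I\bigr)$; since $T(i\gamma_0)$ has rank two it is injective, hence it maps this kernel isomorphically onto the eigenspace and the geometric multiplicities coincide.

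For the algebraic multiplicity I would first note that $\hat{\Lfrak}_{M^*}(\k)$ is, by construction, the companion matrix of the order-six system $\bigl(cI\partial_\xi+\Lcal_{M^*}(\e_1\partial_\xi+i\k)\bigr)\hat{\Vcal}_\k=0$ derived in Step~2 of the proof of Lemma~\ref{lem:spectrum-dispersion-rel}. Hence its characteristic polynomial equals $\lambda\mapsto d(c,\lambda,\k)$ up to its nonzero leading coefficient, which is precisely $-\det\Qcal(0;M^*)$; this is nonzero since $M^*<48$ by \eqref{eq:determinant-Q} and Remark~\ref{rem:Msmaller48}. Consequently the algebraic multiplicity of $i\gamma_0$ equals the order of vanishing of $\lambda\mapsto d(c,\lambda,\k)$ at $\lambda=i\gamma_0$, and after the linear substitution $\lambda=i\gamma$ this is the order of vanishing at $\gamma_0$ of $\gamma\mapsto d(c,i\gamma,\k)=\det A(\gamma)$, where $A(\gamma):=\hat{\Lcal}_{M^*}(\e_1\gamma+\k)+ic\gamma I$ denotes the analytic (in $\gamma$) continuation obtained by replacing $|\e_1\gamma+\k|^2$ with $(\e_1\gamma+\k)\cdot(\e_1\gamma+\k)$.

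It then remains to compare the order of $\det A$ with the order $m_0$ of the analytic eigenvalue branch $\lambda(\gamma)$. Since $A(\gamma)$ is $2\times 2$, $\det A(\gamma)$ is the product of its two eigenvalues; one is the given branch $\lambda(\gamma)=e_{0,m_0}(\gamma-\gamma_0)^{m_0}+\Ocal(|\gamma-\gamma_0|^{m_0+1})$, so $\det A$ vanishes to order exactly $m_0$ provided the \emph{second} eigenvalue does not vanish at $\gamma_0$. This nonvanishing is the crux and the step I expect to be the main obstacle: it is equivalent to $-ic\gamma_0$ being an \emph{algebraically} simple eigenvalue of $\hat{\Lcal}_{M^*}(|\e_1\gamma_0+\k|)$, not merely geometrically simple, and a purely geometric hypothesis cannot by itself exclude a nilpotent $2\times 2$ Jordan degeneracy (where two analytic branches through $0$ would make the order of $\det A$ strictly exceed $m_0$). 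Here I would close the gap using the monotonic structure from Proposition~\ref{prop:linear-monotonic}: a purely imaginary eigenvalue of $\hat{\Lfrak}_{M^*}(\k)$ forces $-ic\gamma_0$ onto the imaginary axis, which in the Turing regime occurs only for $-ic\gamma_0=0$ at the critical modes $|\e_1\gamma_0+\k|\in\{0,k_m^*\}$, where the relevant temporal eigenvalue is $\lambda_+=0$ while $\lambda_-<0$. Thus $0$ is algebraically simple for the temporal symbol, the second eigenvalue of $A(\gamma_0)$ is nonzero, and the factorization $\det A(\gamma)=\lambda(\gamma)\lambda_2(\gamma)$ with $\lambda_2(\gamma_0)\neq 0$ gives algebraic multiplicity $m_0$, as claimed.
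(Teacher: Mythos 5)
Your first part (the spectral equivalence and the equality of geometric multiplicities) is exactly the paper's argument: reduce to the dispersion function via Lemma~\ref{lem:spectrum-dispersion-rel}, use that $\Lcal_{M^*}(i\mathbf{v}) = \hat{\Lcal}_{M^*}(|\mathbf{v}|)$ for real $\mathbf{v}$, and transport kernels with the full-column-rank matrix $T(i\gamma_0)$ from \eqref{eq:relation-eigenvectors}. For the algebraic multiplicity you genuinely diverge: the paper introduces the time-dependent ansatz $\tilde{\Vcal}(t,\xi,\p)$, forms the pencil $\tilde{\Lfrak}_{M^*}(i\k,\lambda)$, and quotes the result of Afendikov--Mielke \cite[Thm.~2.1]{afendikov1995} to convert the vanishing order of the temporal eigenvalue curve into the Jordan-chain length, whereas you observe that $\hat{\Lfrak}_{M^*}(\k)$ is a mixed-order companion matrix, so its characteristic polynomial is $d(c,\cdot,\k)$ up to a nonzero constant ($\pm\det\Qcal(0;M^*)$; the sign convention in \eqref{eq:determinant-Q} is immaterial, only nonvanishing via Remark~\ref{rem:Msmaller48} is used), and then compute the vanishing order of $\det A(\gamma)$, $A(\gamma) = \hat{\Lcal}_{M^*}(\e_1\gamma+\k) + ic\gamma I$, through the factorisation $\det A = \lambda(\gamma)\bigl(\operatorname{trace}A(\gamma) - \lambda(\gamma)\bigr)$. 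This is correct and self-contained, and it has the virtue of making explicit a non-degeneracy the paper leaves implicit: the cofactor must not vanish at $\gamma_0$, i.e.\ $-ic\gamma_0$ must be \emph{algebraically}, not merely geometrically, simple for the $2\times 2$ temporal symbol. You are right that this is the crux — with a Jordan block in the temporal symbol the spatial eigenvalue would still be geometrically simple while $\det A$ vanishes to order larger than $m_0$ — and your closing of the gap via Proposition~\ref{prop:linear-monotonic} (purely imaginary temporal spectrum only at $0$, attained at $|\e_1\gamma_0 + \k| \in \{0,k_m^*\}$ where $\lambda_+ = 0$ and $\lambda_- < 0$) is precisely the fact that also underwrites the paper's unproved assertion that geometric simplicity yields a smooth eigenvalue curve. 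The only cost is that your argument is tied to the $2\times 2$ structure and the Turing regime $(\beta,g) \in \Omega_m$, while the citation-based route states the step at the abstract level of the lemma; since the lemma is only ever applied in that regime (Proposition~\ref{prop:spat-dyn-spectrum}), nothing is lost.
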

\begin{proof}
    Lemma \ref{lem:spectrum-dispersion-rel} yields that $i\gamma_0 \in i\R$ is a purely imaginary eigenvalue of $\hat{\Lfrak}_{M^*}(\k)$ if and only if $d(c,i\gamma_0,\k) = 0$ and thus
    \begin{equation*}
        \det\bigl(i c\gamma_0 I + \Lcal_{M^*}(\e_1 i \gamma_0 + i\k)\bigr) = 0.
    \end{equation*}
    Since $\e_1 i\gamma_0 + i \k$ has purely imaginary entries, we have $\Lcal_{M^*}(\e_1 i\gamma_0 + i\k) = \hat{\Lcal}_{M^*}(|\e_1 \gamma_0 + \k|)$ and $-ic\gamma_0$ is an eigenvalue of $\hat{\Lcal}_{M^*}(|\e_1 \gamma_0 + \k|)$. Following the proof of Lemma \ref{lem:spectrum-dispersion-rel}, the corresponding eigenvectors $\hat{\phib}$ for $\hat{\Lfrak}_{M^*}(\k)$ and the eigenvectors $\phib$ of $\hat{\Lcal}_{M^*}(|\e_1 \gamma + \k|)$ satisfy \eqref{eq:relation-eigenvectors}. Since the matrix $T(i\gamma) \in \C^{6 \times 2}$ has full column rank the dimensions of the corresponding eigenspaces are identical.

    To find the algebraic multiplicity, we use a result from \cite{afendikov1995}, as in \cite{hărăguş–courcelle1999}. To capture the behaviour for $i\gamma$ close to $i\gamma_0$, we make the modified ansatz
    \begin{equation*}
        \Ucal(t,\x) = \tilde{\Vcal}(t,\xi,\p).
    \end{equation*}
    Plugging this ansatz into \eqref{eq:evolution-equation-U} and linearising about $(0,0)$ we find that
    \begin{equation*}
        \partial_t \tilde{\Vcal} - c\partial_\xi \tilde{\Vcal} = \Lcal_{M^*}(\e_1\partial_\xi + \nabla_\p) \tilde{\Vcal}. 
    \end{equation*}
    Writing this as a first-order spatial dynamics system in $\xi$ yields
    \begin{equation*}
        \partial_\xi \tilde{\Wcal} = \tilde{\Lfrak}_{M^*}(\nabla_\p,\partial_t) \tilde{\Wcal},
    \end{equation*}
    where we note that $\tilde{\Lfrak}_{M^*}(i\k,0) = \hat{\Lfrak}_{M^*}(\k)$. We solve the linear equation for $\tilde{\Vcal}$ with the ansatz $\tilde{\Vcal} = e^{\lambda t + i\gamma \xi + i\k \cdot \p} \tilde{\Vcal}_{\k,\gamma,\lambda}$ with $\k\in \Gamma$, $\gamma\in \R$, and $\lambda \in \C$. This gives
    \begin{equation*}
        \lambda \tilde{\Vcal}_{\k,\gamma,\lambda} = \bigl(\hat{\Lcal}_{M^*}(|\e_1 \gamma + \k|) + ic\gamma I\bigr)\tilde{\Vcal}_{\k,\gamma,\lambda}.
    \end{equation*}
    The corresponding spatial dynamics problem then reads as
    \begin{equation*}
        i\gamma \tilde{\Wcal}_{\k,\gamma,\lambda} = \tilde{\Lfrak}_{M^*}(i\k,\lambda) \tilde{\Wcal}_{\k,\gamma,\lambda}.
    \end{equation*}
    Assuming that $i\gamma_0$ is a geometrically simple eigenvalue of $\tilde{\Lfrak}_{M^*}(i\k,\lambda_0)$ yields, similarly to the first part of this proof, that $\lambda_0$ is a geometrically simple eigenvalue of $\hat{\Lcal}_{M^*}(|e_1 \gamma_0 + \k|) + ic\gamma_0 I$. In particular, there exists a smooth eigenvalue curve $\lambda(\gamma)$ for $\gamma$ in a neighborhood of $\gamma_0$ with $\lambda(\gamma_0) = \lambda_0$. The result \cite[Thm.~2.1]{afendikov1995} then states that the algebraic multiplicity of $i\gamma_0$ is $m_0$ if and only if $\lambda(\gamma) = \lambda(\gamma_0) + e_{0,m_0}(\gamma - \gamma_0)^{m_0} + \Ocal(|\gamma - \gamma_0|^{m_0+1})$ with $e_{0,m_0} \neq 0$. In particular, since $i\gamma_0$ is an eigenvalue of $\hat{\Lfrak}_{M^*}(\k)$, which is geometrically simple by assumption, it holds that $\lambda(\gamma_0) = 0$. This concludes the proof.
\end{proof}

Combining Lemmas \ref{lem:spectrum-dispersion-rel} and \ref{lem:multiplicity} we can then conclude the following result on the spectrum of $\Lfrak_{M^*}$.

\begin{proposition}\label{prop:spat-dyn-spectrum}
    Let $c \neq 0$ and $\Gamma$ be a discrete Fourier lattice generated by $\k_1, \dots, \k_N$. Then the spectrum $\sigma(\Lfrak_{M^*})$ splits into a central part $\sigma_0 = \{0\}$ and a hyperbolic part $\sigma_h$. The hyperbolic part satisfies a spectral gap, that is, there exists a $\delta > 0$ such that $|\Re(\lambda_h)| \geq \delta$ for all $\lambda_h \in \sigma_h$. Additionally, the central eigenvector $\lambda = 0$ is semisimple with $(2N+1)$-dimensional eigenspace, which is spanned by $\hat{\phib}_+(0) := T(0)\phib_+(0)$ and $e^{i\k_j \cdot \x}\hat{\phib}_+(k_m^*) := e^{i\k_j \cdot \x} T(0)\phib_+(k_m^*)$ for $|j| = 1,\dots,N$.
\end{proposition}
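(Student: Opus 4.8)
The plan is to exploit that $\Lfrak_{M^*}$ acts diagonally on the Fourier modes $e^{i\k\cdot\p}$, so that $\sigma(\Lfrak_{M^*})=\bigcup_{\k\in\Gamma}\sigma(\hat{\Lfrak}_{M^*}(\k))$, and to read off the imaginary-axis part of the spectrum from the temporal dispersion relation via Lemmas~\ref{lem:spectrum-dispersion-rel} and~\ref{lem:multiplicity}. First I would locate the central spectrum: by Lemma~\ref{lem:multiplicity}, $i\gamma\in i\R$ is an eigenvalue of $\hat{\Lfrak}_{M^*}(\k)$ if and only if $-ic\gamma$ is an eigenvalue of $\hat{\Lcal}_{M^*}(|\e_1\gamma+\k|)$, i.e.\ $-ic\gamma\in\{\lambda_+(|\e_1\gamma+\k|;M^*),\lambda_-(|\e_1\gamma+\k|;M^*)\}$. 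Since $(\beta,g)\in\Omega_m$, Proposition~\ref{prop:linear-monotonic} gives $\Re\lambda_\pm\le 0$ with the two curves purely imaginary only where they vanish, which happens exactly at the wave numbers $0$ and $k_m^*$. As $c\neq 0$, the purely imaginary number $-ic\gamma$ can equal such a value only if $\gamma=0$, forcing $|\e_1\gamma+\k|=|\k|\in\{0,k_m^*\}$. On the square and hexagonal lattices considered here the circle $|\k|=k_m^*$ contains precisely the $2N$ shortest vectors $\pm\k_1,\dots,\pm\k_N$, while $|\k|=0$ only for $\k=0$; hence $\sigma_0=\{0\}$.

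Next I would determine the central eigenspace and its multiplicity. For each contributing block, Lemma~\ref{lem:spectrum-dispersion-rel} identifies the kernel of $\hat{\Lfrak}_{M^*}(\k)$ with $T(0)\phib_+(\cdot)$ through the relation~\eqref{eq:relation-eigenvectors}. The eigenvalue $\lambda_+=0$ of $\hat{\Lcal}_{M^*}$ is geometrically simple at $k=0$ (where $\lambda_-(0)=-\beta$) and at $k=k_m^*$ (where $\lambda_-(k_m^*;M^*)<0$), so each block contributes a one-dimensional kernel; summing over the $2N+1$ blocks yields the $(2N+1)$-dimensional eigenspace spanned by $\hat{\phib}_+(0)=T(0)\phib_+(0)$ and $e^{i\k_j\cdot\p}\hat{\phib}_+(k_m^*)$, $|j|=1,\dots,N$. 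For semisimplicity I would invoke the multiplicity formula of Lemma~\ref{lem:multiplicity}: the algebraic multiplicity equals the order $m_0$ of vanishing at $\gamma=0$ of the eigenvalue curve $\lambda(\gamma)=\lambda_+(|\e_1\gamma+\k|;M^*)+ic\gamma$. Because $\partial_k\lambda_+(k;M^*)=0$ at $k\in\{0,k_m^*\}$ by Proposition~\ref{prop:linear-monotonic}, the chain rule annihilates the $\lambda_+$-contribution to $\lambda'(0)$, leaving $\lambda'(0)=ic\neq 0$; hence $m_0=1$ matches the geometric multiplicity and $0$ is semisimple. It is essential here that $c\neq 0$, which encodes that the fronts are fast-moving.

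It remains to produce a uniform spectral gap, and this is the main obstacle, since $\sigma(\Lfrak_{M^*})$ is a union of the spectra of infinitely many fully coupled $6\times 6$ blocks, so one must rule out non-central eigenvalues accumulating on $i\R$ as $|\k|\to\infty$. I would split at a large radius $R$. For the finitely many $\k\in\Gamma$ with $|\k|\le R$ there are finitely many eigenvalues, none on $i\R$ except $0$ by the previous step, hence bounded away from the axis by some $\delta_1>0$. For $|\k|>R$ I would analyse the degree-six dispersion polynomial $\lambda\mapsto d(c,\lambda,\k)$, whose top coefficient equals $-\det\Qcal(0;M^*)\neq 0$ uniformly in $\k$ by~\eqref{eq:determinant-Q} and Remark~\ref{rem:Msmaller48}, so the degree never drops.

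To control the roots for $|\k|>R$ I would distinguish bounded from unbounded eigenvalues. Writing $s=(\lambda+ik_x)^2-k_y^2$ with $\k=(k_x,k_y)$, every eigenvalue obeys $-c\lambda\in\{\Lambda_+(s),\Lambda_-(s)\}$, where $\Lambda_\pm(s)$ are the eigenvalues of the symbol $\Lcal_{M^*}$ and satisfy $|\Lambda_\pm(s)|\to\infty$ as $|s|\to\infty$. A bounded eigenvalue with $|\Re\lambda|\le 1$ then forces $s$ bounded, and since $\Re s=(\Re\lambda)^2-(\Im\lambda+k_x)^2-k_y^2$ this forces $k_y$ and $\Im\lambda+k_x$ bounded, hence $|\k|$ bounded; thus bounded near-axis eigenvalues cannot occur once $|\k|>R$. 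The unbounded roots are governed by the principal part encoded in $\Qcal(0;M^*)$, whose nondegeneracy $\det\Qcal(0;M^*)>0$ pushes their real parts to grow; combined with $c\neq 0$ and the fact that the temporal curves $\Re\lambda_\pm(k;M^*)\to-\infty$ as $k\to\infty$, a continuity and compactness argument in the spirit of~\cite{afendikov1995}, already used in Lemma~\ref{lem:multiplicity}, furnishes a uniform $\delta_2>0$ with $|\Re\lambda|\ge\delta_2$ for all such eigenvalues. Taking $\delta=\min(\delta_1,\delta_2)$ then gives the asserted spectral gap and completes the proof.
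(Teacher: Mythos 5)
Your treatment of the central part of the spectrum is correct and essentially the paper's own argument: the same reduction through Lemmas~\ref{lem:spectrum-dispersion-rel} and~\ref{lem:multiplicity}, the same localisation $\gamma=0$ and $|\k|\in\{0,k_m^*\}$ from strict negativity of $\Re\lambda_\pm$ away from the critical wave numbers, the same identification of the $(2N+1)$ eigenvectors via $T(0)$, and the same semisimplicity argument (geometric simplicity because $\lambda_+\neq\lambda_-$ at $k=0,k_m^*$, and $m_0=1$ in Lemma~\ref{lem:multiplicity} because the quadratic root of $\lambda_+$ leaves $\lambda'(0)=ic\neq0$).

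The spectral gap is where you genuinely diverge from the paper, and where your argument has a real gap. The paper proceeds by contradiction: if hyperbolic eigenvalues $\lambda_n$ of $\hat{\Lfrak}_{M^*}(\k_n)$ accumulated at a point $i\gamma_1\in i\R$, then continuity and Step~1 force $\gamma_1=0$, the uniform negativity of the temporal curves on the discrete set $\Gamma_h$ forces $\k_n\in\Gamma_0$ eventually, and then $\lambda_n=0$ for large $n$ — a contradiction. Your splitting at a large radius $R$, and in particular your analysis of \emph{bounded} near-axis eigenvalues (bounded $\lambda$ forces bounded $s$, hence bounded $\k$, via $\Re s=(\Re\lambda)^2-(\Im\lambda+k_x)^2-k_y^2$), is correct and in fact more explicit than the paper on this point. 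The problem is the \emph{unbounded} case: the assertion that the nondegeneracy $\det\Qcal(0;M^*)>0$ ``pushes their real parts to grow'' is a claim, not a mechanism — nonvanishing of the top coefficient only prevents roots from escaping to infinity as $\k$ varies, it says nothing about where they go — and the appeal to \cite{afendikov1995} is misplaced, since the result invoked in Lemma~\ref{lem:multiplicity} computes algebraic multiplicities of purely imaginary eigenvalues and has no bearing on spectral gaps. As written, the possibility of eigenvalues with $\Re\lambda_n\to0$ and $|\Im\lambda_n|\to\infty$ (over $\k_n$ with $|\k_n|\to\infty$) is not excluded; note this is precisely the case the paper's own proof also passes over, since ``accumulates at a purely imaginary value'' silently presumes $\Im\lambda_n$ bounded.

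The missing step can be closed with the quantities you already introduced. Suppose $\Re\lambda_n\to0$ and $|\lambda_n|\to\infty$. Since $-c\lambda_n$ is an eigenvalue of the symbol at $s_n=(\lambda_n+ik_{x,n})^2-k_{y,n}^2$ and both symbol eigenvalues blow up as $|s|\to\infty$, necessarily $|s_n|\to\infty$; from $\Im s_n=2\Re\lambda_n\,(\Im\lambda_n+k_{x,n})$ and $-\Re s_n=(\Im\lambda_n+k_{x,n})^2+k_{y,n}^2-(\Re\lambda_n)^2$ one checks $s_n/|s_n|\to-1$, i.e.\ $s_n$ aligns with the negative real axis. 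Writing the dispersion relation as $c^2\lambda^2-cA_1(s)\lambda+A_0(s)=0$, where $A_1,A_0$ are $a_1,a_0$ with $k^2$ replaced by $-s$, the asymptotics $A_1(s)\sim\tfrac13 s^2$ and $A_0(s)\sim-\det\Qcal(0;M^*)\,s^3$ give for the two roots $c\lambda_n\sim\tfrac13|s_n|^2$ and $c\lambda_n\sim3\det\Qcal(0;M^*)\,|s_n|$, so $\Re\lambda_n\to+\infty$ on both branches — contradicting $\Re\lambda_n\to0$. (Equivalently: along the negative real axis the temporal curves satisfy $\Re\lambda_\pm\to-\infty$ while $|\Im\lambda_\pm|$ stays bounded by Remark~\ref{rem:imaginary-roots}, which is incompatible with $-c\lambda_n$ having vanishing real part and diverging imaginary part.) With this inserted, your proof is complete; without it, the gap claim for $|\k|>R$ rests on an assertion rather than an argument.
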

\begin{proof}
    As mentioned above, the spectrum of $\Lfrak_{M^*}$ is given by the union of eigenvalues of $\hat{\Lfrak}_{M^*}(\k)$ over $\k \in \Gamma$.
    
    \noindent\textbf{Step 1: Characterisation of central part.} Using Lemma \ref{lem:spectrum-dispersion-rel}, the matrix $\hat{\Lfrak}_{M^*}$ has an imaginary eigenvalue $i\gamma_0$ if and only if $d(c,i\gamma_0,\k) = 0$. This holds if and only if $\hat{\Lcal}_{M^*}(|\e_1 \gamma_0 + \k|)$ has an eigenvalue $ic\gamma_0$. Recalling Section \ref{sec:linear-analysis}, the eigenvalues of $\hat{\Lcal}_{M^*}(k)$ are given by the curves $\lambda_\pm(k;M^*)$, which have strictly negative real part unless $k = 0,k_m^*$. At $k = 0,k_m^*$ we have $\lambda_+(k;M^*) = 0$ and $\lambda_-(k;M^*) < 0$. This yields $\gamma_0 = 0$ and therefore, $\sigma_0 = \{0\}$. Additionally, zero eigenvalues are obtained only from $\hat{\Lcal}_{M^*}(0)$ and $\hat{\Lcal}_{M^*}(\k_j)$ for $|j|=1,\ldots,N$. In particular, since $\lambda_+(k;M^*) \neq \lambda_-(k;M^*)$ for $k=0,k_m^*$, they are simple eigenvalues and therefore, $0$ is a geometrically simple eigenvalue of $\hat{\Lfrak}_{M^*}(\k)$ for $\k \in \{0,\k_j:|j|=1,\ldots,N\}$. Therefore, Lemma \ref{lem:multiplicity} applies. Since $c \neq 0$ the eigenvalue curve of $\hat{\Lcal}_{M^*}(\e_1 \gamma + \k) + ic\gamma I$ with $\lambda(0) = 0$ is given by $\lambda(\gamma) = \lambda_+(|\e_1 \gamma + \k|;M^*) + ic\gamma$. Using that $\lambda_+(k;M^*)$ has a quadratic root at $k = 0,k_m^*$, see Proposition \ref{prop:linear-monotonic}, $\lambda = 0$ is an algebraically simple eigenvalue of $\hat{\Lfrak}_{M^*}(\k)$ for $\k \in \{0,\k_j : |j|=1,\ldots,N\}$. Recalling the definition of $\Lfrak_{M^*}$, \eqref{eq:relation-eigenvectors} and \eqref{eq:psib-is-eigenvector}, the corresponding eigenvectors are given by $T(0)\psib_+(0)$ and $e^{i\k_j\cdot \x}T(0)\phib_+(k_m^*)$, respectively. This shows the second part of the proposition.

    \noindent\textbf{Step 2: Characterisation of hyperbolic part.} As shown above, $\lambda \in \C$ is a hyperbolic eigenvalue of $\Lfrak_{M^*}$ if and only if $\Lcal_{M^*}(\e_1 \lambda + i\k) + c\lambda I$ has a nontrivial kernel for some $\k \in \Gamma$. We show that there is a spectral gap by contradiction. Assume that there is a sequence $(\lambda_n,\k_n) \subset \C \times \Gamma$ such that $\lambda_n$ accumulates at a purely imaginary value $i \gamma_1$. By continuity of eigenvalues and Step 1, we find $\gamma_1 = 0$. Similarly, since the eigenvalues of $\hat{\Lcal}_{M^*}(\k)$ are bounded away from the imaginary axis uniformly in $\k \in \Gamma_h$ this also yields that for large enough $n \in \N$ it holds that $\k_n \in \Gamma_0$. Since, by assumption $\lambda_n \rightarrow 0$ for $n \rightarrow \infty$ we find that $\lambda_n = 0$ for a finite, but large $n \in N$. This is a contraction to $\lambda_n \in \sigma_h$. This completes the proof.
\end{proof}

\subsection{Center manifold theorem and reduced equations}

We now show that the spatial dynamics formulation \eqref{eq:spat-dyn-CM} has a finite-dimensional invariant center manifold. Using Proposition \ref{prop:spat-dyn-spectrum} we know that the spectrum of $\Lfrak_{M^*}$ decomposes into the stable part $\sigma_s$ in the left complex half-plane, the central part $\sigma_0$ on the imaginary axis and the unstable part $\sigma_u$ in the right complex half-plane. We now define the spectral projections onto the stable, central and unstable eigenspaces. For this we use that for each $\hat{\Lfrak}_{M^*}(\k)$, $k \in \Gamma$, we can define a stable projection $\Pfrak_s(\k)$, a central projection $\Pfrak_0(\k)$ and an unstable projection $\Pfrak_u(\k)$. Then, since $\Lfrak_{M^*}$ can be written as the direct sum of the finite-dimensional matrices $\hat{\Lfrak}_{M^*}(\k)$ over all $\k \in \Gamma$, see \eqref{eq:def-Lfrak}, the spectral projections on the stable eigenspace $\Pfrak_s$, central eigenspace $\Pfrak_0$, and unstable eigenspace $\Pfrak_u$ are well-defined as direct sums over the $\Pfrak_j(\k)$ with $j = s,0,u$. Using this, we denote the projection of $\Lfrak_{M^*}$ as $\Lfrak_{M^*,s}$ and $\Lfrak_{M^*,u}$. Since $\Lfrak_{M^*,s}$ and $\Lfrak_{M^*,u}$ are again direct sums of matrices with strictly negative and positive spectrum, respectively, they generate $C_0$-semigroups on $t \geq 0$ and $t \leq 0$ respectively and the bounds
\begin{equation*}
\begin{split}
    \norm{e^{t\Lfrak_{M^*,s}}}_{\hat{H}^\ell_{\Gamma} \rightarrow \hat{H}^{\ell}_\Gamma} \leq C e^{-\delta t}, \text{ for } t \geq 0, \\
    \norm{e^{t\Lfrak_{M^*,u}}}_{\hat{H}^\ell_{\Gamma} \rightarrow \hat{H}^{\ell}_\Gamma} \leq C e^{\delta t}, \text{ for } t \leq 0
\end{split}
\end{equation*}
hold, where $\delta > 0$ denotes the spectral gap from Proposition \ref{prop:spat-dyn-spectrum}. Therefore, \cite[Theorem 13.1.3]{schneider2017} applies, and the following center manifold theorem holds. To formulate the theorem, we introduce the function space $\Zgoth = \hat{H}^{\ell+4,\ell+2}_\Gamma$, which, similar to Section \ref{sec:stationary}, can be written as the direct sum $\Zgoth = \Zgoth_0 \oplus \Zgoth_h$. Here, $\Zgoth_0$ is the finite-dimensional space containing all $\hat{\Wcal} \in \Zgoth$ with $\Pfrak_0\hat{\Wcal} = \hat{\Wcal}$.

\begin{theorem}\label{thm:center-manifold-fronts}
    Let $\ell > 0$. Then there exists a neighborhood $\Ofrak$ of $(0,M^*)$ in $\Zgoth_0 \times \R$ and a smooth map $\Psib \colon \Ofrak \to \Zgoth_h$ such that
    \begin{equation*}
        \Psib(0;M^*) = D\Psib(0;M^*) = 0
    \end{equation*}
    and the center manifold is given by
    \begin{equation*}
        \Mcal_0 = \{\Wcal_0 + \Psib(\Wcal_0;M) : (\Wcal_0,M) \in \Ofrak\}
    \end{equation*}
    is invariant and contains all small bounded solutions of \eqref{eq:spat-dyn-CM}. Furthermore, solutions to the system of equations
    \begin{equation}\label{eq:central-equations-fronts}
        \partial_t(\Pfrak_0\Wcal_0) = \Pfrak_0\left[\Rcal(\Wcal_0 + \Psib(\Wcal_0;M);M)\right]
    \end{equation}
    give rise to solutions to the full system \eqref{eq:spat-dyn-CM} via $\Wcal = \Wcal_0 + \Psib(\Wcal_0;M)$. Additionally, the symmetries of the system \eqref{eq:spat-dyn-CM} are preserved by the reduction function $\Psib$.
\end{theorem}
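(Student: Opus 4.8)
The plan is to verify the hypotheses of the spatial-dynamics center manifold theorem \cite[Thm.~13.1.3]{schneider2017} for the system \eqref{eq:spat-dyn-CM}, taken with phase space $\hat H^\ell_\Gamma$ and domain $\Zgoth = \hat H^{\ell+4,\ell+2}_\Gamma$, and then to invoke it verbatim. The three ingredients required are: a bounded linear part $\Lfrak_{M^*}\in L(\Zgoth,\hat H^\ell_\Gamma)$ together with a spectral splitting and an exponential dichotomy on the hyperbolic part; a high-frequency resolvent bound; and smoothness of $\Rfrak$ with $\Rfrak(0;M^*)=0$ and $D_{\Wcal}\Rfrak(0;M^*)=0$. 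Much of the linear input is already assembled in Proposition \ref{prop:spat-dyn-spectrum} and in the dichotomy estimates stated just before the theorem, so the work concentrates on the nonlinear hypothesis and on checking that all derivative losses are matched exactly by the gap between $\Zgoth$ and $\hat H^\ell_\Gamma$.

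First I would record the linear hypotheses. Writing $\Lfrak_{M^*}$ as the direct sum of the $6\times6$ matrices $\hat\Lfrak_{M^*}(\k)$, the only entries growing with $|\k|$ sit in the bottom rows of each block and carry at most four $\p$-derivatives on the $\hat h_\k$-components and two on the $\hat\theta_\k$-components; since these components are weighted by $\ell+4$ and $\ell+2$ in $\Zgoth$ while the image is weighted by $\ell$, the growth is absorbed and $\Lfrak_{M^*}\in L(\Zgoth,\hat H^\ell_\Gamma)$. Proposition \ref{prop:spat-dyn-spectrum} then supplies the splitting $\sigma(\Lfrak_{M^*})=\{0\}\cup\sigma_h$ with spectral gap $\delta>0$, the finite $(2N+1)$-dimensional center eigenspace, and the projections $\Pfrak_0,\Pfrak_s,\Pfrak_u$; together with the bounds on $e^{\xi\Lfrak_{M^*,s}}$ and $e^{\xi\Lfrak_{M^*,u}}$ this gives the dichotomy. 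The large-frequency resolvent bound $\norm{(i\omega-\Lfrak_{M^*})^{-1}}\le C/|\omega|$ follows as in Corollary \ref{cor:resolvent-estimate} from the uniform boundedness of the imaginary parts of the temporal eigenvalues (Remark \ref{rem:imaginary-roots}) transported through the correspondence of Lemma \ref{lem:spectrum-dispersion-rel}.

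The substantive step, which I expect to be the main obstacle, is to show that $\Rfrak(\cdot;M)=(\Lfrak_M-\Lfrak_{M^*})(\cdot)+\hat\Nfrak(\cdot;M,\cdot)$ is a smooth map from a neighborhood of $(0,M^*)$ in $\Zgoth\times\R$ into $\hat H^\ell_\Gamma$ with the stated vanishing. The difficulty is the quasilinearity: by \eqref{eq:spat-dyn-system} the nonlinearity $\hat\Nfrak$ carries the factor $\Qcal(\tilde h;M)^{-1}$, which is only defined where $\det\Qcal(\tilde h;M)\neq0$. By \eqref{eq:determinant-Q} and Remark \ref{rem:Msmaller48} we have $\det\Qcal(0;M^*)>0$, so on the neighborhood $\Ofrak_{g,\beta}$ the Nemytskii-type map $\tilde h\mapsto \Qcal(\tilde h;M)^{-1}$ is smooth (indeed analytic) on $\Zgoth$, using that the lowest smoothness index $\ell+2>1$ makes $\Zgoth$ a Banach algebra in the $\p$-variable. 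Because the structure \eqref{eq:Nfrak} places the nonlinearity only in the top-order components, and the finitely many $\xi$-derivatives it involves are themselves coordinates of $\Wcal$ (namely $\partial_\xi^{\le3}\hat h_\k$ and $\partial_\xi^{\le1}\hat\theta_\k$), the remaining $\p$-derivatives number at most four on the $h$-block and two on the $\theta$-block; hence $\Rfrak$ loses only the allotted derivatives and maps $\Zgoth\to\hat H^\ell_\Gamma$ smoothly. The vanishing conditions are immediate, since $\hat\Nfrak$ collects only quadratic-and-higher terms while $(\Lfrak_M-\Lfrak_{M^*})$ vanishes at $M=M^*$.

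Once the hypotheses are checked, \cite[Thm.~13.1.3]{schneider2017} produces the reduction function $\Psib$ with $\Psib(0;M^*)=D\Psib(0;M^*)=0$, the invariant manifold $\Mcal_0$ containing all small bounded solutions, and the reduced equation \eqref{eq:central-equations-fronts}. Finally, for the symmetry claim I would argue that the residual lattice symmetries of \eqref{eq:spat-dyn-CM} — translations in $\p$ (equivalently the phase rotations $\hat\Wcal_\k\mapsto e^{i\k\cdot\mathbf{a}}\hat\Wcal_\k$), the transverse reflection, and the point-group rotations preserving $\Gamma$ — are inherited from the rotation, translation and reflection invariance of \eqref{eq:thin-film-equation}. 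As these act by isometries that commute with both $\Lfrak_{M^*}$ and $\Rfrak$ and preserve the spectral projections $\Pfrak_0,\Pfrak_s,\Pfrak_u$, the Lyapunov–Perron construction can be carried out equivariantly, so that $\Psib$ commutes with the symmetry action, which establishes the last assertion of the theorem.
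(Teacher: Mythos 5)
Your proposal is correct and follows essentially the same route as the paper: both verify the hypotheses of \cite[Thm.~13.1.3]{schneider2017} by using the spectral splitting of Proposition \ref{prop:spat-dyn-spectrum}, defining the projections $\Pfrak_0,\Pfrak_s,\Pfrak_u$ blockwise as direct sums over $\k\in\Gamma$, establishing the exponential dichotomy bounds on the hyperbolic part, and then invoking that theorem on $\Zgoth=\hat H^{\ell+4,\ell+2}_\Gamma$. The paper's proof is terser than yours — it does not spell out the smoothness of $\Rfrak$ (the invertibility of $\Qcal(\tilde h;M)$ near $(0,M^*)$ having already been settled when deriving \eqref{eq:spat-dyn-system}), a high-frequency resolvent bound, or the equivariant Lyapunov--Perron argument for the symmetry claim — so these parts of your write-up are consistent elaborations of the same argument rather than a different approach.
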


In the remainder of this section, we derive the reduced equation \eqref{eq:central-equations-fronts} to leading order and analyse its dynamics. In particular, we focus on heteroclinic orbits and their persistence under higher-order perturbations to establish the existence of modulating fronts in the full system. For this, we introduce the rescaling $M-M^* = \eps^2 M_0$ with $M_0 \in \R$, which also guarantees that for $\eps$ sufficiently small, $M = M^* + \eps^2 M_0$ is contained in the neighborhood of $M^*$, where the center manifold result holds. We also introduce a slow spatial scale $\Xi = \eps^2 \xi$ and make the ansatz
\begin{equation}\label{eq:ansatz-Wcal}
    \Wcal = \sum_{j = 1}^N \eps A_j(\Xi) e^{i\k_j \cdot \p} \hat{\phib}_+(k_m^*) + c.c. + \eps^2 A_0(\Xi) \hat{\phib}_+(0) + \Psib(A_j, A_0;\eps)
\end{equation}
with $A_j(\Xi) \in \C$ and $A_0(\Xi) \in \R$ and
\begin{equation}\label{eq:ansatz-Psib}
     \Psib(A_j,A_0;\eps) = \sum_{\gammab \in \Gamma_h} \Psib_{\gammab} e^{i\gammab \cdot \p} + \Psib_0 + \sum_{j = 1}^N \Psib_j e^{i\k_j \cdot \p} + c.c.
\end{equation}
We point out that the $\eps$-scaling of the slow spatial scale $\Xi$ is the same as the rescaling used in Section \ref{sec:dynamics-amplitude-equations} to obtain the travelling wave ODEs for fast-moving fronts.

\begin{remark}
    Observe that in this ansatz, it is $A_j = \Ocal(1)$ since we encode the anticipated scaling in $\eps$ explicitly into the ansatz. In particular, we restrict to small amplitude solutions, which are also the ones that can be captured using the center manifold. Indeed, the square patterns constructed in Theorem \ref{thm:square-patterns} have small amplitude of order $\eps$ and therefore fit this scaling. In contrast, to guarantee that the hexagonal patterns in Theorem \ref{thm:hex-patterns} have the correct scaling, we need to restrict to the parameter regime where the quadratic coefficient $N$ is of order $\eps$.
\end{remark}

In order to relate the coefficients in the reduced equations on the center manifold to the ones in the amplitude equations in Section \ref{sec:amplitude-equation} we recall that \eqref{eq:spat-dyn-CM} is equivalent to the system
\begin{equation*}
    \begin{split}
        0 &= \begin{pmatrix}
            1 & 0 \\
            0 & 1+\tilde{h}
        \end{pmatrix} (-c \partial_\xi \Vcal) - \Fcal_M(\e_1\partial_\xi+\nabla_\p)(\Vcal), \\
        0 &= \partial_\xi M.
    \end{split}
\end{equation*}
Applying the Fourier transform in $\p$ on this system and splitting $\Fcal_M$ into linear and nonlinear parts, we obtain
\begin{equation} \label{eq:spat-dyn-CM-modfront-Fourier}
    \begin{split}
        0 &= \begin{pmatrix}
            1 & 0 \\
            0 & 1+\hat{\tilde{h}}
        \end{pmatrix} * (-c \partial_\xi \hat{\Vcal}_\k) + \Lcal_M(\e_1 \partial_\xi + i\k)\hat{\Vcal}_\k + \hat{\Ncal}(\e_1 \partial_\xi + i\gammab;\k)(\hat{\Vcal}), \\
        0 &= \partial_\xi M
    \end{split}
\end{equation}
for $\k \in \Gamma$. Note that the wave number $\gammab \in \Gamma$ in the nonlinearities represents a placeholder for different Fourier wave numbers appearing in convolution terms. We recall that $\hat{\Wcal}_\k$ can be reconstructed from $\hat{\Vcal}_\k = (\hat{\Vcal}_{\k,1}, \hat{\Vcal}_{\k,2})^T$ via the invertible operator $\Tcal \colon \Zgoth \to \Zcal$
\begin{equation}\label{eq:Tcal-operator}
    \hat{\Wcal}_\k = (\Tcal^{-1} \Vcal)_{\k} := (\hat{\Vcal}_{\k,1},\partial_{\xi}\hat{\Vcal}_{\k,1},\partial_{\xi}^2\hat{\Vcal}_{\k,1},\partial_{\xi}^3\hat{\Vcal}_{\k,1},\hat{\Vcal}_{\k,2},\partial_{\xi}\hat{\Vcal}_{\k,2}).
\end{equation}

We now outline the strategy to derive the reduced equations on the center manifold. Since we want to use the properties of the physical system \eqref{eq:spat-dyn-CM-modfront-Fourier} and relate the coefficients of the reduced equations to the ones obtained in Sections \ref{sec:amplitude-equation} and \ref{sec:stationary}, we aim to derive a system for $\Tcal \Wcal$ instead of a system for the full spatial dynamics variable $\Wcal$. The main challenge for this is to show that after applying the map $\Tcal$ to the ansatz \eqref{eq:ansatz-Wcal}, the leading-order terms remain unchanged, and $\Tcal\Wcal$ is of the form as in Section \ref{sec:stationary}, cf.~\eqref{eq:ansatz-U0}. Afterwards, we can proceed to derive the reduced equations similarly to Sections \ref{sec:amplitude-equation} and \ref{sec:stationary} by using that nonlinearities including $\Xi$-derivatives are of higher order in $\eps$.

We recall that the central eigenvectors $\hat{\phib}_+(\k)$ of $\hat{\Lfrak}_{M^*}(\k)$ are characterised by the central eigenvectors $\phib_+(\k)$ of $\hat{\Lcal}_{M^*}(\k)$, see Proposition \ref{prop:spat-dyn-spectrum}. In particular, we have that $\phib_+(\k_j) = \Tcal \hat{\phib}_+(\k_j)$ and $\phib_+(0) = \Tcal \hat{\phib}_+(0)$. Therefore, in order to determine the leading order dynamics of the central modes $A_0$ and $A_j$, $j = 1, \dots, N$, we project \eqref{eq:spat-dyn-CM-modfront-Fourier} onto the corresponding eigenspaces using the spectral projections $P_+(\k_j)$ of $\hat{\Lcal}_{M^*}(\k_j)$ for $j = 1, \dots, N$, and $P_+(0)$ of $\hat{\Lcal}_{M^*}(0)$, respectively. The main challenge here is the fact that $\Tcal \Zgoth_h \cap \Zcal_0 \neq \emptyset$. That is, after restricting the hyperbolic part $\Psib$ via $\Tcal$, we can obtain nontrivial contributions in the central eigenspaces. In particular, 
\begin{equation*}
\begin{split}
    \Tcal\Psib_0 &= \tilde{\Psi}_0^- \phib_-(0) + \tilde{\Psi}_0^+ \phib_+(0), \\
    \Tcal\Psib_j &= \tilde{\Psi}_j^- \phib_-(\k_j) + \tilde{\Psi}_j^+ \phib_+(\k_j),
\end{split}
\end{equation*}
with $j = 1,\dots, N$, where the central contributions $\tilde{\Psi}_0^+$ and $\tilde{\Psi}_j^+$ are, in general, nontrivial. However, the following lemma proves that these central contributions are of higher order.

\begin{lemma}\label{lem:reduction-expansion}
    Let $\Wcal = \Wcal_0 + \Psib$ be a solution of \eqref{eq:spat-dyn-CM} with $\Psib$ given by \eqref{eq:ansatz-Psib}. Then there exist $\Psi_0^\pm, \Psi_j^\pm = \Ocal(1)$ for $\eps \rightarrow 0$, $j=1,\ldots,N$, such that
    \begin{equation*}
        \begin{split}
            \Tcal\Psib_0 &= \eps^2 \Psi_0^- \phib_-(0) + \eps^4 \Psi_0^+\phib_+(0), \\
            \Tcal\Psib_j &= \eps^2 \Psi_j^- \phib_-(\k_j) + \eps^2 \Psi_j^+ \phib_+(\k_j).
        \end{split}
    \end{equation*}
\end{lemma}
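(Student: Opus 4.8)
The plan is to split the estimate into a generic bound, valid for the whole reduction function by center-manifold tangency, and a refined bound for the $\phib_+(0)$-contribution that must exploit the conservation-law structure of \eqref{eq:thin-film-equation}.

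First I would record the generic order of $\Psib$. Since $\Psib(0;M^*) = D\Psib(0;M^*) = 0$ by Theorem \ref{thm:center-manifold-fronts}, the reduction function is at least quadratic in $(\Wcal_0, M - M^*)$. The ansatz \eqref{eq:ansatz-Wcal} gives $\Wcal_0 = \Ocal(\eps)$ and $M - M^* = \eps^2 M_0$, whence $\Psib = \Ocal(\eps^2)$ in $\Zgoth_h$. As the operator $\Tcal$ of \eqref{eq:Tcal-operator} only extracts the first and fifth components it is bounded, so $\Tcal\Psib_j, \Tcal\Psib_0 = \Ocal(\eps^2)$. Decomposing these $\C^2$-vectors in the eigenbases $\{\phib_\pm(\k_j)\}$ and $\{\phib_\pm(0)\}$ immediately yields $\Tcal\Psib_j = \eps^2\Psi_j^-\phib_-(\k_j) + \eps^2\Psi_j^+\phib_+(\k_j)$ with $\Psi_j^\pm = \Ocal(1)$, as well as the $\phib_-(0)$-part $\eps^2\Psi_0^-\phib_-(0)$ of $\Tcal\Psib_0$ with $\Psi_0^- = \Ocal(1)$. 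This settles every assertion except that the $\phib_+(0)$-part of $\Tcal\Psib_0$ is of order $\eps^4$ rather than the generic $\eps^2$.

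For this refinement I would abandon the spatial-dynamics nonlinearity $\hat\Nfrak$ from \eqref{eq:Nfrak} --- which is no longer in divergence form, the divergence structure having been sacrificed in resolving the quasilinearity --- and instead project the physical formulation \eqref{eq:spat-dyn-CM-modfront-Fourier} at $\k = 0$ onto $\phib_+(0)$ via $P_+(0)$. Because $\phib_+^*(0) = (1,0)^T$, this projection selects precisely the first, mass-conserving equation of \eqref{eq:thin-film-equation}. Under the front ansatz $\partial_t\mapsto -c\partial_\xi$, $\nabla\mapsto\e_1\partial_\xi + \nabla_\p$, and since $\nabla_\p$ annihilates the $\k=0$ Fourier coefficient, this equation is a total $\xi$-derivative,
\[
    \partial_\xi\bigl[-c\,(\hat\Vcal_0)_1 + \Jsf(\xi)\bigr] = 0,
\]
where $\Jsf(\xi)$ denotes the $\e_1$-component of the $\k=0$ Fourier coefficient of the flux $\tfrac{h^3}{3}(\nabla\Delta h - g\nabla h) + M\tfrac{h^2}{2}\nabla(h-\theta)$. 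Integrating once gives $(\hat\Vcal_0)_1 = c^{-1}\Jsf(\xi) + \mathrm{const}$; since $P_+(0)\hat\Vcal_0 = (\hat\Vcal_0)_1 = \eps^2 A_0 + \tilde\Psi_0^+$ with $\eps^2 A_0$ the central coordinate (constant in $\xi$ to leading order, as the reduced equation forces $c\partial_\Xi A_0 = 0$), the spillover $\tilde\Psi_0^+$ is exactly the non-constant part of $c^{-1}\Jsf(\xi)$.

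It then remains to show $\Jsf = \Ocal(\eps^4)$. The linear part of $\Jsf$ is manifestly $\partial_\xi$ of lower-order quantities, hence $\Ocal(\eps^2)$ relative to the already-established $(\hat\Vcal_0)_1 = \Ocal(\eps^2)$, i.e. $\Ocal(\eps^4)$. For the quadratic part I would invoke the cancellation of Lemma \ref{lem:conservation-law}: the only interactions reaching $\k=0$ are the products of $\eps A_j E_j$ and $\eps A_{-j}E_{-j}$, which contribute $\eps^2\abs{A_j}^2$ times a flux factor linear in $\k_j$; summing the $j$- and $(-j)$-terms the $\e_1$-components $\propto (\k_j)_x$ and $(\k_{-j})_x = -(\k_j)_x$ cancel, so the $\Ocal(\eps^2)$ term vanishes and only the slow-gradient correction $\partial_\xi = \eps^2\partial_\Xi$ acting on $\abs{A_j}^2$ survives, giving $\Ocal(\eps^4)$; cubic terms and interactions of the $\Psib_\gammab$ are of still higher order. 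Hence $\tilde\Psi_0^+ = \Ocal(\eps^4)$, and $\Tcal\Psib_0 = \eps^2\Psi_0^-\phib_-(0) + \eps^4\Psi_0^+\phib_+(0)$ with $\Psi_0^\pm = \Ocal(1)$. The main obstacle is precisely this asymmetry between the two modes at $\k=0$: only $\phib_+^*(0)$ corresponds to the conservation law, so only the $\phib_+(0)$-spillover is forced to higher order, whereas $\phib_-(0)$ (the non-conserved temperature equation) stays at the generic $\eps^2$; recovering this gain of two orders is impossible from the reduced vector field directly and requires returning to the divergence form of the physical equation.
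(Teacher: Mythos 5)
Your overall architecture --- a generic $\Ocal(\eps^2)$ tangency bound plus a refined estimate for the conserved mode obtained by returning to the physical conservation law --- is the same as the paper's, and the quadratic cancellation you take from Lemma \ref{lem:conservation-law} is correct. But the decisive step has a genuine gap. Integrating the conservation law yields $-c(\hat{\Vcal}_0)_1 + \Jsf = \mathrm{const}$, which controls only the \emph{non-constant part} of the sum $\eps^2 A_0 + \tilde{\Psi}_0^+$; it cannot decide how an $\Ocal(\eps^2)$ \emph{constant} is shared between the central coordinate $\eps^2 A_0$ and the spillover $\tilde{\Psi}_0^+$. That split is fixed by the six-dimensional spectral projection, i.e.\ by the defining relation $\Pfrak_0\Psib_0 = 0$, not by separating constant from non-constant parts: a priori $\tilde{\Psi}_0^+$ could carry an $\Ocal(\eps^2)$ constant (at the asymptotic periodic states everything is $\xi$-independent, yet the $\k = 0$ mean of $h$ is a nonzero $\Ocal(\eps^2)$ quantity whose allocation your argument cannot see), whereas the lemma asserts $\tilde{\Psi}_0^+ = \eps^4\Psi_0^+$ outright. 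The missing ingredient is exactly what the paper's proof turns on: from $0 = \Pfrak_0\Psib_0 = \lrangle{\hat{\phib}_+^*(0),\Psib_0}$, the fact that the components $(\Psib_0)_2,(\Psib_0)_3,(\Psib_0)_4,(\Psib_0)_6$ are $\xi$-derivatives of slowly varying $\Ocal(\eps^2)$ quantities, and the computation $\Tcal\hat{\phib}_+^*(0) = c\,(1,0)^T$ --- the adjoint central eigenvector at $\k=0$ puts \emph{zero} weight on the temperature component --- one reads off $c\,\tilde{\Psi}_0^+ = \Ocal(\eps^4)$ pointwise, with no integration constant to allocate. Compounding this, your justification that $A_0$ is constant to leading order cites the reduced equation $c\,\partial_\Xi A_0 = 0$, which in the paper is derived \emph{after} and \emph{by means of} this lemma; as used here it is circular.

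A second, lattice-specific gap: on the hexagonal lattice your claim $\Jsf = \Ocal(\eps^4)$ does not follow from the $\pm\k_j$ pairing alone. Because of the resonance $\k_1+\k_2+\k_3 = 0$, the flux at $\k = 0$ receives $\Ocal(\eps^3)$ contributions from (i) cubic products of the central modes proportional to $A_1A_2A_3$, and (ii) cross terms between $\eps A_j\phib_+(\k_j)$ and the resonance-induced corrections $\eps^2\Psi_j^-\phib_-(\k_j)$, which are genuinely of order $\eps^2$ there (unlike on the square lattice, where they are cubic in the amplitudes). Neither is ``of still higher order'' term by term; they cancel only after summing over $j$, using $\k_1+\k_2+\k_3 = 0$ together with $|\k_1|=|\k_2|=|\k_3| = k_m^*$. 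This cancellation has to be proved, and it is precisely on the hexagonal lattice that the lemma is most delicate.
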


\begin{proof}
    Using the fact that $\Psib$ is at least quadratic in its arguments and that the central part of $\Wcal$ scales at least with $\eps$, see \eqref{eq:ansatz-modulating-front}, we find that $\Psib = \Ocal(\eps^2)$. Hence, the only part of the lemma that is not obvious is the $\eps$ scaling of the $\phib_+(0)$-contribution of $\Tcal\Psib_0$.

    Recalling the derivation of the spatial dynamics system \eqref{eq:spat-dyn-CM}, we can write
    \begin{equation*}
        \partial_\xi \hat{\Wcal}_\k = \Qfrak_0^{-1} \Qfrak_0 \hat{\Lfrak}_{M^*}(\k) + \hat{\Rfrak}(\hat{\Wcal}_\k;M,\k),
    \end{equation*}
    where $\Qfrak_0 \in \R^{6 \times 6}$ is the action of $\Qcal(0;M^*)$ in the spatial-dynamics formulation, i.e.
    \begin{equation*}
        \Qfrak_0 = \begin{pmatrix}
            1 & 0 & 0 & 0 & 0 & 0 \\
            0 & 1 & 0 & 0 & 0 & 0 \\
            0 & 0 & 1 & 0 & 0 & 0 \\
            -\tfrac{1}{3} & -\tfrac{1}{3} & -\tfrac{1}{3} & -\tfrac{1}{3} & \tfrac{M^*}{2} & \tfrac{M^*}{2} \\
            0 & 0 & 0 & 0 & 1 & 0 \\
            -\tfrac{1}{8} & -\tfrac{1}{8} & -\tfrac{1}{8} & -\tfrac{1}{8} & 1 +\tfrac{M^*}{6} & 1 + \tfrac{M^*}{6},
        \end{pmatrix}
    \end{equation*}
    which satisfies $\det(\Qfrak_0) = \det(\Qcal(0;M^*))$ and is thus invertible. Using this formulation, we find that
    \begin{equation*}
        \Qfrak_0 \hat{\Lfrak}_{M^*}(0) = \begin{pmatrix}
            0 & 1 & 0 & 0 & 0 & 0 \\
            0 & 0 & 1 & 0 & 0 & 0 \\
            0 & 0 & 0 & 1 & 0 & 0 \\
            0 & -c & \tfrac{g}{3} - \tfrac{M^*}{2} & 0 & 0 & 0 \\
            0 & 0 & 0 & 0 & 0 & 1 \\
            \beta & 0 & \tfrac{g}{8} - \tfrac{M^*}{6} & 0 & -\beta & -c
        \end{pmatrix}.
    \end{equation*}    
    We first note that $\hat{\Lfrak}_M(0) - \hat{\Lfrak}_{M^*}(0) \hat{\Wcal}_0 = \Ocal(\eps^4)$, since $\Wcal$ is given by \eqref{eq:ansatz-modulating-front}.
    Next, we argue that $\Qfrak_0(I - \Pfrak_0)\hat{\Nfrak}(\hat{\Wcal;M,0}) = (0,0,0,\ast,0,\ast) + \Ocal(\eps^4)$. For that, we observe that quadratic terms in \eqref{eq:spat-dyn-system} are given by 
    \begin{equation*}
        \begin{split}
            &\tilde{\Qcal}_1 \tilde{h} \Bigg(-c\partial_\xi \Vcal - \Lcal_{M^*}(\e_1 \partial_\xi + \nabla_\p) \Vcal + \Qcal(0;M^*) \begin{pmatrix}
                \partial_\xi^2 \tilde{h} \\ \partial_\xi^2 \tilde{\theta}
            \end{pmatrix}\Bigg) \\ 
            &\qquad+ \Qcal(0;M^*)^{-1} \Bigg(-c \begin{pmatrix}
                0 & 0 \\ 0 & \tilde{h}
            \end{pmatrix} \partial_\xi \Vcal - \Ncal_2(\e_1\partial_\xi + \nabla_\p)(\Vcal,\Vcal;M^*)\Bigg),
        \end{split}
    \end{equation*}
    where we have used the Taylor expansion $\Qcal(\tilde{h};M^*)^{-1} = \Qcal(0;M^*)^{-1} + \tilde{\Qcal}_1 \tilde{h} + \Ocal(\tilde{h}^2)$. We first point out that each $\xi$-derivative adds an additional $\eps^2$, and thus, these terms are at least of order $\eps^4$ since $\Vcal = \Ocal(\eps)$. Next, we perform a Fourier transform in $\p$ to obtain that the leading order quadratic nonlinearity at $\k = 0$ is given by
    \begin{equation*}
        - \sum_{\gammab \in \Gamma} \tilde{\Qcal}_1 \hat{\Vcal}_{-\gammab,1} \hat{\Lcal}_{M^*}(\gammab) \hat{\Vcal}_{\gammab} - \Qcal(0;M^*)^{-1} \hat{\Ncal}_2(\hat{\Vcal},\hat{\Vcal};M^*,0).
    \end{equation*}
    Now, we note that the first term is in fact of order $\eps^4$ by using that
    \begin{equation*}
        \Vcal = \Tcal \Wcal = \eps \sum_{j = 1}^N A_j e^{i\k_j \cdot \p} \Tcal \hat{\phib}_+(\k_j) + \Ocal(\eps^2)
    \end{equation*}
    and the fact that $\Tcal\hat{\phib}_+(\k_j) = \phib_+(\k_j)$, see Lemma \ref{lem:spectrum-dispersion-rel}. Therefore, $\eps^2$ terms can only arise as quadratic combinations of the $A_j$, that is they are of the form
    \begin{equation*}
        \tilde{\Qcal}_1 \phib_+(\k_j)_1 \hat{\Lcal}_{M^*}(\k_j) \phib_+(\k_j) |A_j|^2 = 0,
    \end{equation*}
    which vanishes since $\phib_+(\k_j)$ is the eigenvector to the $0$-eigenvalue of $\hat{\Lcal}_{M^*}(\k_j)$. Hence, all $\eps^2$-contributions vanish and the lowest order terms are of order $\eps^4$.

    By construction of the $6d$ spatial dynamics system \eqref{eq:spat-dyn-CM-modfront-Fourier}, we thus find that
    \begin{equation*}
        \begin{split}
            \Pfrak_0 \hat{\Nfrak}_2(\hat{\Wcal},\hat{\Wcal};M,0) &= \dfrac{\lrangle{\Qcal(0;M^*)^*\phib_+^*(0),\Qcal(0;M^*)^{-1} \hat{\Ncal}_2(\Tcal\hat{\Wcal},\Tcal\hat{\Wcal};M^*,0)}}{\lrangle{\hat{\phib}_+^*(0),\hat{\phib}_+(0)}} \hat{\phib}_+(0) + \Ocal(\eps^4) \\
            &= \dfrac{\lrangle{(1,0)^T, \hat{\Ncal}_2(\Tcal\hat{\Wcal},\Tcal\hat{\Wcal};M^*,0)}}{\lrangle{\hat{\phib}_+^*(0),\hat{\phib}_+(0)}} \hat{\phib}_+(0) + \Ocal(\eps^4)
        \end{split}
    \end{equation*}
    where we use that $\hat{\Nfrak}_2$ is constructed using \eqref{eq:Nfrak} and we obtained the relation of $\hat{\phib}_+^*(0)$ and $\phib_+^*(0)$ by a direct computation from the adjoint eigenvalue problem, which gives in particular that $(\hat{\phib}_+^*(0)_4,\hat{\phib}_+^*(0)_6)^T = \Qcal(0;M^*)^* \phib_+^*(0)$. Finally, using that the first component of $\hat{\Ncal}_2(\cdot, \cdot;M^*,0)$ is in divergence form and thus has a leading $\partial_\xi$ derivative, we find that $\Pfrak_0 \hat{\Nfrak}_2(\hat{\Wcal},\hat{\Wcal};M,0) = \Ocal(\eps^4)$ since $\partial_\xi = \eps^2\partial_\Xi$ and $\Tcal\hat{\Wcal} = \Ocal(\eps)$.

    To conclude we find that $(I-\Pfrak_0) \hat{\Nfrak}_2(\hat{\Wcal},\hat{\Wcal};M,0) = (0,0,0,\ast,0,\ast) + \Ocal(\eps^4)$ and therefore, using the form of $\Qfrak_0 \hat{\Lfrak}_{M^*}(0)$, the leading order of $\Psib_0$ is in $\operatorname{span}(\e_1, \e_5)$. In particular, it holds that
    \begin{equation*}
        0 = \Pfrak_0 \Psib_0 = \lrangle{\hat{\phib}_+^*(0),\Psib_0} = \lrangle{\Tcal\hat{\phib}_+^*(0), \Tcal\Psib_0} + \Ocal(\eps^4).
    \end{equation*}
    Using the form of $\hat{\Lfrak}_{M^*}(0)$, a direct computation shows that 
    \begin{equation*}
        \Tcal\hat{\phib}_+^*(0) = \big(Q(0;M^*)^{-1} (c I)\big)^* Q(0;M^*)^* \phib_+^*(0) = c \begin{pmatrix}
            1 \\ 0
        \end{pmatrix},
    \end{equation*}
    where we also recall that $\phib_+^*(0) = (1,0)^T$.
    This shows that the leading-order term of $\Psib_0$ is in the span of $\e_5$ and thus $\Tcal\Psib_0 = \eps^2 \Psi_0^- \phib_-(0) + \Ocal(\eps^4)$. This concludes the proof.
\end{proof}

Using that the $A_j$, $j = 0,\dots,N$ in the ansatz \eqref{eq:ansatz-Wcal} depend on the slow spatial scale $\Xi = \eps^2 \xi$, we write \eqref{eq:spat-dyn-CM-modfront-Fourier} as
\begin{equation}
    0 = \begin{pmatrix}
        1 & 0 \\
        0 & 1+\hat{\tilde{h}}
    \end{pmatrix} * (-c \eps^2 \partial_\Xi \hat{\Vcal}_\k) + \Lcal_M(\e_1 \eps^2 \partial_\Xi + i\k)\hat{\Vcal}_\k + \hat{\Ncal}(\e_1 \eps^2 \partial_\Xi + i\gammab;\k)(\hat{\Vcal}).
\end{equation}
Since $\hat{\Ncal}$ depends smoothly on both arguments, we can first expand with respect to its second argument and obtain
\begin{equation*}
    \hat{\Ncal}(\e_1 \eps^2 \partial_\Xi + i\gammab;\k)(\hat{\Vcal}) = \hat{\Ncal}_2(\e_1 \eps^2 \partial_\Xi + i\gammab;\k)(\hat{\Vcal},\hat{\Vcal}) + \hat{\Ncal}_3(\e_1 \eps^2 \partial_\Xi + i\gammab;\k)(\hat{\Vcal},\hat{\Vcal},\hat{\Vcal}) + \Ocal(\|\Vcal\|^4),
\end{equation*}
where $\hat{\Ncal}_2(\e_1 \eps^2 \partial_\Xi + i\gammab;\k)$ is a bilinear and $\hat{\Ncal}_3(\e_1 \eps^2 \partial_\Xi + i\gammab;\k)$ is a trilinear form. Next, we expand $\hat{\Ncal}_2$ and $\hat{\Ncal}_3$ in $\eps$ and find that
\begin{equation}\label{eq:reduction-nonlinear-terms}
    \begin{split}
        \hat{\Ncal}_2(\e_1 \eps^2 \partial_\Xi + i\gammab;\k)(\Tcal\hat{\Wcal},\Tcal\hat{\Wcal}) & = \hat{\Ncal}_2(i\gammab;\k)(\Tcal\hat{\Wcal},\Tcal\hat{\Wcal}) + \Ocal(\eps^4), \\
        \hat{\Ncal}_3(\e_1 \eps^2 \partial_\Xi + i\gammab;\k)(\Tcal\hat{\Wcal},\Tcal\hat{\Wcal},\Tcal\hat{\Wcal}) &= \hat{N}_3(i\gammab;\k)(\Tcal\hat{\Wcal},\Tcal\hat{\Wcal},\Tcal\hat{\Wcal}) + \Ocal(\eps^5).
    \end{split}
\end{equation}
Here, we use that $\Tcal \hat{\Wcal} = \Ocal(\eps)$. In particular, we note that 
\begin{equation*}
    \begin{split}
        \Ncal_2(\Tcal\Wcal,\Tcal\Wcal) & = \sum_{\k \in \Gamma}\hat{\Ncal}_2(i\gammab;\k)(\Tcal\hat{\Wcal},\Tcal\hat{\Wcal}), \\
        \Ncal_3(\Tcal\Wcal,\Tcal\Wcal,\Tcal\Wcal) & = \sum_{\k \in \Gamma} \hat{N}_3(i\gammab;\k)(\Tcal\hat{\Wcal},\Tcal\hat{\Wcal},\Tcal\hat{\Wcal})
    \end{split}
\end{equation*}
with $\Ncal_2$ and $\Ncal_3$ from \eqref{eq:def-nonlinearities}. This allows us to easily relate combinatorial constants to the ones in the formal amplitude equations that are derived in Section \ref{sec:amplitude-equation}. In addition, following Lemma \ref{lem:conservation-law} we find that
\begin{equation*}
    P_+(0)\hat{\Ncal}(\e_1 \eps^2 \partial_\Xi + i\gammab;0)(\Tcal\hat{\Wcal}) = \eps^6  \sum_{j = 1}^N (\kappa_0+\kappa_1k_{j,1}^2) \partial_\Xi^2 (|A_j|^2) + \Ocal(\eps^{6}).
\end{equation*}
Here, we note that the remainder is of order $\eps^6$ since it contains at least one derivative due to the conservation-law structure and relevant quadratic terms can only occur due to interactions of $\Psib_{\gammab}$ and $\Psib_{-\gammab}$ with $\gammab \in \Gamma_h\cup \{0\}$.

\begin{remark}
    Since $\Ocal(\eps^6)$-terms are irrelevant for the dynamics on the center manifold, we do not elaborate further on the structure of the remainder even though it is of the same order in $\eps$ as the first term. In fact, a symmetrisation argument shows that the remainder is of higher order, following the ideas of Lemma \ref{lem:conservation-law}.
\end{remark}

This discussion reveals that the relevant combinatorics for the nonlinear terms follows in the same fashion as in the derivation of the amplitude equation, see Section \ref{sec:amplitude-equation}. We are left with the analysis of the linear terms. The main result needed here is the fact that
\begin{equation}\label{eq:dispair-Taylor}
    P_+(\k_j) \Lcal_{M^*}(\e_1 \eps^2 \partial_\Xi + i\k_j) \phib_+(\k_j) = \Ocal(\eps^4)
\end{equation}
for $j = 1,\dots, N$. For the Fourier mode $\k = 0$, this expansion is straightforward due to reflection symmetry. Indeed, the expansion \eqref{eq:dispair-Taylor} follows from the following expansion
\begin{equation*}
    \begin{split}
        & P_+(\k_j) \Lcal_{M^*}(\e_1 \eps^2 \partial_\Xi + i\k_j) \phib_+(\k_j) = \bigl(\lambda_+(\e_1 \eps^2 \partial_\Xi + i\k_j;M^*) - \lambda_+(\k_j;M^*)\bigr)\phib_+(\k_j)\\
        &\qquad+ P_+(\k_j) \bigl(\Lcal_{M^*}(\e_1 \eps^2 \partial_\Xi + i\k_j) - \lambda_+(\e_1 \eps^2 \partial_\Xi + i\k_j;M^*)\bigr) \bigl(\phib_+(\k_j) - \phib_+(\e_1 \eps^2 \partial_\Xi + i\k_j)\bigr).
    \end{split}
\end{equation*}
The first term is of order $\eps^4$ since $\lambda_+$ has a quadratic root at $\k_j$, see Proposition \eqref{prop:linear-monotonic}. Using Taylor expansion (in $\eps^2$) for the second term we find that
\begin{equation*}
    \begin{split}
        &P_+(\k_j) \bigl(\Lcal_{M^*}(\e_1 \eps^2 \partial_\Xi + i\k_j) - \lambda_+(\e_1 \eps^2 \partial_\Xi + i\k_j;M^*)\bigr) \bigl(\phib_+(\k_j) - \phib_+(\e_1 \eps^2 \partial_\Xi + i\k_j)\bigr) \\
        &= \eps^2 P_+(\k_j) \bigl(\Lcal_{M^*}(i\k_j) - \lambda_+(\k_j;M^*)\bigr) \partial_{\eps^2}\phib_+(\k_j) + \Ocal(\eps^4) = \Ocal(\eps^4)
    \end{split}
\end{equation*}
since the projection operator $P_+(\k_j)$ commutes with $\Lcal_{M^*}(i\k_j)$. Here, we also used that $\lambda_+$ and $\phib_+$ from Proposition \ref{prop:linear-monotonic} can be analytically continued and therefore can be evaluated at $\e_1 \eps^2 \partial_\Xi + i\k_j$ with the property that
\begin{equation*}
    \Lcal_{M^*}(\e_1 \eps^2 \partial_\Xi + i\k_j) \phib_+(\e_1 \eps^2 \partial_\Xi + i\k_j) = \lambda_+(\e_1 \eps^2 \partial_\Xi + i\k_j;M^*) \phib_+(\e_1 \eps^2 \partial_\Xi + i\k_j).
\end{equation*}

To conclude the discussion of the linear terms, we show that
\begin{equation}\label{eq:full-dispair-Taylor}
    P_+(\k_j)\big(\Lcal_M(\e_1 \eps^2 \partial_\Xi + i\k_j) - \Lcal_{M^*}(\e_1 \eps^2 \partial_\Xi + i\k_j)\big)\phib_+(\k_j;M^*) = \eps^2 \partial_M\lambda_+(\k_j;M^*) \phib_+(\k_j;M^*) + \Ocal(\eps^4).
\end{equation}
This follows from
\begin{equation*}
    \begin{split}
        &P_+(\k_j)\big(\Lcal_M(\e_1 \eps^2 \partial_\Xi + i\k_j) - \Lcal_{M^*}(\e_1 \eps^2 \partial_\Xi + i\k_j)\big)\phib_+(\k_j;M^*) \\
        &= P_+(\k_j)\big(\Lcal_M(\e_1 \eps^2 \partial_\Xi + i\k_j) - \Lcal_{M}(i\k_j)\big)\phib_+(\k_j;M^*) \\
        &\quad + P_+(\k_j)\big(\Lcal_M(i\k_j) - \Lcal_{M^*}(i\k_j)\big)\phib_+(\k_j;M^*) \\
        &\quad + P_+(\k_j)\big(\Lcal_{M^*}(i\k_j) - \Lcal_{M}(\e_1 \eps^2 \partial_\Xi + i\k_j)\big)\phib_+(\k_j;M^*).
    \end{split}
\end{equation*}
The third term is of order $\eps^4$ using \eqref{eq:dispair-Taylor} and the second term is equal to $\eps^2 \partial_M \lambda_+(\k_j;M^*) \phib_+(\k_j;M^*) + \Ocal(\eps^4)$ using \eqref{eq:expansion-linear-coeff}. Finally, the first term is again of order $\eps^4$ using that
\begin{equation*}
\begin{split}
    & P_+(\k_j)\big(\Lcal_M(\e_1 \eps^2 \partial_\Xi + i\k_j) - \Lcal_{M}(i\k_j)\big)\phib_+(\k_j;M^*) \\
    & = P_+(\k_j)\big(\Lcal_{M^*}(\e_1 \eps^2 \partial_\Xi + i\k_j) - \Lcal_{M^*}(i\k_j)\big)\phib_+(\k_j;M^*) \\
    & \quad + \eps^2 \big(\partial_M P_+(\k_j) \Lcal_{M^*}(\e_1 \eps^2 \partial_\Xi + i\k_j) - \partial_M\lambda_+(\k_j;M^*)\big) \phib_+(\k_j;M^*) + \Ocal(\eps^4).
\end{split}
\end{equation*}
Here, the first term is of order $\eps^4$ due to \eqref{eq:dispair-Taylor} and the second one is of order $\eps^4$ using that 
\begin{equation*}
    \partial_M P_+(\k_j) \Lcal_{M^*}(\e_1 \eps^2 \partial_\Xi + i\k_j)\phib_+(\k_j;M^*) = \partial_M\lambda_+(\k_j;M^*) \phib_+(\k_j) + \Ocal(\eps^2)
\end{equation*} 
by Taylor expansion in $\eps$. This concludes the discussion of the linear terms.

Collecting the results of Lemma \ref{lem:reduction-expansion}, the discussion of the nonlinear terms \eqref{eq:reduction-nonlinear-terms}, and the expansions of linear terms \eqref{eq:dispair-Taylor} and \eqref{eq:full-dispair-Taylor} we obtain the reduced equations on the center manifold by following the same calculations as in the formal derivation of the amplitude equations in Section \ref{sec:amplitude-equation}. In particular, on the square lattice, we find that
\begin{equation*}
    \begin{split}
        0 &= c\partial_\Xi A_1 + M_0 \kappa A_1 + K_c A_0 A_1 + K_0 A_1 \abs{A_1}^2 + K_1 A_1 \abs{A_2}^2, \\
        0 & = c \partial_\Xi A_2 + M_0 \kappa A_2 + K_c A_0 A_2 + K_0 A_2 \abs{A_2}^2 +  K_1 A_2 \abs{A_1}^2, \\
        0 & = c \partial_{\Xi} A_0
    \end{split}
\end{equation*}
to leading order in $\eps$. On the hexagonal lattice, the reduced equations are given by
\begin{equation*}
    \begin{split}
        0 &= c\partial_\Xi A_1 + M_0 \kappa A_1 + K_c A_0 A_1 + \dfrac{N}{\eps}\bar{A}_2\bar{A}_3 + K_0 A_1 \abs{A_1}^2 + K_2 A_1 (\abs{A_2}^2 + \abs{A_3}^2), \\
        0 &= c\partial_\Xi A_2 + M_0 \kappa A_2 + K_c A_0 A_2 + \dfrac{N}{\eps}\bar{A}_1\bar{A}_3  + K_0 A_2 \abs{A_2}^2 +  K_2 A_2 (\abs{A_1}^2 + \abs{A_3}^2), \\
        0 &= c\partial_\Xi A_3 + M_0 \kappa A_3 + K_c A_0 A_3 + \dfrac{N}{\eps}\bar{A}_1\bar{A}_2  + K_0 A_3 \abs{A_3}^2 + K_2 A_3 (\abs{A_1}^2 + \abs{A_2}^2), \\
        0 &= c\partial_{\Xi} A_0
    \end{split}
\end{equation*}
again to leading order in $\eps$. The coefficients are given in Sections \ref{sec:stationary-reduced-square} and \ref{sec:stationary-hex} as well as in the Supplementary Material. We again notice that $A_0$ is constant to leading order and, therefore, can be set to zero. In particular, we point out that the amplitude modulation is indeed given to leading order as fast front solutions of the formal amplitude equations, see Section \ref{sec:dynamics-amplitude-equations}.

\subsection{Dynamics on the center manifold: square lattice}\label{sec:modulation-center-square}

We now analyse the dynamics of the reduced equations on the center manifold. The stationary patterns obtained in Section \ref{sec:stationary} appear as stationary points of the dynamics and we are interested in heteroclinic orbits connecting the stationary points. First, we note that the subspace $A_1(\Xi), A_2(\Xi) \in \R$ of real-valued amplitudes is invariant. This can be seen by applying the symmetry $c\to -c$, $\Xi \to -\Xi$ and $p \to -p$, which is inherited from the reflection symmetry of \eqref{eq:thin-film-equation} and preserved by the center manifold reduction. Hence, the leading-order system that we need to analyse reads as
\begin{equation}\label{eq:leading-order-system-fronts-square}
    \begin{split}
        0 &= c\partial_\Xi A_1 + M_0 \kappa A_1 + K_0 A_1^3 + K_1 A_1 A_2^2, \\
        0 & = c \partial_\Xi A_2 + M_0 \kappa A_2 + K_0 A_2^3 +  K_1 A_2 A_1^2.
    \end{split}
\end{equation}
As in \cite{doelman2003}, we find that this system has a Lyapunov functional
\begin{equation*}
    \Ecal(A_1,A_2) = \dfrac{M_0\kappa}{2c}[A_1^2 + A_2^2] + \dfrac{K_0}{4c} [A_1^4 + A_2^4] + \dfrac{K_1}{2c} A_1^2 A_2^2,
\end{equation*}
which is strictly decreasing along the orbits of the reduced system. Therefore, there are no periodic orbits in \eqref{eq:leading-order-system-fronts-square}. Moreover, we find that the leading-order system is a gradient flow since it can be written as
\begin{equation*}
    \partial_\Xi \begin{pmatrix}
        A_1 \\ A_2
    \end{pmatrix} = - \nabla\Ecal(A_1,A_2).
\end{equation*}
Therefore, equilibria of the reduced dynamics are given as critical points of $\Ecal$. Since we already analysed the equilibria in Section \ref{sec:stationary-square}, we only briefly recall the results here. If $M_0 K_0 < 0$ there is a critical point of $\Ecal$ at 
\begin{equation*}
    R = \Big(\sqrt{-\dfrac{M_0\kappa}{K_0}}, 0\Big),
\end{equation*}
corresponding to roll waves in the full system \eqref{eq:thin-film-equation} after reverting the center manifold reduction. Additionally, if $M_0(K_0+K_1) < 0$ there is a critical point of $\Ecal$ at
\begin{equation*}
    S = \Big(\sqrt{-\dfrac{M_0 \kappa}{K_0 + K_1}},\sqrt{-\dfrac{M_0 \kappa}{K_0 + K_1}}\Big)
\end{equation*}
corresponding to square patterns, see Theorem \ref{thm:square-patterns}. Finally, the trivial state $T = (0,0)$ is always a critical point.

To determine the linear stability, we derive the Hessian of $\Ecal$ at the respective critical points and calculate the corresponding eigenvalues. At the trivial state, $D^2\Ecal(T)$ possesses a double eigenvalue
\begin{equation*}
    \lambda_{1,T} = \lambda_{2,T} = -\frac{M_0\kappa}{c}.
\end{equation*}
For roll waves we find that $D^2\Ecal(R)$ has the eigenvalues
\begin{equation*}
    \lambda_{1,R} =  \dfrac{2M_0\kappa}{c}, \quad \lambda_{2,R} = \dfrac{(K_1-K_0)M_0\kappa}{cK_0}
\end{equation*}
with corresponding eigenvectors given by $(1,0)^T$ and $(0,1)^T$, respectively. Finally, for square patterns we find that $D^2\Ecal(S)$ has the eigenvalues
\begin{equation*}
    \lambda_{1,S} = \dfrac{2M_0\kappa}{c}, \quad \lambda_{2,S} = -\dfrac{2(K_1-K_0)M_0\kappa}{c(K_0+K_1)}
\end{equation*}
with corresponding eigenvectors given by $(1,1)^T$ and $(-1,1)^T$, respectively. Recall that $c > 0$ by definition and $\kappa = \partial_M\lambda_+(\k_j;M^*) > 0$ by Proposition \ref{prop:linear-monotonic}. Additionally, given the sign of $M_0$, the existence conditions for roll waves and square patterns determine the signs of $K_0$ and $K_0 + K_1$, respectively. Hence, it remains to distinguish four cases depending on the signs of $M_0$ and $K_1 - K_0$, and we summarise the results in Tables \ref{tab:trivial}--\ref{tab:square}. 

\begin{table}[H]
\caption{Linear stability of trivial state $T$.}
\begin{tabular}{p{0.25\textwidth}|p{0.25\textwidth}}
     \makecell{$M_0>0$} & \makecell{$M_0<0$} \\ \hline
     \makecell{$\lambda_{1,T} = \lambda_{2,T} < 0$\\[3pt] $T$ is stable} & \makecell{$\lambda_{1,T} = \lambda_{2,T} > 0$ \\[3pt] $T$ is unstable}
\end{tabular}
\label{tab:trivial}
\end{table}

\begin{table}[H]
\caption{Linear stability of roll waves $R$.}
{%
\begin{tabular}{p{0.12\textwidth}|p{0.25\textwidth}|p{0.25\textwidth}}
    & \makecell{$M_0>0$ ($K_0<0$)} & \makecell{$M_0<0$ ($K_0>0$)} \\ \hline
    $K_1-K_0>0$ & \makecell{$\lambda_{1,R}>0,  \lambda_{2,R} < 0$\\[3pt] $R$ is a saddle point} & \makecell{$\lambda_{1,R}<0, \lambda_{2,R} < 0$ \\[3pt] $R$ is stable} \\
    \hline
    $K_1-K_0<0$ &\makecell{$\lambda_{1,R} >0 ,\lambda_{2,R} > 0$\\[3pt] $R$ is unstable} & \makecell{$\lambda_{1,R} <0,\lambda_{2,R} > 0$ \\[3pt] $R$ is a saddle point}
\end{tabular}
}
\label{tab:roll}
\end{table}

\begin{table}[H]
\caption{Linear stability of square patterns $S$.}
\begin{tabular}{p{0.12\textwidth}|p{0.25\textwidth}|p{0.25\textwidth}}
    & \makecell{$M_0>0$ ($K_0+K_1<0$)} & \makecell{$M_0<0$ ($K_0+K_1>0$)} \\ \hline
    $K_1-K_0>0$ & \makecell{$\lambda_{1,S}>0,  \lambda_{2,S} > 0$\\[3pt] $S$ is unstable} & \makecell{$\lambda_{1,S}<0, \lambda_{2,S} > 0$ \\[3pt] $S$ is a saddle point} \\\hline
    $K_1-K_0<0$ &\makecell{$\lambda_{1,S} > 0 ,\lambda_{2,S} < 0$\\[3pt] $S$ is a saddle point} & \makecell{$\lambda_{1,S} <0,\lambda_{2,S} < 0$ \\[3pt] $S$ is stable}
\end{tabular}
\label{tab:square}
\end{table}

We now discuss the existence of heteroclinic connections between the different fixed points in the leading order system. First, we note that the connections between $T$ and $R$ and between the $T$ and $S$ can be constructed straightforwardly by noting that these connections lie in the one-dimensional invariant subspaces $\{A_1 \in \R, A_2 = 0\}$ and $\{A_1 = A_2\in \R\}$, respectively. Additionally, note that recalling the eigenvectors, the corresponding eigenvalue in the relevant direction is $\lambda_{1,R}$ for $R$ and $\lambda_{1,S}$ for $S$. Therefore, from the signs of the relevant eigenvalues in Tables \ref{tab:trivial}--\ref{tab:square} we find that for $M_0 > 0$ there are heteroclinic connections from the nontrivial fixed points to the trivial state, while for $M_0 < 0$ there are heteroclinic connections from the trivial state to the nontrivial fixed points.

Next, we discuss the existence of heteroclinic orbits connecting $S$ and $R$. Hence, we assume that we are in a parameter regime, where both fixed points exist, that is, $M_0 K_0 < 0$ and $M_0(K_0 + K_1) < 0$. We then construct these orbits via phase-plane analysis of the leading-order system \eqref{eq:leading-order-system-fronts-square}. For this, we first note that in the case $M_0 > 0$ there is a sufficiently large ball around the origin of the $A_1$-$A_2$-plane, which is an overflowing invariant set with respect to the dynamics of \eqref{eq:leading-order-system-fronts-square}. This follows from the fact that the Lyapunov function $\Ecal$ tends to $-\infty$ as $(|A_1|,|A_2|) \rightarrow +\infty$, which follows from an application of Young's inequality. Similarly, for $M_0 < 0$ there exists a sufficiently large ball around the origin in the $A_1$-$A_2$-plane, which is an inflowing invariant set.

We now present the construction of the heteroclinic orbits in detail in the case $M_0 < 0$. For $M_0 > 0$ the existence follows from the same arguments after a time-reversal. If $M_0 < 0$ and $K_1 - K_0 > 0$, we first find that $T$ is an unstable fixed point, $R$ is a stable fixed point, and $S$ is a saddle with a one-dimensional unstable manifold, which is perpendicular to the invariant subspace $\{A_1 = A_2 \in \R\}$ at $S$. Since both $R$ and $S$ are contained in an inflowing invariant set and $T$ is unstable, there must be a transverse intersection of the one-dimensional unstable manifold of $S$ with the two-dimensional stable manifold of $R$. Therefore, there exists a heteroclinic orbit from $S$ to $R$. In the case $M_0 < 0$ and $K_1 - K_0 < 0$, the roles of $S$ and $R$ are interchanged, and we can construct a heteroclinic orbit from $S$ and $R$ with the same arguments.

We collect the results on the existence of fixed points and heteroclinic orbits between them in the following phase-plane diagrams in Figures \ref{fig:phase-diags-square1} and \ref{fig:phase-diags-square2}.

\begin{figure}[H]
    \centering
    \includegraphics[width=0.4\linewidth]{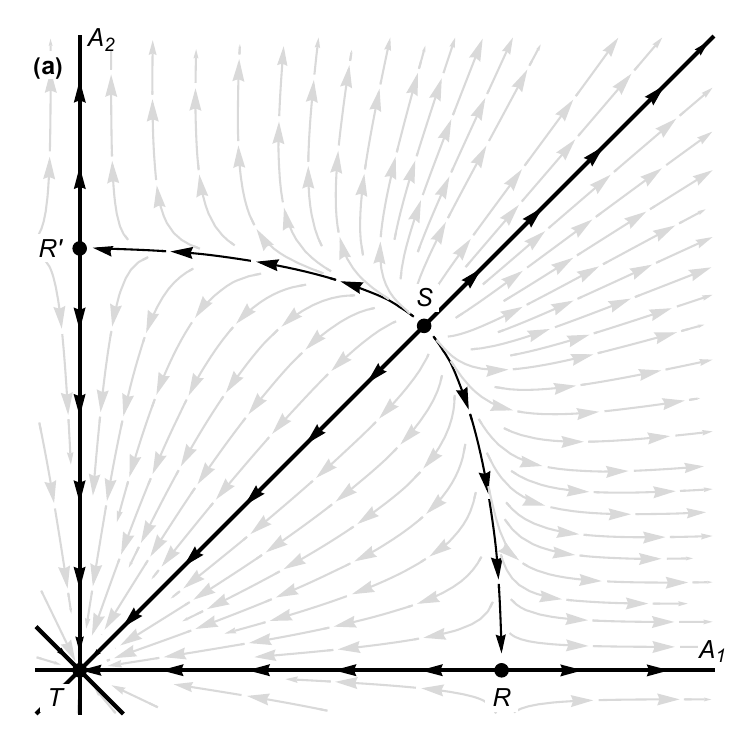}
    \hspace{0.1cm}
    \includegraphics[width=0.4\linewidth]{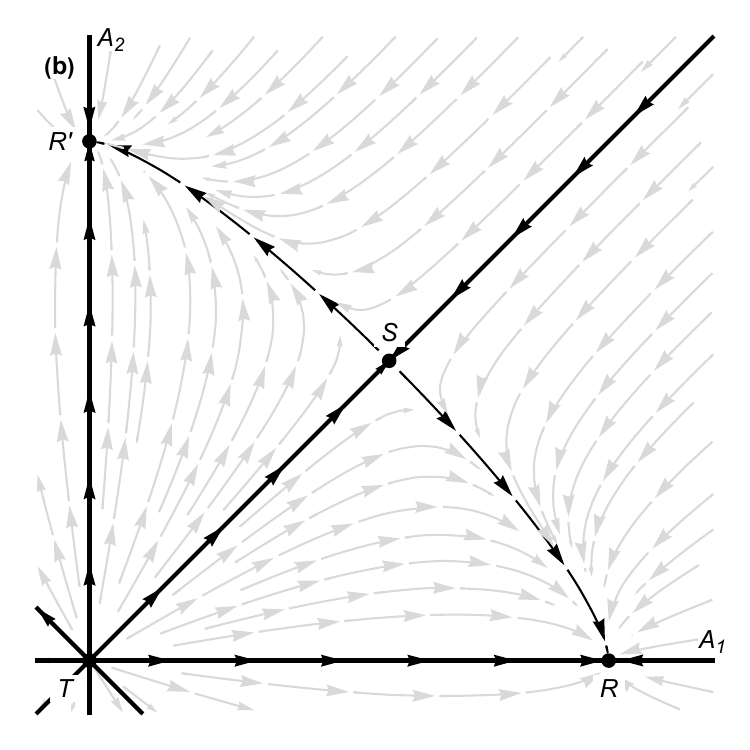}
    
    \caption{Phase planes of the dynamical system \eqref{eq:leading-order-system-fronts-square} in the following parameter regimes: a) $M_0>0$ and $K_1-K_0 >0$, b) $M_0<0$ and $K_1-K_0 >0$.}
    \label{fig:phase-diags-square1}
\end{figure}

\begin{figure}[H]
    \centering
    \includegraphics[width=0.4\linewidth]{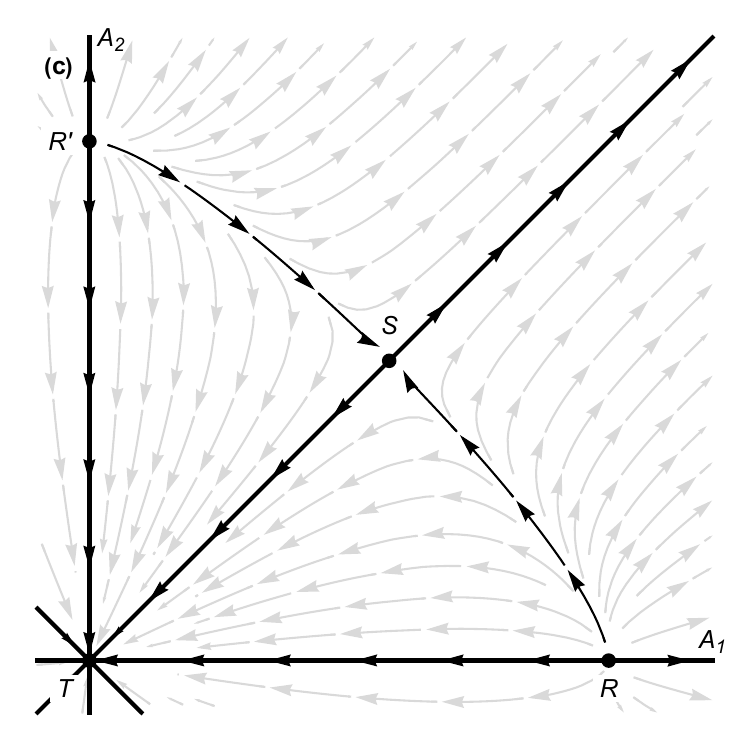}
    \hspace{0.1cm}
    \includegraphics[width=0.4\linewidth]{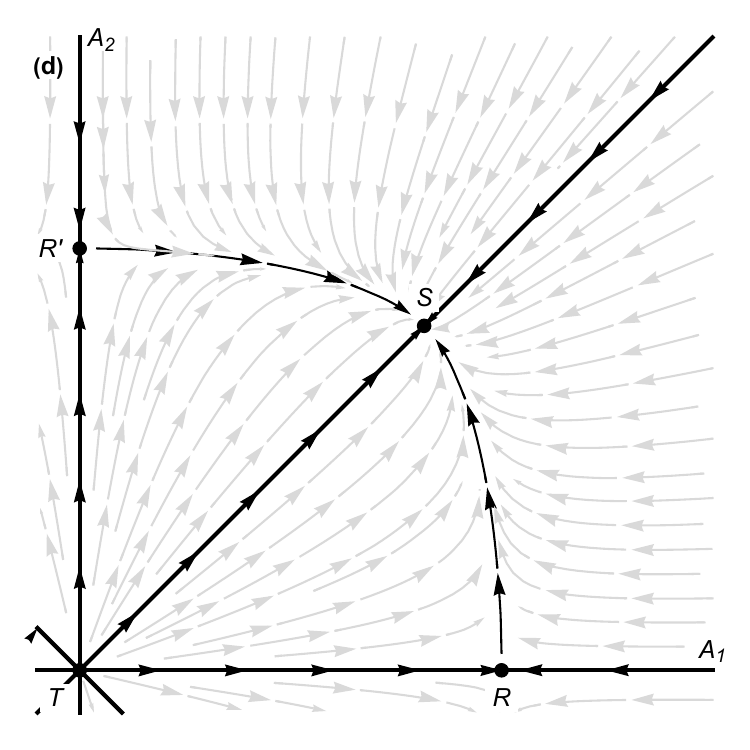}
    \caption{Phase planes of the dynamical system \eqref{eq:leading-order-system-fronts-square} in the following parameter regimes: c) $M_0>0$ and $K_1-K_0 <0$, d) $M_0<0$ and $K_1-K_0 <0$.}
    \label{fig:phase-diags-square2}
\end{figure}

\subsection{Dynamics on the center manifold: hexagonal lattice}\label{sec:modulation-center-hex}

We now study the dynamics of the reduced equation on the hexagonal lattice. Following \cite{doelman2003} we restrict to the invariant set $\{A_1 \in \R, A_2 = A_3 \in \R\}$, which contains both hexagonal patterns with $A_1 = A_2 = A_3$ as well as roll waves with $A_1 \in \R$ and $A_2=A_3 = 0$. Additionally, recalling the discussion in Section \ref{sec:stationary-hex}, we assume that $N = \eps N_0$ by restricting to an appropriate parameter regime in $\beta$ and $g$. Furthermore, without loss of generality, we assume that $N_0>0$. Indeed, the case $N_0<0$ can be recovered by the symmetry transformation $N\to -N$ and $A_j \to -A_j$ for $j = 1,2$. Then the resulting reduced system reads as

\begin{equation}\label{eq:leading-order-system-fronts-hex}
\begin{split}
        0 &= c\partial_\Xi A_1 + M_0 \kappa A_1 + N_0 A_2^2 + K_0 A_1^3 + 2K_2 A_1 A_2^2 , \\
        0 &= c\partial_\Xi A_2 + M_0 \kappa A_2 + N_0 A_1A_2  + (K_0+K_2) A_2^3 +  K_2 A_2A_1^2, 
    \end{split}
\end{equation}
which we abbreviate with $\partial_\Xi (A_1,A_2) = f(A_1,A_2)$.

We observe that there is a neighborhood of the curve $\{(\beta,g) \in (0,72) \times (0,\infty) \,:\, N=0\}$, which is parametrised by
\begin{equation}\label{eq:beta-von-g}
    \beta(g) =\dfrac{2 g^2-\sqrt{484 g^3+7425 g^2+34992 g+46656}+87 g+216}{3 (g+2)},
\end{equation}
for $g \in (0,18)$. It turns out that on this curve, $K_0+2K_2 < 0$ holds, and we restrict the following analysis to this neighborhood. Note that furthermore, $K_0$ changes sign along the curve \(\beta(g)\) at $g=10$, see Figures \ref{fig:coeffs-hex} and \cite[Fig.~5]{shklyaev2012}.

As for the square lattice, this system also has a Lyapunov functional given by
\begin{equation*}
    \Ecal(A_1,A_2) = \dfrac{M_0\kappa}{2c}[A_1^2 + 2A_2^2] + \dfrac{N_0}{c} A_1A_2^2 + \dfrac{K_0}{4c} A_1^4 + \dfrac{K_2}{c} A_1^2A_2^2  + \dfrac{K_0+K_2}{2c}A_2^4,
\end{equation*}
cf.~\cite{doelman2003}. Note that in this case, the leading-order system is not a gradient flow with respect to $\Ecal$, but it holds $\nabla \Ecal \cdot f = -f_1^2-2f_2^2$. In particular, the Lyapunov functional is strictly decreasing along orbits of the reduced system, which again excludes the existence of periodic orbits.

\paragraph{Fixed points and dynamics on the invariant sets}
We first discuss the existence of fixed points and heteroclinic connections on the invariant sets $\{A_1 \in \R, A_2 = 0\}$, $\{A_1 = A_2\}$ and $\{A_1 = - A_2\}$. As discussed in Section \ref{sec:stationary-hex} we find the following fixed points on these sets. First, $T = (0,0)$ is again a fixed point, as well as $R = \bigl(\sqrt{\tfrac{-M_0\kappa}{K_0}},0\bigr)$, provided $M_0 K_0 < 0$. Additionally, if $N_0^2 - 4M_0\kappa(K_0 + 2K_2) > 0$, there are two fixed points on the invariant subspace $\{A_1=A_2\}$
\begin{equation*}
    H_{1,\pm} = \Big(\dfrac{-N_0 \mp \sqrt{N_0^2 - 4\kappa M_0 (K_0 + 2K_2)}}{2(K_0+2K_2)}, \dfrac{-N_0 \mp \sqrt{N_0^2 - 4\kappa M_0 (K_0 + 2K_2)}}{2(K_0+2K_2)}\Big)
\end{equation*}
and there are two additional fixed points on the invariant subspace $\{A_1 = -A_2\}$
\begin{equation*}
    H_{2,\pm} = \Big(\dfrac{-N_0 \mp \sqrt{N_0^2 - 4\kappa M_0 (K_0 + 2K_2)}}{2(K_0+2K_2)}, -\dfrac{-N_0 \mp \sqrt{N_0^2 - 4\kappa M_0 (K_0 + 2K_2)}}{2(K_0+2K_2)}\Big)
\end{equation*}
which correspond to hexagonal patterns. If $A_1 A_2^2 > 0$ these fixed points correspond to up-hexagons in the full system \eqref{eq:thin-film-equation}, whereas they correspond to down-hexagons (sometimes called $\pi$-hexagons) if $A_1 A_2^2 < 0$, see \cite{hoyle2007}. In particular, since $N_0 > 0$ by assumption and $K_0 + 2K_2 < 0$, we find that if $M_0 < 0$, all fixed points correspond to up-hexagons. If $M_0 > 0$, it holds that $H_{1,+}$ and $H_{2,+}$ correspond to up-hexagons whereas $H_{1,-}$ and $H_{2,-}$ correspond to down-hexagons in the full system \eqref{eq:thin-film-equation}, see Figure \ref{fig:bifurcation-diag-hex} for the structure of the bifurcation.

\begin{figure}[H]
    \centering
    \includegraphics[width=0.9\textwidth]{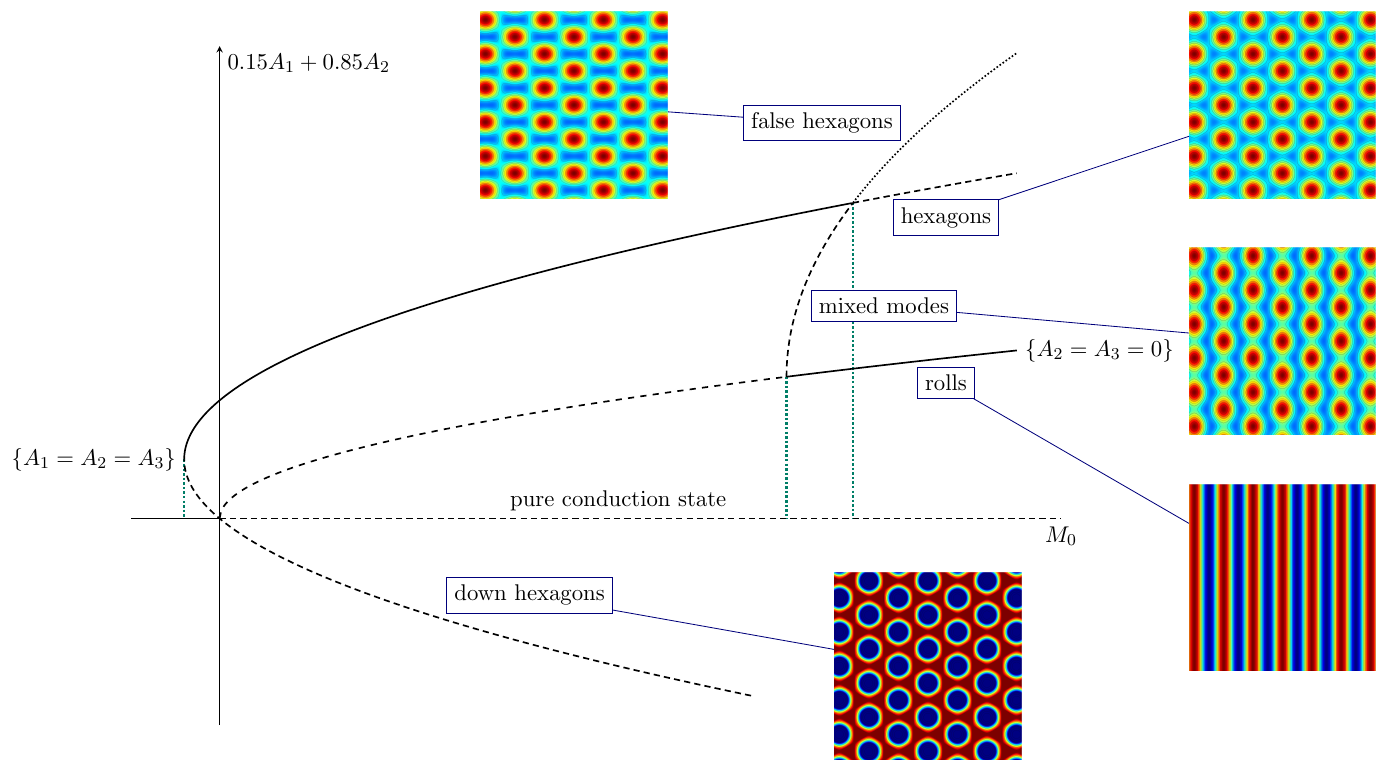}
    \caption{Bifurcation diagram on the hexagonal lattice for $N_0 > 0$, $K_0 <  0$ and $K_2 < 0$.}
    \label{fig:bifurcation-diag-hex}
\end{figure}

To determine the linear stability of the fixed points, we study $\nabla f$ at the fixed points and calculate the corresponding eigenvalues. At the trivial state, $\nabla f(T)$ possesses a double eigenvalue
\begin{equation*}
    \lambda_{1,T} = \lambda_{2,T} = -\frac{M_0\kappa}{c}.
\end{equation*}
For roll waves, we find that $\nabla f(R)$ has the eigenvalues
\begin{equation*}
    \lambda_{1,R} =  \dfrac{2M_0\kappa}{c}, \quad \lambda_{2,R} = -\dfrac{(K_0-K_2)M_0\kappa}{cK_0} - \dfrac{N_0\sqrt{\tfrac{-M_0\kappa}{K_0}}}{c}
\end{equation*}
with corresponding eigenvalues given by $(1,0)^T$ and $(0,1)^T$, respectively. Finally, for hexagonal patterns, we find that $\nabla f(H_{1,\pm})$ has the eigenvalues
\begin{equation*}
    \lambda_{1,H_{1,\pm}} = M_0\kappa - (K_0+2K_2)A_{1,\pm}^2, \quad \lambda_{2,H_{1,\pm}} = - 2M_0\kappa - 2(2K_0+K_2)A_{1,\pm}^2
\end{equation*}
with eigenvectors $(1,1)^T$ and $(-2,1)^T$ and where $A_{1,\pm}$ is the first component of $H_{1,\pm}$, respectively. Additionally, since the dynamics of \eqref{eq:leading-order-system-fronts-hex} is invariant under $A_2 \mapsto - A_2$, we find that $\nabla f(H_{2,\pm})$ has the same eigenvalues as $\nabla f(H_{1,\pm})$ with eigenvectors $(1,-1)^T$ and $(-2,-1)^T$.

Since the invariant sets are one-dimensional subsets of the phase space, it is sufficient to consider the signs of the eigenvalues with eigenspaces in the direction of these invariant subsets to analyse the dynamics. First, we find that $T$ is an unstable fixed point for $M_0 < 0$ and a stable one for $M_0 > 0$. Additionally, $R$ is stable along $\{A_1 \in \R, A_2 = 0\}$ for $M_0 < 0$ and unstable for $M_0 > 0$. Therefore, provided that $R$ exists, we obtain a heteroclinic connection from $T$ to $R$ for $M_0 < 0$ and from $R$ to $T$ for $M_0 > 0$. 

For $H_{1,\pm}$, the relevant eigenvalue is $\lambda_{1,H_{1,\pm}} = M_0 \kappa - (K_0 + 2K_2) A_1^2$. For $M_0 > 0$, we find that $\lambda_{1,H_{1,\pm}} > 0$ since $K_0 + 2K_2 < 0$ in a neighborhood of the curve $\{N=0\}$. Thus, noting again that $T$ is a stable fixed point for $M_0 > 0$, we obtain heteroclinic connections from $H_{1,\pm}$ to $T$. By again exploiting the invariance of \eqref{eq:leading-order-system-fronts-hex} under $A_2 \mapsto -A_2$ we thus also obtain heteroclinic connections from $H_{2,\pm}$ to $T$ for $M_0 > 0$.

For $M_0<0$, both $H_{1,+}$ and $H_{1,-}$ correspond to up-hexagons and in particular $0 < A_{1,-} < A_{1,+}$, which yields $\lambda_{1,H_{1,-}} < \lambda_{1,H_{1,+}}$. To find the signs we first note that $M_0\kappa - (K_0 + 2K_2) x^2$ is negative if $x < \sqrt{\tfrac{M_0 \kappa}{K_0 + 2K_0}} =: x_\star$ and positive if $x > x_\star$. Using that $A_{1,\pm} > 0$ we obtain $A_{1,-} < x_\star < A_{1,+}$ and thus we find that $\lambda_{1,H_{1,-}}<0$ and $\lambda_{1,H_{1,+}}>0$. In particular, this yields the heteroclinic connections from $T$ to $H_{1,-}$ and from $H_{1,+}$ to $H_{1,-}$ as well as connections from $T$ to $H_{2,-}$ and from $H_{2,+}$ to $H_{2,-}$, provided the hexagons exist. We collect the results on the existence of fixed points and heteroclinic orbits in Figures \ref{fig:phase-diags-hex1} and \ref{fig:phase-diags-hex2}.

\paragraph{Fixed points and dynamics outside the invariant sets}
For the dynamics outside of the invariant sets, we restrict to parameter regimes where $K_0 < 0$, which guarantees that roll waves bifurcate supercritically at $M_0 = 0$ and thus exist for $M_0 > 0$. Numerically, we observe that on $\{N=0\}$ it holds $K_0 < 0$ for $g \in (10,18)$ and thus, such a parameter regime indeed exists. Since it holds that $K_0 + 2K_2 < 0$ we obtain that also $H_{1,\pm}$ and $H_{2,\pm}$ exist for $M_0 > 0$.

We start again by analysing the fixed points of the system \eqref{eq:leading-order-system-fronts-hex}. Outside the invariant subsets discussed above, there are at most two additional fixed points at
\begin{equation*}
   M\!\!M_{\pm} = \Big(\dfrac{N_0}{K_0 - K_2},\pm\dfrac{1}{K_0 - K_2} \sqrt{-\dfrac{K_0 N_0^2 + (K_0 - K_2)^2 M_0 \kappa}{K_0 + K_2}}\Big),
\end{equation*}
which exist if $\tfrac{K_0 N_0^2 + (K_0 - K_2)^2 M_0 \kappa}{K_0 + K_2}<0$. Note that if they exist, then necessarily $K_0\neq K_2$. In fact, we find numerically that $K_0 - K_2 > 0$ for $10 < g < 18$. Since the reduced system \eqref{eq:leading-order-system-fronts-hex} is invariant under $A_2 \mapsto - A_2$ we restrict the following analysis to the fixed point $M\!\!M_+ =: M\!\!M$. In particular, this fixed point bifurcates from $R$ at $M_0 \kappa = -\tfrac{K_0 N_0^2}{(K_0 - K_2)^2}$ and crosses the invariant subset $\{A_1 = A_2\}$ at $M_0 \kappa = -\tfrac{N_0^2(2K_0 + K_2)}{(K_0 - K_2)^2}$. If $M\!\!M$ is below the diagonal they are referred to as mixed modes in the literature. If $M\!\!M$ is above the diagonal, they are called false hexagons, see Figure \ref{fig:bifurcation-diag-hex}

The eigenvalues of the mixed modes $\lambda_{1,M\!\!M}$ and $\lambda_{2,M\!\!M}$ are given in the Supplementary Material. Numerically, we find that $\lambda_{1,M\!\!M} > 0$ for all $(\beta,g)$ in the neighborhood of $\{N=0\}$ with $10 < g < 18$. Further, we find that $\lambda_{2,M\!\!M} < 0$ on the initial part of the branch where $M\!\!M$ lies below the diagonal, while $\lambda_{2,M\!\!M} > 0$ when $M\!\!M$ lies above the diagonal. Therefore, if $M\!\!M$ is below the diagonal, it has one unstable and one stable eigenspace, whereas if $M\!\!M$ is above the diagonal, it has two unstable eigenspaces.

To understand the dynamics outside of the invariant sets it is also necessary to discuss the signs of the eigenvalues $\lambda_{2,R}$ and $\lambda_{2,H_{1,\pm}}$, which correspond to the eigenspaces of $R$ and $H_{1,\pm}$, respectively, which lie outside of the invariant sets. To understand $\lambda_{2,R}$, note that $\lambda_{2,R}=0$ precisely at $M_0\kappa = 0$ and $M_0\kappa = -\tfrac{K_0 N_0^2}{(K_0 - K_2)^2}$, where the latter condition is the bifurcation point of the mixed modes. Since the second term dominates for small $M_0$, we necessarily find that $\lambda_{2,R} < 0$ for $0 < M_0\kappa < -\tfrac{K_0 N_0^2}{(K_0 - K_2)^2}$ and $\lambda_{2,R} >0$ for $M_0\kappa > -\tfrac{K_0 N_0^2}{(K_0 - K_2)^2}$. For $\lambda_{2,H_{1,\pm}}$, we note that at $M_0=0$ it is $\lambda_{2,H_{1,+}} > 0$ and $\lambda_{2,H_{1,-}}=0$. Furthermore, we know that at $M_0\kappa = -\tfrac{N_0^2(2K_0 + K_2)}{(K_0 - K_2)^2}$ we have $\lambda_{2,H_{1,+}} = 0$. Inserting $A_{1,+}$, we find that
\begin{equation*}
    \lambda_{2,H_{1,+}} = \dfrac{2(K_0-K_2)(K_0+2K_2)M_0\kappa - 2(K_0+K_2)N_0\Bigl(N_0+\sqrt{N_0^2-4M_0\kappa(K_0+2K_2)}\Bigr)}{(K_0+2K_2)^2}
\end{equation*}
and hence $\lambda_{2,H_{1,+}}$ descreases for $M_0$ sufficiently large, showing that $\lambda_{2,H_{1,+}} > 0$ for $0<M_0\kappa < -\tfrac{N_0^2(2K_0 + K_2)}{(K_0 - K_2)^2}$ and $\lambda_{2,H_{1,+}} < 0 $ for $M_0\kappa > -\tfrac{N_0^2(2K_0 + K_2)}{(K_0 - K_2)^2}$. In particular, $\lambda_{2,H_{1,+}}$ changes sign from positive to negative precisely when $M\!\!M$ crosses the diagonal, i.e. $H_{1,+}$ changes from unstable to a saddle point. Since
\begin{equation*}
    \begin{split}
        \lambda_{2,H_{1,-}} & = \dfrac{-(2K_0+K_2)N_0^2+ 2(K_0-K_2)(K_0+2K_2)M_0\kappa}{(K_0+2K_2)^2} \\
        & \quad + \dfrac{2(K_0+K_2)N_0\Bigl(N_0+\sqrt{N_0^2-4M_0\kappa(K_0+2K_2)}\Bigr)}{(K_0+2K_2)^2} <0,
    \end{split}
\end{equation*}
we find that $H_{1,-}$ is always a saddle point. We point out that due to invariance with respect to $A_2 \mapsto - A_2$, it holds that $\lambda_{1,H_{2,\pm}} = \lambda_{1,H_{1,\pm}}$ and $\lambda_{2,H_{2,\pm}} = \lambda_{2,H_{1,\pm}}$.

In order to understand the existence of further heteroclinic orbits outside of the invariant sets, note that, since $K_0<0$ and $K_2<0$, there exists a sufficiently large ball containing all fixed points which is an overflowing invariant set. By a time-reversal argument, we may conclude the existence of heteroclinic orbits connecting $H_{1,+}$ to $R$ and $H_{1,+}$ to $H_{2,-}$ in the regime $M_0\kappa < -\tfrac{K_0 N_0^2}{(K_0 - K_2)^2}$, before the mixed-modes bifurcate. In the regime $-\tfrac{K_0 N_0^2}{(K_0 - K_2)^2} < M_0\kappa < -\tfrac{N_0^2(2K_0 + K_2)}{(K_0 - K_2)^2}$, the branch connecting $H_{1,+}$ to $H_{2,-}$ remains.

Below the diagonal, the situation is more subtle since the additional saddle point $M\!\!M$ is located here. Using a time-reversal argument and the Poincaré--Bendixson theorem \cite[Thm. 6.32]{d.meiss2017}, there exist heteroclinic connections from $R$ to $M\!\!M$ and also from $H_{1,+}$ to $M\!\!M$. The details of this argument can be found in Section \ref{app:heteroclinic} of the appendix. Using this, the fact that $T$ is a stable fixed point and $M\!\!M$ is a saddle node, we also find a heteroclinic connection from $M\!\!M$ to $T$.

Finally, in the case that $M_0 \kappa > -\tfrac{N_0^2(2K_0 + K_2)}{(K_0 - K_2)^2}$, the fixed point $H_{1,+}$ is a saddle point and $M\!\!M$ lies above the diagonal and is now an unstable fixed point. Using that the unstable manifolds of $H_{1,+}$ and $H_{2,-}$ are the diagonals, we can conclude with similar arguments as above and in Appendix \ref{app:heteroclinic} that there are heteroclinic orbits from $M\!\!M$ to $T$, $H_{1,+}$ and $H_{2,-}$, respectively. Additionally, using time-reversal arguments, we find a heteroclinic orbit from $H_{1,+}$ to $R$.

We summarise the existence of heteroclinic orbits discussed above in the phase portraits in Figures \ref{fig:phase-diags-hex1} and \ref{fig:phase-diags-hex2}.

\begin{figure}[H]
    \centering
    \includegraphics[width=0.48\linewidth]{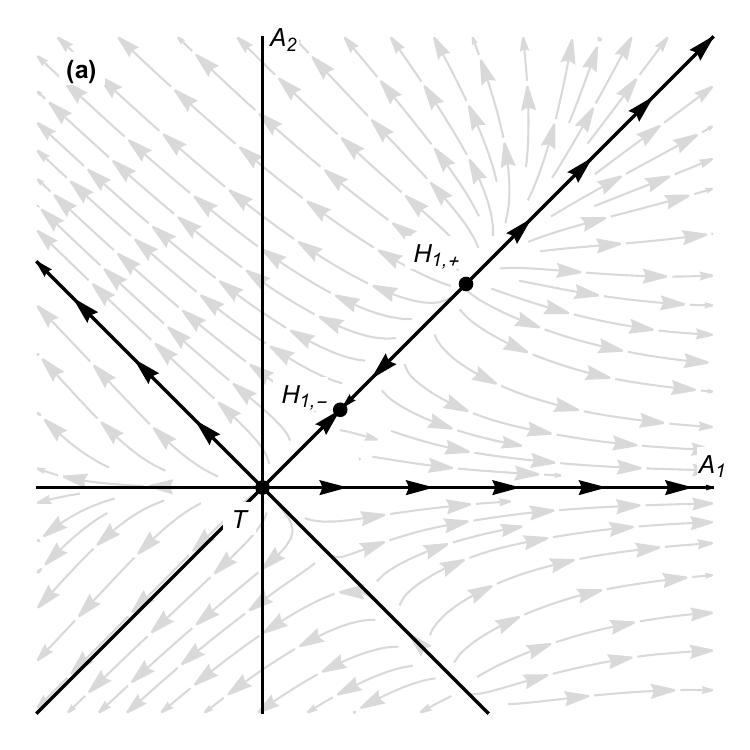}
    \hspace{0.1cm}
    \includegraphics[width=0.48\linewidth]{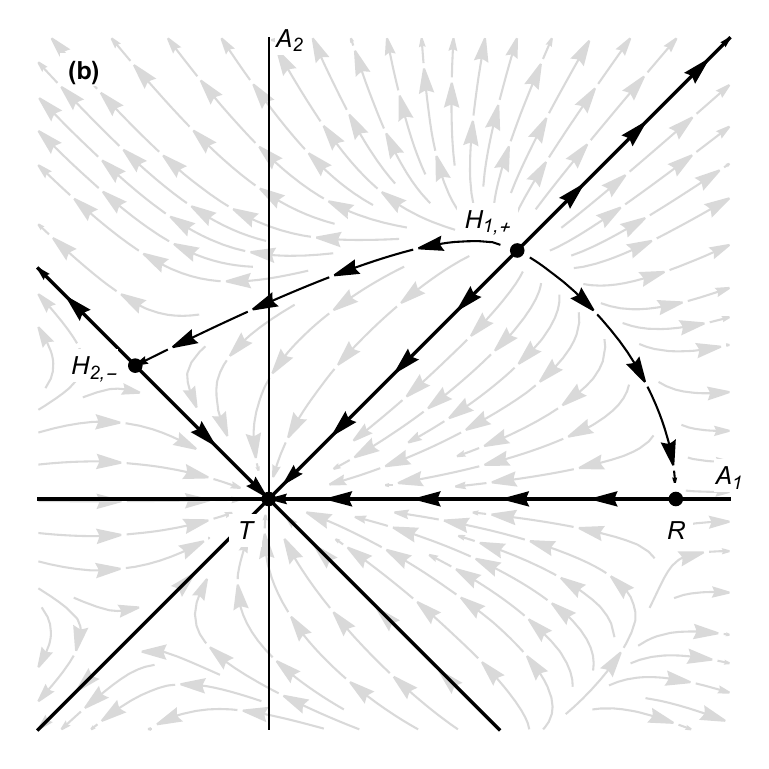}
    
    \caption{Phase planes of the dynamical system \eqref{eq:leading-order-system-fronts-hex} in the following parameter regimes: a) $M_0<0$, where only the hexagonal patterns and the trivial fixed point exist; b) $0 < M_0\kappa < -\tfrac{K_0 N_0^2}{(K_0 - K_2)^2}$, where the trivial fixed point, square patterns and hexagonal patterns exist.}
    \label{fig:phase-diags-hex1}
\end{figure}

\begin{figure}[H]
    \centering
    \includegraphics[width=0.48\linewidth]{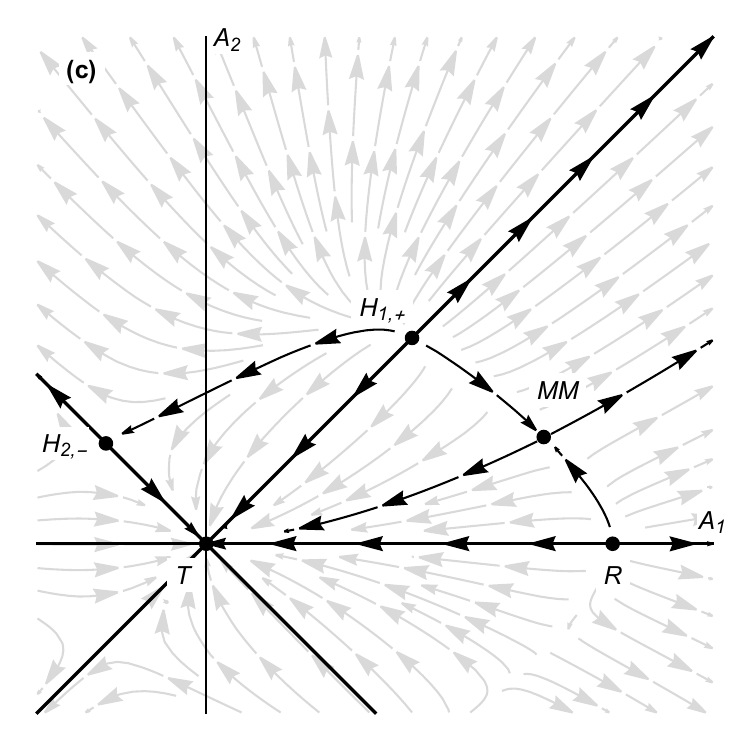}
    \hspace{0.1cm}
    \includegraphics[width=0.48\linewidth]{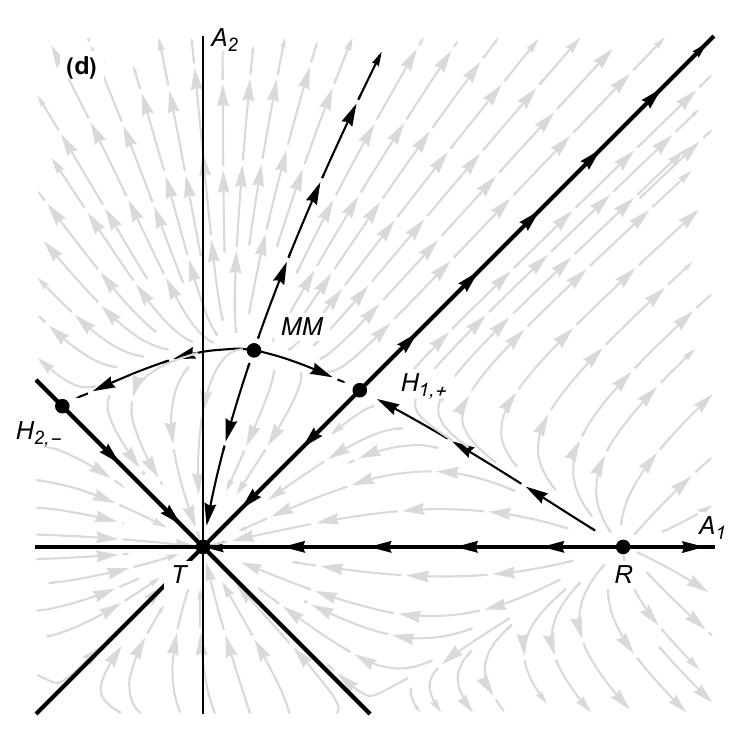}
    \caption{Phase planes of the dynamical system \eqref{eq:leading-order-system-fronts-hex} in the following parameter regimes after the mixed modes bifurcated from the roll waves:  c) $-\tfrac{K_0 N_0^2}{(K_0 - K_2)^2} < M_0\kappa < -\tfrac{N_0^2(2K_0 + K_2)}{(K_0 - K_2)^2}$, d) $M_0 \kappa > -\tfrac{N_0^2(2K_0 + K_2)}{(K_0 - K_2)^2}$. }
    \label{fig:phase-diags-hex2}
\end{figure}

\begin{remark}
    In the previous discussion, we have assumed that $K_0<0$ which in particular implied that a sufficiently large ball is an overflowing invariant set. For $K_0 > 0$, this gets more subtle, and more complicated situations might occur. Since $K_0 < 0$ is the case, where both hexagons and roll waves exist for $M_0 > 0$ we thus restrict the discussion here to this scenario.
\end{remark}

\subsection{Persistence of heteroclinic orbits and final results}

In order to conclude the construction of the fast-moving modulating front dynamics for system \eqref{eq:thin-film-equation}, it remains to prove that the heteroclinic orbits found for the reduced equations persist under the higher-order perturbations. First, we need to take the conservation law into account. Recalling the derivation of the reduced equations from the physical system, we find that
\begin{equation*}
    0 = c \partial_\Xi A_0 + \eps^2 \partial_\Xi f_{\text{sq}}(A_0,A_1,A_2)
\end{equation*}
for the square lattice and 
\begin{equation*}
    0 = c \partial_\Xi A_0 + \eps^2 \partial_\Xi f_{\text{hex}}(A_0,A_1,A_2,A_3)
\end{equation*}
for the hexagonal lattice, where $f_{\mathrm{sq}}$ and $f_{\mathrm{hex}}$ are sufficiently regular functions. Hence, after integrating once and applying the implicit function theorem, we find that $A_0$ is slaved by $A_1$ and $A_2$ in the square case and by $A_1$, $A_2$ and $A_3$ in the hexagonal case. In particular, using this relation, we find a closed system for $A_1$ and $A_2$ in the square case and for $A_1$, $A_2$ and $A_3$ in the hexagonal case.

Next, we argue that the relevant invariant subspaces remain invariant under higher-order perturbation. This follows from symmetries of the original system, which are preserved by the center manifold reduction. First, the subsets $\{A_2 = 0\}$ and $\{A_2 = A_3 = 0\}$ remain invariant using translation invariance. Second, the subset $\{A_1 = A_2\}$ and $\{A_1 = A_2 = A_3\}$ remain invariant using rotation invariance. Finally, the subset $\{A_2 = A_3\}$ in the hexagonal case remains invariant using reflection symmetry $y \mapsto -y$.

Since all relevant invariant subspaces are persistent and all fixed points are hyperbolic, we find that the phase plane diagrams as discussed in Sections \ref{sec:modulation-center-square} and \ref{sec:modulation-center-hex}, see also Figures \ref{fig:phase-diags-square1}, \ref{fig:phase-diags-square2}, \ref{fig:phase-diags-hex1} and \ref{fig:phase-diags-hex2}, persist. Before we state the main existence theorem for modulating fronts, we note that a heteroclinic orbit from, for example, $S$ to the $T$, corresponds to an invasion of the trivial state by square patterns in the full system \eqref{eq:thin-film-equation}.

\begin{theorem}[Modulating fronts on square lattice]\label{thm:modulating-fronts-square}
    Consider the thin-film system \eqref{eq:thin-film-equation} with $M = M_m^* + \eps^2 M_0$ and fix a front speed $c > 0$. Then, there exists a $\eps_0 > 0$ such that the following holds true for all $0 < \eps < \eps_0$. The thin-film system \eqref{eq:thin-film-equation} has modulating front solutions $(h,\theta)(t,\x) = (1,1) + \Vcal(x-ct, \x)$ satisfying
    \begin{equation*}
        \Vcal(\xi,x,y) = \Vcal(\xi,x+\tfrac{2\pi}{k_m^*},y) = \Vcal(\xi,x,y+\tfrac{2\pi}{k_m^*}),
    \end{equation*}
    which describe, for $M_0 < 0$,
    \begin{itemize}
        \item an invasion of the roll waves by the pure conduction state if $K_0 > 0$;
        \item an invasion of the square patterns by the pure conduction state if $K_0 + K_1 >0$;
        \item an invasion of the roll waves by the square patterns if $K_0 >0$, $K_0 + K_1 >0$ and $K_1 - K_0 > 0$;
        \item an invasion of the square patterns by the roll waves if $K_0 > 0$, $K_0 + K_1 >0$ and $K_1 - K_0 < 0$;
    \end{itemize}
    and, for $M_0 > 0$,
    \begin{itemize}
        \item an invasion of the pure conduction state by roll waves if $K_0 < 0$;
        \item an invasion of the pure conduction state by square patterns if $K_0 + K_1  < 0$;
        \item an invasion of the roll waves by the square patterns if $K_0 < 0$, $K_0 + K_1 < 0$ and $K_1 - K_0 > 0$;
        \item an invasion of the square patterns by the roll waves if $K_0 < 0$, $K_0 + K_1 < 0$ and $K_1 - K_0 < 0$.
    \end{itemize}
\end{theorem}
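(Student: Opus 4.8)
The plan is to deduce the theorem from the center manifold reduction of Theorem~\ref{thm:center-manifold-fronts} together with the phase-plane analysis of Section~\ref{sec:modulation-center-square}, so that the only genuinely new work is to organize the existence statements and to prove persistence. First I would invoke Theorem~\ref{thm:center-manifold-fronts} to replace the infinite-dimensional spatial dynamics system \eqref{eq:spat-dyn-CM} by the finite-dimensional flow on $\Mcal_0$, parametrized by the amplitudes $(A_1,A_2,A_0)$ through the ansatz \eqref{eq:ansatz-Wcal}. Restricting to the invariant subspace of real-valued amplitudes — which is inherited from the reflection symmetry of \eqref{eq:thin-film-equation} and preserved by the reduction — yields the planar leading-order system \eqref{eq:leading-order-system-fronts-square} coupled to the neutral equation $c\partial_\Xi A_0 = \Ocal(\eps^2)$. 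A modulating front connecting two stationary patterns then corresponds to a heteroclinic orbit of the reduced flow between the fixed points $T$, $R$, $S$, which, upon reconstructing $\Ucal = \Wcal_0 + \Psib$ and undoing $\Tcal$, represent the pure conduction state, roll waves, and square patterns of Theorem~\ref{thm:square-patterns}.

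Next I would establish the heteroclinic orbits for the leading-order system. The Lyapunov functional $\Ecal$ makes \eqref{eq:leading-order-system-fronts-square} a gradient flow, so there are no periodic or homoclinic orbits and every bounded orbit is heteroclinic between equilibria. The connections $T\!-\!R$ and $T\!-\!S$ live in the one-dimensional invariant subspaces $\{A_2=0\}$ and $\{A_1=A_2\}$; the sign of the relevant Hessian eigenvalue, read off from Tables~\ref{tab:trivial}--\ref{tab:square}, fixes the direction of each connection and hence which state invades which, split according to $\operatorname{sgn} M_0$. For the $R\!-\!S$ connections I would use a trapping-region argument: since $\Ecal\to-\infty$ (resp.\ $+\infty$) as $|(A_1,A_2)|\to\infty$ for $M_0>0$ (resp.\ $M_0<0$) by Young's inequality, a large ball is overflowing (resp.\ inflowing) invariant, and the saddle structure of $R$ or $S$ (depending on $\operatorname{sgn}(K_1-K_0)$) forces the one-dimensional unstable manifold of the saddle to land in the basin of the stable node, producing the connection; this is precisely the content of the phase portraits in Figures~\ref{fig:phase-diags-square1} and \ref{fig:phase-diags-square2}.

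Finally I would prove persistence under the higher-order terms. The conservation law must be handled first: the $A_0$-equation has the form $c\partial_\Xi A_0 + \eps^2\partial_\Xi f_{\mathrm{sq}}(A_0,A_1,A_2) = 0$, which after one integration and an application of the implicit function theorem slaves $A_0$ to $(A_1,A_2)$ up to $\Ocal(\eps^2)$, yielding a closed planar system for $(A_1,A_2)$ that is an $\Ocal(\eps^2)$-perturbation of \eqref{eq:leading-order-system-fronts-square}. The invariant subspaces $\{A_2=0\}$ and $\{A_1=A_2\}$ persist because they are forced by the translation and rotation symmetries that survive the reduction, so the $T\!-\!R$ and $T\!-\!S$ connections persist within them; and since every equilibrium is hyperbolic, the saddle-to-sink $R\!-\!S$ connection persists because a saddle's unstable manifold landing in an open basin of attraction is an open condition, while the trapping region remains intact under small perturbations.

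The main obstacle I expect is exactly this persistence step in the presence of the neutral conservation-law mode. One must ensure that the slaving of $A_0$ and the resulting $\Ocal(\eps^2)$-perturbation neither destroy the hyperbolicity of the equilibria nor break the trapping-region structure underlying the saddle-to-node connections; in particular, the gradient (Lyapunov) structure is only guaranteed for the leading-order flow, so for the perturbed planar system persistence has to be argued directly from hyperbolicity and the robustness of the invariant region and of transverse intersections of stable and unstable manifolds, rather than from the absence of recurrence. Checking that the reconstructed fronts genuinely satisfy the asymptotic boundary conditions in the original variables — that $\Tcal(\Wcal_0+\Psib)$ converges to the correct planar patterns of Theorem~\ref{thm:square-patterns} as $\xi\to\pm\infty$ — is then a bookkeeping matter using Lemma~\ref{lem:reduction-expansion}.
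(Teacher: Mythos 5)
Your proposal is correct and follows essentially the same route as the paper: center manifold reduction via Theorem~\ref{thm:center-manifold-fronts}, restriction to real amplitudes, the gradient-flow/Lyapunov analysis of \eqref{eq:leading-order-system-fronts-square} with the stability tables, invariant-subspace connections for $T$--$R$ and $T$--$S$, the trapping-region argument for the saddle-to-sink $R$--$S$ connections, and persistence via slaving $A_0$ by integration plus the implicit function theorem, symmetry-protected invariant subspaces, and hyperbolicity of the equilibria. The difficulties you flag (the neutral conservation-law mode and arguing persistence from hyperbolicity rather than from the leading-order gradient structure) are precisely the ones the paper addresses in its persistence section.
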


\begin{remark}\label{rem:modulating-pattern-selection-square}
    We point out that the parameter conditions in Theorem \ref{thm:modulating-fronts-square} are consistent with the parameter regimes for pattern selection in \cite[Eq.~(33)]{shklyaev2012} (note that there $K_0$ and $K_1$ have the opposite sign) and Remark \ref{rem:pattern-selection-square}. In particular, for $M_0 > 0$ square patterns are selected if $K_1 - K_0 < 0$, whereas roll waves are selected if $K_1 - K_0 > 0$. Additionally, Theorem \ref{thm:modulating-fronts-square} and the phase portraits Figures \ref{fig:phase-diags-square1} and \ref{fig:phase-diags-square2} suggest that the selection for $K_1 - K_0 < 0$ is either facilitated via a direct invasion of the pure conduction state by square waves or by a two-stage invasion process, where first the pure conduction state is invaded by the roll waves, which then are invaded by square patterns, see Figure \ref{fig:Modfronts-square}. Analogously, for $K_1 - K_0 > 0$.
\end{remark}

\begin{figure}[H]
    \centering
    \includegraphics[width=\linewidth]{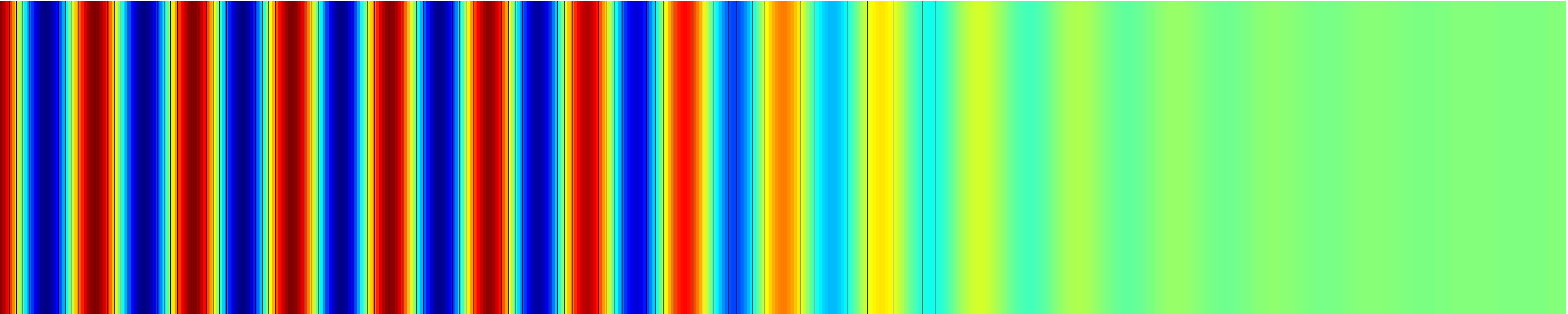}

    \vspace{0.25cm}

    \includegraphics[width=\linewidth]{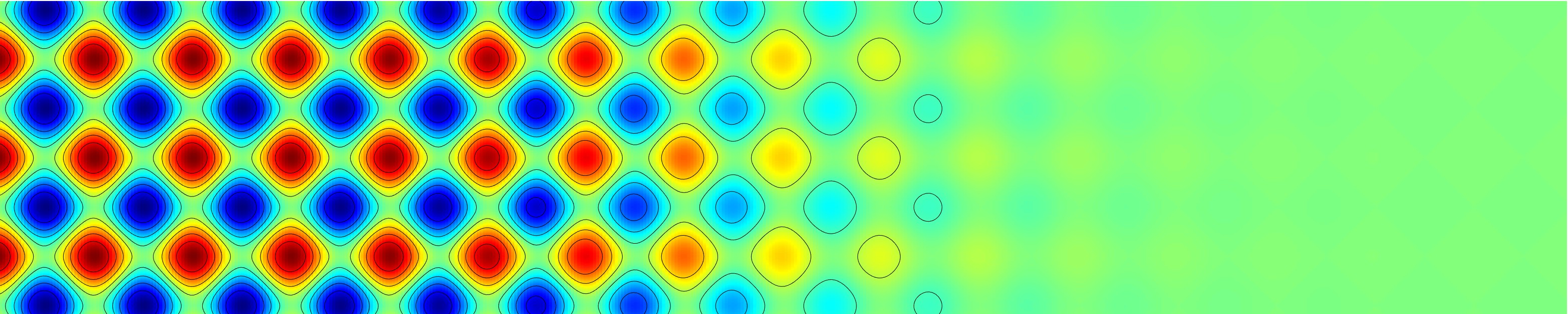}

    \vspace{0.25cm}

    \includegraphics[width=\linewidth]{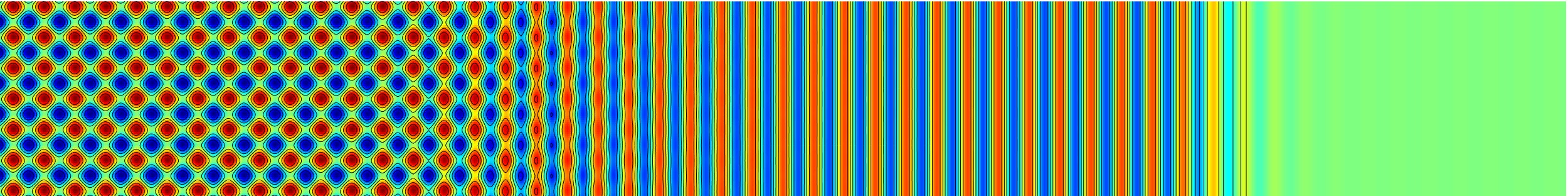} 

    \vspace{0.3cm}
    \begin{tikzpicture}
         \draw[-stealth, very thick] (-2,-0.25) -- node[below]{direction of propagation} (2,-0.25);
    \end{tikzpicture}
    \caption{Examples of moving pattern interfaces on a square lattice. The first image shows an invasion of the pure conduction state by roll waves. The second image is an invasion of the pure conduction state by a square pattern. The last image is a two-stage invasion process consisting of a primary invasion of the pure conduction state by roll waves followed by a secondary invasion of the roll waves by a square pattern.}
    \label{fig:Modfronts-square}
\end{figure}

\begin{theorem}[Modulating fronts on hexagonal lattice]\label{thm:modulating-fronts-hex}
    Consider the thin-film system \eqref{eq:thin-film-equation} with $M = M_m^* + \eps^2 M_0$ and fix a front speed $c > 0$. Furthermore, assume that $g \in (10,18)$ and $\beta$ is such that $N(\beta,g) = \eps N_0$ with $N_0 > 0$ in \eqref{eq:leading-order-system-fronts-hex}, $K_0 < 0$ and $K_2 < 0$. Then, there exists a $\eps_0 > 0$ such that the following holds true for all $0 < \eps < \eps_0$. The thin-film system \eqref{eq:thin-film-equation} has modulating front solutions $(h,\theta)(t,\x) = (1,1) + \Vcal(x-ct, \x)$ satisfying
    \begin{equation*}
        \Vcal(\xi,x,y) = \Vcal(\xi,x+\tfrac{2\pi}{k_m^*},y) = \Vcal(\xi,x-\tfrac{\pi}{k_m^*},y+\tfrac{\sqrt{3}\pi}{k_m^*}) = \Vcal(\xi,x-\tfrac{\pi}{k_m^*},y-\tfrac{\sqrt{3}\pi}{k_m^*}),
    \end{equation*}
    which describe, for $M_0 < 0$,
    \begin{itemize}
        \item an invasion of up-hexagons by the pure conduction state;
        \item an invasion of up-hexagons by up-hexagons with a larger amplitude;
    \end{itemize}
    and, for $M_0 > 0$,
    \begin{itemize}
        \item an invasion of the pure conduction state by the roll waves;
        \item an invasion of the pure conduction state by up-hexagons;
        \item an invasion of the pure conduction state by down-hexagons;
        \item an invasion of roll waves by up-hexagons, if $M_0\kappa < -\tfrac{K_0 N_0^2}{(K_0 - K_2)^2}$;
        \item an invasion of down-hexagons by up-hexagons, if $M_0\kappa < -\tfrac{N_0^2(2K_0 + K_2)}{(K_0 - K_2)^2}$;
        \item an invasion of the mixed modes by either roll waves or up-hexagons, if $-\tfrac{K_0 N_0^2}{(K_0 - K_2)^2} < M_0\kappa < -\tfrac{N_0^2(2K_0 + K_2)}{(K_0 - K_2)^2}$;
        \item an invasion of both up- and down-hexagons by the mixed modes
        if $M_0\kappa > -\tfrac{N_0^2(2K_0 + K_2)}{(K_0 - K_2)^2}$;
        \item an invasion of the pure conduction state by the mixed modes if $M_0\kappa > -\tfrac{K_0 N_0^2}{(K_0 - K_2)^2}$.
    \end{itemize}
\end{theorem}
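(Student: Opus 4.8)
The plan is to read this theorem as the assembly of all the machinery built in this section, so the "proof" is really a bookkeeping argument: identify which heteroclinic orbits of the reduced hexagonal system persist for small $\eps$, and translate each back into the invasion statement it encodes. First I would fix the front speed $c > 0$ and invoke the spatial-dynamics reformulation \eqref{eq:spat-dyn-CM} obtained by inserting the ansatz \eqref{eq:ansatz-modulating-front}. Since $c \neq 0$, Proposition \ref{prop:spat-dyn-spectrum} gives the spectral gap separating $\sigma_0 = \{0\}$ from the hyperbolic part, so Theorem \ref{thm:center-manifold-fronts} produces a finite-dimensional center manifold containing all small bounded solutions at $M = M_m^* + \eps^2 M_0$ for $\eps$ small. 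Controlling the spurious $\phib_+(0)$- and $\phib_+(\k_j)$-contributions of $\Tcal\Psib$ via Lemma \ref{lem:reduction-expansion}, together with the linear and nonlinear $\eps$-expansions carried out before Section \ref{sec:modulation-center-square}, yields the leading-order reduced hexagonal system with the same coefficients $\kappa$, $K_0$, $K_2$, $N$, $K_c$ as the stationary problem.

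Next I would dispatch the conservation law. Its reduced form is $0 = c\partial_\Xi A_0 + \eps^2 \partial_\Xi f_{\mathrm{hex}}(A_0,A_1,A_2,A_3)$; integrating once and applying the implicit function theorem slaves $A_0$ to the remaining amplitudes, giving a closed system for $(A_1,A_2,A_3)$. Restricting to the invariant subspace $\{A_1 \in \R,\, A_2 = A_3 \in \R\}$ -- cut out by the reflection symmetries $y \mapsto -y$ and $c \to -c$, $\Xi \to -\Xi$, $\p \to -\p$ inherited by the reduction -- collapses the problem to the planar system \eqref{eq:leading-order-system-fronts-hex}, whose complete phase portrait was determined in Section \ref{sec:modulation-center-hex} and Appendix \ref{app:heteroclinic} under exactly the standing hypotheses $g \in (10,18)$, $N = \eps N_0$ with $N_0 > 0$, $K_0 < 0$, $K_2 < 0$: the equilibria $T$, $R$, $H_{1,\pm}$, $H_{2,\pm}$, $M\!\!M$, their hyperbolicity, the Lyapunov functional $\Ecal$ excluding periodic orbits, and all the listed heteroclinic connections, with the case split governed by the thresholds $M_0\kappa = -K_0 N_0^2/(K_0 - K_2)^2$ and $M_0\kappa = -N_0^2(2K_0+K_2)/(K_0-K_2)^2$ at which $M\!\!M$ bifurcates from $R$ and crosses the diagonal.

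Then I would carry out the persistence step. Each connection in the theorem falls into one of two robust classes: either it lies inside a persistent invariant subspace -- the sets $\{A_2 = A_3 = 0\}$, $\{A_1 = A_2 = A_3\}$, $\{A_2 = A_3\}$ and $\{A_1 = -A_2\}$, whose invariance follows from the translation, rotation, and reflection symmetries preserved by the center-manifold reduction -- or it arises as a transverse intersection of the finite-dimensional stable and unstable manifolds of hyperbolic equilibria, as produced by the Poincaré--Bendixson and time-reversal arguments of Appendix \ref{app:heteroclinic}. In the first case persistence follows from hyperbolicity and the one-dimensionality of the relevant unstable manifold within the subspace; in the second from the openness of transverse intersections. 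Reversing the reduction via $\Wcal = \Wcal_0 + \Psib(\Wcal_0;M)$ and the change of variables $\Tcal$ then turns each persistent orbit into a genuine modulating front of \eqref{eq:thin-film-equation} with the stated $D_6$ symmetry; matching the orbit's orientation (source at $\xi = -\infty$, sink at $\xi = +\infty$) to the invasion convention -- a connection from $P$ to $Q$ being an invasion of $Q$ by $P$ -- reproduces the itemised list, including the identification of $H_{1,\pm}$, $H_{2,\pm}$ as up- or down-hexagons according to the sign of $A_1 A_2^2$.

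The hardest part will be the persistence of the heteroclinic orbits lying \emph{outside} the invariant subspaces, in particular the connections to and from the mixed-mode saddle $M\!\!M$ obtained by Poincaré--Bendixson. For these there is no low-dimensional invariant subspace to fall back on, so persistence must come from transversality of the invariant manifolds together with uniform control of the $\Ocal(\eps)$ remainder; one must in addition verify that the slaving of $A_0$ does not spoil this transversality, which is precisely where the $\eps^2$-smallness of the coupling $\eps^2\partial_\Xi f_{\mathrm{hex}}$ and the higher-order estimate of Lemma \ref{lem:reduction-expansion} enter. Once transversality and the remainder bounds are in place, a standard $C^1$-closeness argument for the perturbed flow completes the persistence and hence the theorem.
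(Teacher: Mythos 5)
Your proposal is correct and follows essentially the same route as the paper: spatial dynamics plus the center-manifold reduction of Theorem \ref{thm:center-manifold-fronts} and Lemma \ref{lem:reduction-expansion}, slaving of $A_0$ via the integrated conservation law, restriction to the symmetry-induced invariant subspace giving \eqref{eq:leading-order-system-fronts-hex}, the phase-plane and Poincaré--Bendixson analysis of Section \ref{sec:modulation-center-hex} and Appendix \ref{app:heteroclinic}, and persistence from preserved symmetries plus hyperbolicity. The only (immaterial) difference is that the step you flag as hardest is handled more simply in the paper: once the planar invariant subspace persists, every connection outside the one-dimensional subspaces is a source-to-saddle or saddle-to-sink orbit of a planar flow, so the intersection of the relevant invariant manifolds is automatically transverse and no separate transversality or $C^1$-closeness argument is needed.
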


\begin{figure}[H]
    \centering
    \includegraphics[width=\linewidth]{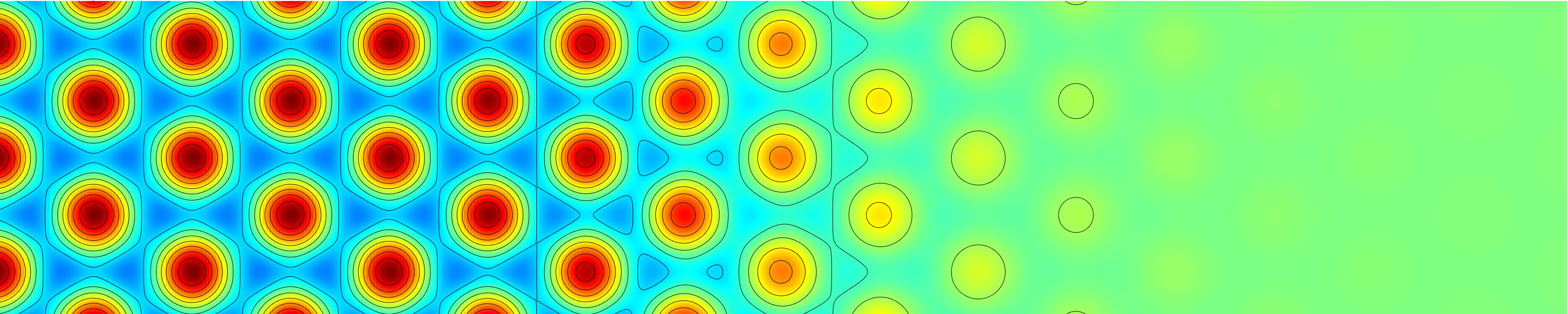}

    \vspace{0.25cm}
    
    \includegraphics[width=\linewidth]{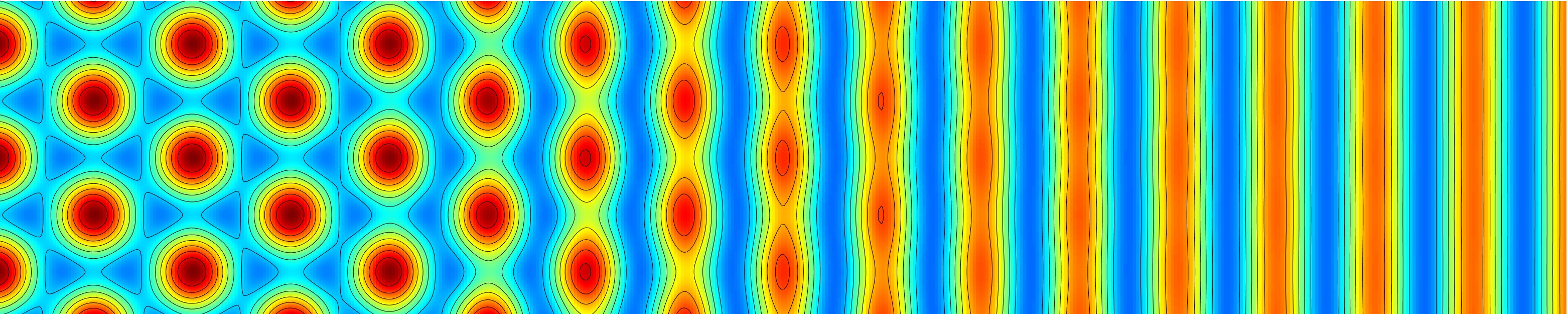}

    \vspace{0.25cm}

    \includegraphics[width=\linewidth]{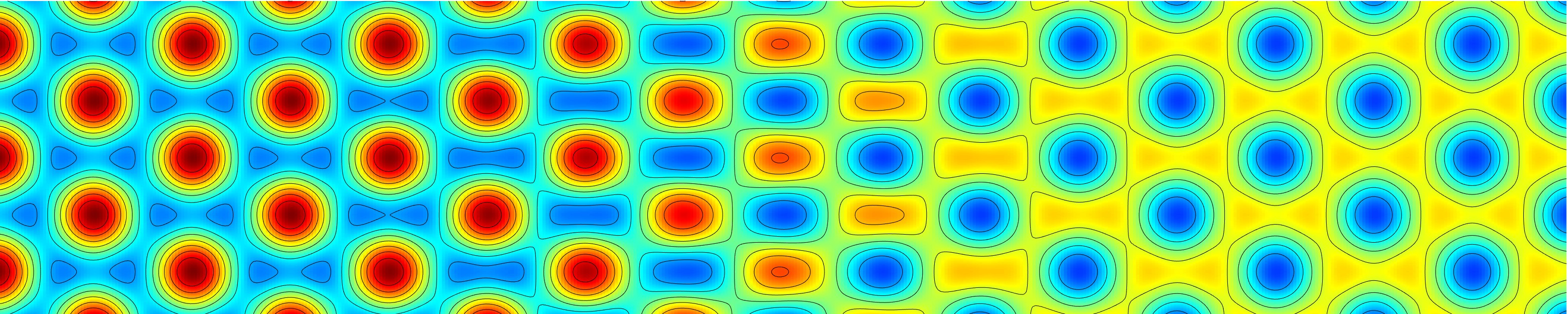}
    
    \vspace{0.3cm}
    \begin{tikzpicture}
         \draw[-stealth, very thick] (-2,-0.25) -- node[below]{direction of propagation} (2,-0.25);
    \end{tikzpicture}
    \caption{Examples of moving pattern interfaces on a hexagonal lattice. The first image shows an invasion of the pure conduction state by a hexagonal pattern. The second image is an invasion of roll waves by a hexagonal pattern. The third image is an invasion of down-hexagons by up-hexagons.}
    \label{fig:Modfronts-hex}
\end{figure}

\begin{remark}\label{rem:modulating-pattern-selection-hex}
    Similar to the previous Remark \ref{rem:modulating-pattern-selection-square}, Theorem \ref{thm:modulating-fronts-hex} and the phase portraits in Figures \ref{fig:phase-diags-hex1} and \ref{fig:phase-diags-hex2} suggest that pattern selection on a hexagonal lattice is either facilitated by a direct invasion or a two-stage invasion process. We note that such a two-stage invasion of a homogeneous ground state by a hexagonal pattern via roll waves has been numerically observed in a damped Kuramoto–Sivashinsky equation in \cite{csahok1999}.
\end{remark}

\begin{figure}[H]
    \includegraphics[width=\linewidth]{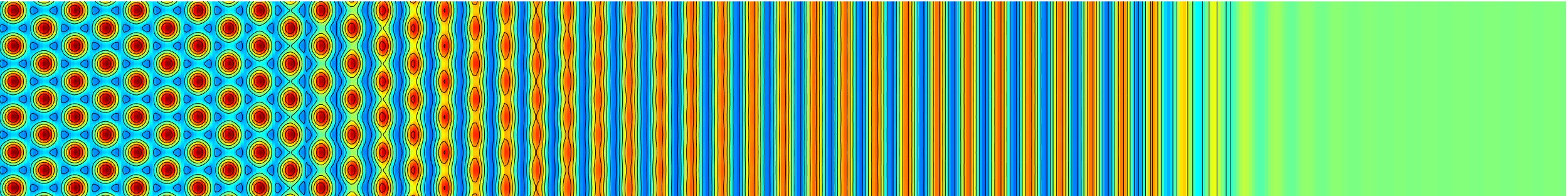}
    
    \vspace{0.3cm}
    \begin{tikzpicture}
         \draw[-stealth, very thick] (-2,-0.25) -- node[below]{direction of propagation} (2,-0.25);
    \end{tikzpicture}
    \caption{Example of a moving pattern interface on a hexagonal lattice. This two-stage invasion process consists of a primary invasion of the pure conduction state by roll waves followed by a secondary invasion of the roll waves by a hexagonal pattern.}
    \label{fig:Modfronts-hex-two}
\end{figure}

\SkipTocEntry\section*{Acknowledgements}
B.H.~was partially supported by the Swedish Research Council -- grant no.~2020-00440 -- and the Deutsche Forschungsgemeinschaft (DFG, German Research Foundation) -- Project-ID 444753754.

J.J.~acknowledges the support of Lunds Universitet, where major parts of the research were carried out.

This material is also partially based upon work supported by the Swedish Research Council under grant no.~2021-06594 while the authors were in residence at Institut Mittag-Leffler in Djursholm, Sweden, during October 2023.

\SkipTocEntry\section*{Data Availability Statement}

The explicit formulas for the coefficients, which can be found in the Supplement Material, were computed in Mathematica \cite{Mathematica}. The plots of the coefficients, the phase diagrams and the approximate solutions were generated using Python. The code used to generate the corresponding data and videos of selected modulating fronts are available under \cite{Hilder_Data_availability_for_2024}. We also make the Supplementary Material available at \cite{Hilder_Data_availability_for_2024}.

\appendix

\section{Heteroclinic orbits for mixed modes}\label{app:heteroclinic}

We now give a proof of the existence of heteroclinic orbits between roll waves and mixed modes, as well as between hexagons and the mixed modes in Section \ref{sec:modulation-center-hex}. The proof relies on standard arguments for planar dynamical systems, in particular on the Poincaré-Bendixson theorem, see e.g.~\cite[Thm.~6.32]{d.meiss2017}.

\begin{lemma}
    Let $K_0, K_2 < 0$ in \eqref{eq:leading-order-system-fronts-hex}. If $-\tfrac{K_0 N_0^2}{(K_0 - K_2)^2} < M_0 \kappa < -\tfrac{N_0^2(2K_0 + K_2)}{(K_0 - K_2)^2}$ such that the fixed point $M\!\!M$ exists, the system \eqref{eq:leading-order-system-fronts-hex} has heteroclinic orbits from $R$ to $M\!\!M$ and $H_{1,+}$ to $M\!\!M$.
\end{lemma}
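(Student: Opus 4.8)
The plan is to treat the reduced planar system \eqref{eq:leading-order-system-fronts-hex}, written as $\partial_\Xi(A_1,A_2) = f(A_1,A_2)$, as a gradient-like flow and to read off the two heteroclinic connections from its global phase portrait via time reversal. Throughout I work in the window of the lemma; since $\kappa>0$ and $K_0<0$ force $M_0\kappa>0$ and hence $M_0>0$, the eigenvalue computations of Section \ref{sec:modulation-center-hex} give that $T=(0,0)$ is a sink, that both $R$ and $H_{1,+}$ are sources (here $\lambda_{2,R}>0$ because $M_0\kappa$ lies above the first threshold, and $\lambda_{2,H_{1,+}}>0$ because it lies below the second), and that $M\!\!M$ is a hyperbolic saddle in the open first quadrant below the diagonal. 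The Lyapunov functional satisfies $\nabla\Ecal\cdot f=-f_1^2-2f_2^2$, so $\Ecal$ is strictly decreasing along non-constant orbits; in particular there are no periodic orbits and no homoclinic loops. I then pass to the reversed flow $\partial_\Xi(A_1,A_2)=-f(A_1,A_2)$, under which $\Ecal$ is a strictly increasing Lyapunov function, the sources $R,H_{1,+}$ become sinks, $T$ becomes a source, and $M\!\!M$ stays a saddle. The orbits sought in forward time are exactly the two branches of the reversed-time unstable manifold $W^u(M\!\!M)$, \emph{provided} they limit onto $R$ and onto $H_{1,+}$ respectively.

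First I would build a trapping region for the reversed flow. The lines $\{A_2=0\}$ and $\{A_1=A_2\}$ are invariant (by the translation and rotation symmetries preserved under the reduction), and since $K_0,K_2<0$ there is a large circular arc on which $f$ points strictly outward, so that $-f$ points inward. These three curves bound an open region $\Omega$ in the first quadrant which contains $M\!\!M$, carries $T$ and $R$ on its axis edge and $H_{1,+}$ on its diagonal edge, and is positively invariant for the reversed flow: orbits cannot cross the invariant edges and cross the arc inward. Both branches of $W^u(M\!\!M)$ emanate into $\Omega$ and, by positive invariance and boundedness, remain in $\bar\Omega$ for all reversed time. Applying the Poincaré--Bendixson theorem \cite[Thm.~6.32]{d.meiss2017} together with the absence of periodic orbits and the strict monotonicity of $\Ecal$, the reversed-time $\omega$-limit of each branch is a single equilibrium of strictly larger $\Ecal$-value than $\Ecal(M\!\!M)$. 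The only equilibria in $\bar\Omega$ are $T,R,H_{1,+},M\!\!M$; a branch cannot return to $M\!\!M$ (no homoclinics) and cannot limit onto the reversed-time source $T$, so each branch converges to $R$ or to $H_{1,+}$.

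It remains to show that the two branches converge to \emph{distinct} sinks, which is the heart of the matter. Since the reversed flow on $\bar\Omega$ is gradient-like with only hyperbolic equilibria and no connecting cycles, the basins $B(R)$ and $B(H_{1,+})$ are open, disjoint and nonempty, and their common boundary inside $\Omega$ must be carried by stable manifolds of saddles; the only saddle available in $\Omega$ is $M\!\!M$. Hence $M\!\!M$ lies on $\partial B(R)\cap\partial B(H_{1,+})$, and near the saddle its local stable manifold splits a neighborhood into two half-disks belonging to $B(R)$ and to $B(H_{1,+})$; because the four invariant branches alternate at a planar saddle, the two unstable branches of $M\!\!M$ lie one in each half-disk and therefore converge to $R$ and to $H_{1,+}$ respectively. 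Reverting to forward time produces the heteroclinic orbit from $R$ to $M\!\!M$ and the one from $H_{1,+}$ to $M\!\!M$.

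The main obstacle I anticipate is precisely this separation step: ruling out that both unstable branches of $M\!\!M$ fall into a single basin, and confirming that the basin boundary inside $\Omega$ is indeed carried by $M\!\!M$ rather than escaping across the arc or accumulating on the saddles $H_{1,-},H_{2,-}$ that sit outside $\Omega$. I would secure this by one of two concrete routes. Either I argue by contradiction: if both branches limited to the same sink, the two connecting orbits together with $M\!\!M$ and that sink would form an invariant Jordan curve, and Poincaré--Bendixson applied to the enclosed region would force a further interior equilibrium or periodic orbit, neither of which is available. Or, more elementarily, I construct two sub-trapping regions adjacent to the axis edge and to the diagonal edge of $\Omega$, each bounded by the relevant invariant line, a short segment transverse to $f$, and an arc, check on the transversals that $-f$ points inward, and thereby trap one unstable branch of $M\!\!M$ in each sub-region, with Poincaré--Bendixson identifying the limits as $R$ and $H_{1,+}$. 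The sign of the unstable eigenvector of $M\!\!M$, computable from the expressions in the Supplementary Material, fixes which branch enters which sub-region and removes any remaining ambiguity.
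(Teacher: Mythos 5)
Your proposal is correct and follows essentially the same strategy as the paper: reverse time, use the Lyapunov functional $\Ecal$ (via $\nabla\Ecal\cdot f=-f_1^2-2f_2^2$) to exclude periodic orbits and homoclinic loops, apply Poincar\'e--Bendixson to force each reversed-time unstable branch of $M\!\!M$ to converge to a single equilibrium among $\{R,H_{1,+}\}$, and rule out both branches falling into the same sink by a Jordan-curve contradiction. Your route (b) is in substance the paper's own argument: the paper takes a point $p\in W^s(M\!\!M)$ inside the enclosed region $\Ccal$ and derives the contradiction from its $\alpha$-limit set (it cannot be the sink, cannot be $M\!\!M$ without a homoclinic orbit, and cannot avoid all fixed points without producing a periodic orbit), which is precisely the rigorous form of your claim that the enclosed disk would otherwise need a further interior equilibrium or periodic orbit.

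One caution: your primary route (a) is not complete as stated, and you are right to identify it as the weak point. The assertion that the local stable manifold of $M\!\!M$ splits a neighborhood into two half-disks \emph{belonging to $B(R)$ and to $B(H_{1,+})$} presupposes $M\!\!M\in\partial B(R)\cap\partial B(H_{1,+})$, which is essentially equivalent to what you are trying to prove. The alternation of the four saddle branches only tells you that each half-disk contains one unstable branch, and each half-disk then lies in the basin of whatever sink \emph{its own} branch converges to --- so at this stage nothing prevents both half-disks from lying in $B(R)$. Likewise, the step "the common basin boundary must be carried by stable manifolds of saddles, hence $M\!\!M$ lies on it" needs an argument that the two basins actually share a boundary point at $M\!\!M$ rather than being separated only along the global stable branches away from the saddle. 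Closing this requires exactly the kind of argument in route (b) (or the paper's stable-manifold witness point), so that contradiction argument should be the backbone of the proof, with route (a) at best a heuristic picture of the resulting phase portrait.
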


\begin{proof} After time-reversal, a sufficiently large ball around $T$ is an inflowing invariant manifold. Additionally, $T$ is an unstable fixed point and $R$ and $H_{1,+}$ are stable ones. Since $M\!\!M$ is a saddle point, it has a one-dimensional unstable and a one-dimensional stable manifold. We argue that the $\omega$-limit set of the unstable manifold $\omega(W^u(M\!\!M))$, which is defined as the union of the $\omega$-limit sets of every point on the unstable manifold, contains the fixed points $R$ and $H_{1,+}$. Assume first that $\omega(W^u(M\!\!M))$ contains neither $R$ nor $H_{1,+}$. Note that $M\!\!M$ is contained in a compact inflowing invariant set, which contains only the fixed points $T$, $R$, $H_{1,+}$ and $M\!\!M$. Since $T$ is unstable, we find that $T \notin \omega(W^u(M\!\!M))$. Additionally, since \eqref{eq:leading-order-system-fronts-hex} has a strictly decreasing Lyapunov function, $W^s(M\!\!M)$ and $W^u(M\!\!M)$ cannot intersect since this would create a homoclinic orbit to $M\!\!M$. In particular, $M\!\!M \notin \omega(W^u(M\!\!M))$. Therefore, since $W^u(M\!\!M)$ is contained in a compact set and $\omega(W^u(M\!\!M))$ does not contain any fixed point, Poincaré--Bendixson, see e.g.\ \cite[Thm.~6.32]{d.meiss2017}, yields that $\omega(W^u(M\!\!M))$ contains a periodic orbit of \eqref{eq:leading-order-system-fronts-hex} -- a contradiction. 

Now, assume that $\omega(W^u(M\!\!M))$ contains either $R$ or $H_{1,+}$, but not both. Without loss of generality we assume that $R \in \omega(W^u(M\!\!M))$. Under this assumption, the closure of $W^u(M\!\!M)$ is a closed curve, and its interior is a compact, invariant set $\Ccal$. Since $W^s(M\!\!M)$ is tangential to the stable eigenspace, there is a point $p \in W^s(M\!\!M) \cap \Ccal$. In particular, its backward orbit is contained in a compact set. Since $R$ is a stable fixed point and there are no homoclinic orbits, the $\omega$-limit set of the backward orbit of $p$ does not contain any fixed point. Applying Poincaré-Bendixon, therefore leads to the existence of a periodic orbit and, thus, a contradiction. Hence $\omega(W^u(M\!\!M))$ contains both $R$ and $H_{1,+}$. Since both $R$ and $H_{1,+}$ are stable there cannot be a heteroclinic connection between these fixed points. Therefore, $\omega(W^u(M\!\!M)) = \{R, H_{1,+}\}$, see \cite[Thm.~2.4.11]{schneider2017} or \cite[Thm.~6.41]{d.meiss2017}. This proves that the claimed heteroclinic connections exist.
\end{proof}

\printbibliography

\end{document}